\documentclass[a4paper, reqno]{amsart}
\usepackage[margin=3.81cm]{geometry}
\usepackage[utf8]{inputenc}
\usepackage[english]{babel}

\usepackage{csquotes}
\usepackage{graphicx}
\usepackage[utf8]{inputenc}
\usepackage[T1]{fontenc}
\usepackage[dvipsnames]{xcolor}
\usepackage{subfiles}

\usepackage[textsize = footnotesize]{todonotes}

\usepackage[mathscr]{eucal}
\usepackage[shortlabels]{enumitem}
\usepackage[super]{nth}
\usepackage{leftindex}

\usepackage[backend=biber,
    style=numeric,
    sorting=nyt,
    giveninits=true,
    isbn=false,
    maxalphanames=4,
    date=year,
    maxbibnames=99,
    backref=false]{biblatex}
\DeclareRobustCommand{\SkipTocEntry}[5]{}

\usepackage{amsmath}
\usepackage{amssymb}

\usepackage{amsthm}
\usepackage{thmtools}
\usepackage{graphicx}

\usepackage{dsfont} 
\usepackage{mathtools}
\usepackage{xparse}
\usepackage{tikz-cd}
\tikzset{
    symbol/.style={%
        draw=none,
        every to/.append style={%
            edge node={node [sloped, allow upside down, auto=false]{$#1$}}}
    }
}
\usepackage{spectralsequences}

\definecolor{leafgreen}{RGB}{48, 138, 3}

\usepackage[pdfborder={0 0 0}]{hyperref}
\hypersetup{
   colorlinks=true,
   allcolors=.,
   bookmarksdepth = 3
}

\usepackage[capitalize, nameinlink]{cleveref}
\crefformat{equation}{#2(#1)#3}
\Crefformat{equation}{#2(#1)#3}
\crefname{section}{Section}{Sections}
\crefname{subsection}{Section}{Sections}
\crefname{subsubsection}{Section}{Sections}
\crefname{subappendix}{Section}{Sections}
\crefname{subsubappendix}{Section}{Sections}

\usepackage{microtype}

\theoremstyle{plain}
\newtheorem{theorem}{Theorem}[section]
\newtheorem{introtheorem}{Theorem}
\newtheorem{introcorollary}[introtheorem]{Corollary}
\newtheorem{lemma}[theorem]{Lemma}
\newtheorem{proposition}[theorem]{Proposition}
\newtheorem{corollary}[theorem]{Corollary}

\newtheorem*{theorem*}{Theorem}

\theoremstyle{definition}
\newtheorem{definition}[theorem]{Definition}
\newtheorem{remark}[theorem]{Remark}
\newtheorem{example}[theorem]{Example}

\newtheorem{construction}[theorem]{Construction}

\theoremstyle{remark}

\DeclareMathOperator{\End}{End}

\DeclareMathOperator{\id}{id}
\DeclareMathOperator*{\colim}{colim}

\DeclareMathOperator{\ev}{ev}

\DeclareMathOperator{\free}{free}

\DeclareMathOperator{\forget}{forget}
\DeclareMathOperator{\cofree}{cofree}
\DeclareMathOperator{\triv}{triv}
\DeclareMathOperator{\indec}{indec}
\DeclareMathOperator{\prim}{prim}

\DeclareMathOperator{\LMod}{LMod}
\DeclareMathOperator{\RMod}{RMod}
\DeclareMathOperator{\BMod}{BiMod}

\DeclareMathOperator{\LcoMod}{LcoMod}

\DeclareMathOperator{\Alg}{Alg}
\DeclareMathOperator{\coAlg}{coAlg}

\DeclareMathOperator{\coCAlg}{coCAlg}
\DeclareMathOperator{\Map}{Map}
\DeclareMathOperator{\Sym}{Sym}

\DeclareMathOperator{\cof}{cof}

\DeclareMathOperator{\cross}{cr}

\DeclareMathOperator{\Bahr}{Bar}
\DeclareMathOperator{\Cobar}{Cobar}
\DeclareMathOperator{\Tot}{Tot}

\DeclareMathOperator{\coOp}{coOp}

\newcommand{\hide}[1]{}

\newlist{numberenum}{enumerate}{1}
\setlist[numberenum]{\upshape(\arabic*)}

\newcommand{\Fun}{\mathrm{Fun}}
\newcommand{\CMonnu}{\mathrm{CMon}^{\mathrm{nu}}}

\newcommand{\FunL}{\mathrm{Fun}^{\mathrm{L}}}

\newcommand{\FunLM}{\Fun_{\lten}}
\newcommand{\FunRM}{\Fun_{\rten}}
\newcommand{\FunBM}{\Fun_{\bten}}
\newcommand{\iFunLM}{\underline\Fun_{\lten}}
\newcommand{\iFunRM}{\underline\Fun_{\rten}}
\newcommand{\FunLL}[1]{\FunLM}
\newcommand{\FunRR}[1]{\FunRM}
\newcommand{\FunBB}[2]{\FunBM}
\newcommand{\iFunL}[1]{\iFunLM}
\newcommand{\iFunR}[1]{\iFunRM}

\newcommand{\Fin}{\mathrm{Fin}}
\newcommand{\Surj}{\mathrm{Surj}}

\newcommand{\sA}{\mathscr{A}}
\newcommand{\sB}{\mathscr{B}}
\newcommand{\sC}{\mathscr{C}}
\newcommand{\sD}{\mathscr{D}}
\newcommand{\sE}{\mathscr{E}}

\newcommand{\sP}{\mathscr{P}}
\newcommand{\sQ}{\mathscr{Q}}
\newcommand{\sM}{\mathscr{M}}

\newcommand{\sO}{\mathscr{O}}

\newcommand{\sX}{\mathscr{X}}

\newcommand{\Sp}{\mathrm{Sp}}

\newcommand{\diff}{\mathscr{D}\mathrm{iff}}

\newcommand{\presl}{\mathscr{P}\mathrm{r}^{\mathrm{L}}}

\newcommand{\Spc}{\mathscr{S}}

\newcommand{\Cat}{\mathscr{C}\mathrm{at}}

\newcommand{\MMor}{\mathscr{M}\mathrm{or}}

\newcommand{\Lie}{\mathrm{Lie}}

\newcommand{\op}{\mathrm{op}}

\newcommand{\SymFunL}{\mathrm{SymFun}^\mathrm{L}}

\newcommand{\SSeq}{\mathrm{SSeq}}
\newcommand{\SSeqnu}{\mathrm{SSeq}^{\mathrm{nu}}}

\newcommand{\mSSeqnu}[1]{\mathrm{SSeq}^{\mathrm{nu}}(\Sp^{\times #1}, \Sp)}
\newcommand{\biSSeq}{\mathrm{BSSeq}^{\mathrm{nu}}}

\newcommand{\Tw}{\mathrm{Tw}}

\newcommand{\Ar}{\mathrm{Ar}}

\newcommand{\mlin}{P_{\vec{1}}}

\newcommand{\otimesaug}{\otimes^{\mathrm{aug}}}
\newcommand{\coAlgdpnil}{\mathrm{coAlg}^{\mathrm{dp, nil}}}
\newcommand{\otimesday}{\otimes^{\mathrm{Day}}}
\newcommand{\reAlg}{\Lambda_{\mathrm{Alg}}}
\newcommand{\reLMod}{\Lambda_{\mathrm{LMod}}}
\newcommand{\maAlg}{\mathcal{G}_{\mathrm{Alg}}}
\newcommand{\maLMod}{\mathcal{G}_{\mathrm{LMod}}}

\newcommand{\LL}{\mathbb{L}}

\newcommand{\E}{\mathbb{E}}
\newcommand{\Sph}{\mathbb{S}}
\newcommand{\unit}{\mathbf{1}}

\makeatletter
\newcommand{\oset}[3][0ex]{%
	\mathrel{\mathop{#3}\limits^{
			\vbox to#1{\kern-2\ex@
				\hbox{$\scriptstyle#2$}\vss}}}}
\makeatother

\tikzset{
	rot90/.style={anchor=south, rotate=90, inner sep=.5mm}
} 

\NewDocumentCommand\derprojlim{e{_}}{\mathchoice
{\varprojlim  \IfValueT{#1}{_{\mathclap{#1}}}{}^{\!1\!}\mathop{}}
{\varprojlim^1  \IfValueT{#1}{_{#1}}}
{\varprojlim^1  \IfValueT{#1}{_{#1}}}
{\varprojlim^1  \IfValueT{#1}{_{#1}}}}

\newcommand{\Op}{\mathrm{Op}}

\newcommand{\Com}{\mathbf{Com}}
\newcommand{\coCom}{\mathbf{Com}^{\vee}}

\newcommand{\lten}{\mathrm{LM}}
\newcommand{\rten}{\mathrm{RM}}
\newcommand{\bten}{\mathrm{BM}}

\makeatletter
 \def\subsection{\@startsection{subsection}{1}%
 \z@{.7\linespacing\@plus\linespacing}{.5\linespacing}%
 {\normalfont\bfseries\centering}}
\makeatother

\renewcommand{\epsilon}{\varepsilon}

\begin{document}

\title{A characterization of the spectral Lie operad}

\author{Max Blans}
\address{University of Oxford}
\email{max.blans@maths.ox.ac.uk}

\author{Gijs Heuts}
\address{Utrecht University}
\email{g.s.k.s.heuts@uu.nl}

\begin{abstract}
In this paper we study the structure of the $\infty$-category of spectral Lie algebras. We show that this $\infty$-category admits an interesting symmetric monoidal structure, defined by an analog of the smash product of pointed spaces, and that the free Lie algebra functor $\Sp \to \Lie(\Sp)$ is symmetric monoidal with respect to it. Moreover, this property of the free functor essentially characterizes the spectral Lie operad (among nonunital operads in spectra). This result may be thought of as Koszul dual to the more familiar fact that the free commutative algebra functor takes direct sums to tensor products. One of the key ideas is that the $\infty$-category of spectral Lie algebras behaves in many ways like the $\infty$-category of pointed spaces. More precisely, we deduce structural facts about spectral Lie algebras from familiar statements about spaces by differentiating, in the sense of Goodwillie calculus. The tool to do this is the highly structured generalization of Arone--Ching's chain rule established by Blans--Blom. Numerous other features of spectral Lie algebras follow as well, such as a version of Mather's second cube lemma, the relation between the James construction and loop-suspensions, the Hilton-Milnor splitting, and a version of the EHP sequence.
\end{abstract}

\maketitle

\tableofcontents

\section{Introduction}

In this paper we establish many structural features of \emph{spectral Lie algebras}, which form an extension of the theory of Lie algebras to the setting of higher algebra. This extension arises naturally for at least two reasons: it is `Koszul dual' to the theory of commutative ring spectra \cite{ChingThesis}, which makes it an important notion in the context of deformation theory \cite{brantnermathew,lurieDAGX}, and the theory of spectral Lie algebras bears a close resemblance to the homotopy theory of pointed spaces. This second fact is illustrated by Quillen's Lie algebra model for rational homotopy theory, which can be extended to also give Lie algebra models for the $v_n$-periodic localizations of unstable homotopy theory \cite{heutsLie}.

To begin describing our results, we recall the notion of smash product. For two objects $X$ and $Y$ of a pointed $\infty$-category with finite products and finite colimits, their \emph{smash product} is defined to be the cofiber of the natural map $X \amalg Y \to X \times Y$. This does not generally define a symmetric monoidal structure; indeed, at this level of generality the smash product may fail to be associative. To describe our first result, we write $\LL$ for the spectral Lie operad \cite{ChingThesis}, $\Lie(\Sp)$ for the $\infty$-category of $\LL$-algebras in $\Sp$, and we recall that a spectral Lie algebra $X \in \Lie(\Sp)$ in particular carries a natural \emph{Lie bracket} 
\[
[\cdot, \cdot]\colon \Sigma^{-1} X^{\otimes 2} = \LL(2) \otimes X^{\otimes 2} \to X.
\]

\begin{introtheorem}
\label{thm: mainA}
    For $X, Y \in \Lie(\Sp)$ there is a natural cofiber sequence
    \[
        \Sigma X \wedge Y \to \Sigma X \vee \Sigma Y \to \Sigma X \times \Sigma Y,
    \]
    where the first map is the Whitehead bracket (see \cref{def: whitehead-bracket}) of the inclusions of $\Sigma X$ and $\Sigma Y$ in the wedge and the second map is the canonical map from the coproduct to the product. In case $X$ and $Y$ are free Lie algebras $\free_{\LL}(A)$ and $\free_{\LL}(B)$ respectively, this cofiber sequence is equivalent to
    \[
    \free_{\LL}(\Sigma^{-1} A \otimes B) \xrightarrow{[\cdot,\cdot]} \free_{\LL}(A \oplus B) \to \free_{\LL}(A) \times \free_{\LL}(B),
    \]
    where the map denoted $[\cdot, \cdot]$ is the map of spectral Lie algebras induced by
    \[
    [\iota_{A}, \iota_{B}] \colon \Sigma^{-1} A \otimes B \to \free_{\LL}(A \oplus B),
    \]
    the Lie bracket of the inclusions of $A$ and $B$ into $\free_{\LL}(A \oplus B)$.
\end{introtheorem}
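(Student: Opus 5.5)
We will first handle the free case and then deduce the general case by resolving arbitrary Lie algebras by free ones. For free algebras, the plan is to obtain the cofiber sequence by differentiating, in the sense of Goodwillie calculus, the classical statement for pointed spaces, using the chain rule of Blans--Blom as advertised in the abstract.

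\textbf{The free case.} For pointed spaces there is the classical fact (Ganea, or Gray's theorem on the fiber of the fold map) that for suspensions the sequence
\[
\Sigma (\Omega\Sigma K \wedge \Omega\Sigma L)\to \Sigma K\vee\Sigma L\to\Sigma K\times\Sigma L
\]
is a cofiber sequence whose first map is the Whitehead product of the inclusions. Better, there is a natural statement at the level of functors: the fiber of $\Sigma K\vee \Sigma L\to \Sigma K\times\Sigma L$ is $\Sigma \Omega\Sigma K\wedge\Omega\Sigma L$. We will view both sides as functors of $(K,L)$ and take multivariable Goodwillie derivatives. The Blans--Blom chain rule identifies the derivatives of $\mathrm{id}_{\Spc_*}$ with $\LL$ (shifted appropriately) as a highly structured object. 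It also identifies the derivatives of compositions such as $\Sigma^\infty\Omega^\infty$-type functors with composite symmetric sequences. Derivatives of functors of the form $F=\Omega^\infty\circ G$ will then correspond to free $\LL$-algebras in the derivative spectra.

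Concretely, the derivatives of $(K,L)\mapsto \Sigma K\vee\Sigma L$ assemble to $\free_{\LL}(A\oplus B)$ after the shift $\Sigma^{-1}$ that appears in Ching's normalization. The derivatives of the product assemble to $\free_{\LL}(A)\times\free_{\LL}(B)$, and those of the smash term assemble to $\free_{\LL}(\Sigma^{-1}A\otimes B)$. Here one uses that the derivatives of $\Omega\Sigma$ composed with a smash produce the bilinear generator $\Sigma^{-1}A\otimes B$. Taking derivatives preserves fiber/cofiber sequences, since we are in a stable target and derivatives are exact in the functor. Hence we obtain a cofiber sequence of symmetric bisequences that is compatible with the $\LL$-(bi)module structures. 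The chain rule's module compatibility is what upgrades this to a cofiber sequence of Lie algebras of the stated form. Identifying the first map with the map induced by $[\iota_A,\iota_B]$ amounts to computing it on the bottom, bilinear derivative. There the Whitehead product differentiates to the Lie bracket $\LL(2)\otimes A\otimes B\to \free_\LL(A\oplus B)$; this is essentially the definition in \cref{def: whitehead-bracket}.

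\textbf{The general case.} We will use the following facts. The functors $\Sigma(-)\vee\Sigma(-)$, $\Sigma(-)\times\Sigma(-)$ and $\Sigma(-\wedge-)$ on $\Lie(\Sp)^{\times 2}$ all preserve geometric realizations separately in each variable. For the smash product this uses that the smash product of Lie algebras commutes with sifted colimits in each variable, which should follow from its description as a cofiber of $X\amalg Y\to X\times Y$, together with the fact that finite products commute with sifted colimits in $\Lie(\Sp)$ because the forgetful functor to $\Sp$ preserves both. Every Lie algebra is the geometric realization of its bar resolution by free algebras, and cofiber sequences are preserved by colimits. So naturality of the free-case sequence with respect to all maps of free Lie algebras suffices. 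That naturality holds only for maps of the form $\free_\LL(f)$ a priori, so we need it functorially on the full subcategory of free algebras. This is where the structured chain rule is essential.

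\textbf{Main obstacle.} We expect the main difficulty to be exactly this: making the free-case identification natural in all Lie algebra maps between free algebras, not just free maps, and matching the differentiated Whitehead map with the bracket coherently. Our first attempt will be to phrase everything as a statement about functors $\Spc_*^{\times 2}\to\Spc_*$ that are $\Omega^\infty$ of ``Lie-like'' objects. We would then invoke the Blans--Blom equivalence between suitable analytic functors and $\LL$-(bi)modules, which is functorial in natural transformations.
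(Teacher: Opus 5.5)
There is a genuine gap, and it sits in your free case, which carries the whole argument.

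\textbf{Wrong classical input.} Ganea's theorem gives a \emph{fiber} sequence $\Sigma\Omega\Sigma K\wedge\Omega\Sigma L\to\Sigma K\vee\Sigma L\to\Sigma K\times\Sigma L$, not a cofiber sequence. For $K=L=S^1$ the Ganea fiber has homology in infinitely many degrees, so the cofiber of its map to $S^2\vee S^2$ cannot be $S^2\times S^2$. The cofiber sequence you need is $\Sigma(K\wedge L)\xrightarrow{[\iota_K,\iota_L]}\Sigma K\vee\Sigma L\to\Sigma K\times\Sigma L$, expressing that the top cell of a product of suspensions is attached by the Whitehead product. As a statement about reduced finitary functors $\Spc_*\times\Spc_*\to\Spc_*$, this is \cref{lem: spaces-comm-cofiber-seq}.

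Differentiating the Ganea term cannot produce $\free_\LL(\Sigma^{-1}A\otimes B)$. The functor $\Omega\Sigma$ has nontrivial derivatives in every degree (James splitting). What your computation would actually produce is the \emph{fiber} of $\free_\LL(A\oplus B)\to\free_\LL(A)\times\free_\LL(B)$, which is a free algebra on generators built from all $A^{\otimes n}\otimes B^{\otimes m}$ with $n,m\geq 1$. Since neither $\Lie(\Sp)$ nor the categories of $\LL$-(bi)modules are stable, this really differs from the cofiber in the statement.

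\textbf{Wrong transfer mechanism.} $\partial_*$ with values in (bi)symmetric sequences preserves finite limits but not cofiber sequences. Moreover, a cofiber sequence of underlying bisymmetric sequences carrying compatible module structures is not a cofiber sequence of modules, because pushouts of left $\LL$-modules are not computed on underlying objects. The chain rule's compatibility does not ``upgrade'' this.

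\textbf{What is missing.} The key input is \cref{prop: derivatives-lmod-colim}: $\partial_*$ valued in bimodules preserves \emph{all} colimits and finite limits. It is proved by applying $\Sigma^\infty$ and using Koszul duality to pass to comodules, where colimits are computed underlying. Combined with \cref{prop: materialization-exactness}, this lets the paper simply apply the Goodwillie transform $\maAlg$ to the commutator cofiber sequence of functors on $\Spc_*$. That sequence is built from finite limits and colimits, so it is carried to the commutator sequence on $\Lie(\Sp)$. Hence the latter is a cofiber sequence for all $X,Y$ simultaneously and naturally.

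No free resolution is needed, so your ``main obstacle'' disappears. It would be harmless anyway: the commutator sequence is already natural on $\Lie(\Sp)^{\times 2}$, so it would suffice to check that the comparison map is an equivalence on free algebras.

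\textbf{The free-case identification.} This needs two inputs you gloss over.
\begin{enumerate}
\item The equivalence $\free_\LL(A)\wedge\free_\LL(B)\simeq\free_\LL(A\otimes B)$ (\cref{lem: derivatives-smash-product}). It uses that $\Sigma^\infty\circ\wedge$ is bilinear, so $\partial_*(-\wedge-)$ is the free left $\LL$-module on its $(1,1)$-part.
\item The comparison of the Whitehead and Lie brackets, which is not a matter of definitions. One shows that the Whitehead map induces an equivalence on $(1,1)$-derivatives, which is a Hilton--Milnor input. The comparison is then determined only up to the sign $\pm 1$ of natural automorphisms of $(A,B)\mapsto\Sigma A\otimes B$.
\end{enumerate}
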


\begin{remark}
An interpretation of the second cofiber sequence in perhaps more familiar terms is the following. One can intuitively think of $\free_{\LL}(\Sigma^{-1} A \otimes B)$ as the `commutator ideal' of $\free_{\LL}(A \oplus B)$ generated by brackets of classes in $A$ with classes in $B$. Quotienting by this ideal then yields the product of $\free_{\LL}(A)$ and $\free_{\LL}(B)$, in which $A$ and $B$ commute. For this reason, we will also refer to both cofiber sequences of \Cref{thm: mainA} as the \emph{commutator cofiber sequence}.
\end{remark}

One consequence of \Cref{thm: mainA} is that the smash product on $\Lie(\Sp)$ is in fact associative:

\begin{introcorollary} \label{cor: intro-smash-associative}
The smash product defines a symmetric monoidal structure on $\Lie(\Sp)$ which preserves colimits separately in each variable. Moreover, the cartesian product
\[
    \times \colon \Lie(\Sp) \times \Lie(\Sp) \to \Lie(\Sp) 
\]
preserves colimits indexed by weakly contractible categories separately in each variable.
\end{introcorollary}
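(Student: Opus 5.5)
The strategy is to use \Cref{thm: mainA} to identify the smash product with a manifestly colimit-preserving and associative operation, namely a desuspended ``suspension of a smash''. Since $\Lie(\Sp)$ is stable-like only after suspension, we pass through $\Sigma$. From the commutator cofiber sequence, $\Sigma(X \wedge Y)$ is the fiber-side term of $\Sigma X \vee \Sigma Y \to \Sigma X \times \Sigma Y$. So the first step is to express $X \wedge Y$ itself via the free case. For free algebras, \Cref{thm: mainA} gives $\free_{\LL}(A) \wedge \free_{\LL}(B) \simeq \free_{\LL}(\Sigma^{-1}A \otimes B)$, after desuspending: $\Sigma \free_\LL(C) \simeq \free_\LL(\Sigma C)$, and the first term $\Sigma X\wedge Y$ is read as $\Sigma(X\wedge Y)$ or $(\Sigma X)\wedge Y$. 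I would check which reading the definition uses and adjust. Thus on the full subcategory of free algebras the smash product is the transport of the symmetric monoidal structure $(A,B)\mapsto \Sigma^{-1}A\otimes B$ on $\Sp$, which is $\otimes$ conjugated by the equivalence $\Sigma^{-1}$ (i.e. a twisted unit $\Sigma \Sph$). This gives associativity and symmetry coherently on free objects, in the sense that $\free_\LL$ becomes symmetric monoidal there.

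The second step extends this to all of $\Lie(\Sp)$. Every Lie algebra is a geometric realization (sifted colimit) of free ones, via the bar resolution. So I would show that the smash product preserves sifted colimits, indeed all colimits, separately in each variable. Then the Day-convolution-style left Kan extension of the structure from free algebras along $\free_\LL$ to $\Lie(\Sp)=\mathcal{P}_\Sigma$-type completion produces a symmetric monoidal structure agreeing with $\wedge$. The colimit preservation comes from \Cref{thm: mainA}: $\Sigma(X\wedge Y)$ is the fiber of $\Sigma X\vee\Sigma Y\to\Sigma X\times\Sigma Y$, equivalently the cofiber term, and $\Sigma$ and $\vee$ preserve colimits. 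So the remaining question is the behaviour of the product.

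This leads to the main obstacle, which is that the product $\times$ preserves weakly contractible colimits in each variable. This is the second claim, and the two claims bootstrap each other. My plan is to fix $Y$ and consider the functor $X\mapsto X\times Y$ as a functor $\Lie(\Sp)\to\Lie(\Sp)_{/Y}$. For weakly contractible diagrams, colimits in the slice agree with colimits in $\Lie(\Sp)$. The forgetful functor $\Lie(\Sp)\to\Sp$ preserves products and sifted colimits, and it is conservative, so for sifted diagrams the claim reduces to $\Sp$, where it is clear. For pushouts $X_1\leftarrow X_0\to X_2$, I would use the cofiber sequence of \Cref{thm: mainA} to write $\Sigma X\times\Sigma Y$ as the cofiber of the Whitehead bracket map. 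Every term except the product is visibly colimit-preserving (granting that $\wedge$ preserves pushouts on free generators, checked via the formula $\Sigma^{-1}A\otimes B$). Since every Lie algebra is a suspension only after looping issues are addressed, I would instead apply the argument to resolutions by free algebras, which are of the form $\free_\LL(\Sigma C)=\Sigma\free_\LL(C)$. I would then argue by reducing pushouts of general algebras to sifted colimits of pushouts of free ones, which are suspensions.

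The whole argument must also be run so that the cofiber sequence identification on free objects is natural enough (as functors of $A,B$) to produce the coherent symmetric monoidal data. I would get this from the statement that $\free_\LL$ intertwines the two sequences, which is part of \Cref{thm: mainA}.
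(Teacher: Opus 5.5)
Your argument that $\times$ preserves weakly contractible colimits is essentially the paper's. By \cref{thm: mainA} there is a natural cofiber sequence $\free_{\LL}(\Sigma^{-1}A\otimes B)\to\free_{\LL}(A\oplus B)\to\free_{\LL}(A)\times\free_{\LL}(B)$, and its first two terms preserve weakly contractible colimits in each spectrum variable. One then passes to arbitrary algebras using free simplicial resolutions: levelwise these are $\free_\LL$ of diagrams of spectra, and $\times$ commutes with geometric realizations.

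The gap is in how you obtain the symmetric monoidal structure. \cref{thm: mainA} (or \cref{lem: derivatives-smash-product}) only gives a natural equivalence of binary functors $\free_\LL(A)\wedge\free_\LL(B)\simeq\free_\LL(A\otimes B)$, natural in maps of spectra.

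\begin{itemize}
\item It contains none of the higher coherence data: compatibility with the associators and symmetries of the smash product, which a priori is only oplax.
\item Since $\free_\LL$ is not full, it says nothing about maps between free algebras that are not of the form $\free_\LL(f)$. So ``transporting'' a structure from $\Sp$ to the full subcategory of free algebras is not meaningful.
\item A Day-convolution-style left Kan extension does not apply either. Free algebras are not compact projective in $\Lie(\Sp)$, since already $\Omega^\infty$ fails to commute with geometric realizations. Hence $\Lie(\Sp)$ is not a $\mathcal{P}_\Sigma$-completion of the free algebras.
\end{itemize}

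A coherent symmetric monoidal structure on $\free_\LL$ is a genuinely harder statement. The paper proves it separately, in \cref{prop: free-lie-sym-mon}, by a Morita-category argument, and it does not follow from \cref{thm: mainA}.

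The missing idea is that no coherence on free algebras is needed. \cref{prop: smash-monoidal-structure} applies to any pointed nonunital symmetric monoidal $\infty$-category with finite colimits whose tensor product preserves finite weakly contractible colimits in each variable and satisfies $\ast\otimes\ast\simeq\ast$. For such a category, the smash product is an honest nonunital symmetric monoidal structure, obtained by localizing Day convolution on $\Ar(\sC)$ along $\mathrm{cofib}$. Apply this to $\otimes=\times$ on $\Lie(\Sp)$: the product statement alone implies the rest. Colimit preservation of $\wedge$ is then immediate from $X\wedge Y=\mathrm{cofib}(X\vee Y\to X\times Y)$ together with $\ast\wedge Y\simeq\ast$. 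So the logic runs in one direction, product first, and nothing bootstraps.

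There are also two smaller errors.
\begin{itemize}
\item The free formula is $\free_\LL(A)\wedge\free_\LL(B)\simeq\free_\LL(A\otimes B)$, not $\free_\LL(\Sigma^{-1}A\otimes B)$. The reason is that the cofiber of the second map in a cofiber sequence is the suspension of the first term, which gives $\Sigma\free_\LL(\Sigma^{-1}A\otimes B)\simeq\free_\LL(A\otimes B)$. You cannot ``desuspend'' in $\Lie(\Sp)$, because $\Sigma$ is not invertible there ($\Omega\Sigma X\simeq\mathcal{J}(X)$).
\item $\Sigma(X\wedge Y)$ is not the fiber of $\Sigma X\vee\Sigma Y\to\Sigma X\times\Sigma Y$. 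Outside the stable setting, a cofiber sequence does not let you deduce exactness properties of the first term from those of the other two.
\end{itemize}
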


\Cref{thm: mainA} strongly suggests that the free Lie algebra functor sends tensor products of spectra to smash products of Lie algebras. This is true; better yet, \Cref{introthm: characterization} shows that this property of the free algebra functor characterizes the spectral Lie operad among nonunital operads. To state this result precisely, we first fix some notation. For a nonunital operad $\sO$ in spectra we write $B\sO$ for its bar construction, which is naturally a cooperad \cite[Section 3]{heuts2024koszulduality}. We write $\mathrm{cot}_{\sO}$ for the left adjoint of the trivial $\sO$-algebra functor; the notation references the terminology \emph{cotangent fiber} (see \cite[Section 6]{heuts2024koszulduality}), but other common names are \emph{topological Quillen homology} and \emph{derived indecomposables}. 

\begin{introtheorem} \label{introthm: characterization}
Let $\sO$ be a nonunital and reduced operad in spectra. Then the following are equivalent:
\begin{enumerate}[\upshape{(}\arabic*\upshape{)}]
    \item The operad $\sO$ is equivalent to the spectral Lie operad $\LL$.
    \item The cooperad $B\sO$ is equivalent to the nonunital cocommutative cooperad $\mathbf{Com}^{\vee}$.
    \item
    The free $\sO$-algebra functor 
    \[
    \free_{\sO} \colon \Sp \to \Alg_{\sO}(\Sp)
    \] 
    admits a nonunital symmetric monoidal structure with respect to the tensor product on $\Sp$ and the smash product on $\Alg_{\sO}(\Sp)$.
    \item The functor
    \[
        \cot_{\sO} \colon \Alg_{\sO}(\Sp) \to \Sp
    \]
    admits a nonunital symmetric monoidal structure with respect to the smash product on $\Alg_{\sO}(\Sp)$ and the tensor product on $\Sp$.
    \item The functor
    \[
    \forget \colon \coAlg^{\mathrm{dp, nil}}_{B\sO}(\Sp) \to \Sp
    \]
    admits a nonunital symmetric monoidal structure with respect to the smash product on $\coAlg^{\mathrm{dp, nil}}_{B\sO}(\Sp)$ and the tensor product on $\Sp$.
\end{enumerate}
\end{introtheorem}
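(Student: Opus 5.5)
The plan is to prove the cycle of implications $(1)\Leftrightarrow(2)$, $(2)\Rightarrow(5)\Rightarrow(4)\Rightarrow(3)\Rightarrow(2)$, using bar–cobar (Koszul) duality for nonunital reduced operads throughout.

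\emph{The equivalence $(1)\Leftrightarrow(2)$.} This is essentially the definition of $\LL$ as the Koszul dual of $\mathbf{Com}^{\vee}$ (Ching). We have $B\LL\simeq\mathbf{Com}^{\vee}$, and for reduced operads the cobar construction inverts $B$. So $B\sO\simeq\mathbf{Com}^{\vee}$ forces $\sO\simeq \Cobar B\sO\simeq\Cobar\mathbf{Com}^{\vee}\simeq\LL$.

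\emph{The implication $(2)\Rightarrow(5)$.} For $B\sO=\mathbf{Com}^{\vee}$, the forgetful functor from nilpotent divided power cocommutative coalgebras should be symmetric monoidal for the smash product. This is the coalgebra analogue of the statement that the underlying space of a smash product of pointed spaces has reduced chains $\tilde C(X\wedge Y)\simeq\tilde C(X)\otimes\tilde C(Y)$. Concretely, the forgetful functor preserves finite products (the product of coalgebras $C\times D$ has underlying object $C\oplus D\oplus C\otimes D$), coproducts (underlying $C\oplus D$), and cofibers. Hence the underlying object of $C\wedge D$ is $C\otimes D$. To upgrade this to a symmetric monoidal structure, I would realize $\coAlg^{\mathrm{dp,nil}}_{\mathbf{Com}^{\vee}}$ via the Blans–Blom chain rule as the derivative of $\Spc_*$. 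Then the smash product of pointed spaces differentiates to a symmetric monoidal structure whose underlying functor is the forgetful one, with tensor product on $\Sp$.

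\emph{The implication $(5)\Rightarrow(4)$.} Koszul duality gives the factorization $\cot_{\sO}=\forget\circ\maAlg$, where $\maAlg\colon\Alg_{\sO}(\Sp)\to\coAlg^{\mathrm{dp,nil}}_{B\sO}(\Sp)$ is the left adjoint comparison. This functor preserves colimits. It also preserves finite products, since $\cot$ of a product is computed by the finite-product-preserving bar construction: $\cot(X\times Y)\simeq\cot X\oplus\cot Y\oplus(\text{cross term})$ matching the coalgebra product. Therefore $\maAlg$ preserves smash products and is nonunitally symmetric monoidal. Composing with $(5)$ gives $(4)$.

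\emph{The implication $(4)\Rightarrow(3)$.} Use $\cot_{\sO}\circ\free_{\sO}\simeq\id$. A smash product of free algebras should be free: I would show $\free A\wedge\free B\simeq\free(A\otimes B)$ by checking on $\cot$ and using that $\free$ is left adjoint. More canonically, transport the lax structure through the adjunction $\free\dashv\forget$ using the mate construction. This direction looks weak, so instead I would organize the argument as $(3)\Rightarrow(2)$ and $(4)\Rightarrow(2)$ directly, and get $(3)$ from $(1)$ via \Cref{thm: mainA}, which identifies $\free(A)\wedge\free(B)$ with $\free(\Sigma^{-1}$-shifted$)$ data.

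\emph{The key step, $(3)$ or $(4)\Rightarrow(2)$.} Here I expect the main obstacle. From a nonunital symmetric monoidal structure on $\cot_{\sO}$, for trivial algebras $\triv(V)$ one computes $\cot(\triv V)\simeq\bigoplus_n (B\sO(n)\otimes V^{\otimes n})_{h\Sigma_n}$, i.e.\ the cofree $B\sO$-coalgebra. The symmetric monoidal constraint then yields natural equivalences of symmetric sequences relating $B\sO$ to the multiplicativity of $\cot$ on $\triv V\wedge\triv W$. Taking multilinear parts (via Goodwillie derivatives in $V,W$) produces a cocommutative, counital-in-degree-one comultiplication $B\sO(n)\to \mathrm{Ind}(B\sO(p)\otimes B\sO(q))$. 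From this I would derive that $B\sO(n)\simeq\Sph$ with trivial $\Sigma_n$-action, inductively in $n$, starting from $B\sO(1)\simeq\Sph$. The remaining difficulty is to show this is compatible with the cooperad structure, identifying $B\sO$ with $\mathbf{Com}^{\vee}$ rather than just levelwise. I would try to handle this using the rigidity of the symmetric monoidal structure: the cooperad $B\sO$ is recovered from the monoidal functor $\cot\circ\free$ on multilinear parts, and $\mathbf{Com}^{\vee}$ is the terminal (unit) object there.
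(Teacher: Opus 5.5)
\textbf{There is a genuine gap, and it is at the step you flag as the main obstacle.} Your treatment of $(1)\Leftrightarrow(2)$ is fine. In the key step, evaluating the monoidal constraint of $\cot_\sO$ on trivial algebras (using $\triv V\times\triv W\simeq\triv(V\oplus W)$ and that $\cot_\sO$ preserves colimits) gives equivalences $\mathrm{Res}^{\Sigma_{k+l}}_{\Sigma_k\times\Sigma_l}B\sO(k+l)\simeq B\sO(k)\otimes B\sO(l)$, and hence $B\sO(n)\simeq\Sph$ as underlying spectra. Such identifications do not produce a homotopy-coherent $\Sigma_n$-equivariant equivalence with the trivial action. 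They also say nothing about the decomposition maps of $B\sO$.

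The proposed ``rigidity'' has no content as written, since $\cot_\sO\circ\free_\sO\simeq\id$ carries no information about $B\sO$. What is missing is an actual map of cooperads $B\sO\to\coCom$. Once you have one, your restriction equivalences show it is an equivalence, because $\mathrm{Res}^{\Sigma_n}_{\Sigma_{n-1}}$ is conservative.

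The paper produces this map by first passing from $\Sp$ to symmetric sequences.
\begin{itemize}
\item The property ``$\Sym_{B\sO}$ is a lax symmetric monoidal comonad for $\otimesaug$ with $\Sym_{B\sO}(X)\otimesaug\Sym_{B\sO}(Y)\simeq\Sym_{B\sO}(X\times Y)$'' is transported by the Goodwillie transform $\maLMod$.
\item Hence $\forget\colon\LcoMod_{B\sO}\to\SSeqnu(\Sp)$ sends products to $\otimesaug$, so every left $B\sO$-comodule is canonically a nonunital cocommutative coalgebra for Day convolution.
\item This gives a comonad map $\Sym_{B\sO}\to\Sym_{\coCom}$, because in $\SSeqnu(\Sp)$, unlike in $\Sp$, the cofree nonunital cocommutative coalgebra comonad is exactly $\Sym_{\coCom}$ (\cref{prop: cofree-is-sym}).
\end{itemize}
If you stay in $\Sp$, as you do, you only get a map to the cofree coalgebra comonad of $\Sp$. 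That comonad is not $\Sym_{\coCom}$, so this does not directly yield a cooperad map.

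\textbf{The other links in your cycle also do not hold up.}
\begin{itemize}
\item For $(5)\Rightarrow(4)$ you need $\indec_\sO$ to preserve finite products, and the reason you give is not valid: $B(\unit,\sO,-)$ does not preserve finite products. The claim that the cross term of $\cot_\sO(X\times Y)$ is $\cot_\sO X\otimes\cot_\sO Y$ is, for $X=\free_\sO(A)$ and $Y=\free_\sO(B)$, exactly the commutator cofiber sequence of \cref{thm: mainA}, which is the $\LL$-specific content. So (5) is never tied back to the other conditions; the paper does this as $(5)\Rightarrow(2)$, by the argument above.
\item For $(2)\Rightarrow(5)$, $\coAlgdpnil_{\coCom}(\Sp)$ is not the Goodwillie transform of $\Spc_*$. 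That transform is $\Lie(\Sp)$, and $\indec/\prim$ is not an equivalence there. A workable route is to differentiate $\Sigma^\infty$, which makes $\cot_\LL$ symmetric monoidal. Then apply \cref{prop: lax-sm-comonad-over-sp-aug} to $\cot_\LL\dashv\triv_\LL$, whose comonad is $\Sym_{\coCom}$.
\item \cref{thm: mainA} and \cref{lem: derivatives-smash-product} only give a natural binary equivalence $\free_\LL(A)\wedge\free_\LL(B)\simeq\free_\LL(A\otimes B)$. That is not the coherent structure required in (3).
\item You give no argument out of (3). The paper obtains $(1)\Rightarrow(3)\Rightarrow(4)$ from the Morita-category lifting lemmas (\cref{lem: postcomposition-cot-on-Mor-monoidal,lem: precomposition-free-on-Mor-mon}), which produce the entire symmetric monoidal structure at once.
\end{itemize}
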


\begin{remark}
When formulating item (3), we cannot assume that the smash product on $\Alg_{\sO}(\Sp)$ is associative. For a general operad $\sO$ it only defines an \emph{oplax} nonunital symmetric monoidal structure on this $\infty$-category. However, this is enough to define what it means for a functor from a symmetric monoidal $\infty$-category to $\Alg_{\sO}(\Sp)$ to admit a nonunital symmetric monoidal structure, so that (3) is well-posed. Moreover, we will see that if the conclusion of (3) is satisfied, then in fact the smash product on $\Alg_{\sO}(\Sp)$ is associative and does define an honest nonunital symmetric monoidal structure.
See \cref{sec: smash} for details on this technical point.
\end{remark}

\begin{remark}
Item (4) can equivalently be formulated in terms of the cartesian product as follows:
\begin{itemize}
\item[(4*)] The functor
\[
(\mathrm{cot}_{\sO})_+\colon \Alg_{\sO}(\Sp) \to \Sp \colon X \mapsto \Sph \oplus \mathrm{cot}_{\sO}(X)
\]
admits a symmetric monoidal structure with respect to the cartesian product on $\Alg_{\sO}(\Sp)$ and the tensor product on $\Sp$.
\end{itemize}
A useful analogy is to compare $\mathrm{cot}_{\sO}$ and $(\mathrm{cot}_{\sO})_+$ with the functors $\Sigma^\infty$ and $\Sigma^\infty_+$ on the $\infty$-category of pointed spaces.
\end{remark}

Our main technique for proving \Cref{thm: mainA} and \Cref{introthm: characterization} is the chain rule in Goodwillie calculus \cite{AroneChingChainRule,blansblom2025chainrulegoodwilliecalculus}; we review what we need about it in \cref{sec: Calculus}. In particular, this chain rule yields an operad structure on the derivatives of the identity functor $\partial_*{\id_{\Spc_*}}$ on the $\infty$-category $\Spc_*$ of pointed spaces. This operad is Koszul dual to the nonunital commutative operad  and therefore (by definition) equivalent to the spectral Lie operad $\LL$ (see \cref{prop: derivatives-id-Lie}). Moreover, the derivatives $\partial_* F$ of any functor $F\colon \Spc_* \to \Spc_*$ will form a bimodule over the operad $\LL$. This allows us to deduce results about $\LL$-bimodules by differentiating familiar functors on pointed spaces, for example the ones in the analog of the cofiber sequence of \Cref{thm: mainA} in the $\infty$-category $\Spc_*$.

The use of Goodwillie calculus to deduce facts about spectral Lie algebras from statements about pointed spaces is a very productive technique. To illustrate it further, we prove the three theorems below. We stress that this collection is not exhaustive; many classical theorems from unstable homotopy theory admit Lie algebra analogs using similar arguments.

Write $\mathbf{\Delta}^{\mathrm{inj}}$ for the subcategory of the usual simplex category $\mathbf{\Delta}$ containing only injective morphisms. Then a spectral Lie algebra $X$ defines a diagram
\[
\mathcal{J}_\bullet(X) \colon \mathbf{\Delta}^{\mathrm{inj}} \to \Lie(\Sp)\colon {[n]} \mapsto X^{n+1},
\]
which sends an injective map $\alpha\colon {[m]} \to {[n]}$ to the evident inclusion $X^{m+1} \to X^{n+1}$ inserting 0 in any coordinate outside the image of $\alpha$ (where we interpret $X^{n+1}$ as the product indexed by the set ${[n]} = \{0, \ldots, n\}$, and similarly for $m$). We refer to the colimit $\mathcal{J}(X) := \mathrm{colim} \,\mathcal{J}_\bullet(X)$ as the \emph{James construction} of $X$. We will see that $\mathcal{J}(X)$ naturally admits the structure of an $\mathbb{E}_1$-monoid and is in fact the free $\mathbb{E}_1$-monoid generated by $X$. (All of this is completely parallel to the usual James construction of pointed spaces.)

\begin{introtheorem} \label{introthm:James}
For any $X \in \Lie(\Sp)$ there is a natural equivalence of spectral Lie algebras $\mathcal{J}(X) \xrightarrow{\simeq} \Omega\Sigma X$. Moreover, the James construction splits upon suspending once:
\[
\Sigma \mathcal{J}(X) \simeq \coprod_{n \geq 1} \Sigma X^{\wedge n}.
\]
Finally, any $\mathbb{E}_1$-monoid in $\Lie(\Sp)$ is grouplike.
\end{introtheorem}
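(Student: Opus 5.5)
We plan to deduce all three assertions of \Cref{introthm:James} from the corresponding classical facts about pointed spaces via the chain rule, reducing first to free Lie algebras. All functors involved preserve sifted colimits: $\mathcal{J}$ is a colimit of finite products, and finite products commute with sifted colimits in $\Lie(\Sp)$, whose forgetful functor to $\Sp$ preserves them. Likewise $\Omega\Sigma$ preserves sifted colimits: $\Sigma$ does, and since $\Lie(\Sp)$ is stable-like for these purposes (the forgetful functor is exact and conservative), $\Omega$ commutes with colimits after forgetting to $\Sp$. Every spectral Lie algebra is a geometric realization of free ones, so it suffices to produce a natural comparison map and check it on $X = \free_{\LL}(A)$.

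\textbf{Step 1: the comparison map.} The map $\mathcal{J}(X) \to \Omega\Sigma X$ comes from the universal property of $\mathcal{J}(X)$ as the free $\mathbb{E}_1$-monoid on $X$, applied to the $\mathbb{E}_1$-monoid $\Omega\Sigma X$. We realize that monoid structure as a Segal-type construction using the product. For this we need that $\mathcal{J}(X)$ really is the free $\mathbb{E}_1$-monoid, which we establish as for spaces: the multiplication $\mathcal{J}(X)\times\mathcal{J}(X)\to\mathcal{J}(X)$ is induced by concatenation $X^{m+1}\times X^{n+1}\to X^{m+n+2}$. This uses \Cref{cor: intro-smash-associative}, namely that $\times$ preserves weakly contractible colimits in each variable, and $\mathbf{\Delta}^{\mathrm{inj}}$-indexed colimits qualify.

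\textbf{Step 2: free case via derivatives.} For $X=\free_{\LL}(A)$ we identify both sides as free $\LL$-algebras on explicit spectra, or at least as $\LL$-modules. Products of free algebras are described by \Cref{thm: mainA} iterated. Applying $\cot_{\LL}$, which turns smash products into tensor products by \Cref{introthm: characterization}(4), we compare $\cot$ of both sides:
\[
\cot(\Omega\Sigma X)\simeq \bigoplus_{n\ge1}\Sigma^{n-1}A^{\otimes n}.
\]
Here we argue by differentiating the classical James equivalence $J(Y)\simeq \Omega\Sigma Y$ of functors $\Spc_*\to\Spc_*$: the derivatives are $\LL$-bimodules, and the chain rule identifies $\partial_*(\Omega\Sigma)$ and $\partial_*(J)$ compatibly with the Lie structures. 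Evaluating the resulting right $\LL$-module equivalence on $A$ gives the free-algebra case. Since $\cot$ is conservative on connective (or suitably nilpotent) objects, we would check the free case at the level of Goodwillie towers/derivatives directly rather than through $\cot$.

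\textbf{Step 3: splitting and grouplike.} The splitting $\Sigma\mathcal{J}(X)\simeq\coprod_{n\ge1}\Sigma X^{\wedge n}$ follows by the classical filtration argument. The filtration of $\mathcal{J}$ by $\mathbf{\Delta}^{\mathrm{inj}}_{\le n}$ has associated graded $X^{\wedge n}$, because the latching object is the fat wedge, and its cofiber is the smash, using \Cref{thm: mainA} and associativity. After one suspension, \Cref{thm: mainA} gives $\Sigma(X\times Y)\simeq\Sigma X\vee\Sigma Y\vee\Sigma X\wedge Y$, which splits each stage. For grouplikeness, the shear map $M\times M\to M\times M$ is an equivalence. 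We check this after applying the conservative forgetful functor, where products are direct sums and the shear map is triangular with identity diagonal, hence invertible.

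The main obstacle is Step 2: making the chain-rule transfer honest, i.e.\ showing that the James equivalence for spaces differentiates to an equivalence of $\LL$-bimodules whose evaluation on $A$ is precisely $\mathcal{J}(\free A)\to\Omega\Sigma\free A$. We would first try to exhibit both functors $\Spc_*\to\Spc_*$ as built from products and colimits, and use that $\partial_*$ takes these to the corresponding $\LL$-module operations.
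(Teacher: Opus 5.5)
\textbf{There is a genuine gap in Step~3.} \cref{thm: Lie-commutator-cofiber-seq} is a statement about $\Sigma X\times\Sigma Y$, not about $\Sigma(X\times Y)$. So it does not by itself give $\Sigma(X\times Y)\simeq\Sigma X\vee\Sigma Y\vee\Sigma(X\wedge Y)$ for arbitrary $X,Y$.

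More seriously, suppose you grant such splittings of every $\Sigma X^{n+1}$. Write $J_n$ for the colimit over $\mathbf{\Delta}^{\mathrm{inj}}_{\leq n}$ and $L_n$ for the latching (fat wedge) object. To split the pushout $\Sigma J_n\simeq\Sigma J_{n-1}\sqcup_{\Sigma L_n}\Sigma X^{n+1}$, you need the splitting to exhibit $\Sigma L_n\to\Sigma X^{n+1}$ as the inclusion of a wedge summand with complement $\Sigma X^{\wedge(n+1)}$. This must hold coherently over $\mathbf{\Delta}^{\mathrm{inj}}$.

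For spaces this is supplied by James--Hopf maps, or by a homology computation plus Whitehead's theorem. In $\Lie(\Sp)$ you have neither: the conservative forgetful functor to $\Sp$ preserves neither coproducts nor smash products. And in an unstable setting, a cofiber sequence whose first map admits a retraction need not split (think of $S^1\to S^1\times S^1\to S^2$).

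The paper does not argue internally here at all. The classical natural splitting $\Sigma J(Y)\simeq\bigvee_{n}\Sigma Y^{\wedge n}$ on connected pointed spaces involves only functors built from colimits and finite limits. Applying $\maAlg$, which preserves colimits and finite limits on mapping categories (\cref{cor:exactnessGoodwillietransform}), transports it directly to $\Lie(\Sp)$.

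\textbf{Step~2 has the right idea.} The obstacle you name is resolved exactly by \cref{cor:exactnessGoodwillietransform}. Its essential input is that $\partial_*$ preserves all colimits once it takes values in bimodules (\cref{prop: derivatives-lmod-colim}). The chain rule alone does not identify $\maAlg(J)$ and $\maAlg(\Omega\Sigma)$ with the Lie-algebraic $\mathcal{J}$ and $\Omega\Sigma$.

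Two corrections are needed:
\begin{enumerate}
\item The map $J(Y)\to\Omega\Sigma Y$ is an equivalence only for connected $Y$, not on all of $\Spc_*$. You must therefore use that a transformation which is an equivalence on connected spaces induces an equivalence on derivatives.
\item Once you have this, $\maAlg(J)\to\maAlg(\Omega\Sigma)$ is an equivalence on all of $\Lie(\Sp)$. So neither the reduction to free algebras nor Step~1 is needed.
\end{enumerate}

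Conversely, Step~1 claims that $\mathcal{J}(X)$ is the free $\E_1$-monoid. This is a substantial theorem, which the paper cites from Blans--Linskens, not something to establish ``as for spaces''. If you had it, you would not need derivatives for the first claim. By \cref{prop:freeE1AlgO}, $\Omega\colon\Lie(\Sp)\to\mathrm{Mon}(\Lie(\Sp))$ is an equivalence, so $\free_{\E_1}\simeq\Omega\Sigma$ directly.

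Your cotangent-fiber formula is also off. It should read $\cot_{\LL}(\Omega\Sigma\free_{\LL}(A))\simeq\bigoplus_{n\geq 1}A^{\otimes n}$, with no shifts.

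Your grouplike argument is correct, and more direct than the paper's route through \cref{prop:freeE1AlgO}. It uses the shear map and the conservative, product-preserving forgetful functor, under which the shear map becomes triangular with identity diagonal.
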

\begin{remark}
Contrary to the case of pointed spaces, this theorem does not need any assumption on the connectivity of $X$.
\end{remark}

For a spectral Lie algebra $X$, we will write $E\colon X \to \Omega\Sigma X$ for the unit of the loop-suspension adjunction. In \Cref{subsec:EHP} we construct a natural \emph{Hopf map} $H\colon \Omega\Sigma X \to \Omega\Sigma X^{\wedge 2}$. Then we establish the following version of James' EHP sequence:

\begin{introtheorem} \label{introthm:EHP}
There is a natural nullhomotopy of the composite
\[
X \xrightarrow{E} \Omega\Sigma X \xrightarrow{H} \Omega\Sigma X^{\wedge 2}.
\]
Moreover, when evaluating on a free Lie algebra $X = \free_{\LL}(\Sph^k)$ generated by a shifted sphere, this sequence becomes a fiber sequence upon localization at 2.
\end{introtheorem}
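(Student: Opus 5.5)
We prove \Cref{introthm:EHP} by differentiating James' classical EHP sequence for pointed spaces, using the chain rule of \cite{AroneChingChainRule,blansblom2025chainrulegoodwilliecalculus} to convert a statement about functors $\Spc_* \to \Spc_*$ into one about $\LL$-bimodules, and hence about functors on $\Lie(\Sp)$. A spectral Lie algebra $X$ determines its own endofunctors $X \mapsto \Omega\Sigma X$ and $X \mapsto \Omega\Sigma X^{\wedge 2}$. The functor $X\mapsto \Omega\Sigma X$ is analytic in the sense of the chain rule, with $\LL$-bimodule of ``derivatives'' corresponding to $\partial_*(\Omega\Sigma)$ of pointed spaces. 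The same holds for $X\mapsto \Omega\Sigma X^{\wedge 2}$, which corresponds to $\partial_*(\Omega\Sigma(-)^{\wedge 2})$; this uses that the smash product on $\Lie(\Sp)$ corresponds to the smash product on $\Spc_*$, via \Cref{introthm: characterization} and \Cref{cor: intro-smash-associative}.

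\textbf{Step 1: identify $E$ and $H$ with derivatives of the classical maps.} By \Cref{introthm:James}, $\Omega\Sigma X \simeq \mathcal{J}(X)$, and $\Sigma\mathcal{J}(X)$ splits as $\coprod_n \Sigma X^{\wedge n}$. Define the Hopf map $H$ as the adjoint of the projection $\Sigma\mathcal{J}(X) \to \Sigma X^{\wedge 2}$ onto the $n=2$ summand. This mirrors the James--Hopf map for spaces exactly. Since the James filtration and the splitting of $\Sigma J$ are natural constructions built from products, coproducts and smash products, they are sent by the chain-rule correspondence to the analogous Lie-algebra constructions. So I would check that $E$ and $H$ are the Lie-algebra maps induced by the bimodule maps $\partial_*(E)$ and $\partial_*(H)$ of the classical maps.

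\textbf{Step 2: the nullhomotopy.} In spaces, the composite $HE$ is canonically nullhomotopic, naturally in $X$: $E$ lands in the first James filtration stage $J_1X = X$, and the James--Hopf map restricted to $J_1$ is the constant map. The same holds in $\Lie(\Sp)$ directly. The map $E$ factors as $X = \mathcal{J}_0 \to \mathcal{J}(X)$, and the $n=2$ summand of the splitting restricts to zero on the image of $\Sigma X$, since the splitting is compatible with the filtration and the first filtration quotient is $\Sigma X$. Taking adjoints gives the natural nullhomotopy. This step is formal once the splitting is known to be filtration-compatible.

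\textbf{Step 3: fiber sequence on free algebras at $2$.} For $X=\free_{\LL}(\Sph^k)$, all three terms are of the form $\free_{\LL}$ of something, up to the loop/suspension functors, which commute with the relevant structure. It suffices to show that the map from $X$ to the fiber of $H$ is an equivalence after $2$-localization. Taking underlying spectra, this reduces to comparing the Goodwillie derivatives of $S^k \mapsto$ the three terms of the classical EHP sequence, evaluated on sphere spectra.

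Classically, the EHP sequence $S^n \to \Omega S^{n+1} \to \Omega S^{2n+1}$ is a $2$-local fiber sequence for all spheres; for odd $n$ it is even integral. The statement needed here is at the level of derivatives, i.e.\ of the associated Lie-algebra free objects, rather than of the spaces themselves. I would use the known computation: the derivatives of the identity evaluated at spheres give the Goodwillie tower, and the EHP sequence is compatible with the tower $2$-locally. This is the Mahowald/Behrens EHP theory for the Goodwillie tower, where $\partial_*$ of the EHP fiber sequence is a cofiber sequence of bimodules after $2$-localization.

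\textbf{Main obstacle.} The hard step is Step 3, in two respects. First, passing from a fiber sequence of spaces to a statement about derivatives requires the functors to be suitably analytic with convergent towers. Second, the $2$-local EHP fiber sequence is not a natural statement in $X$, only valid on spheres. I would therefore try to work directly in $\Lie(\Sp)$. By the Hilton--Milnor and James splittings, the underlying spectrum of $\Omega\Sigma\free_{\LL}(A)$ is computed as $\free_{\LL}$ of a sum of tensor powers. The fiber of $H$ can then be computed via the homology of the free Lie algebra on $\Sph^k$: use the mod-$2$ Dyer--Lashof-type operations of Behrens and Kjaer to compare ranks, checking that the map is an isomorphism on mod-$2$ homology, as in the classical Serre spectral sequence argument for EHP. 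Boundedness of connectivity then promotes this to a $2$-local equivalence.
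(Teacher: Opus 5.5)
Your overall route is the paper's: transport the classical null sequence along the Goodwillie transform $\maAlg$, then reduce the fiber-sequence claim to a statement about derivatives evaluated at $\Sph^k$. For Steps 1--2, the paper simply \emph{defines} $E$, $H$ and the nullhomotopy as $\maAlg$ of the classical natural null sequence. This is also the safest choice: whether the sequence is a fiber sequence depends on the chosen nullhomotopy, so an intrinsically built one would have to be matched with the transported one.

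The real problem is Step~3, where you misstate the key input. It is not true that $\partial_*$ of the EHP sequence is a cofiber sequence of $\LL$-bimodules after $2$-localization. The derivatives of $X\mapsto \Omega\Sigma X^{\wedge 2}$ live only in even arities. In arity $3$, however, $\partial_3\id_{\Spc_*}=\LL(3)$ is a wedge of two copies of $\Sph^{-2}$, while $\partial_3(\Omega\Sigma)\simeq \Sph^{-1}\otimes\LL(3)\otimes(\Sph^{1})^{\otimes 3}$ is a wedge of two copies of $\Sph^{0}$. So the arity-$3$ part already fails, even rationally. The EHP phenomenon only appears after evaluating on a sphere and taking $\Sigma_n$-orbits.

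That evaluated statement is exactly what is needed, and it removes both obstacles you list. The functor $\maAlg(F)$ sends $\free_{\LL}(\Sph^k)$ to $\partial_*F\circ_{\LL}\LL\circ\Sph^k\simeq \partial_*F\circ\Sph^k$, and limits in $\Lie(\Sp)$ are computed on underlying spectra. So the claim is literally that
\[
\partial_*(\id_{\Spc_*})\circ\Sph^k \xrightarrow{E} \partial_*(\Omega\Sigma)\circ\Sph^k \xrightarrow{H} \partial_*(\Omega\Sigma(-)^{\wedge 2})\circ\Sph^k
\]
is a $2$-local fiber sequence. Equivalently, the spectra underlying the Goodwillie layers $D_n(S^k)\to \Omega D_n(S^{k+1})\to \Omega D_{n/2}(S^{2k+1})$ form $2$-local fiber sequences, with the last term zero for odd $n$. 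This is \cite[Lemma 2.1.2]{behrensEHP} and \cite[Propositions 4.6, 4.7]{aronemahowald}, and citing it is all the paper's proof does.

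No convergence of Goodwillie towers enters: you never pass from the space-level EHP fibration to derivatives, since the cited results are already statements about layers. The failure of naturality in $X$ is also irrelevant, because the claim is only made at $\free_{\LL}(\Sph^k)$. Your fallback via Hilton--Milnor and mod~$2$ homology of free Lie algebras would at best reprove that layer computation weight by weight. As sketched it is not an argument, and it should be replaced by the citation.
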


We prove these results, as well as an analog of the Hilton--Milnor splitting, in \cref{sec: homotopy-theory-of-lie-algebras}. Our final result concerns Mather's second cube lemma: this classical result states that homotopy pushouts of spaces are `universal', in the sense that they are preserved by pullback along any map. This statement also holds in the $\infty$-category of spectral Lie algebras:

\begin{introtheorem} \label{introthm:Mather}
Weakly contractible colimits are universal in the $\infty$-category of spectral Lie algebras. More precisely, let $X \in \Lie(\Sp)$, let $\mathcal{I}$ be a weakly contractible small $\infty$-category, and let $g\colon \mathcal{I} \to \Lie(\Sp)_{/X}$ be a diagram. Then the colimit of $g$ is stable under base change, i.e., for any $f\colon Y \to X$ the natural comparison map
\[
\mathrm{colim}_{\mathcal{I}} (Y \times_X g(-)) \to Y \times_X \mathrm{colim}_{\mathcal{I}} g(-)
\]
is an equivalence.
\end{introtheorem}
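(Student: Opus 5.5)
We reduce universality of weakly contractible colimits to the statement that, for fixed $f\colon Y \to X$, the base change functor $f^*\colon \Lie(\Sp)_{/X} \to \Lie(\Sp)_{/Y}$ preserves weakly contractible colimits. The main input is \Cref{cor: intro-smash-associative}: the cartesian product on $\Lie(\Sp)$ preserves weakly contractible colimits separately in each variable.

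\textbf{Step 1: colimits in the slice.} Colimits of a weakly contractible diagram in $\Lie(\Sp)_{/X}$ are computed in $\Lie(\Sp)$. This holds for any slice category, and it lets us work with colimits in $\Lie(\Sp)$ throughout.

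\textbf{Step 2: write the pullback as a totalization.} Form the cobar (Čech) cosimplicial object
\[
Y \times_X Z \simeq \lim_{[n]\in\mathbf\Delta} Y \times X^{n} \times Z.
\]
We would rather avoid commuting colimits with this limit in general. Instead, use that $\Lie(\Sp)$ is pointed and that the forgetful functor $\Lie(\Sp)\to\Sp$ is conservative and preserves limits and sifted colimits. This reduces the problem to checking that the comparison map is an equivalence on underlying spectra.

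Underlying, $Y\times_X Z$ is the fiber of $Y\oplus Z \to X$ (the difference map) in $\Sp$. So the question becomes whether $\colim_{\mathcal I}$ of the underlying spectrum of $g(i)$ computes the underlying spectrum of $\colim_{\mathcal I} g$. This is not true for non-sifted $\mathcal I$, and this is the main obstacle.

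\textbf{Step 3: reduce to sifted colimits and pushouts.} A weakly contractible colimit can be built from sifted colimits together with pushouts. Over the fixed base $X$, coproducts in the slice are pushouts over $X$, which are weakly contractible. Concretely, every weakly contractible colimit is an iterated combination of filtered colimits, geometric realizations and pushouts. Sifted colimits commute with finite limits in $\Lie(\Sp)$, since they do so in $\Sp$ and are detected there. It therefore suffices to show that pullback along $f$ preserves pushouts $B\amalg_A C$ of objects over $X$.

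\textbf{Step 4: pushouts via the product.} For pushouts, write the pullback as an equalizer:
\[
Y\times_X Z \simeq \mathrm{eq}(Y\times Z \rightrightarrows X).
\]
Since $\Lie(\Sp)$ is pointed, this is the fiber of a map $Y\times Z \to X$ obtained by shearing with $X\times X\to X$. That step needs care, because there is no group structure; the James theorem (\Cref{introthm:James}) gives that $\Omega$ of Lie algebras is grouplike, which may help.

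More robustly, argue via Goodwillie calculus, as in the rest of the paper. Mather's cube lemma holds in $\Spc_*$. Apply the chain rule to the functors
\[
(B,A,C)\mapsto Y\times_X(B\amalg_A C)
\quad\text{and}\quad
(B,A,C)\mapsto (Y\times_X B)\amalg_{Y\times_X A}(Y\times_X C)
\]
restricted to free Lie algebras. This yields an equivalence of their derivatives as $\LL$-bimodules. We then extend from free algebras to all algebras by resolving with bar constructions, which are sifted colimits and so are handled by Step 3.

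The hardest point is making the product-preserves-pushouts input from \Cref{cor: intro-smash-associative} yield the fibered version. I would try first to express $Y\times_X-$ as the cartesian product $Y\times-$ followed by a fiber over the diagonal. The first part preserves pushouts by the corollary. The second part preserves pushouts because fibers in a stable-enough setting commute with pushouts; this is checked on underlying spectra after sifted resolution.
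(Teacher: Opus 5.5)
Your reduction in Steps 1--3 to the case of pushouts is essentially correct, but the justification needs fixing. Coproducts in $\Lie(\Sp)_{/X}$ are coproducts in $\Lie(\Sp)$, not pushouts over $X$, and base change does not preserve them. The right fact is this: a functor $G\colon\sC\to\sD$ between cocomplete $\infty$-categories that preserves pushouts, filtered colimits and geometric realizations preserves all weakly contractible colimits. Indeed, the induced functor $\sC\to\sD_{G(\emptyset)/}$ preserves the initial object, finite coproducts (pushouts over $\emptyset$), filtered colimits and geometric realizations, hence all colimits, and weakly contractible colimits in $\sD_{G(\emptyset)/}$ are computed in $\sD$. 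Since sifted colimits commute with pullbacks in $\Lie(\Sp)$, it does suffice to treat pushouts.

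The genuine gap is Step 4, which is the whole content of the theorem, and none of your arguments for it works.
\begin{itemize}
\item The shearing idea needs a group structure on $X$, which an arbitrary spectral Lie algebra does not have (\Cref{introthm:James} only says monoids are grouplike). So $Y\times_X Z$ is not a fiber of a map $Y\times Z\to X$.
\item The final paragraph is circular: $Y\times_X Z\simeq(Y\times Z)\times_{X\times X}X$ trades base change along $f$ for base change along the diagonal, which is again an instance of the claim.
\item Nothing about pushouts can be checked on underlying spectra, with or without a sifted resolution. The forgetful functor $\Lie(\Sp)\to\Sp$ does not preserve them: $\free_\LL(V)\amalg\free_\LL(W)\simeq\free_\LL(V\oplus W)$ is not $\free_\LL(V)\oplus\free_\LL(W)$.
\end{itemize}

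The Goodwillie-calculus sketch points the right way but omits its only nontrivial step. The functors $(B,A,C)\mapsto Y\times_X(B\amalg_A C)$ and its companion do not live on $\Lie(\Sp)^{\times 3}$. They live on a category of diagrams (a span over $X$ plus $Y\to X$), so their derivatives are bimodules over $\partial_*\mathrm{id}$ of that diagram category, not over $\LL$. Also, `restricted to free Lie algebras' is not well defined when the structure maps are arbitrary. What you need is that the Goodwillie transform sends the category of such diagrams in $\Spc_*$, with its comparison map, to the category of such diagrams in $\Lie(\Sp)$, with its comparison map. The paper supplies this as follows. It encodes universality in $\Spc_*$ as invertibility of the limit--colimit interchange map $\alpha$ on the pullback category $\sE$ of cospans with a constant leg. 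It then applies $\maAlg$, using \cref{prop: malin-preserves-cotensors} (so $\Fun(K,-)$, restrictions, $\colim_{\mathcal I}$, $\lim_K$ and Beck--Chevalley maps are preserved) and \cref{prop: malin-preserves-certain-pullbacks} (so $\sE$ goes to the corresponding category of Lie diagrams).

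Alternatively, you can close the gap within your own framework. Restrict to diagrams free on their objects: $A=L_a$, $B=L_a\vee L_b$, $C=L_a\vee L_c$, $Y=L_y$, $X=L_a\vee L_b\vee L_c\vee L_x\vee L_y$, with summand inclusions. Both sides of the comparison are then functors $\Lie(\Sp)^{\times 5}\to\Lie(\Sp)$ built from projections by finite limits and colimits. Exactly as in the proof of \cref{thm: Lie-commutator-cofiber-seq}, \cref{cor:exactnessGoodwillietransform} and \cref{prop: partial-mor-preserves-products} identify the comparison map with $\maAlg$ of the corresponding map on $\Spc_*^{\times 5}$. That map is invertible because pushouts are universal in $\Spc_*$. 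The category of such diagrams in $\Lie(\Sp)$ is monadic over $\Lie(\Sp)^{\times 5}$ by restriction, so every diagram is a geometric realization of free ones. Both sides commute with geometric realizations, since pullbacks and geometric realizations in $\Lie(\Sp)$ are computed in $\Sp$, where they commute. Combined with the corrected Step 3, this would give a proof that avoids the two extra properties of $\maAlg$ used in the paper.
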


\subsection*{Acknowledgments}

The first author thanks Thomas Blom for many helpful discussions.
The second author thanks Jacob Lurie for conversations related to \Cref{thm: mainA}. This work was supported by an ERC Starting Grant (no. 950048) and an NWO Vidi grant (no. 233.093).
The first author was also supported by the Royal Society through grant URF\textbackslash R1\textbackslash211075.

\section{Operads}

In this section we recall some definitions and results relating to operads, cooperads, and Koszul duality.

\subsection{Symmetric sequences and operads}

\begin{definition}
The $\infty$-category of \emph{symmetric sequences} in spectra is defined to be 
\[
\SSeq(\Sp) \coloneqq \Fun(\Fin^{\cong}, \Sp),
\]
where $\Fin^{\cong}$ denotes the category of finite sets and bijections.  
The $\infty$-category of \emph{nonunital symmetric sequences} $\SSeqnu(\Sp)$ is the full subcategory of $\SSeq(\Sp)$ spanned by the symmetric sequences that send the empty set to $0$.
\end{definition}

For $A \in \SSeq(\Sp)$ and a finite set $I$, we write $A_I$ for the value of $A$ on $I$.
We write $A_n$ for the spectrum with $\Sigma_n$-action given by evaluating $A$ on the set $\{1, \ldots, n\}$.
We call $A_n$ the arity $n$ term of $A$.
There is a fully faithful embedding $\Sp \hookrightarrow \SSeq(\Sp)$ with essential image those symmetric sequences $A$ concentrated in arity $0$.

The $\infty$-category $\SSeq(\Sp)$ comes equipped with a monoidal structure, called the \emph{composition product}; see \cite{brantnerThesis} for a construction.
Given $A, B \in \SSeqnu(\Sp)$, their composition product $A \circ B$ is the nonunital symmetric sequence whose value on a finite set $I$ is
\[
(A \circ B)_I = \bigoplus_{E \in \mathrm{Part}(I)} A_E \otimes \bigotimes_{J \in E} B_J,
\]
where $\mathrm{Part}(I)$ denotes the set of partitions of $I$.
We denote the unit for the composition product by $\unit$, which is given by
\[
\unit_n \simeq \begin{cases}
    \Sph & \text{if $n = 1$,}\\
    0 & \text{otherwise.}
    \end{cases}
\]
If $A \in \SSeqnu(\Sp)$ and $X$ is a spectrum regarded as a symmetric sequence concentrated in arity $0$, then $A \circ X$ is again concentrated in arity $0$, and we have the formula
\[
A \circ X = \bigoplus_{n \geq 1} (A_n \otimes X^{\otimes n})_{h\Sigma_n}.
\]
In this case, we also write $\Sym_A(X) \coloneqq A \circ X$.
This defines a functor $\Sym \colon \SSeq(\Sp) \to \End(\Sp)$, 
which by construction is monoidal with respect to the composition product on the source and functor composition on the target.

\begin{definition}
An \emph{operad} (resp.\ \emph{cooperad}) in spectra is an algebra (resp.\ coalgebra) in $\SSeq(\Sp)$ for the composition product.
We say an operad or cooperad is \emph{nonunital} if its underlying symmetric sequence lies in $\SSeqnu(\Sp)$.
We say a nonunital operad $\sO$ is \emph{reduced} if $\sO_1 = \Sph$, and similarly for cooperads.
\end{definition}

As we only consider reduced operads and cooperads throughout this paper, we will simply refer to them as operads and cooperads.
We write $\Op(\Sp)$ (resp.\ $\coOp(\Sp)$) for the $\infty$-category of operads (resp.\ cooperads).

\begin{definition}
	Let $\sO \in \Op(\Sp)$ and $\sQ \in \mathrm{coOp}(\Sp)$. An \emph{$\sO$-algebra} in $\Sp$ is an algebra for the monad $\Sym_\sO$. 
	A \emph{divided power conilpotent $\mathscr{Q}$-coalgebra} in $\Sp$ is a coalgebra for the comonad $\Sym_{\sQ}$.
		We write $\Alg_\sO(\Sp)$ for the category of $\sO$-algebras and $\coAlgdpnil_{\sQ}(\Sp)$ for the category of conilpotent divided power $\sQ$-coalgebras.
\end{definition}

\begin{remark}
    The reason for the terminology `divided power conilpotent coalgebra' is as follows.
	Suppose $\sQ \in \coOp(\Sp)$ and $X \in \coAlgdpnil_{\sQ}(\Sp)$.
    As $X$ is by definition a coalgebra for the comonad $\Sym_\sQ$, there is a structure morphism
    \[
        X \to \bigoplus_{n \geq 1} (\sQ_n \otimes X^{\otimes n})_{h\Sigma_n}.
    \]
    The presence of the $\Sigma_n$-orbits instead of fixed points accounts for the word ``divided power''.
    The presence of a direct sum instead of a product accounts for the word ``conilpotent''.
\end{remark}

\begin{definition}
    Let $\sO \in \Op(\Sp)$ and $\sQ \in \mathrm{coOp}(\Sp)$.
    Write $\LMod_\sO$ (resp.\ $\LcoMod_\sQ$) for the $\infty$-category of left $\sO$-modules (resp.\ left $\sQ$-comodules) in $\SSeqnu(\Sp)$.
\end{definition}

\begin{remark}
    The $\infty$-category of $\sO$-algebras is equivalent to the $\infty$-category of left $\sO$-modules in $\SSeq(\Sp)$ concentrated in arity $0$. We point this out to emphasize the similarity between the $\infty$-categories $\Alg_\sO(\Sp)$ and $\LMod_\sO$.
\end{remark}

Apart from the composition product, the $\infty$-category $\SSeq(\Sp)$ also comes equipped with a symmetric monoidal structure given by Day convolution.
We think of this as an extension of the tensor product of spectra and denote the Day convolution of symmetric sequences $A$ and $B$ by $A \otimes B$.
It is given by the formula
\[
(A \otimes B)_I \simeq \bigoplus_{I = I_1 \sqcup I_2} A_{I_1} \otimes A_{I_2},
\]
where $I$ is a finite set and the direct sum on the right is indexed by partitions of $I$ into two disjoint subsets. The Day convolution product commutes with colimits in both variables separately. The unit for Day convolution is the sphere spectrum concentrated in arity $0$. Note that $\SSeqnu(\Sp)$ is closed under $\otimes$, so that Day convolution gives this $\infty$-category a nonunital symmetric monoidal structure.

\begin{remark} \label{rem: day-convolution-and-composition}
    Day convolution is in fact closely related to the composition product. As $(\SSeq(\Sp), \otimes)$ is the free stable presentably symmetric monoidal $\infty$-category on one object, there is an equivalence of $\infty$-categories
    \[
    \Fun^{\otimes, \mathrm{L}}(\SSeq(\Sp), \SSeq(\Sp)) \simeq \SSeq(\Sp),
    \]
    where the left hand side denotes the $\infty$-category of colimit preserving symmetric monoidal functors.
    Transporting the reverse of the composition monoidal structure on the left hand side to the right hand side yields the composition product on $\SSeq(\Sp)$, see \cite{BrantnerCamposNuiten,brantnerThesis}.
    In particular, for any $A \in \SSeq(\Sp)$ the functor $B \mapsto B \circ A$ is colimit preserving and strong monoidal for Day convolution.
\end{remark}

\subsection{The cocommutative cooperad}

In this section, we give a construction of the nonunital cocommutative cooperad.
We start with a number of preliminary results on nonunital cocommutative coalgebras.

In dealing with nonunital algebras and coalgebras, it is convenient to work with a slightly non-standard symmetric monoidal structure on spectra and symmetric sequences.
Given a stable presentably symmetric monoidal $\infty$-category $\sC$ with unit $\unit$, there is a symmetric monoidal structure on the double slice $\infty$-category $\sC_{\unit//\unit}$ such that the forgetful functor to $\sC$ is strong monoidal \cite[Remark 2.2.2.5]{HA}.
The $\infty$-category $\sC_{\unit//\unit}$ is equivalent to $\sC$ by the functor that sends an object $\unit \to X \to \unit$ to the cofiber of the first (or equivalently the fiber of the second) map.
By transport of structure, the tensor product on $\sC_{\unit//\unit}$ yields a new symmetric monoidal structure on $\sC$.
If we denote the original tensor product of $\sC$ by $\otimes$, we denote the new one by $\otimesaug$ and call it the \emph{augmented tensor product}.
It is described by the formula
\[
X \otimesaug Y = X \oplus Y \oplus X \otimes Y.
\]
Note that $\otimesaug$ has unit given by the zero object of $\sC$.
We write $\coCAlg^{\mathrm{nu}}(\sC, \otimes)$ for the category of nonunital cocommutative coalgebras in $(\sC, \otimes)$ as defined in \cite[Definition 5.4.4.1]{HA}.

\begin{proposition}
    There is an equivalence of $\infty$-categories 
    \[
    	\coCAlg^{\mathrm{nu}}(\sC, \otimes) \simeq \coCAlg(\sC, \otimesaug).
    \]
\end{proposition}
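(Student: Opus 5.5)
Via the transport of structure just described, $\coCAlg(\sC, \otimesaug)$ is identified with $\coCAlg(\sC_{\unit//\unit}, \otimes)$, where $\sC_{\unit//\unit}$ carries the symmetric monoidal structure of \cite[Remark 2.2.2.5]{HA}. It therefore suffices to produce an equivalence
\[
\coCAlg^{\mathrm{nu}}(\sC,\otimes) \simeq \coCAlg(\sC_{\unit//\unit},\otimes).
\]
This is the dual of the classical statement that nonunital commutative algebras in $\sC$ are the same as augmented commutative algebras, i.e.\ commutative algebras in $\sC_{\unit//\unit}$. The plan is to reduce to that statement by passing to opposite categories.

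Coalgebras in $\sC$ are algebras in $\sC^{\op}$, with the same convention for nonunital (co)commutative objects in \cite[Definition 5.4.4.1]{HA}. Moreover, $(\sC_{\unit//\unit})^{\op} \simeq (\sC^{\op})_{\unit//\unit}$ as symmetric monoidal $\infty$-categories, since the construction of \cite[Remark 2.2.2.5]{HA} is self-dual: it uses only the unit and the symmetric monoidal structure. So the claim reduces to
\[
\mathrm{CAlg}^{\mathrm{nu}}(\sD) \simeq \mathrm{CAlg}(\sD_{\unit//\unit}) \qquad \text{for } \sD = \sC^{\op}.
\]
Two ingredients are needed here:
\begin{enumerate}
\item $\mathrm{CAlg}(\sD_{\unit/}) \simeq \mathrm{CAlg}(\sD)$, since the unit is initial among commutative algebras;
\item $\mathrm{CAlg}(\sD)_{/\unit} \simeq \mathrm{CAlg}(\sD_{/\unit})$, as in \cite[2.2.2]{HA}.
\end{enumerate}
Combining them gives $\mathrm{CAlg}(\sD_{\unit//\unit}) \simeq \mathrm{CAlg}(\sD)_{/\unit}$, the category of augmented commutative algebras. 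Finally, \cite[Proposition 5.4.4.10]{HA} identifies augmented commutative algebras with nonunital ones via the augmentation ideal (fiber of the augmentation). This step uses stability or an additive-type hypothesis; here $\sD = \sC^{\op}$ is stable because $\sC$ is, so the hypothesis holds.

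Dualizing back, the equivalence sends a nonunital coalgebra $X$ to $\unit \oplus X$ with the evident coaugmentation, and this transports to $X$ itself under $\sC_{\unit//\unit}\simeq \sC$. As a sanity check, the comultiplication $X \to X\otimesaug X = X\oplus X\oplus X\otimes X$ should come out as $(\id,\id,\Delta)$.

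The main obstacle is the input from \cite{HA}. The result there is stated for presentable, or at least nice, $\infty$-categories with unital-versus-nonunital algebras, and I would need to check that its hypotheses apply to $\sC^{\op}$, which is not presentable. Lurie's proof of the augmentation-ideal equivalence only uses that $\sD$ is stable (or pointed with finite limits and colimits) and that $\otimes$ preserves finite colimits in each variable. The second condition holds in $\sC^{\op}$: in the stable $\sC$, $\otimes$ is exact in each variable, so it preserves finite limits, which are finite colimits in $\sC^{\op}$. If the cited statement nonetheless demands presentability, I would instead rerun its argument directly: show that the functor $\coCAlg(\sC_{\unit//\unit}) \to \coCAlg^{\mathrm{nu}}(\sC)$ given by taking the cofiber of the coaugmentation is an equivalence of $\infty$-operad algebras by checking on underlying objects.
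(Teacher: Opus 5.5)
Your proposal is correct and follows the same chain as the paper. You transport structure to identify the left side with $\coCAlg(\sC_{\unit//\unit},\otimes)$, identify that with coaugmented cocommutative coalgebras, and then pass to nonunital ones via \cite[Proposition 5.4.4.10]{HA}. The only difference is that you spell out the dualization to $\sC^{\op}$ and check that stability and exactness of $\otimes$ in each variable (not presentability) are what that proposition needs; the paper leaves both points implicit.
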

\begin{proof}
We have the following string of equivalences:
   \[
   \coCAlg(\sC, \otimesaug) \simeq \coCAlg(\sC_{\unit//\unit}, \otimes) \simeq \coCAlg^{\mathrm{aug}}(\sC, \otimes) \simeq \coCAlg^{\mathrm{nu}}(\sC, \otimes),
   \]
   The third $\infty$-category denotes the category of coaugmented cocommutative coalgebras in $\sC$, and the third equivalence is \cite[Proposition 5.4.4.10]{HA}.
\end{proof}

We will use the augmented tensor product in the cases where $\sC$ is either $\Sp$ with the standard tensor product, or $\SSeq(\Sp)$ with the Day convolution product.
Note that $\SSeqnu(\Sp)$ is closed under the augmented Day convolution tensor product.

The following standard result says that the cartesian product on the category of nonunital cocommutative coalgebras is given by $\otimesaug$.

\begin{proposition} \label{prop: product-nu-cocalg}
    Let $\sC$ be a stable presentably symmetric monoidal category.
    The forgetful functor $\coCAlg^{\mathrm{nu}}(\sC) \to \sC$ is strong monoidal from the cartesian product to $\otimesaug$.
\end{proposition}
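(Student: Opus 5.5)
We reduce to the classical statement that in cocommutative coalgebras the cartesian product is the tensor product. By the preceding proposition, $\coCAlg^{\mathrm{nu}}(\sC, \otimes) \simeq \coCAlg(\sC, \otimesaug)$, compatibly with the forgetful functors to $\sC$. This compatibility holds because each equivalence in the chain $\coCAlg(\sC,\otimesaug)\simeq \coCAlg(\sC_{\unit//\unit},\otimes)\simeq\coCAlg^{\mathrm{aug}}(\sC,\otimes)\simeq\coCAlg^{\mathrm{nu}}(\sC,\otimes)$ commutes with the functors to $\sC$, given by the identity, then $(\unit\to X\to\unit)\mapsto \cof(\unit\to X)$, and so on. So it suffices to show that for any symmetric monoidal $\infty$-category $(\sD,\otimes)$ the forgetful functor $\coCAlg(\sD,\otimes)\to\sD$ is strong monoidal from the cartesian product to $\otimes$. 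We then apply this with $\sD=(\sC,\otimesaug)$.

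For that statement we use the dual of \cite[Proposition 3.2.4.7]{HA}: in $\CAlg(\sD)$, the tensor product of the underlying objects is the coproduct, and the induced symmetric monoidal structure on $\CAlg(\sD)$ is cocartesian. Applied to $\sD^{\op}$, this shows that $\coCAlg(\sD)=\CAlg(\sD^{\op})^{\op}$ carries a symmetric monoidal structure lifting $\otimes$. That structure is cartesian: the product of $C$ and $D$ is $C\otimes D$, with projections $C\otimes D\to C\otimes\unit\simeq C$ given by the counit of $D$, and the monoidal unit $\unit$ is terminal.

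Since the forgetful functor is symmetric monoidal for this lifted structure by construction, it is strong monoidal from the cartesian product. In concrete terms, the projections above induce a canonical comparison map from the underlying object of $C\times D$ to $C\otimes D$, and this map is an equivalence. For the augmented structure, the unit of $\otimesaug$ is $0$, and $0$ is the terminal nonunital coalgebra, which is consistent with the general statement.

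The main obstacle is the transport step. We must check that the equivalence of the preceding proposition is compatible with the forgetful functors, so that $\otimesaug$ on underlying objects really corresponds to the product in $\coCAlg^{\mathrm{nu}}(\sC)$. The one point that needs care is that the forgetful functor $\sC_{\unit//\unit}\to\sC$ is strong monoidal while the equivalence $\sC_{\unit//\unit}\simeq\sC$ is not the forgetful functor. Since $\otimesaug$ is defined precisely by transport along that equivalence, the compatibility holds by definition, and the proof of the proposition is complete.
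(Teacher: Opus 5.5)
Your proposal is correct and is essentially the paper's argument: transport along the equivalence $\coCAlg^{\mathrm{nu}}(\sC,\otimes)\simeq\coCAlg(\sC,\otimesaug)$, then apply (the dual of) \cite[Proposition 3.2.4.7]{HA} to $(\sC,\otimesaug)$. Your explicit check that this equivalence commutes with the forgetful functors to $\sC$, via $C\mapsto\cof(\unit\to C)$ on coaugmented coalgebras, is a step that the paper's one-line proof leaves implicit.
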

\begin{proof}
    This follows from the equivalence $\coCAlg^{\mathrm{nu}}(\sC) \simeq \coCAlg(\sC, \otimesaug)$ together with \cite[Proposition 3.2.4.7]{HA}.
\end{proof}

Recall that if $\sC$ is a stable presentably symmetric monoidal $\infty$-category, then the free nonunital commutative algebra on $X \in \sC$ is given by the formula
\[
\Sym(X) \simeq \bigoplus_{n \geq 1} X^{\otimes n}_{h\Sigma_n}.
\]
The cofree cocommutative coalgebra functor on the other hand does not admit such a straightforward formula in general.
However, in the category $\SSeqnu(\Sp)$ we have the pleasant circumstance that it is given by the same formula.
We let $\coCAlg^{\mathrm{nu}}(\SSeqnu(\Sp))$ denote the category of nonunital cocommutative coalgebras in $\SSeqnu(\Sp)$ with respect to Day convolution.

\begin{proposition} \label{prop: cofree-is-sym}
    The underlying functor of the comonad associated with the adjunction
    \[
    \begin{tikzcd}
        \forget \colon \coCAlg^{\mathrm{nu}}(\SSeqnu(\Sp)) \ar[r, shift left] & \SSeqnu(\Sp) \colon \cofree \ar[l, shift left]
    \end{tikzcd}
    \]
    is given by the functor $\Sym \colon \SSeqnu(\Sp) \to \SSeqnu(\Sp)$.
\end{proposition}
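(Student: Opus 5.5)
The plan is to show directly that $\Sym(A) = \bigoplus_{n\geq 1} (A^{\otimes n})_{h\Sigma_n}$, with its evident comultiplication, is the cofree nonunital cocommutative coalgebra on $A\in\SSeqnu(\Sp)$. The mechanism is that in $\SSeqnu(\Sp)$ the norm maps are equivalences and the relevant limits become finite arity by arity. For each finite set $I$, the Day convolution power $(A^{\otimes n})_I$ is a sum over ordered decompositions $I = I_1\sqcup\dots\sqcup I_n$ with every $I_j$ nonempty, since $A_\emptyset = 0$. The group $\Sigma_n$ acts freely on this indexing set of decompositions, so $(A^{\otimes n})_I$ is an induced $\Sigma_n$-object. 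Two consequences follow. First, the norm map $(A^{\otimes n})_{h\Sigma_n}\to (A^{\otimes n})^{h\Sigma_n}$ is an equivalence (the Tate construction vanishes on induced objects). Second, $(A^{\otimes n})_I = 0$ for $n > |I|$. As a result, in each arity the sum $\bigoplus_n$ agrees with the product $\prod_n$, and only finitely many terms are nonzero.

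Next I would compute the cofree coalgebra. By the Proposition identifying $\coCAlg^{\mathrm{nu}}(\sC,\otimes)\simeq\coCAlg(\sC,\otimesaug)$, it suffices to work with coaugmented coalgebras. The standard construction (for instance Lurie's description of cofree coalgebras as a limit of the tower of ``truncated'' cofree objects, or the dual of the free algebra formula, which holds when tensor powers commute with the relevant limits) identifies the cofree nonunital cocommutative coalgebra as $\prod_{n\geq1}(A^{\otimes n})^{h\Sigma_n}$, provided the tensor product preserves the required limits. Here the limits are computed arity-wise, and by the previous step everything is arity-wise a finite product of finite limits in a stable category. Day convolution preserves finite limits in each variable, and in each fixed arity only finitely many terms contribute. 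So the conditions needed for the cofree formula hold, giving $\cofree(A)\simeq\prod_n (A^{\otimes n})^{h\Sigma_n}\simeq\bigoplus_n (A^{\otimes n})_{h\Sigma_n} = \Sym(A)$.

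For a cleaner route, I would instead verify the universal property directly. Take $\Sym(A)$ with comultiplication induced by the maps $(A^{\otimes(p+q)})_{h\Sigma_{p+q}}\to (A^{\otimes p})_{h\Sigma_p}\otimes(A^{\otimes q})_{h\Sigma_q}$, namely the transfer along $\Sigma_p\times\Sigma_q\subset\Sigma_{p+q}$, which is available because norms are equivalences. Given a coalgebra $C$ and a map $f\colon C\to A$, the iterated comultiplications $C\to (C^{\otimes n})^{h\Sigma_n}\to (A^{\otimes n})^{h\Sigma_n}\simeq (A^{\otimes n})_{h\Sigma_n}$ assemble into a map $C \to \Sym(A)$, because arity-wise the sum is finite. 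Uniqueness follows from a filtration argument on $n$.

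I expect the main obstacle to be making the cofree-coalgebra identification rigorous $\infty$-categorically rather than by hand with coherent comultiplications. I would handle it by citing Lurie's construction of cofree coalgebras in the presence of a limit-compatibility hypothesis and checking that hypothesis arity-wise using the finiteness observation above.
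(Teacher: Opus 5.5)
Your main route is the paper's. Via the dual of Lurie's free-algebra formula (HA, Proposition 3.1.3.3), you reduce to checking that Day convolution with an arbitrary $Y \in \SSeqnu(\Sp)$ commutes with the limit $\prod_{n\geq 1}(A^{\otimes n})^{h\Sigma_n}$ of the diagram $\{1,\dots,n\}\mapsto A^{\otimes n}$. You then check this with exactly the paper's two observations: the $\Sigma_n$-action is levelwise free, and $A^{\otimes n}$ vanishes in arities below $n$.

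Your verification of that hypothesis is wrong as stated, however. Homotopy fixed points $(-)^{h\Sigma_n}$ are not a finite limit, since $B\Sigma_n$ is not finite. So ``Day convolution preserves finite limits in each variable'' does not apply. Arity-wise, $Y\otimes -$ amounts to tensoring with arbitrary spectra $Y_{I_1}$, and that in general does not commute with homotopy fixed points.

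What you need, and what the paper uses, is that the identification ``limit $=$ colimit'' survives tensoring.
\begin{enumerate}[\upshape{(}\arabic*\upshape{)}]
\item For any $Y$, the object $Y\otimes A^{\otimes n}$ is again levelwise induced as a $\Sigma_n$-object and vanishes in arities $<n$.
\item Hence $\prod_n (Y\otimes A^{\otimes n})^{h\Sigma_n}\simeq\bigoplus_n (Y\otimes A^{\otimes n})_{h\Sigma_n}$ via the norm and sum-to-product maps.
\item Day convolution preserves colimits in each variable, so $Y\otimes\bigoplus_n (A^{\otimes n})_{h\Sigma_n}\simeq\bigoplus_n (Y\otimes A^{\otimes n})_{h\Sigma_n}$.
\item By naturality of these maps, the comparison map $Y\otimes\prod_n (A^{\otimes n})^{h\Sigma_n}\to\prod_n (Y\otimes A^{\otimes n})^{h\Sigma_n}$ is an equivalence.
\end{enumerate}
With this repair your argument is complete.

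Drop your second, ``cleaner'' route. Writing down a comultiplication via transfers and proving uniqueness by filtering on $n$ does not give a coherent nonunital cocommutative coalgebra structure, nor an equivalence of mapping spaces, in the $\infty$-categorical setting. That is exactly why one reduces to Lurie's formula. The passage through $\otimesaug$ and coaugmented coalgebras is also unnecessary, though harmless.
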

\begin{proof}
It follows from \cite[Proposition 3.1.3.3]{HA} that it suffices to show that for every $X \in \SSeqnu(\Sp)$, the Day convolution tensor product $\otimes$ commutes with the limit of the diagram $\Fin^{\cong} \to \SSeqnu(\Sp)$ that sends $\{1, \ldots, n\}$ to the $\Sigma_n$-object $X^{\otimes n}$.
In other words, it is the diagram with limit given by
\[
\prod_{n \geq 1} (X^{\otimes n})^{h \Sigma_n}.
\]
By inspecting the formula for the Day convolution product, we find that this product is levelwise finite, so that it is equivalent to a direct sum.
We also see that the $\Sigma_n$-object $X^{\otimes n}$ is levelwise free, so that its fixed points are equivalent to its orbits.
These equivalences still hold after tensoring the diagram with an arbitrary symmetric sequence.
As the Day convolution product commutes with all colimits in both variables, this completes the proof.
\end{proof}

We now give a construction of the cocommutative cooperad.

\begin{proposition} \label{prop: construction-cocom-cooperad}
    There exists a unique cooperad $\sQ \in \coOp(\Sp)$ such that there is an equivalence
    \[
    \LcoMod_{\sQ} \simeq \coCAlg^{\mathrm{nu}}(\SSeqnu(\Sp))
    \]
    commuting with the forgetful functors.
    This cooperad satisfies $\sQ_n \simeq \Sph$ for all $n \geq 1$.
\end{proposition}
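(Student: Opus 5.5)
The plan is to recognize the comonad $\forget\circ\cofree$ on $\SSeqnu(\Sp)$ as the comonad of a cooperad, that is, as right composition with a coalgebra for the composition product. By \cref{prop: cofree-is-sym}, its underlying endofunctor is $\Sym$, and $\Sym(X) = \bigoplus_{n\geq 1} (X^{\otimes n})_{h\Sigma_n}$ with $\otimes$ the Day convolution. This is exactly $\sQ' \circ X$ for the symmetric sequence $\sQ'$ with $\sQ'_n = \Sph$ (trivial action) for $n \geq 1$ and $\sQ'_0 = 0$. The reason is that the composition product $A\circ X$, for $X$ nonunital, is given by $\bigoplus_n (A_n\otimes X^{\otimes n})_{h\Sigma_n}$ with $\otimes$ the Day convolution. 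This formula follows from \cref{rem: day-convolution-and-composition}: $(-)\circ X$ is the colimit-preserving symmetric monoidal functor sending the generator $\unit$ to $X$.

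To promote this to a comonad, I would use the monoidal functor
\[
\SSeq(\Sp) \to \End(\SSeq(\Sp)), \qquad A \mapsto (-)\circ A
\]
for the reversed product, or equivalently left composition $A\circ(-)$, which is monoidal for $\circ$ and composition. I would argue that this functor is fully faithful onto the colimit-preserving endofunctors of the required shape, or at least that it detects the relevant comonad structures. The cleanest route is Lurie's Barr--Beck style comparison. The comonad $T=\forget\circ\cofree$ commutes with colimits, since it is $\Sym$ and $\otimes$ preserves colimits in each variable. A colimit-preserving endofunctor of $\SSeqnu(\Sp)$ that is a comonad and is compatible with the left action of $\SSeqnu$ on itself should come from a coalgebra in $\SSeqnu$ under $\circ$.

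Concretely, I would show that $T$ is a right $\SSeqnu(\Sp)$-linear endofunctor, with $T(X\circ Y)\simeq T(X)\circ Y$. This holds because $(-)\circ Y$ is symmetric monoidal and colimit preserving for Day convolution, so it carries cofree cocommutative coalgebras to cocommutative coalgebras, and compatibility follows from the formula $\Sym(X)\circ Y \simeq \Sym(X\circ Y)$. Right-linear colimit-preserving endofunctors of $\SSeqnu$, viewed as a right module over itself, are equivalent as monoidal categories to $\SSeqnu$ via $A\mapsto A\circ(-)$, with inverse given by evaluation at $\unit$. Under this equivalence the comonad $T$ corresponds to a coalgebra $\sQ$ with $\sQ=T(\unit)=\Sym(\unit)$, which has $\sQ_n=\Sph$. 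Since $\sQ_1=\Sph$, the cooperad is reduced. The comonadicity of $\forget$ (the Barr--Beck--Lurie theorem; the forgetful functor from coalgebras is comonadic) then gives $\coCAlg^{\mathrm{nu}}(\SSeqnu)\simeq \mathrm{coAlg}_T \simeq \LcoMod_\sQ$ over $\SSeqnu$.

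Uniqueness follows from the same equivalence. If $\sQ''$ is another cooperad with such an equivalence over the forgetful functors, then the comonads $\sQ''\circ(-)$ and $T$ agree, because a comonad is recovered from its comonadic forgetful functor. Applying the monoidal equivalence identifies $\sQ''\simeq\sQ$ as cooperads.

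I expect the main obstacle to be the coherent step of making the linearity of $T$, and its interaction with the comonad structure, fully coherent, so that $T$ genuinely lives in the monoidal $\infty$-category of linear endofunctors. A workable approach is to note that the cofree-coalgebra adjunction is itself an adjunction of right $\SSeqnu$-module categories. The right action on coalgebras is available because $(-)\circ Y$ is symmetric monoidal for Day convolution, and the forgetful functor is strictly linear. The right adjoint is then automatically linear, as the lax linearity maps are equivalences by the formula above. Lurie's results on module categories and endomorphism monads then produce the coalgebra $\sQ$.
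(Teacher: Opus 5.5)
Your proposal is correct and follows essentially the same route as the paper. It uses the right action of $(\SSeqnu(\Sp),\circ)$ on $\coCAlg^{\mathrm{nu}}(\SSeqnu(\Sp))$ through the functors $-\circ B$, which are symmetric monoidal for Day convolution. The forgetful functor is linear, so $\cofree$ is lax linear, and it is strong because the lax map $\cofree(A)\circ B\to\cofree(A\circ B)$ is identified, via \cref{prop: cofree-is-sym}, with the equivalence $\Sym(A)\circ B\simeq\Sym(A\circ B)$. Hence $T\simeq\sQ\circ -$ with $\sQ\simeq T(\unit)=\Sym(\unit)$.

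One correction: $\Sym$ and $A\circ -$ preserve only sifted colimits, not all colimits. So drop the ``colimit-preserving'' hypothesis you impose, which $T$ does not satisfy. Right linearity alone already gives $T(X)\simeq T(\unit\circ X)\simeq T(\unit)\circ X$.
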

\begin{proof}
    The construction is dual to the one given in \cite[Construction 4.1.6]{raksit2020hochschildhomologyderivedrham}.
    We reproduce it here for the convenience of the reader.

    As the composition product of symmetric sequences distributes over Day convolution in the first variable (see \cref{rem: day-convolution-and-composition}), it follows that $\coCAlg^{\mathrm{nu}}(\SSeqnu(\Sp))$ admits a right action from $(\SSeqnu(\Sp), \circ)$ such that the forgetful functor $\coCAlg^{\mathrm{nu}}(\SSeqnu(\Sp)) \to \SSeqnu(\Sp)$ is $\SSeqnu(\Sp)$-linear.
    It follows that its right adjoint, the $\cofree$ functor, admits a lax $\SSeqnu(\Sp)$-linear structure by \cite[Corollary 7.3.2.7]{HA}.
    One can use the explicit formula for this functor from \cref{prop: cofree-is-sym} to show that it is in fact strong $\SSeqnu(\Sp)$-linear.
    Indeed, this amounts to the observation that for any $A, B \in \SSeqnu(\Sp)$, the composite
    \[
    \bigoplus_{n \geq 1} A^{\otimes n}_{h\Sigma_n} \circ B \to \bigoplus_{m \geq 1} \left [ \bigoplus_{n \geq 1} A^{\otimes n}_{h\Sigma_n} \circ B \right ]^{\otimes m}_{h\Sigma_m} \to \bigoplus_{m \geq 1} (A \circ B)^{\otimes m}_{h\Sigma_m}
    \]
    is an equivalence, where the first map is the comultiplication of $\cofree(A) \circ B$, and the second map is the projection onto the $n=1$ components.

    The nonunital cocommutative coalgebra comonad therefore admits a right $\SSeqnu(\Sp)$-linear structure, so that it has to be of the form $A \mapsto \sQ \circ A$ for a unique cooperad $\sQ$.
    The fact that $\sQ_n \simeq\Sph$ for all $n \geq 1$ follows from the natural equivalence $\Sym(A) \simeq \sQ \circ A$.
\end{proof}

\begin{definition}
    We define the cocommutative cooperad $\coCom$ to be the cooperad from \Cref{prop: construction-cocom-cooperad}.
\end{definition}

\subsection{Koszul duality}
We here recall some basic results on Koszul duality of operads and cooperads.

\begin{definition}
Suppose $\sO$ is an operad in spectra, $M$ is a left $\sO$-module, and $N$ is a right $\sO$-module in $\SSeq(\Sp)$.
The \emph{bar construction} $B(M, \sO, N)$ is the geometric realization of the usual simplicial diagram
\[
\begin{tikzcd}
          \cdots M \circ \sO \circ \sO \circ N \ar[r, shift left=2] \ar[r] \ar[r, shift right=2] & M \circ \sO \circ N \ar[l, shift left, shorten=0.4em] \ar[l, shift right, shorten=0.4em] \ar[r, shift left] \ar[r, shift right] & M \circ N. \ar[l, shorten=0.4em]
   \end{tikzcd}
\]
For $\sQ \in \coOp(\Sp)$ and left and right $Q$-comodules $M$ and $N$, the \emph{cobar construction} $C(M, Q, N)$ is given by the totalization of the analogous cosimplicial diagram.
We will sometimes write $M \circ_\sO N$ for the bar construction $B(M, \sO, N)$.
\end{definition}

If $\sO$ is an operad, the projection $\sO \to \unit$ onto the arity 1 term provides $\sO$ with a canonical augmentation and makes $\unit$ into an $\sO$-bimodule.
Similarly, the inclusion of the arity $1$ term gives any cooperad a canonical coaugmentation.

\begin{proposition}
\label{prop:KDoperads}
	There is an adjoint equivalence of $\infty$-categories
	\[
	\begin{tikzcd}[sep = large]
		\Op(\Sp) \ar[r, "B", shift left] & \coOp(\Sp) \ar[l, "C", shift left]
	\end{tikzcd}
	\]
	given on underlying symmetric sequences by $B\sO = B(\unit, \sO, \unit)$ and $C\sQ = C(\unit, \sQ, \unit)$. 
\end{proposition}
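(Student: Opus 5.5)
This is the Koszul duality equivalence between reduced operads and reduced cooperads in spectra. In the literature it is due to Ching and, in the $\infty$-categorical form needed here, to Brantner--Campos--Nuiten and to Heuts \cite{heuts2024koszulduality}. The plan is to recall the argument rather than reprove it from scratch; the main steps are as follows.

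\textbf{Step 1: structures on $B\sO$ and $C\sQ$.} First we equip $B\sO = B(\unit,\sO,\unit)$ with a cooperad structure. We regard $(\SSeqnu(\Sp),\circ)$ as a monoidal $\infty$-category and $\sO$ as an augmented algebra in it. Its bar construction $\unit \otimes_\sO \unit$ is then a coalgebra for formal reasons, by Lurie's bar--cobar formalism for augmented algebras in monoidal $\infty$-categories (HA, Section 5.2.2). The one input needed is that $\circ$ preserves geometric realizations in each variable:
\begin{itemize}
\item in the first variable this is \cref{rem: day-convolution-and-composition};
\item in the second variable it holds because $\circ$ is built from tensor powers, and $\Delta^{\op}$ is sifted.
\end{itemize}
Dually, $C\sQ$ is an operad, because $\circ$ commutes with the totalizations that occur here. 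These totalizations are levelwise finite. This uses reducedness: in arity $n$ the cobar complex only involves finitely many nondegenerate terms, namely trees with at most $n-1$ internal vertices. So the limits are finite, and they commute with $\circ$ in each arity. The adjunction $B \dashv C$ is then the standard bar--cobar adjunction between augmented algebras and coaugmented coalgebras.

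\textbf{Step 2: unit and counit are equivalences.} It remains to show that the unit $\sO \to CB\sO$ and the counit $BC\sQ \to \sQ$ are equivalences. The approach is a filtration argument by arity.
\begin{itemize}
\item Both sides are reduced, so they agree in arity $1$.
\item In arity $n$, the terms of $B\sO$ and $CB\sO$ are computed by finite constructions involving only $\sO_k$ for $k \le n$.
\item Consider the truncation of $\sO$ to arities $\le n$. Passing from arities $\le n-1$ to arities $\le n$ changes $\sO$ by a square-zero extension by $\sO_n$. This changes $B\sO_n$ by $\Sigma\sO_n$ and changes $CB\sO$ by $\Omega\Sigma\sO_n \simeq \sO_n$.
\item Stability lets us induct on $n$, comparing the fibers of the arity-$n$ maps.
\end{itemize}
The counit is handled dually. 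A cleaner alternative for this step is to identify both $\Op(\Sp)$ and $\coOp(\Sp)$ with towers of such square-zero extensions.

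\textbf{Main obstacle.} The hard part is Step 1's coherence: producing the cooperad structure on $B\sO$ and the adjunction $\infty$-categorically. The issue is that $\circ$ is not symmetric and does not commute with totalizations in general, so cobar constructions are harder to control than bar constructions. I would handle this by restricting attention to reduced (co)operads and invoking the cited constructions for the relevant coherence. The arity-wise equivalence check in Step 2 is then essentially a computation.
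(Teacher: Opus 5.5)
Your proposal rests on the same citations as the paper. The paper's proof is only a reference to Ching \cite{ChingBarCobar} and, for the $\infty$-categorical version, to \cite[Theorem 3.4]{heuts2024koszulduality}. Your Step~1 is a reasonable outline of the formal part. However, Step~2, which you present as the actual argument and call ``essentially a computation'', does not close.

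Your fibre computations in arity $n$ are right. Only the corolla sees $\sO_n$, so $\sO\to\tau_{\le n-1}\sO$ has fibre $\Sigma\sO_n$ on $(B-)_n$ and $\Omega\Sigma\sO_n\simeq\sO_n$ on $(CB-)_n$. Granting that the induced map on fibres is the canonical equivalence, comparing fibres in
\[
\begin{tikzcd}
\sO_n \ar[r] \ar[d] & (CB\sO)_n \ar[d] \\
0 = (\tau_{\le n-1}\sO)_n \ar[r] & (CB\,\tau_{\le n-1}\sO)_n
\end{tikzcd}
\]
shows only this: the unit of $\sO$ is an equivalence in arity $n$ if and only if $(CB\,\tau_{\le n-1}\sO)_n\simeq 0$. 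That is an arity-$n$ statement about an operad which vanishes in arity $n$. It is not an instance of any inductive hypothesis about arities $<n$.

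This vanishing is the genuine acyclicity content of Koszul duality, not bookkeeping. Take $n=3$ and $\sP=\tau_{\le 2}\sO$. Then $(B\sP)_3\simeq\bigoplus\Sigma^2(\sO_2\otimes\sO_2)$, summed over the three binary trees, and $(C\sQ)_3$ is the fibre of $\Omega$ applied to the cooperad decomposition $\sQ_3\to\bigoplus\sQ_2\otimes\sQ_2$. So $(CB\sP)_3\simeq 0$ says exactly that the decomposition map $(B\sP)_3\to\bigoplus(B\sP)_2\otimes(B\sP)_2$ is an equivalence. Proving that requires explicit control of the cooperad structure you produced abstractly in Step~1. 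For larger $n$ it becomes a contractibility statement about complexes of trees.

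Your ``tower of square-zero extensions'' alternative hits the same wall. The base of the extension is $\tau_{\le n-1}\sO$ regarded as an $n$-truncated operad. You would need $B$ of it to be, in arity $n$, the cofree extension of its truncated bar construction. That is a Beck--Chevalley condition for the mate of $\tau B\simeq B\tau$, and it is not formal.

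So the difficulty is the reverse of your assessment. The coherence in Step~1 is largely formal given HA~5.2.2 and the arity-wise finiteness you point out. The arity-wise equivalence is what the cited works actually prove.

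A minor point on Step~1: $\circ$ does not commute with finite limits in its second variable. What you need is that tensor powers commute with these arity-wise finite totalizations. That follows from exactness of $\otimes$ in each variable separately together with initiality of the diagonal $\Delta\to\Delta^{\times k}$.
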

\begin{proof}
    This is originally due to Ching \cite{ChingBarCobar}.
    A transposition of this proof to the setting of $\infty$-categories was given in \cite[Theorem 3.4]{heuts2024koszulduality}.
\end{proof}

We call $B\sO$ and $C\sQ$ the Koszul duals of $\sO$ and $\sQ$, respectively.

\begin{definition} \label{def: spectral-Lie-operad}
    The \emph{spectral Lie operad} $\LL$ is the Koszul dual of the cocommutative cooperad $\Com^{\vee}$.
\end{definition}

We will now discuss Koszul duality between $\sO$-modules and $B\sO$-comodules.
Let $\sM$ be either $\Sp$ or $\SSeqnu(\Sp)$ equipped with the left action by $\SSeqnu(\Sp)$ coming from the composition product.
For every operad $\sO \in \Op(\Sp)$ we have a pair of morphisms
\[
\unit \to \sO \to \unit
\]
given by the unit and the augmentation.
By restriction and extension of scalars along these maps, we obtain a pair of adjunctions (left adjoints on top)
\[
\begin{tikzcd}[sep = large]
	\sM \ar[r, shift left, "\free_\sO"] & \LMod_\sO(\sM) \ar[r, shift left, "\cot_\sO"] \ar[l, shift left, "\forget_\sO"] & \sM. \ar[l, shift left, "\triv_\sO"]
\end{tikzcd}
\]
(Here $\LMod_\sO(\sM)$ is supposed to be read as $\Alg_{\sO}(\Sp)$ if $\sM = \Sp$.) Observe that the left adjoints as well as the right adjoints compose to the identity functor.
The functor $\cot_\sO$ is called the \emph{cotangent fiber}.
It can be computed by the formula
\[
\cot_\sO(M) \simeq B(\unit, \sO, M).
\]

\begin{proposition}[{\cite[Proposition 6.3]{heuts2024koszulduality}}] \label{prop: cot-triv-stabilization}
    The adjunction $(\cot_\sO, \triv_\sO)$ exhibits $\Sp$ as the stabilization of $\Alg_{\sO}(\Sp)$.
\end{proposition}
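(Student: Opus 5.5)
The plan is to identify the stabilization by checking three things. First, $\Alg_\sO(\Sp)$ is a presentable $\infty$-category with a zero object. Second, $\triv_\sO\colon \Sp \to \Alg_\sO(\Sp)$ factors through a right adjoint $\Sp \to \Sp(\Alg_\sO(\Sp))$ which is an equivalence. Third, under this equivalence $\cot_\sO$ is identified with $\Sigma^\infty$.

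Since $\sO$ is nonunital, the zero spectrum carries a unique $\sO$-algebra structure, and this algebra is both initial and terminal, so $\Alg_\sO(\Sp)$ is pointed. Presentability follows because $\Sym_\sO$ is accessible and preserves sifted colimits. The right adjoint $\triv_\sO$ preserves limits and lands in a pointed category. Because $\Sp$ is stable, $\triv_\sO$ factors through a limit-preserving functor $\Sp \to \Sp(\Alg_\sO(\Sp))$, and by adjunction this factorization corresponds to a comparison of left adjoints $\Sigma^\infty \to \cot_\sO$. It then suffices to show that the composite $\Omega^\infty \circ (\text{comparison})^{-1}$ behaves correctly. Concretely, I would show that the forgetful functor $\Sp(\Alg_\sO(\Sp)) \to \Sp(\Sp) \simeq \Sp$ is an equivalence that inverts this factorization.

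The main point is to show that spectrum objects in $\Alg_\sO(\Sp)$ carry trivial operations. Limits in $\Alg_\sO(\Sp)$ are computed underlying, so a spectrum object $(X_n)$ with $X_n \simeq \Omega X_{n+1}$ has underlying spectra forming a spectrum object in $\Sp$, which is just a spectrum $X$. The arity $k \geq 2$ structure maps $(\sO_k \otimes X_{n}^{\otimes k})_{h\Sigma_k} \to X_n$ are then compatible with the loop equivalences. Writing $X_n = \Omega^{m} X_{n+m}$, the arity $k$ operation on $X_n$ factors through the $k$-fold smash of $m$-fold desuspensions, namely $\Sigma^{-mk}$ on the source against $\Sigma^{-m}$ on the target. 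Thus it is a colimit of maps shifting degree increasingly. I would make this precise using Goodwillie calculus: the stabilization of $\Alg_\sO(\Sp)$ is computed by the linearization $P_1$ of the identity, and $P_1(\forget \circ \free_\sO)(X) = \sO_1 \otimes X = X$ because the higher terms $(\sO_k \otimes X^{\otimes k})_{h\Sigma_k}$ are $k$-homogeneous and are killed by $P_1$.

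So the key steps, in order, are as follows. First, $\Sp(\Alg_\sO(\Sp))$ is the category of algebras over the monad $\partial_1(\forget\circ\free_\sO) = \Sym_{\sO_1}$ acting on $\Sp(\Sp)=\Sp$. This follows by a Lurie-style monadicity argument: stabilization of a monadic adjunction whose monad preserves sifted colimits gives a monadic adjunction with the linearized monad. Second, since $\sO_1 = \Sph$ (reducedness), this monad is the identity, so the stabilization is $\Sp$. Third, the identifications match $\Omega^\infty$ with $\triv_\sO$, since $\triv_\sO$ is the algebra structure pulled back along $\sO \to \unit$, which on the linear part is the identity. Hence $\Sigma^\infty \simeq \cot_\sO$. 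I expect the main obstacle to be the monadicity-after-stabilization step, specifically verifying that linearization commutes with the relevant sequential colimits $\colim \Omega^n \Sigma^n$. This works because $\Sym_\sO$ preserves filtered colimits and the forgetful functor preserves sifted colimits. Alternatively one can cite the tower/Goodwillie argument directly.
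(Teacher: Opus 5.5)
The paper does not prove this statement. It is quoted from \cite[Proposition 6.3]{heuts2024koszulduality} and used as foundational input: it is what supplies the identification $\Sp(\Alg_\sO(\Sp))\simeq\Sp$ needed even to speak of $\partial_*\id_{\Alg_\sO(\Sp)}\simeq\sO$. Your argument is a correct, self-contained proof along standard lines:
\begin{itemize}
\item stabilize the monadic adjunction $\free_\sO\dashv\forget$;
\item identify the induced monad on $\Sp(\Sp)\simeq\Sp$ with $P_1(\Sym_\sO)$;
\item observe that $P_1(\Sym_\sO)\simeq\sO_1\otimes-\simeq\id$, because the summands $(\sO_k\otimes X^{\otimes k})_{h\Sigma_k}$ with $k\geq 2$ are $k$-homogeneous, $P_1$ commutes with the direct sum, and $\sO_1\simeq\Sph$.
\end{itemize}
Nonunitality is used only for pointedness and reducedness only at the last step. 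Only the bottom layer of the Goodwillie tower enters, so there is no circularity with the chain-rule machinery the paper builds on top of this identification.

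Three steps deserve a more precise formulation.

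First, Barr--Beck is more than you need. $\Sp(\forget)$ is conservative because it is computed levelwise, so it suffices that the unit $\id\to\Sp(\forget)\Sp(\free_\sO)$ be an equivalence. For this you must identify the unit itself, not merely the underlying functor, with $P_1(\eta)$ for $\eta\colon\id\to\Sym_\sO$. This works as follows:
\begin{itemize}
\item under $\Sp(\Sp)\simeq\Sp$ the functor $\Sp(\forget)$ is $\forget\circ\Omega^\infty$, so $\Sp(\free_\sO)\simeq\Sigma^\infty\circ\free_\sO$, and the stabilized adjunction is the composite of $\free_\sO\dashv\forget$ with $\Sigma^\infty\dashv\Omega^\infty$;
\item in the differentiable category $\Alg_\sO(\Sp)$ one has $\Omega^\infty\Sigma^\infty\simeq\colim_m\Omega^m\Sigma^m$, together with $\forget\circ\Sigma^m\circ\free_\sO\simeq\Sym_\sO\circ\Sigma^m$;
\item hence the unit becomes $X\to\Sym_\sO(X)\to P_1(\Sym_\sO)(X)$, which is $P_1(\eta)$, namely the inclusion of $\sO_1\otimes X\simeq X$.
\end{itemize}

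Second, matching $\Omega^\infty$ with $\triv_\sO$ uses nothing about ``the linear part'' of restriction along $\sO\to\unit$; it uses only $\forget\circ\triv_\sO\simeq\id_\Sp$. With $\tilde t$ your left exact lift of $\triv_\sO$, you get directly $\Sp(\forget)\circ\tilde t\simeq\forget\circ\Omega^\infty\circ\tilde t\simeq\forget\circ\triv_\sO\simeq\id_\Sp$. So $\tilde t$ is inverse to the equivalence $\Sp(\forget)$, and passing to left adjoints gives $\cot_\sO\simeq\Sp(\forget)\circ\Sigma^\infty$.

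Third, the paragraph about operations on $\Omega^m X_{n+m}$ is only heuristic. At best it shows that individual operations on a spectrum object are null, which is far from identifying the $\infty$-category $\Sp(\Alg_\sO(\Sp))$. The $P_1$ computation is the coherent form of that intuition, so you can drop the heuristic.
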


\begin{proposition}[{\cite[Proposition 3.28]{BrantnerCamposNuiten}}]
	The comonad $\cot_\sO \triv_\sO$ is equivalent to $B\sO \circ -$, where $\circ$ denotes the left action of $\SSeqnu(\Sp)$ on $\sM$.
	The adjunction between $\cot_\sO$ and $\triv_\sO$ therefore induces an adjunction
	\[
	\begin{tikzcd}
		\indec_\sO \colon \LMod_\sO(\sM) \ar[r, shift left] & \LcoMod_{B\sO}(\sM) \colon \prim_{B\sO} \ar[l, shift left]
	\end{tikzcd}
	\]
	We have the formulas $\indec_\sO(M) \simeq B(\unit, \sO, M)$ and $\prim_{B\sO}(N) \simeq C(\unit, B\sO, N)$.
\end{proposition}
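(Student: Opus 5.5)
The plan is to identify the comonad $\cot_\sO \triv_\sO$ with $B\sO \circ -$ by exhibiting it as right $\SSeqnu(\Sp)$-linear and then reading off the coefficient sequence. We argue first for $\sM = \SSeqnu(\Sp)$ and deduce the case $\sM = \Sp$ by restricting to symmetric sequences concentrated in arity $0$, which is compatible with the left action.

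\textbf{Step 1: the underlying functor.} The functors $\triv_\sO$ and $\forget_\sO$ are restriction along $\sO \to \unit$ and $\unit \to \sO$, so $\forget_\sO \triv_\sO \simeq \id$. Using the bar formula $\cot_\sO(M) \simeq B(\unit, \sO, M)$ and the fact that $\circ$ commutes with geometric realizations in the first variable, we get for $N \in \sM$
\[
\cot_\sO \triv_\sO(N) \simeq B(\unit,\sO,\triv_\sO N) \simeq B(\unit,\sO,\unit)\circ N = B\sO \circ N.
\]
The second equivalence needs justification. Here $\triv_\sO N$ is $N$ with the left $\sO$-action through the augmentation, that is, $\unit \circ N$ with $\unit$ viewed as an $(\sO,\unit)$-bimodule. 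The simplicial objects $\unit \circ \sO^{\circ k} \circ (\unit \circ N)$ and $(\unit \circ \sO^{\circ k} \circ \unit)\circ N$ agree by associativity of $\circ$. Geometric realization commutes with $-\circ N$ because $A \mapsto A\circ N$ preserves colimits (\cref{rem: day-convolution-and-composition}). This identifies the functors.

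\textbf{Step 2: the comonad structure.} The substantive point is that the comultiplication of $\cot_\sO\triv_\sO$ corresponds to the cooperad structure on $B\sO$ from \cref{prop:KDoperads}. The approach parallels the proof of \cref{prop: construction-cocom-cooperad}.

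The functors $\free_\sO, \forget_\sO, \cot_\sO, \triv_\sO$ are all compatible with the right action of $(\SSeqnu(\Sp),\circ)$ on $\LMod_\sO$ and $\sM$, since composing on the right commutes with left module structures. Concretely, $\triv_\sO$ is strictly linear, and $\cot_\sO = \unit \circ_\sO -$ is linear because $(\unit\circ_\sO M)\circ A \simeq \unit\circ_\sO(M\circ A)$, again since $-\circ A$ preserves geometric realizations. By \cite[Corollary 7.3.2.7]{HA} the adjunction is therefore one of right $\SSeqnu(\Sp)$-linear functors. Hence the comonad $\cot_\sO\triv_\sO$ is a right-linear comonad on $\SSeqnu(\Sp)$, and such a comonad is $\sQ\circ -$ for a cooperad $\sQ$ with underlying sequence $\cot_\sO\triv_\sO(\unit) \simeq B\sO$.

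It remains to match $\sQ$ with the cooperad $B\sO$ of \cref{prop:KDoperads}. I expect this to be the main obstacle, because it depends on how the coalgebra structure on $B(\unit,\sO,\unit)$ was constructed in \cite{heuts2024koszulduality}. My first attempt would use that that construction is itself the comonad of the adjunction $\unit\circ_\sO - \dashv \triv$ on $\LMod_\sO$, evaluated at $\unit$; in that case the two agree by definition. Failing that, I would compare both structures through the universal property of the bar construction as the dual of the endomorphism algebra of $\unit$.

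\textbf{Step 3: the induced adjunction and formulas.} Once the comonad is identified, the comparison functor sends $M$ to $\cot_\sO M$ with its $B\sO$-coaction, which gives $\indec_\sO$ with $\indec_\sO(M)\simeq B(\unit,\sO,M)$. Its right adjoint is obtained by the standard formula for coalgebras over a comonad: the equalizer, or more precisely the totalization, of $\triv_\sO$ applied to the cobar resolution. This yields $\prim_{B\sO}(N)\simeq \Tot\, \triv_\sO((B\sO)^{\circ\bullet}\circ N) \simeq C(\unit,B\sO,N)$. The last equivalence holds because the forgetful functor from $\LMod_\sO$ preserves limits, so the underlying object is computed by the cosimplicial cobar object.
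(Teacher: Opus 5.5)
The gap is the one you flag yourself at the end of Step~2, and it is where essentially all of the content of the statement lies. Your right-linearity argument (the same device the paper uses for \cref{prop: construction-cocom-cooperad}) only produces \emph{some} cooperad structure $\sQ$ on the symmetric sequence $B(\unit,\sO,\unit)$ with $\cot_\sO\triv_\sO\simeq\sQ\circ-$, namely the one coming from the coendomorphism comonad of $\cot_\sO$.

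The proposition asserts more: $\sQ$ is the cooperad $B\sO$ of \cref{prop:KDoperads}, i.e.\ the one with $C(B\sO)\simeq\sO$. The paper relies on exactly this later. In \cref{prop: derivatives-id-Lie}, and in the steps $(5)\Rightarrow(2)\Rightarrow(1)$ of \cref{thm: characterization}, the cooperad $B\sO$ is identified with $\coCom$ through its comodules, i.e.\ through the comonad $\cot_\sO\triv_\sO$. That identification is then converted into $\sO\simeq C(\coCom)=\LL$ by bar--cobar duality.

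Your first route is not available. Nothing in the paper defines the cooperad of \cref{prop:KDoperads} as this coendomorphism comonad; it is the cooperad of the bar--cobar equivalence of Ching and \cite[Theorem 3.4]{heuts2024koszulduality}. The paper gives no proof of the present statement and instead quotes it from \cite[Proposition 3.28]{BrantnerCamposNuiten}, precisely because matching the two coalgebra structures on $B(\unit,\sO,\unit)$ is not a tautology. Your second route is only a slogan. What you need is an actual comparison, for instance showing that both cooperads corepresent the same functor on $\coOp(\Sp)$. The coendomorphism cooperad is initial among cooperads $\sQ'$ equipped with a compatible coaction $\cot_\sO\to\sQ'\circ\cot_\sO$ of right-linear functors. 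By contrast, $B\sO$ corepresents $\sQ'\mapsto\Map_{\Op(\Sp)}(\sO,C\sQ')$. You would have to identify these two kinds of data.

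A smaller slip: $\SSeqnu(\Sp)$ contains no nonzero sequences concentrated in arity $0$, so you cannot obtain the case $\sM=\Sp$ by restricting the case $\sM=\SSeqnu(\Sp)$. Instead, run the linearity argument in $\LMod_\sO(\SSeq(\Sp))$ with the right action of all of $\SSeq(\Sp)$. The arity-$0$ part $\Sp\subset\SSeq(\Sp)$ is stable under the left action, and $\cot_\sO$ and $\triv_\sO$ preserve the corresponding subcategories. Then $T(X)\simeq T(\unit)\circ X$ holds for arity-$0$ $X$ as well. Steps~1 and~3 are fine: the identification of the underlying functor, and $\prim_{B\sO}$ as the totalization of $\triv_\sO$ applied to the cobar resolution, computed on underlying objects since $\forget_\sO$ preserves limits.
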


This adjunction turns out to be an equivalence in case $\sM = \SSeqnu(\Sp)$.

\begin{proposition}[{\cite[Theorem 14.17]{heuts2024koszulduality}}] \label{prop: koszul-duality-left-modules}
	The adjunction between $\indec_\sO$ and $\prim_{B\sO}$ yields an equivalence
	\[
	\LMod_\sO(\SSeqnu(\Sp)) \simeq \LcoMod_{B\sO}(\SSeqnu(\Sp)).
	\]
    The same holds if we replace $\SSeqnu(\Sp)$ by $\RMod_{\sP}(\SSeqnu(\Sp))$ for an operad $\sP$.
\end{proposition}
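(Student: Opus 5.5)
The plan is to prove that the unit $M \to \prim_{B\sO}\indec_\sO(M)$ and the counit $\indec_\sO\prim_{B\sO}(N) \to N$ are equivalences. The argument is an induction on arity, and it rests on one finiteness property of $\SSeqnu(\Sp)$. Since $\sO$ and $B\sO$ are reduced and all symmetric sequences vanish in arity $0$, the arity-$n$ term of any iterated composite $\sO^{\circ k}\circ M$ or $B\sO^{\circ k} \circ N$ depends only on the terms of $M$ or $N$ in arities $\le n$. Moreover, in the cosimplicial object computing $C(\unit, B\sO, N)$, the arity-$n$ term is built from partition trees of height $\le n-1$ with non-identity vertices. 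So in each fixed arity, the cobar totalization is the totalization of a cosimplicial object that is left Kan extended from a finite skeleton; the same holds for the bar geometric realization.

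\textbf{Step 1: finiteness in each arity.} The first consequence is that in each arity $\Tot$ is a finite limit, hence a finite colimit since $\Sp$ is stable. Therefore both $\indec_\sO$ and $\prim_{B\sO}$ preserve fiber and cofiber sequences. They also commute with the arity truncation functors $(-)_{\le n}$ and with the filtrations these induce.

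\textbf{Step 2: base case of trivial objects.} For a symmetric sequence $X$ regarded as a trivial $\sO$-module, we have $\indec_\sO(\triv X) \simeq B\sO \circ X$, the cofree $B\sO$-comodule. The cobar construction $C(\unit, B\sO, B\sO\circ X)$ has an extra codegeneracy, so it is $X$, and the unit is an equivalence on trivial modules. Dually, on cofree comodules $B\sO\circ X$ the counit is an equivalence: $\prim_{B\sO}(B\sO\circ X)\simeq X$, and $\indec_\sO$ of the resulting trivial module is again $B\sO\circ X$.

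\textbf{Step 3: induction on arity.} For a general $M$, filter by the truncations $M_{\le n}$. The fiber of $M_{\le n}\to M_{\le n-1}$ is concentrated in arity $n$. Because $\sO$ is reduced, the module structure on a sequence concentrated in a single arity is trivial, so this fiber is of the form $\triv$ of its underlying sequence. Since both functors preserve these fiber sequences and commute with truncation (Step 1), induction on $n$ shows that the unit is an equivalence on each $M_{\le n}$. As arity-$n$ values only see $M_{\le n}$, the unit is an equivalence on $M$. The same induction handles comodules: a comodule concentrated in one arity is a trivial comodule, and such a comodule sits in a fiber sequence built from cofree ones via its one-step cobar resolution. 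I expect the cleaner route is instead to show that $\indec_\sO$ is conservative and combine this with the unit equivalence. Conservativity holds because the lowest nonvanishing arity of $M$ coincides with that of $\indec_\sO M$, with the same value there. Essential surjectivity then follows from the counit being an equivalence after applying the conservative functor $\prim$, which is again verified by the arity induction.

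\textbf{Step 4: right $\sP$-modules.} For $\RMod_{\sP}(\SSeqnu(\Sp))$, all constructions are formed from left composition with $\sO$ and $B\sO$. This commutes with the right $\sP$-action, and the forgetful functor to $\SSeqnu(\Sp)$ is conservative and preserves the relevant finite limits and colimits. So the unit and counit can be checked on underlying symmetric sequences, and this case follows from the previous one.

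The main obstacle is Step 1: justifying carefully that $\Tot$ in each arity is effectively finite, so that $\prim_{B\sO}$ commutes with the colimits and truncations used in the induction. I would establish this by identifying the normalized cochains in arity $n$ with sums over trees with at most $n-1$ internal non-unary levels, which vanish in cosimplicial degree $\ge n$.
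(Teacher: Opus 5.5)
\textbf{Verdict: there is a genuine gap.} Your Step 2 base cases are purely formal: $\indec_\sO\circ\triv_\sO\simeq\cofree_{B\sO}$ and $\prim_{B\sO}\circ\cofree_{B\sO}\simeq\triv_\sO$ hold for the comparison functor of any adjunction. So all of the content has to come from the inductive step. That step rests on the Step 1 claim that $\indec_\sO$ and $\prim_{B\sO}$ preserve fiber and cofiber sequences, and arity-wise finiteness does not give this.

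Finiteness concerns the (co)simplicial direction. The actual obstruction is that $M\mapsto(\sO^{\circ k}\circ M)_n$ and $N\mapsto(B\sO^{\circ k}\circ N)_n$ are nonlinear, and that fibers in $\LcoMod_{B\sO}$ are not computed on underlying sequences. For instance, take $M=\triv_\sO(X)$ with $X$ concentrated in arities $1,2$. Your sequence $\triv_\sO(X_2)\to M\to\triv_\sO(X_1)$ is sent to $\cofree_{B\sO}(X_2)\to\cofree_{B\sO}(X)\to\cofree_{B\sO}(X_1)$. Its underlying arity-$3$ terms are $0\to (B\sO_3\otimes X_1^{\otimes 3})\oplus\bigoplus_{3}B\sO_2\otimes X_1\otimes X_2\to B\sO_3\otimes X_1^{\otimes 3}$, which is not a fiber sequence once $\sO_2\neq 0$.

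This sequence is a fiber sequence in $\LcoMod_{B\sO}$, but only because $\cofree_{B\sO}$ is a right adjoint. For a general $M$, the claim that $\indec_\sO$ carries $\triv_\sO(M_n)\to M_{\le n}\to M_{\le n-1}$ to a fiber sequence of comodules is essentially the statement you are proving. Likewise, ``commuting with truncation'' only means that $\indec_\sO M\to\indec_\sO M_{\le n}$ is an equivalence in arities $\le n$.

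Suppose you restrict to arities $\le n$, where everything is computable. The sequences are then fine in arity $n$, but the map of fiber sequences in arity $n$ has right-hand component $0\to(\prim_{B\sO}\indec_\sO M_{\le n-1})_n$. So you need $(\prim_{B\sO}\indec_\sO M')_n\simeq 0$ for every $M'$ concentrated in arities $<n$. Your inductive hypothesis controls only arities $\le n-1$, so it does not give this. This vanishing is the non-formal heart of the theorem. Already for $M=\free_\sO(\unit)=\sO$, the statement says $C(\unit,B\sO,\unit)\simeq\sO$, i.e.\ operadic bar--cobar duality (\cref{prop:KDoperads}), which your argument never invokes.

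\textbf{A route that works.}
(i) On free modules, $\indec_\sO(\sO\circ X)\simeq\triv_{B\sO}(X)$ and $\prim_{B\sO}\triv_{B\sO}(X)\simeq\Tot\bigl(C^\bullet(\unit,B\sO,\unit)\circ X\bigr)$. In each arity this $\Tot$ is finite, and $A\mapsto(A\circ X)_m$ is a finite sum of terms $A_j\otimes(\cdots)$, hence exact in $A$. So you get $C(\unit,B\sO,\unit)\circ X\simeq\sO\circ X$, and the unit is an equivalence on free modules.
(ii) In each arity, $\prim_{B\sO}$ commutes with geometric realizations, being a finite $\Tot$ of sifted-colimit-preserving functors in $\Sp$, and colimits of comodules are computed underlying. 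Resolving $M$ by free modules then gives the unit in general. This is where your finiteness observation genuinely pays off.
(iii) $\prim_{B\sO}$ is conservative, by induction on arity using the fiber sequence $\prim_{B\sO}(N_{<m})_m\to\prim_{B\sO}(N)_m\to N_m$ for the subcomodule $N_{<m}$. Together with the unit, this gives the counit.

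Two smaller points. Matching lowest nonvanishing arities only shows that a functor detects zero objects. Since neither category is stable, conservativity needs such arity-wise (co)fiber sequences, and it is conservativity of $\prim_{B\sO}$, not of $\indec_\sO$, that combines with the unit. Also, for $\Tot$ to be finite you want the cosimplicial object right Kan extended (coskeletal) from a finite skeleton, not left Kan extended.
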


\begin{remark}
	In contrast to the previous proposition, the adjunction
    \[
    \begin{tikzcd}
        \indec_\sO \colon \Alg_{\sO}(\Sp) \ar[r, shift right] & \coAlgdpnil_{B\sO}(\Sp) \ar[l, shift right] \colon \prim_{B\sO}
    \end{tikzcd}
    \]
    is in general \emph{not} an equivalence of categories.
\end{remark}

The basic feature of the adjunction $(\indec_\sO, \prim_{B\sO})$ is that it interchanges the roles of free and trivial (co)modules.
More precisely, we have the following pair of commutative diagrams (see \cite[Sections 6-7]{heuts2024koszulduality} for a proof):
\[
\begin{tikzcd}
	& \LMod_\sO(\sM) \ar[dr, "\cot_\sO"] \ar[dd, "\indec_\sO"] & & & \LMod_\sO(\sM) \ar[dl, "\forget_\sO"'] & \\
	\sM \ar[dr, "\triv_{B\sO}"'] \ar[ur, "\free_\sO"] & & \sM & \sM & & \sM \ar[dl, "\cofree_{B\sO}"] \ar[ul, "\triv_\sO"'] \\
	& \LcoMod_{B\sO}(\sM) \ar[ur, "\forget_{B\sO}"'] & & & \LcoMod_{B\sO}(\sM) \ar[uu, "\prim_{B\sO}"'] \ar[ul] & 
\end{tikzcd}
\]

All functors in the first diagram are left adjoints, and the second diagram is obtained from the first by passing to right adjoints.
The functor $\triv_{B\sO}$ is the trivial $B\sO$-coalgebra functor.
It can be constructed as the pushforward along $\unit \to B\sO$.

\section{Calculus} \label{sec: Calculus}

The purpose of this section is to explain how Goodwillie calculus and the generalized chain rule from \cite{blansblom2025chainrulegoodwilliecalculus} can be used to study operads and their algebras.

\subsection{Goodwillie derivatives}

We say an $\infty$-category $\sC$ is \emph{differentiable} if it is pointed, presentable, and filtered colimits commute with finite limits in $\sC$.
We say a functor $F \colon \sC \to \sD$ between differentiable $\infty$-categories is \emph{reduced} if it preserves the zero object, and \emph{finitary} if it preserves filtered colimits.
We write $\Fun^{\ast, \omega}(\sC, \sD)$ for the category of reduced finitary functors from $\sC$ to $\sD$.

Suppose that $\sC$ and $\sD$ are differentiable $\infty$-categories with specified equivalences $\Sp(\sC) \simeq \Sp \simeq \Sp(\sD)$.
Then Goodwillie calculus attaches to any reduced finitary functor $F \colon \sC \to \sD$ a nonunital symmetric sequence
\[
\partial_*F \in \SSeqnu(\Sp)
\]
called the \emph{derivatives} of $F$.

Our main tool for studying operads and algebras through Goodwillie calculus is the following theorem, which was one of the main results of \cite{blansblom2025chainrulegoodwilliecalculus}.

\begin{theorem}[{\cite[Theorem 3.3.2]{blansblom2025chainrulegoodwilliecalculus}}] \label{thm: lax-derivatives}
    Let $\sC$ be a differentiable $\infty$-category with a specified equivalence $\Sp(\sC) \simeq \Sp$.
    Then the functor
    \[
    \partial_* \colon \Fun^{\ast, \omega}(\sC, \sC) \to \SSeqnu(\Sp)
    \]
    admits a lax monoidal structure, where the source is equipped with functor composition and the target with the composition product.
    If $\sC \simeq \Sp$, then the functor is strong monoidal.
\end{theorem}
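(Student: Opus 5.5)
The plan is to construct the lax monoidal structure by exploiting the universal property of the Goodwillie tower together with the layer-wise description of derivatives. One preliminary remark on scope: this is Theorem 3.3.2 of \cite{blansblom2025chainrulegoodwilliecalculus}, and a complete proof would rest on their machinery. Everything below is an outline, not a self-contained argument.

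\textbf{Step 1: reduce to stable targets.} Using the specified equivalence $\Sp(\sC) \simeq \Sp$, the derivatives of $F$ can be extracted from the functor
\[
\Sigma^\infty_{\sC} F \Omega^\infty_{\sC} \colon \Sp \to \Sp.
\]
More precisely, $\partial_* F$ is recovered from the multilinearized cross effects of this stabilized functor, in the spirit of Arone--Ching. The first concrete task is to define this construction functorially as an assignment
\[
\Fun^{\ast,\omega}(\sC,\sC) \to \SSeqnu(\Sp).
\]
The cleanest way to do this is as the composite of:
\begin{itemize}
\item the bimodule-type functor $F \mapsto \Sigma^\infty F \Omega^\infty$, which is lax monoidal for composition because the counit $\Sigma^\infty\Omega^\infty \to \id$ supplies maps $\Sigma^\infty F\Omega^\infty\Sigma^\infty G\Omega^\infty \to \Sigma^\infty FG\Omega^\infty$;
\item followed by the derivative functor on $\Fun^{\omega}(\Sp,\Sp)$.
\end{itemize}
Here the composition maps $\Sigma^\infty F\Omega^\infty\Sigma^\infty G\Omega^\infty \to \Sigma^\infty FG\Omega^\infty$ are exactly the structure maps needed for laxness.

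\textbf{Step 2: the case $\sC \simeq \Sp$.} Here I would prove that $\partial_*$ is strong monoidal on finitary reduced endofunctors of spectra. I would realize $\partial_*$ through the equivalence between analytic/$n$-excisive functors and symmetric sequences, $A \mapsto \Sym_A$, with $\Sym$ monoidal as recalled earlier. The idea is that $\partial_*$ is right adjoint, or more precisely inverse on Taylor towers, to $\Sym$. The key computation is the chain rule for spectra:
\[
\partial_*(FG) \simeq \partial_*F \circ \partial_*G.
\]
This can be checked on towers by reducing to homogeneous layers and using that $P_n$ of a composite depends only on $P_n F$ and $P_n G$. Coherence of this identification is then obtained by working $\infty$-categorically with a monoidal Yoneda or localization argument: $\Fun^{\omega}(\Sp,\Sp)$ localized at maps inducing equivalences on all $P_n$ is monoidally equivalent to pro-symmetric sequences.

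\textbf{Step 3: general $\sC$.} Combine Steps 1 and 2. Composing a lax monoidal functor with a strong monoidal one gives a lax monoidal structure on $\partial_*$.

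\textbf{Main obstacle.} The hardest part is the $\infty$-categorical coherence in Step 2, namely promoting the pointwise chain rule to a monoidal functor. Derivatives are only defined via limits of towers, and finitary functors are not determined by their derivatives. So one must show that the localization to Taylor towers is compatible with composition, i.e.\ that composition preserves $P_\infty$-equivalences. I would first try to prove this by showing that $P_n(F\circ G) \simeq P_n(P_nF \circ P_nG)$ naturally, which makes the localization monoidal.
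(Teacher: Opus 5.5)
The paper does not prove this statement; it quotes it from Blans--Blom. I am therefore judging your outline on its own terms, and it breaks already in Step~1.

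\textbf{Step 1 computes the wrong functor, with the wrong variance.} The composite you build is $F \mapsto \partial_*(\Sigma^\infty_\sC F \Omega^\infty_\sC)$, and this is not $\partial_* F$.
\begin{itemize}
\item Counterexample: take $\sC = \Spc_*$ and $F = \id$. Your functor outputs $\partial_*(\Sigma^\infty\Omega^\infty)$, which is $\Sph$ with trivial action in every arity. By contrast, $\partial_2 \id_{\Spc_*} \simeq \LL(2) \simeq \Sph^{-1}$.
\item The reason: derivatives are the multilinearized cross effects of $F$ itself, computed in $\sC$ and then transported to $\Sp(\sC)$. Precomposing with $\Omega^\infty_\sC$ does not preserve them, because $\Omega^\infty_\sC$ turns sums into products and so does not commute with cross effects. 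Postcomposing with $\Sigma^\infty_\sC$ does not preserve them either; for instance $\partial_2 \Sigma^\infty = 0$.
\item In general, as the chain rule later shows, $\partial_*(\Sigma^\infty_\sC F \Omega^\infty_\sC) \simeq \unit \circ_{\partial_*\id_\sC} \partial_* F \circ_{\partial_*\id_\sC} \unit$. This is the Koszul dual bicomodule, not $\partial_*F$.
\item The monoidality claim also has the wrong variance. The composite $\Sigma^\infty F \Omega^\infty \Sigma^\infty G \Omega^\infty$ contains the monad $\Omega^\infty\Sigma^\infty$ in the middle, not $\Sigma^\infty\Omega^\infty$, so the counit cannot be applied there. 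The only available map is the unit $\id_\sC \to \Omega^\infty\Sigma^\infty$. It gives $\Sigma^\infty FG \Omega^\infty \to \Sigma^\infty F \Omega^\infty \Sigma^\infty G \Omega^\infty$.
\item Together with $\Sigma^\infty\Omega^\infty \to \id$, this makes conjugation \emph{oplax} monoidal. That is exactly why $\partial_*(\Sigma^\infty_\sC\Omega^\infty_\sC)$ is a cooperad rather than an operad.
\end{itemize}
So Steps 1 and 3 yield, at best, an oplax structure on the Koszul dual side. To reach a lax structure on $\partial_*$ you would have to recover $\partial_* F$ from that bicomodule by a cobar construction, naturally and compatibly with composition. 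That is essentially Arone--Ching's route in point-set models, and it is a substantial theorem in its own right. You cannot borrow it from the paper either: \cref{prop: Koszul-dual-derivative-id,prop: derivatives-stabilization-indec} are stated in terms of the operad and bimodule structures that the present theorem supplies. What is missing is a construction of coherent maps $\partial_*F \circ \partial_*G \to \partial_*(FG)$ that works with the cross effects of $F$ and $G$ themselves.

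\textbf{Step 2 has an independent gap.} The coherence mechanism you propose rests on a false claim. $\partial_*$ is not inverse to $\Sym$ on Taylor towers. Localizing $\Fun^{\ast,\omega}(\Sp,\Sp)$ at $P_\infty$-equivalences does not give (pro-)symmetric sequences, because Taylor towers of functors of spectra are not determined by their derivatives: the layers are glued by Tate data.
\begin{itemize}
\item For example, $P_2(\Sigma^\infty\Omega^\infty)$ restricted to spectra is a nonsplit extension of $X$ by $X^{\otimes 2}_{h\Sigma_2}$. Its extension class is built from the Tate diagonal $X \to (X^{\otimes 2})^{t\Sigma_2}$.
\item By contrast, $\Sym$ applied to its first two derivatives is split.
\item This is why Arone--Ching's classification of Taylor towers needs extra divided power structure.
\end{itemize}
So the pointwise chain rule, even combined with $P_n(FG) \simeq P_n(P_nF \circ P_nG)$, does not upgrade to a monoidal functor along the lines you describe. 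Note that the paper treats even the monoidal identification $\partial_* \circ \Sym \simeq \id$ as a separate, nontrivial input (\cref{ex: der-retraction-sym}).
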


This result implies that $\partial_*$ sends reduced finitary monads to operads.
In particular, $\partial_*{\id_\sC}$ acquires the structure of an operad.
Since any reduced finitary functor $F \colon \sC \to \sC$ is trivially a bimodule over $\id_{\sC}$, it also follows that the derivatives $\partial_*F$ can be equipped with the structure of a $\partial_*{\id_\sC}$-bimodule.

The fact that $\partial_*$ is a strong symmetric monoidal functor for endofunctors of spectra is referred to as the \emph{stable chain rule}.
It implies that $\partial_*(\Sigma^\infty_\sC \Omega^\infty_\sC)$ inherits the structure of a cooperad from the comonad $\Sigma^\infty_\sC \Omega^\infty_\sC$, where we write
\[
\begin{tikzcd}
\Sigma^\infty_\sC \colon \sC \ar[r, shift left] & \Sp \colon \Omega^\infty_\sC \ar[l, shift left]
\end{tikzcd}
\]
for the stabilization adjunction of $\sC$.
The operad $\partial_*{\id_\sC}$ and the cooperad $\partial_*(\Sigma^\infty_\sC\Omega^\infty_\sC)$ are related through Koszul duality:

\begin{proposition}[{\cite[Theorem 4.3.1]{blansblom2025chainrulegoodwilliecalculus}}] \label{prop: Koszul-dual-derivative-id}
    Let $\sC$ be a differentiable $\infty$-category with a specified equivalence $\Sp(\sC) \simeq \Sp$.
    Then there is an isomorphism of operads
    \[
    \partial_*{\id_\sC} \simeq \Cobar \partial_*(\Sigma^\infty_\sC \Omega^\infty_\sC).
    \]
\end{proposition}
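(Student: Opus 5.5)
The plan is to reduce the statement to two things: the existence of a map of operads, and an equivalence on underlying symmetric sequences computed through the cosimplicial cobar resolution of the identity functor. Write $K = \Sigma^\infty_\sC \Omega^\infty_\sC$ for the stabilization comonad on $\Sp$.

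\textbf{Step 1: linear derivatives.} The functors $\Sigma^\infty_\sC$ and $\Omega^\infty_\sC$ are reduced, finitary and linear. Under the chosen identification $\Sp(\sC)\simeq\Sp$, their derivatives are both the unit $\unit$.

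\textbf{Step 2: constructing the operad map.} The unit $\eta\colon \id_\sC \to \Omega^\infty_\sC\Sigma^\infty_\sC$ is a map of reduced finitary monads. The pair $(\Sigma^\infty_\sC,\Omega^\infty_\sC)$ makes $\Sigma^\infty_\sC$ a left $K$-comodule and a right $\id_\sC$-module, and the two structures commute.
\begin{itemize}
\item I would upgrade \cref{thm: lax-derivatives} to the two-variable situation: endofunctors of $\sC$, endofunctors of $\Sp$, and functors between $\sC$ and $\Sp$. The lax structure on these mixed composites is expected to be strong whenever one of the two categories is $\Sp$.
\item Applying $\partial_*$ then makes $\partial_*\Sigma^\infty_\sC\simeq\unit$ a $(\partial_*K,\partial_*\id_\sC)$-bicomodule–module.
\item The right $\partial_*\id_\sC$-action on $\unit$ factors through the augmentation. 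This induces a map of symmetric sequences $B(\unit,\partial_*\id_\sC,\unit)\to\partial_*K$.
\item One should check that this map is a map of cooperads, for instance by identifying it with the canonical comparison $\unit\circ_{\partial_*\id}\unit \to \partial_*(\Sigma^\infty_\sC \circ_{\id}\Omega^\infty_\sC)$.
\item By adjunction (\cref{prop:KDoperads}) it corresponds to an operad map $\phi\colon\partial_*\id_\sC\to \Cobar\,\partial_*K$.
\end{itemize}

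\textbf{Step 3: checking $\phi$ is an equivalence.} Since $B\dashv C$ is an adjoint equivalence, it suffices to check that $\phi$ is an equivalence of symmetric sequences. For this I would use the cosimplicial object
\[
\Omega^\infty_\sC K^{\bullet}\Sigma^\infty_\sC
\]
coaugmented by $\id_\sC$, in the style of Arone--Ching.
\begin{itemize}
\item By Step 1 and the strong chain rule on $\Sp$, its derivatives in cosimplicial degree $n$ are $(\partial_*K)^{\circ n}$. The resulting cosimplicial diagram is exactly the one whose totalization is $C(\unit,\partial_*K,\unit)$, compatibly with $\phi$.
\item It then remains to show that the coaugmentation induces an equivalence $\partial_*\id_\sC\to\Tot\,\partial_*(\Omega^\infty_\sC K^\bullet\Sigma^\infty_\sC)$.
\item First, the Taylor tower of $\id_\sC$ agrees with that of the $K$-completion $\Tot\,\Omega^\infty_\sC K^\bullet\Sigma^\infty_\sC$. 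The reason is that each $P_n$ commutes with this particular totalization: the $n$-excisive parts of the codegeneracy-normalized terms vanish above cosimplicial degree roughly $n$, because $\partial_k(K^{\circ m})$ for reduced $K$ involves only trees of height $m$ with $k$ leaves, all of whose vertices have arity at least $2$ after normalization.
\item Second, $\partial_n$ commutes with finite totalizations.
\end{itemize}

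\textbf{Main obstacle.} I expect this convergence and interchange step to be the hardest part: showing that $\partial_n$ commutes with $\Tot$ and that the Taylor tower of $\id_\sC$ is recovered from the cosimplicial resolution, in an arbitrary differentiable $\sC$ rather than $\Spc_*$. I would first try to prove directly that the map $P_n\id_\sC\to P_n\Tot(\cdots)\simeq \Tot\, P_n(\cdots)$ is an equivalence. The approach is to reduce to the layers $D_k$, $k\le n$, using the degree-wise vanishing of the normalized cosimplicial derivatives. A secondary technical issue is making the mixed-category chain rule in Step 2 coherent enough to produce a genuine cooperad map rather than just a map of symmetric sequences.
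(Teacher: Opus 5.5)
The paper does not prove this statement itself; it quotes it from \cite[Theorem 4.3.1]{blansblom2025chainrulegoodwilliecalculus}. Judged on its own, your Steps 1 and 2 are a reasonable construction of a comparison map. Step 3, which carries all of the content, has a genuine gap: nothing in it shows that the coaugmentation induces $\partial_*\id_\sC\simeq\Tot\,\partial_*(\Omega^\infty_\sC K^\bullet\Sigma^\infty_\sC)$. Vanishing of $P_n$ on the normalized terms in high cosimplicial degree at best shows that $P_n\Tot^M$ stabilizes in $M$. It does not let you commute $P_n$ with the inverse limit $\Tot=\lim_M\Tot^M$. Already for spectrum-valued functors on finite spectra, an inverse limit of $2$-homogeneous functors can have nonzero $P_1$, the discrepancy being a Tate construction. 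In $\Spc_*$ this is exactly where Arone--Kankaanrinta and Arone--Ching use connectivity estimates, and these have no analogue in an arbitrary differentiable $\sC$. More seriously, even granting $P_n\Tot\simeq\Tot P_n$, you never relate $P_n\id_\sC$ to $P_n$ of the $K$-completion. In pointed spaces that input is the convergence $X\simeq\Tot\,\Omega^\infty K^\bullet\Sigma^\infty X$ for simply connected $X$, a genuinely space-level theorem; for general $\sC$ there is no such convergence result. Your fallback of reducing to the layers $D_k$, $k\le n$, is circular: $D_k\id_\sC\simeq\Tot D_k(\Omega^\infty_\sC K^\bullet\Sigma^\infty_\sC)$ amounts to $\partial_k\id_\sC\simeq C(\unit,\partial_*K,\unit)_k$, which is the statement to be proved.

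The missing input is not a convergence statement. It is the chain rule for the composite $K=\Sigma^\infty_\sC\circ\Omega^\infty_\sC$ through $\sC$, i.e.\ that your Step 2 comparison map $\unit\circ_{\partial_*\id_\sC}\unit\to\partial_*K$ is an equivalence; once you have it, the cosimplicial resolution is unnecessary. With the results recorded in the paper, the strong functor $\partial_*\colon\diff_\Sp\to\MMor_\Sp$ sends the adjunction $\Sigma^\infty_\sC\dashv\Omega^\infty_\sC$ to an adjunction in $\MMor_\Sp$ between $\partial_*\id_\sC$ and $\unit$. Its $1$-morphisms are $\unit$ with the trivial module structures, forced for degree reasons by your Step 1. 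Functors of $(\infty,2)$-categories preserve adjunctions and their comonads, so $\partial_*K\simeq\unit\circ_{\partial_*\id_\sC}\unit=B\,\partial_*\id_\sC$ as cooperads; this also settles the coherence issue you flag in Step 2. Then \cref{prop:KDoperads} gives $\partial_*\id_\sC\simeq CB\,\partial_*\id_\sC\simeq\Cobar\,\partial_*K$. Whether this ordering is logically prior in \cite{blansblom2025chainrulegoodwilliecalculus} is a separate question; the point is that some form of the chain rule through $\sC$ is the ingredient your argument lacks.
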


\begin{example} \label{ex: der-identity-pointed-spaces}
    The derivatives of the identity functor in pointed spaces are isomorphic to the spectral Lie operad:
    \[
    \partial_*{\id_{\Spc_*}} \simeq \LL.
    \]
    This was first proved in a point set model by Ching in his thesis \cite{ChingThesis}.
    It is equivalent to the statement that $\partial_*(\Sigma^\infty \Omega^\infty)$ is isomorphic to the cocommutative cooperad, which was proved by Arone and Ching in \cite{AroneChingChainRule}, again in a point set model.
    A proof that the lax monoidal functor $\partial_*$ from \cref{thm: lax-derivatives} induces this operad structure has not yet appeared; we will give a proof in \cref{cor: derivatives-identity-pointed-spaces} below using some of the techniques from this paper.
    A different proof will appear in \cite{BlansBlom2026}.
\end{example}

\begin{example}
    Since the identity functor on any stable category is linear, we have
    \[
    \partial_*{\id_\Sp} \simeq \unit.
    \]
    In other words, the derivatives of the identity functor in spectra are isomorphic to the trivial operad.
\end{example}

\begin{example} \label{ex: der-identity-O-algebras}
    Let $\sO$ be an operad in spectra.
    Then the category $\Alg_\sO(\Sp)$ is differentiable and there is an equivalence of operads 
    \[
    \partial_*{\id_{\Alg_{\sO}(\Sp)}} \simeq \sO.
    \]
    In a different setting, this was first proved by Ching in \cite{ChingDayConvolution}.
    A proof that the lax monoidal functor $\partial_*$ from \cref{thm: lax-derivatives} induces this operad structure was given in the thesis of the first author \cite[Theorem 7.2.6]{Blans-Thesis}.
\end{example}

\begin{example} \label{ex: der-retraction-sym}
    It will be proved in \cite{BlansBlom2026} that the composite
    \[
    \begin{tikzcd}
        \SSeqnu(\Sp) \ar[r, "\Sym"] & \End^{\ast, \omega}(\Sp) \ar[r, "\partial_*"] & \SSeqnu(\Sp)
    \end{tikzcd}
    \]
    is equivalent as a strong monoidal functor to the identity functor of $\SSeqnu(\Sp)$.
    In particular, for any operad $\sO$ we have an equivalence $\partial_* \Sym_\sO \simeq \sO$ of operads.
    The same holds for any cooperad.
\end{example}

The derivatives construction admits even more functoriality.
To begin with, the lax structure can (in an appropriate sense) be extended to reduced finitary functors $F \colon \sC \to \sD$ where the source is not necessarily equivalent to the target.
As a consequence, $\partial_*F$ in this case admits the structure of a $(\partial_*{\id_\sD}, \partial_*{\id_\sC})$-bimodule in $\SSeqnu(\Sp)$.
The extra functoriality on $\partial_*$ amounts to the compatibility of this bimodule structure with functor composition.
It can be most conveniently expressed in the language of $(\infty, 2)$-categories, as we will now explain.

Write $\diff_{\Sp}$ for the $(\infty,2)$-category whose objects are differentiable $\infty$-categories $\sC$ with a specified equivalence $\Sp(\sC) \simeq \Sp$ and whose morphisms are reduced finitary functors.
We do not require the morphisms to be compatible with the equivalence $\Sp(\sC) \simeq \Sp$.
Making use of a construction from \cite{blom2025straighteningfunctor}, it was shown in \cite[Section 4.4.1]{blansblom2025chainrulegoodwilliecalculus} that there exists an $(\infty,2)$-precategory\footnote{An $(\infty, 2)$-precategory is a Segal object $\sX \colon \Delta^\op \to \Cat$ such that $\sX_0$ is a space. It is an $(\infty, 2)$-category if this Segal object is additionally complete. The distinction between these notions plays no role in this article.} $\MMor_\Sp$ --- the notation stands for \emph{Morita category} --- whose objects are the collection of operads $\sO \in \Op(\Sp)$.
For any pair of operads $\sO, \sP$, the category of morphisms from $\sO$ to $\sP$ in $\MMor_\Sp$ is given by
\[
\MMor_\Sp(\sO, \sP) \simeq \BMod_{(\sP, \sO)}(\SSeqnu(\Sp)),
\]
i.e.\ the category of $(\sP, \sO)$-bimodules in $\SSeqnu(\Sp)$.
Given a $(\sP, \sO)$-bimodule $N$ and a $(\sQ, \sP)$-bimodule $M$, their composition in $\MMor_\Sp$ is given by the relative composition product $M \circ_\sP N$.

\begin{theorem}[{\cite[Theorem 4.4.7]{blansblom2025chainrulegoodwilliecalculus}}]
    There is a strong functor of $(\infty, 2)$-precategories
    \[
    \partial_* \colon \diff_\Sp \to \MMor_\Sp
    \]
    that sends $\sC$ to the operad $\partial_*{\id_\sC}$.
    It sends a reduced finitary functor $F \colon \sC \to \sD$ to the $(\partial_*{\id_\sD}, \partial_*{\id_\sC})$-bimodule $\partial_*F$.
\end{theorem}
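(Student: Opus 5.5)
This theorem is imported from \cite[Theorem 4.4.7]{blansblom2025chainrulegoodwilliecalculus}, so the plan below only outlines how that argument is organized, in the form in which we use it.

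\emph{Step 1: build the source.} We present $\diff_\Sp$ as a Segal object $\Delta^\op \to \Cat$. Its $n$-simplices are strings
\[
\sC_0 \xrightarrow{F_1} \sC_1 \to \cdots \xrightarrow{F_n} \sC_n
\]
of reduced finitary functors. We realize $\MMor_\Sp$ via the straightening construction of \cite{blom2025straighteningfunctor}. Its $n$-simplices are sequences of operads $\sO_0,\dots,\sO_n$ together with compatible bimodules $M_{ij}$ for $i<j$, and the Segal maps are relative composition products $M_{jk}\circ_{\sO_j} M_{ij} \to M_{ik}$ that are required to be equivalences.

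\emph{Step 2: construct a lax functor.} We extend \cref{thm: lax-derivatives} from endofunctors of a single $\sC$ to composable strings. The source is the "$n$-object" composition category $\Fun^{\ast,\omega}$ on $\sC_0\sqcup\dots\sqcup\sC_n$, which only allows upward maps. The lax monoidal $\partial_*$ then sends a string to the upper-triangular matrix $(\partial_*(F_j\cdots F_{i+1}))_{i\le j}$. Its diagonal entries are the operads $\partial_*\id_{\sC_i}$, and its lax maps are
\[
\partial_*G\circ\partial_*F \to \partial_*(GF).
\]
These are compatible with the face and degeneracy maps. This yields a lax functor $\diff_\Sp\to\MMor_\Sp$ on objects and morphisms, as stated.

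\emph{Step 3: strongness, the main obstacle.} We must show that the induced map
\[
\partial_*G\circ_{\partial_*\id_\sD}\partial_*F \to \partial_*(GF)
\]
is an equivalence. This is the Arone--Ching chain rule. The plan is to reduce to the stable case, where \cref{thm: lax-derivatives} gives strong monoidality.

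Write $\partial_*\id_\sD$ as the cobar construction of the stabilization comonad (\cref{prop: Koszul-dual-derivative-id}). Then resolve $\id_\sD$ by the cosimplicial object $(\Omega^\infty\Sigma^\infty)^{\bullet+1}$ and identify
\[
\partial_* G\circ_{\partial_*\id_\sD}\partial_*F
\]
with a bar/cobar expression in the derivatives of the stable functors $\Sigma^\infty G\Omega^\infty$ and $\Sigma^\infty F\Omega^\infty$, which compose strongly.

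The delicate points are the following:
\begin{itemize}
\item commuting $\partial_*$ with the totalization, which requires finiteness or convergence in each arity, using that only finitely many terms contribute in each arity;
\item interchanging the bar construction with a totalization, using the Koszul duality equivalence for bimodules (\cref{prop: koszul-duality-left-modules} and its right-module variant).
\end{itemize}

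Once the chain rule holds for pairs, the Segal condition makes it hold for all simplices, and the lax functor is strong.
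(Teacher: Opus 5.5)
The paper does not prove this statement; it imports it from \cite[Theorem 4.4.7]{blansblom2025chainrulegoodwilliecalculus}. Judged on its own, your sketch breaks down at the step that carries the content of the theorem: strongness, i.e.\ the chain rule, in Step~3. There you need the coaugmentation $\partial_n(\Sigma^\infty_\sE GF\Omega^\infty_\sC)\to\Tot\,\partial_n\bigl(\Sigma^\infty_\sE G(\Omega^\infty_\sD\Sigma^\infty_\sD)^{\bullet+1}F\Omega^\infty_\sC\bigr)$ to be an equivalence. Your reason is that only finitely many cosimplicial degrees contribute in each arity. That only shows the \emph{target} is a finite totalization; it says nothing about the map. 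Neither $G$ nor $\Sigma^\infty_\sE$ commutes with totalizations. The map $GF\to\Tot\,G(\Omega^\infty_\sD\Sigma^\infty_\sD)^{\bullet+1}F$ is not an equivalence of functors in general. And $\partial_n$ does not commute with infinite limits. A further idea is needed. One option: in arity $n$, replace $\Sigma^\infty_\sE G$ by its $n$-excisive approximation, which is harmless for $\partial_n$ of the composites. Then induct up the Taylor tower, using that both sides are exact in $G$. Finally, use that each layer $D_k(\Sigma^\infty_\sE G)$ factors through $\Sigma^\infty_\sD$. On a layer the coaugmented resolution is then split by the counit $\Sigma^\infty_\sD\Omega^\infty_\sD\to\id$, hence a limit diagram that $\partial_n$ preserves.

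There is a second problem. To present $\partial_*G$ as the two-sided cobar construction on the bicomodule $\partial_*(\Sigma^\infty_\sE G\Omega^\infty_\sD)$, you need more than the left-hand statement \cref{prop: derivatives-stabilization-indec}. You also need the right-hand statement $\partial_*(G\Omega^\infty_\sD)\simeq\partial_*G\circ_{\partial_*\id_\sD}\unit$, which is itself a case of the chain rule you are proving. You can avoid this with a one-sided argument. First, $\indec_{\partial_*\id_\sE}$ is an equivalence on bimodules by \cref{prop: koszul-duality-left-modules}. Second, $\indec(\partial_*G\circ_{\partial_*\id_\sD}\partial_*F)\simeq\indec(\partial_*G)\circ_{\partial_*\id_\sD}\partial_*F$. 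So \cref{prop: derivatives-stabilization-indec} reduces the comparison to $\Sigma^\infty_\sE G$, i.e.\ to a stable target. There the tower argument above applies, and the homogeneous case comes down to \cref{prop: derivatives-stabilization-indec} together with the much easier chain rule for post-composition with a functor between stable categories.

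Finally, Step~2 is asserted rather than constructed. Composition of nonlinear functors on $\sC_0\times\cdots\times\sC_n$ is not multiplication of the triangular arrays of their components. Producing a lax structure coherently compatible with faces and degeneracies is exactly what the cited source uses the straightening machinery of \cite{blom2025straighteningfunctor} for. Until strongness is known, that structure lands in a non-Segal `lax' Morita category rather than in $\MMor_\Sp$ itself.
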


\begin{remark}
The fact that the functor from the previous theorem is a strong rather than lax functor of $(\infty, 2)$-precategories is an alternative formulation of what is commonly known as the chain rule in Goodwillie calculus.
Indeed, given a pair of reduced finitary functors $F \colon \sD \to \sE$ and $G \colon \sC \to \sD$, functoriality of $\partial_*$ amounts to the equivalence
\[
\partial_*(FG) \simeq \partial_*F \circ_{\partial_*{\id_\sD}} \partial_*G,
\]
which is the usual way of writing the chain rule.
\end{remark}

For $\sC, \sD \in \diff_\Sp$, the functor $\partial_* \colon \diff_\Sp \to \MMor_\Sp$ induces on mapping $\infty$-categories a functor
\[
\partial_* \colon \Fun^{\ast, \omega}(\sC, \sD) \to \BMod_{(\partial_*{\id_\sD}, \partial_*{\id_\sC})}(\SSeqnu(\Sp))
\]
that lifts the usual derivatives functor taking values in $\SSeqnu(\Sp)$.
This interacts with Koszul duality in the following way:

\begin{proposition}\label{prop: derivatives-stabilization-indec}
    Let $\sC, \sD \in \diff_\Sp$.
    Then the following diagram commutes
    \[
    \begin{tikzcd}[sep = large]
        \Fun^{\ast, \omega}(\sC, \sD) \ar[r, "\partial_*"] \ar[d, "\Sigma_\sD^\infty \circ -"] & \BMod_{\partial_*{\id_\sD}}(\SSeqnu(\Sp)) \ar[d, "\indec_{\partial_*{\id_\sD}}"] \\
        \LcoMod_{\Sigma^{\infty}_\sD \Omega^\infty_\sD}(\Fun^{\ast, \omega}(\sC, \Sp)) \ar[r, "\partial_*"] & \LcoMod_{\partial_*(\Sigma^\infty_\sD\Omega^\infty_\sD)}(\RMod_{\partial_*{\id_\sC}}(\SSeqnu(\Sp)))
    \end{tikzcd}
    \]
    where we identified the target of the right vertical arrow using the equivalence $\Bahr \partial_*{\id_\sD} \simeq \partial_*(\Sigma^\infty_\sD \Omega^\infty_\sD)$ from \cref{prop: Koszul-dual-derivative-id}.
\end{proposition}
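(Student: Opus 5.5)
The plan is to use the strong $(\infty,2)$-functor $\partial_*\colon \diff_\Sp \to \MMor_\Sp$ together with the chain rule, and to recognize both composites in the square as derivatives of composite functors.

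First, let $F\colon \sC\to\sD$ be reduced finitary. The functor $\Sigma^\infty_\sD\colon \sD\to\Sp$ is a morphism in $\diff_\Sp$. By the chain rule,
\[
\partial_*(\Sigma^\infty_\sD F)\simeq \partial_*\Sigma^\infty_\sD\circ_{\partial_*\id_\sD}\partial_*F
\]
as $(\unit,\partial_*\id_\sC)$-bimodules, that is, as right $\partial_*\id_\sC$-modules.

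Second, we identify $\partial_*\Sigma^\infty_\sD$. It is a $(\unit,\partial_*\id_\sD)$-bimodule whose underlying symmetric sequence is $\unit$, since $\Sigma^\infty_\sD$ is $1$-excisive with first derivative $\Sph$. The right action is then forced to be the augmentation action, because $\partial_*\id_\sD$ is reduced and $\unit$ is concentrated in arity $1$. Hence
\[
\partial_*(\Sigma^\infty_\sD F)\simeq \unit\circ_{\partial_*\id_\sD}\partial_*F=\indec_{\partial_*\id_\sD}(\partial_*F).
\]
This gives the square on underlying right $\partial_*\id_\sC$-modules.

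Third, we upgrade to the comodule structures. The left comodule structure on $\Sigma^\infty_\sD F$ over $\Sigma^\infty_\sD\Omega^\infty_\sD$ is $\Sigma^\infty_\sD\eta F$, where $\eta$ is the unit of the stabilization adjunction. Applying $\partial_*$ as a $2$-functor sends the adjunction $\Sigma^\infty_\sD\dashv\Omega^\infty_\sD$ to an adjunction in $\MMor_\Sp$ between $\partial_*\Sigma^\infty_\sD=\unit$ and $\partial_*\Omega^\infty_\sD=\unit$. By the standard description of adjunctions in Morita categories, this is the restriction/extension of scalars adjunction along the augmentation $\partial_*\id_\sD\to\unit$. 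Its comonad $\unit\circ_{\partial_*\id_\sD}\unit$ is the bar construction, matching the identification $\Bahr\,\partial_*\id_\sD\simeq\partial_*(\Sigma^\infty_\sD\Omega^\infty_\sD)$ of \cref{prop: Koszul-dual-derivative-id}; I would arrange that this identification is the one used there. The coaction on $\partial_*(\Sigma^\infty_\sD F)$ is the image of $\Sigma^\infty\eta F$, namely $\unit\circ_{\partial_*\id_\sD}\eta'\circ_{\partial_*\id_\sD}\partial_*F$ with $\eta'$ the Morita unit. This is exactly the coaction defining $\indec$ via the $(\cot,\triv)$ adjunction.

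To make this natural in $F$, I would phrase the whole argument as applying the $2$-functor $\partial_*$ to the comonadic lift
\[
\Fun^{\ast,\omega}(\sC,\sD)\to \LcoMod_{\Sigma^\infty\Omega^\infty}(\Fun^{\ast,\omega}(\sC,\Sp)),
\]
which exists for any adjunction. A $2$-functor preserves adjunctions and the induced comparison functors into comodule categories, so the square follows.

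The main obstacle is the identification of $\partial_*\Sigma^\infty_\sD$ and $\partial_*\Omega^\infty_\sD$, with the induced adjunction, as the augmentation-induced $(\cot,\triv)$ adjunction in $\MMor_\Sp$, compatibly with \cref{prop: Koszul-dual-derivative-id}. I would first check that the space of right $\partial_*\id_\sD$-module structures on $\unit$ is contractible for reduced operads, and that units of adjunctions between such bimodules are unique. This rigidity should pin down the adjunction data.
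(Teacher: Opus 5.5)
Your argument works, given the results this paper imports, but it takes a different route from the paper. The paper gives no argument of its own. It cites \cite[Corollary 4.4.4]{blansblom2025chainrulegoodwilliecalculus}, which is the left-module version, and asserts that the same proof goes through for bimodules. You instead derive the square formally from the strong functor $\partial_*\colon \diff_\Sp \to \MMor_\Sp$:
\begin{itemize}
\item the chain rule gives $\partial_*(\Sigma^\infty_\sD F)\simeq \partial_*\Sigma^\infty_\sD \circ_{\partial_*\id_\sD} \partial_*F$;
\item degree reasons force $\partial_*\Sigma^\infty_\sD$ to be $\unit$ with the augmentation action;
\item uniqueness of adjoints identifies the image of $\Sigma^\infty_\sD \dashv \Omega^\infty_\sD$ with the extension/restriction adjunction along $\partial_*\id_\sD \to \unit$;
\item a strong functor carries the comparison functor $F \mapsto (\Sigma^\infty_\sD F, \Sigma^\infty_\sD\eta F)$ on mapping categories to the one defining $\indec_{\partial_*\id_\sD}$.
\end{itemize}
The gain is that the right $\partial_*\id_\sC$-module structure comes along automatically, since it is just the source variable of the mapping category. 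That is exactly the extension the paper leaves to the reader.

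There are two costs. First, you take the strong chain rule as input. In Blans--Blom, Corollary 4.4.4 comes before the strong functor (Theorem 4.4.7) and is likely one of its ingredients. So your argument is not an independent proof of the cited fact; it is legitimate here only because this paper states Theorem 4.4.7 as a black box before this proposition. Second, your argument produces the cooperad identification $\Bahr\partial_*\id_\sD = \unit\circ_{\partial_*\id_\sD}\unit \simeq \partial_*\Sigma^\infty_\sD\circ_{\partial_*\id_\sD}\partial_*\Omega^\infty_\sD \simeq \partial_*(\Sigma^\infty_\sD\Omega^\infty_\sD)$. You cannot simply ``arrange'' that this agrees with the equivalence of \cref{prop: Koszul-dual-derivative-id}, because that equivalence is fixed by the source. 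You should either check that the two agree, or record that you only get commutativity up to an automorphism of the cooperad. The latter is harmless for the later uses in \cref{prop: derivatives-lmod-colim} and \cref{prop: derivative-left-adjoint}, but it is not literally the stated square.
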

\begin{proof}
    This was proved in \cite[Corollary 4.4.4]{blansblom2025chainrulegoodwilliecalculus} for left modules, but the same proof works for bimodules as well.
\end{proof}

The derivatives functor taking values in $\SSeqnu(\Sp)$ preserves finite limits, but it preserves hardly any colimits.
The following result, which will play an important role throughout the rest of this paper, shows that the situation improves considerably when we let it take values in bimodules.

\begin{proposition} \label{prop: derivatives-lmod-colim}
    Let $\sC, \sD \in \diff_\Sp$.
    Then the functor
    \[
        \partial_* \colon \Fun^{\ast, \omega}(\sC, \sD) \to {\BMod}_{\partial_*{\id_\sD}}(\SSeqnu(\Sp))
    \]
    preserves colimits and finite limits.
\end{proposition}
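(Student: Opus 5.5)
The strategy is to treat finite limits and colimits separately. Finite limits can be checked on underlying symmetric sequences. Colimits are where the bimodule structure matters, and I would handle them by passing through Koszul duality using \cref{prop: derivatives-stabilization-indec}. Throughout I read the target as $\BMod_{(\partial_*\id_\sD,\partial_*\id_\sC)}(\SSeqnu(\Sp))$, i.e.\ left $\partial_*\id_\sD$-modules in $\RMod_{\partial_*\id_\sC}(\SSeqnu(\Sp))$.

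\emph{Finite limits.} The forgetful functor from bimodules to $\SSeqnu(\Sp)$ is a right adjoint (free bimodules exist) and is conservative, so it creates limits. By construction the bimodule-valued $\partial_*$ lifts the ordinary derivatives functor, which preserves finite limits. The reason is that $\partial_n F$ is obtained from $F$ by cross-effects and multilinearization: these are finite limits together with filtered colimits of finite limits, and such colimits commute with finite limits in the differentiable target. Finite limits in $\Fun^{\ast,\omega}(\sC,\sD)$ are computed pointwise, since finite limits commute with filtered colimits in $\sD$. Hence $\partial_*$ preserves finite limits.

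\emph{Colimits.} By \cref{prop: koszul-duality-left-modules}, applied with $\RMod_{\partial_*\id_\sC}(\SSeqnu(\Sp))$ in place of $\SSeqnu(\Sp)$, the right vertical functor $\indec_{\partial_*\id_\sD}$ in \cref{prop: derivatives-stabilization-indec} is an equivalence. It therefore suffices to show that the composite along the bottom-left of that square preserves colimits. The argument has three steps.
\begin{enumerate}[(i)]
\item Colimits in $\Fun^{\ast,\omega}(\sC,\sD)$ are pointwise, because reducedness and finitarity are preserved by pointwise colimits. The functor $\Sigma^\infty_\sD$ is a left adjoint, so $\Sigma^\infty_\sD\circ-$ preserves them.
\item The forgetful functors from left comodules, over $\Sigma^\infty_\sD\Omega^\infty_\sD$ and over $\partial_*(\Sigma^\infty_\sD\Omega^\infty_\sD)$, create colimits. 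The functor $\partial_*$ on comodules lifts $\partial_*\colon \Fun^{\ast,\omega}(\sC,\Sp)\to \RMod_{\partial_*\id_\sC}(\SSeqnu(\Sp))$. So it suffices that this last functor preserves colimits.
\item The forgetful functor $\RMod_{\partial_*\id_\sC}\to\SSeqnu(\Sp)$ creates colimits, since $-\circ\partial_*\id_\sC$ preserves colimits by \cref{rem: day-convolution-and-composition}. So it remains to see that $\partial_*\colon\Fun^{\ast,\omega}(\sC,\Sp)\to\SSeqnu(\Sp)$ preserves colimits.
\end{enumerate}

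For (iii), the target is stable, so cross-effects (finite limits) are also finite colimits and commute with all colimits. Multilinearization is a sequential colimit of iterated $\Omega$--$\Sigma$ constructions, and these also commute with colimits of $\Sp$-valued functors. Evaluating at the appropriate objects of $\sC$ built from $\Sigma^\infty$-adjoints then yields $\partial_n F$ colimit-preservingly in $F$.

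The main obstacle is bookkeeping at two points. First, one must check that the comodule-level $\partial_*$ is compatible with the forgetful functors, so that colimit-creation can be transported across the square. Second, one must check that stable derivatives really commute with arbitrary (not only filtered) colimits. I expect the latter to be the substantive point, and I would argue it via the formula $\partial_n F\simeq$ multilinearized $n$-th cross-effect, using stability of $\Sp$.
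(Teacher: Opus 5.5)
Your proposal is correct and follows essentially the same route as the paper. Finite limits are checked on underlying symmetric sequences via cross-effects and multilinearization. Colimits go through the Koszul duality square of \cref{prop: derivatives-stabilization-indec} and \cref{prop: koszul-duality-left-modules}, with comodule colimits computed on underlying objects, which reduces everything to stable cross-effects and multilinearization of $\Sp$-valued functors. The only difference is cosmetic: the paper first forgets from bimodules to left $\partial_*\id_\sD$-modules and uses the left-module square, whereas you keep the right $\partial_*\id_\sC$-action by invoking the $\RMod$ version of Koszul duality and forget from $\RMod$ only at the end.
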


\begin{remark}
    This proposition and its proof already appeared in the thesis of the first author \cite[Proposition 7.1.1]{Blans-Thesis}.
    The cases where $\sC$ and $\sD$ are either pointed spaces or spectra were already proved by Arone and Ching in \cite{AroneChingClassification} by explicitly constructing a right adjoint to $\partial_*$. 
\end{remark}

\begin{proof}
    We first prove the claim about colimits.
    Since the composition product preserves colimits in the left variable, the forgetful functor
    \[
        {\BMod}_{({\partial_*{\id_\sD}},{\partial_*{\id_\sC}})}(\SSeqnu(\Sp)) \to {\LMod}_{\partial_*{\id_\sD}}(\SSeqnu(\Sp))
    \]
    creates colimits by \cite[Corollary 4.2.3.5]{HA}.
    It therefore suffices to prove the proposition for ${\LMod}_{\partial_*{\id_\sD}}$ instead of ${\BMod}_{({\partial_*{\id_\sD}},{\partial_*{\id_\sC}})}$.
    Consider the commutative square from \cref{prop: derivatives-stabilization-indec}. 
    By \cref{prop: koszul-duality-left-modules}, the right vertical functor is an equivalence of categories.
    As $\Sigma^\infty_{\sD} \circ -$ preserves colimits, it therefore suffices to show that the bottom horizontal arrow 
    \[
    \partial_* \colon \LcoMod_{\Sigma^{\infty}_\sD \Omega^\infty_\sD}(\Fun^{\ast, \omega}(\sC, \Sp)) \to \LcoMod_{\partial_*(\Sigma^\infty_\sD\Omega^\infty_\sD)}(\SSeqnu(\Sp))
    \]
    preserves colimits.
    Since colimits in categories of left comodules are computed on underlying objects by \cite[Corollary 4.2.3.3]{HA}, this in turn follows if we can show
    \[
    \begin{tikzcd}
        \partial_* \colon \Fun^{\ast, \omega}(\sC, \Sp) \ar[r] & \SSeqnu(\Sp)
    \end{tikzcd}
    \]
    preserves colimits.
    This functor factors as
    \[
    \begin{tikzcd}
        \Fun^{\ast, \omega}(\sC, \Sp) \ar[r, "\cross_*"] & \prod_{n \geq 1} \Fun^{*, \omega}(\sC^{\times n}_{h \Sigma_n}, \Sp) \ar[r, "\mlin"] & \SSeqnu(\Sp),
    \end{tikzcd}
    \]
    where $\Fun^{*, \omega}(\sC^{\times n}_{h \Sigma_n}, \Sp)$ denotes the category of functors $\sC^{\times n}_{h \Sigma_n} \to \Sp$ such that the underlying functor $\sC^{\times n} \to \Sp$ is finitary and reduced in each variable separately, $\cross_*$ is the symmetric cross-effects functor, and $\mlin$ is multilinearization.
    It follows immediately from the formula for $\cross_*$ (see \cite[Construction 6.1.3.20]{HA}) and the fact that $\Sp$ is stable that $\cross_*$ preserves colimits. 
    As $\mlin$ is a left adjoint, we can conclude that $\partial_*$ preserves colimits.

    To see that the functor also preserves finite limits, note that the forgetful functor
    \[
    {\LMod}_{\partial_*{\id_\sD}}(\SSeqnu(\Sp)) \to \SSeqnu(\Sp)
    \]
    creates all limits by \cite[Corollary 4.2.3.3]{HA},
    so that it suffices to show that
    \[
        \partial_* \colon \Fun^{\ast, \omega}(\sC, \sD) \to \SSeqnu(\Sp)
    \]
    preserves finite limits.
    This again follows by writing $\partial_*$ as multilinearized cross-effects: the functor $\cross_*$ is a right adjoint and therefore commutes with all limits, whereas $\mlin$ preserves finite limits by Goodwillie's formula for multilinearization.
\end{proof}

We will also need the following:

\begin{proposition} \label{prop: derivative-left-adjoint}
    Let $F \colon \sC \to \sD$ be a left adjoint between differentiable categories.
    Then there is an equivalence
    \[
    \partial_*{F} \simeq \partial_*{\id_\sD} \circ \partial_1{F}
    \]
    of left $\partial_*{\id_\sD}$-modules.
    In other words, $\partial_*{F}$ is the free left $\partial_*{\id_\sD}$-module on the symmetric sequence $\partial_1{F}$.
\end{proposition}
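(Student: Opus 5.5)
The idea is to build the comparison map from the left module structure on $\partial_*F$, and then detect that it is an equivalence after applying $\indec$, using \cref{prop: derivatives-stabilization-indec} and the Koszul duality of \cref{prop: koszul-duality-left-modules}.

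\emph{Constructing the map.} The first derivative $\partial_1 F$ is a symmetric sequence concentrated in arity $1$, and there is a canonical map $\partial_1 F \to \partial_* F$ including the arity $1$ term; this is a map of symmetric sequences since $\partial_*F$ is nonunital. By the free--forgetful adjunction for left $\partial_*\id_\sD$-modules, it extends to a map of left modules
\[
\phi\colon \partial_*\id_\sD \circ \partial_1F \to \partial_*F .
\]
It remains to show that $\phi$ is an equivalence.

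\emph{Reducing via Koszul duality.} Because $\indec$ is an equivalence on $\LMod_{\partial_*\id_\sD}(\SSeqnu(\Sp))$ by \cref{prop: koszul-duality-left-modules}, it suffices to check that $\indec(\phi)$ is an equivalence, and for this it is enough to check it on underlying symmetric sequences, since forgetful functors from comodules are conservative. On the source, $\indec$ of a free module is the trivial comodule, so $\indec(\partial_*\id_\sD\circ\partial_1F)\simeq \partial_1F$, concentrated in arity $1$. On the target, \cref{prop: derivatives-stabilization-indec} (for left modules) identifies $\indec(\partial_*F)$ with $\partial_*(\Sigma^\infty_\sD F)$. The problem is therefore to show that $\partial_*(\Sigma^\infty_\sD F)$ is concentrated in arity $1$, with value $\partial_1F$, and that $\indec(\phi)$ is the identity in arity $1$.

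\emph{Linearity of $\Sigma^\infty_\sD F$.} The composite $\Sigma^\infty_\sD F\colon \sC\to\Sp$ is a left adjoint, so it preserves all colimits. A colimit-preserving functor to a stable category sends pushouts to pushouts, which are pullbacks in $\Sp$, so it is $1$-excisive; it is also reduced. Hence its derivatives vanish in arities $\geq 2$, and $\partial_1(\Sigma^\infty_\sD F)\simeq\partial_1 F$, because $\Sigma^\infty_\sD$ is linear with first derivative the unit under the specified identification $\Sp(\sD)\simeq\Sp$.

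\emph{Main obstacle.} The delicate point is compatibility in arity $1$: I need $\indec(\phi)$ in arity $1$ to be the canonical identification $\partial_1F\simeq\partial_1(\Sigma^\infty_\sD F)$. The plan is to argue that $\indec$ in arity $1$ is simply the identity, since $\partial_*\id_\sD$ is reduced and so $B(\unit,\sO,M)_1\simeq M_1$. The map $\phi$ is the identity in arity $1$ by construction, and the natural transformation of \cref{prop: derivatives-stabilization-indec} in arity $1$ is induced by $\Sigma^\infty_\sD$, whose first derivative is the unit. Once this is checked, $\indec(\phi)$ is an equivalence of comodules, and therefore $\phi$ is an equivalence of left modules.
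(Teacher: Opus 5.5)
Your proposal is correct and follows essentially the paper's argument: linearity of $\Sigma^\infty_\sD F$ combined with \cref{prop: derivatives-stabilization-indec} and Koszul duality. The one difference is that you build the comparison map $\phi$ and detect it after applying $\indec$, whereas the paper just notes that a comodule concentrated in arity $1$ is trivial for degree reasons and applies $\prim_{B\partial_*\id_\sD}$. Your arity-$1$ ``obstacle'' dissolves: both sides of $\indec(\phi)$ vanish above arity $1$, so you only need $\indec(\phi)_1\simeq\phi_1$ to be an equivalence, which your observation $B(\unit,\partial_*\id_\sD,M)_1\simeq M_1$ already gives, without knowing how the equivalence in \cref{prop: derivatives-stabilization-indec} is built.
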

\begin{proof}
    Since $F$ preserves colimits, the functor $\Sigma^\infty_\sD \circ F \colon \sC \to \Sp(\sD)$ is linear.
    It follows that $\partial_*(\Sigma^\infty_\sD \circ F)$ is concentrated in arity $1$, where it is given by $\partial_1(F)$.
    This symmetric sequence is a trivial left $\partial_*(\Sigma^\infty_\sD \Omega^\infty_\sD)$-comodule for degree reasons, so that its Koszul dual is the free left $\partial_*{\id_\sD}$-module on $\partial_1{F}$.
    But this Koszul dual is equivalent to $\partial_*{F}$ by \cref{prop: derivatives-stabilization-indec}, finishing the proof.
\end{proof}

\begin{remark} \label{rem: derivative-right-adjoint}
    One can prove in the same way that for a right adjoint $G \colon \sD \to \sC$, we have $\partial_*{G} \simeq \partial_1{G} \circ \partial_*{\id_\sD}$.
    In other words, $\partial_*{G}$ is the free \emph{right} $\partial_*{\id_\sD}$-module on $\partial_1{G}$.
\end{remark}

\begin{remark} \label{rem: derivative-colim-pres-multivar-functor}
    The same proof also shows that if $F \colon \sC^{\times n} \to \sD$ is a functor that preserves colimits in each variable separately, then $\partial_*F$ is a free left $\partial_*{\id_\sD}$-module on $\partial_{(1, \ldots, 1)}(F)$, the derivative of $F$ in multidegree $(1, \ldots, 1)$. 
    See \cref{sec: multivariate-calculus} below for more on derivatives in multiple variables.
\end{remark}

\subsection{Differentiating symmetric monoidal \texorpdfstring{$\infty$}{infinity}-categories}

We will now show that the assignment that sends $\sC$ to either of the $\infty$-categories $\Alg_{\partial_*{\id_\sC}}(\Sp)$ or $\LMod_{\partial_*{\id_\sC}}(\SSeqnu(\Sp))$ can be turned into a functor of $(\infty, 2)$-categories.
We will also explain that this construction carries a symmetric monoidal structure on $\sC$ to one on $\Alg_{\partial_*{\id_\sC}}(\Sp)$ and $\LMod_{\partial_*{\id_\sC}}(\SSeqnu(\Sp))$, and preserves (lax) symmetric monoidal functors.

Up to this point, we have assumed that our differentiable $\infty$-categories $\sC$ come equipped with an equivalence $\Sp(\sC) \simeq \Sp$.
This assumption was just a convenience, but not really necessary.
Given a reduced finitary functor $F \colon \sC \to \sD$ between arbitrary differentiable $\infty$-categories, its derivatives $\partial_*F$ form an object in the category $\SSeqnu(\Sp(\sC), \Sp(\sD))$ of \emph{nonunital functor symmetric sequences} from $\Sp(\sC)$ to $\Sp(\sD)$, which is a generalization of the ordinary notion of symmetric sequence.
We will not recall the precise definition here, but suffice it to say that a composition product for functor symmetric sequences can be defined, and all results stated in the previous section hold in this generality as well; see \cite{blansblom2025chainrulegoodwilliecalculus} for details\footnote{The category $\SSeqnu(\sC, \sD)$ was denoted by $\SymFunL_{\geq 1}(\sC, \sD)$ in \cite{blansblom2025chainrulegoodwilliecalculus}.}.
In particular, there is an $(\infty, 2)$-precategory $\MMor$ whose objects are pairs $(\sC, \sO)$ where $\sC$ is a stable category and $\sO$ is an algebra in $\SSeqnu(\sC, \sC)$.
The $\infty$-category $\SSeqnu(\sC, \sD)$ is compatibly left-tensored by $\SSeqnu(\sC, \sC)$ and right-tensored by $\SSeqnu(\sD, \sD)$, and morphisms from $(\sC, \sO)$ to $(\sD, \sP)$ in $\MMor$ are given by $(\sP, \sO)$-bimodules in $\SSeqnu(\sC, \sD)$.
Writing $\diff$ for the $(\infty, 2)$-category of differentiable $\infty$-categories and reduced finitary functors, there is a functor
\[
\partial_* \colon \diff \to \MMor
\]
that sends $\sC$ to the pair $(\Sp(\sC), \partial_*{\id_\sC})$ and $F \colon \sC \to \sD$ to the $(\partial_*{\id_\sD}, \partial_*{\id_\sC})$-bimodule $\partial_*F$ \cite[Theorem 4.4.7]{blansblom2025chainrulegoodwilliecalculus}. 

For any pair of stable $\infty$-categories $\sC$ and $\sD$, there is an evaluation functor
\[
- \circ - \colon \SSeqnu(\sC, \sD) \times \sC \to \sD,
\]
In case $\sC = \sD$, this is part of an action of the category $\SSeq(\sC, \sC)$ on $\sC$.
If $\sC$ and $\sD$ are both equivalent to the category of spectra, this action is the one corresponding to the monoidal functor $\Sym \colon \SSeq(\Sp) \to \End(\Sp)$.
If $\sO \in \Alg(\SSeq(\sC, \sC))$, we use this action to define the category $\Alg_\sO(\sC)$ of $\sO$-algebras in $\sC$, exactly as we did for operads in spectra.
The $(\infty, 2)$-precategory $\MMor$ has a final object $\ast$, which has the property that for any $(\sC, \sO) \in \MMor$, we have an equivalence 
\[
\MMor(\ast, (\sC, \sO)) \simeq \Alg_\sO(\sC),
\]
where the left hand side denotes the mapping category in $\MMor$.

\begin{definition}
We write
\[
\reAlg \colon \MMor \to \diff, \qquad \reLMod \colon \MMor \to \diff
\]
for the functors corepresented by $\ast$ and $(\Sp, \unit)$ respectively, where $\unit$ denotes the trivial operad in spectra.  
We define the functors
\[
    \maAlg \colon \diff \to \diff \quad \text{and} \quad \maLMod \colon \diff \to \diff
\]
as the composites $\reAlg \circ \partial_*$ and $\reLMod \circ \partial_*$ respectively.
\end{definition}

We will often refer to either of the functors $\maAlg$ or $\maLMod$ as the \emph{Goodwillie transform}. Note that for an operad in spectra $\sO$, we have equivalences
\[
\reAlg(\sO) \simeq \Alg_\sO(\Sp), \qquad \reLMod(\sO) \simeq \LMod_\sO(\SSeqnu(\Sp)).
\]
Moreover, $\reAlg$ sends a $(\sP, \sO)$-bimodule $M$ to the functor
\[
M \circ_\sO - \colon \Alg_\sO(\Sp) \to \Alg_\sP(\Sp),
\]
given by taking an $\sO$-algebra $A$ to the two-sided bar construction $M \circ_\sO A$.
Similarly, $\reLMod$ sends $M$ to $M \circ_\sO -$ considered as a functor on left modules.
The functor $\maAlg$ therefore sends $\sC \in \diff_\Sp$ to $\Alg_{\partial_*{\id_\sC}}(\Sp)$, and it sends $F \colon \sC \to \sD$ to the functor
\[
\partial_*F \circ_{\partial_*{\id_\sC}} - \colon \Alg_{\partial_*{\id_\sC}}(\Sp) \to \Alg_{\partial_*{\id_\sD}}(\Sp).
\]
The functor $\maLMod$ has a similar description.

The functors $\reAlg$ and $\reLMod$ have the following exactness properties.

\begin{proposition} \label{prop: materialization-exactness}
    The functors
    \[
    \reAlg \colon \MMor \to \diff, \qquad \reLMod \colon \MMor \to \diff
    \]
    preserve colimits and finite limits on mapping categories.
\end{proposition}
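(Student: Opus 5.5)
We have to show that, for operads $\sO$ and $\sP$, the functors
\[
\reAlg \colon \BMod_{(\sP,\sO)}(\SSeqnu(\Sp)) \to \Fun^{\ast,\omega}(\Alg_\sO(\Sp), \Alg_\sP(\Sp)), \qquad M \mapsto M \circ_\sO -,
\]
and the analogous functor $\reLMod$ landing in functors $\LMod_\sO(\SSeqnu(\Sp)) \to \LMod_\sP(\SSeqnu(\Sp))$, preserve colimits and finite limits. The strategy is to reduce to the underlying spectra and symmetric sequences. We then use that the relative composition product is built from constructions that preserve colimits and finite limits in the left variable.

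Colimits and limits in $\Fun^{\ast,\omega}(\Alg_\sO(\Sp),\Alg_\sP(\Sp))$ are computed pointwise, provided the pointwise (co)limits are again reduced and finitary. For finite limits and all colimits this holds, since filtered colimits commute with finite limits in $\Alg_\sP(\Sp)$. So it suffices to fix $A \in \Alg_\sO(\Sp)$ and show that $M \mapsto M \circ_\sO A$ preserves colimits and finite limits as a functor $\BMod_{(\sP,\sO)} \to \Alg_\sP(\Sp)$.

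The argument has three steps.

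\begin{enumerate}
\item Reduce to underlying spectra. The forgetful functor $\Alg_\sP(\Sp) \to \Sp$ creates limits. It also creates sifted colimits, because $\Sym_\sP$ preserves sifted colimits. Likewise $\BMod_{(\sP,\sO)} \to \RMod_\sO(\SSeqnu(\Sp))$ creates limits and sifted colimits, and in fact all colimits, as in the proof of \cref{prop: derivatives-lmod-colim}, since $\sP \circ -$ preserves sifted colimits in the right variable and colimits are created along $\BMod \to \LMod$. It is therefore enough to show that $M \mapsto M \circ_\sO A = |M \circ \sO^{\circ \bullet} \circ A|$ preserves all colimits and finite limits, viewed as a functor from right $\sO$-modules to spectra.
\item Colimits. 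Each simplicial level $M \mapsto M \circ X$, for a fixed spectrum or symmetric sequence $X$, preserves colimits in $M$, because the composition product is colimit preserving in the left variable. Geometric realization commutes with colimits, so $M \mapsto M\circ_\sO A$ preserves colimits.
\item Finite limits. It suffices to preserve the zero object and pullbacks. Since everything is stable, preserving finite limits is equivalent to being exact, and an exact functor between stable categories preserves finite colimits if and only if it preserves finite limits. The functor from $\Sp$-valued, hence stable, right modules to $\Sp$ preserves finite colimits by step 2, so it also preserves finite limits. The sources $\RMod_\sO(\SSeqnu(\Sp))$ and $\LMod_\sO(\SSeqnu(\Sp))$ are stable, since the composition product is exact in the left variable. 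The only point to check is that finite limits in $\BMod$ are created by the forgetful functor to $\RMod_\sO$. This requires $\sP \circ -$ to preserve finite limits, and it does: $\sP\circ -$ is a polynomial functor given by $\Sigma_n$-orbits of tensor powers, so it preserves finite limits up to cross-effect terms. Alternatively, one avoids this by checking only pullbacks at the level of underlying objects, which are created by the forgetful functor.
\end{enumerate}

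For $\reLMod$ the same argument applies with $A$ replaced by a left $\sO$-module in $\SSeqnu(\Sp)$, with \cite[Corollary 4.2.3.3]{HA} used in the same way.

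The main obstacle is the bookkeeping of which forgetful functors create which (co)limits in the bimodule categories, together with the fact that the target $\Alg_\sP(\Sp)$ is not stable. For the target, the key observation is that limits in $\Alg_\sP(\Sp)$ and geometric realizations of the bar construction are both computed underlying. Once we pass to underlying spectra, stability lets the finite-limit statement follow from the colimit statement. The remaining hypotheses to keep track of are the exactness of $\circ$ in its left variable and the fact that $\circ_\sO$ is computed by a geometric realization.
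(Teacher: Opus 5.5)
\textbf{There is a genuine gap in the colimit part.} Your arguments for finite limits and for sifted colimits are fine. The colimit part, however, rests on a false reduction.

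In Step 1 you claim that the forgetful functor $\BMod_{(\sP,\sO)}(\SSeqnu(\Sp)) \to \RMod_\sO(\SSeqnu(\Sp))$ creates all colimits. It does not. That functor forgets the \emph{left} $\sP$-action, so it creates only colimits preserved by $\sP \circ -$, which means sifted ones. The forgetful functor that creates all colimits in the proof of \cref{prop: derivatives-lmod-colim} is the other one, to $\LMod_\sP$. It forgets the right $\sO$-action and works because $- \circ \sO$ preserves colimits. Likewise, $\Alg_\sP(\Sp) \to \Sp$ only detects sifted colimits.

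So checking on underlying spectra and right $\sO$-modules proves preservation of sifted colimits and finite limits, and nothing about coproducts or pushouts. Pushouts (cofiber sequences) are exactly what the paper needs later, for instance for the commutator cofiber sequence. In fact the composite $\BMod_{(\sP,\sO)} \to \Sp$ does \emph{not} preserve coproducts. Take $\sO = \unit$ and $M = N = \sP$. Then $M \sqcup N \simeq \sP \circ (\unit \oplus \unit)$, and evaluating on $A$ gives $\free_\sP(A \oplus A)$. Its underlying spectrum contains cross terms and is not $\free_\sP(A) \oplus \free_\sP(A)$.

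\textbf{How to fix it.} You must keep the left $\sP$-module structure in play. There are two ways.
\begin{enumerate}
\item Compute colimits of bimodules in $\LMod_\sP(\SSeqnu(\Sp))$. The functor $M \mapsto M \circ \sO^{\circ n} \circ A$ is induced by the colimit-preserving functor $- \circ (\sO^{\circ n} \circ A)$. By associativity of $\circ$ this functor commutes with the left action of $\SSeqnu(\Sp)$. Hence it gives a colimit-preserving functor $\LMod_\sP(\SSeqnu(\Sp)) \to \LMod_\sP(\Sp) \simeq \Alg_\sP(\Sp)$: it preserves sifted colimits and sends free modules to free modules. Then use that geometric realization in $\Alg_\sP(\Sp)$ commutes with colimits. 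This is the content of the paper's one-line argument that ``the composition product preserves colimits in the first variable.''
\item Since you already have sifted colimits, it remains only to check finite coproducts. Resolving by free bimodules, you may assume $M = \sP \circ Y \circ \sO$. Then $M \circ_\sO A \simeq \free_\sP(Y \circ A)$, and the claim follows because $\free_\sP$ and $- \circ A$ preserve coproducts.
\end{enumerate}

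\textbf{Two smaller slips.} First, $\LMod_\sO(\SSeqnu(\Sp))$ is not stable; only $\RMod_\sO(\SSeqnu(\Sp))$ is, since $\circ$ is exact only in the left variable. Second, $\sP \circ -$ does not preserve finite limits. Neither fact is needed, because forgetful functors from module categories create all limits with no hypothesis. Once these are removed, your finite-limit argument is a valid variant of the paper's. You derive exactness of $\RMod_\sO(\SSeqnu(\Sp)) \to \Sp$ from colimit preservation plus stability, whereas the paper uses directly that $\circ$ commutes with finite limits in the left variable.
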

\begin{proof}
    We will give the proof for $\reAlg$; the proof for $\reLMod$ is exactly the same.
    We will also only show this for mapping categories between operads in spectra, since this suffices for our applications.
    (The general case is proved in exactly the same way.)

    Suppose that $\sO, \sP \in \Op(\Sp)$.
    We need to show that the functor
    \begin{align*}
        \BMod_{(\sP, \sO)}(\SSeqnu(\Sp)) &\to \Fun(\Alg_\sO(\Sp), \Alg_\sP(\Sp)) \\
        M &\mapsto M \circ_{\sO} -
    \end{align*}
    preserves colimits and finite limits.
    For colimits, this follows since the composition product preserves colimits in the first variable.
    For finite limits, this follows since limits and sifted colimits in $\Alg_{\sP}(\Sp)$ are computed on underlying objects, and the composition product commutes with finite limits in the first variable.
\end{proof}

\begin{corollary}
\label{cor:exactnessGoodwillietransform}
    The functors 
    \[
    \maAlg \colon \diff \to \diff \quad \text{and} \quad \maLMod \colon \diff \to \diff
    \]
    preserve colimits and finite limits on mapping categories.
\end{corollary}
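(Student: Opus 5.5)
The corollary should follow formally by composing exactness statements along the factorizations $\maAlg = \reAlg \circ \partial_*$ and $\maLMod = \reLMod \circ \partial_*$. Fix $\sC, \sD \in \diff$. On mapping $\infty$-categories, $\maAlg$ is the composite
\[
\Fun^{\ast, \omega}(\sC, \sD) \xrightarrow{\partial_*} \BMod_{(\partial_*{\id_\sD}, \partial_*{\id_\sC})}(\SSeqnu(\Sp(\sC), \Sp(\sD))) \xrightarrow{\reAlg} \Fun^{\ast,\omega}(\Alg_{\partial_*{\id_\sC}}(\Sp(\sC)), \Alg_{\partial_*{\id_\sD}}(\Sp(\sD))),
\]
and the same holds for $\maLMod$ with $\reLMod$ in place of $\reAlg$. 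The first functor preserves colimits and finite limits by \cref{prop: derivatives-lmod-colim}, and the second does so by \cref{prop: materialization-exactness}. A composite of functors that both preserve colimits and finite limits again preserves colimits and finite limits, which gives the claim.

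Two points need checking, and the second is where I expect the only real friction.

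First, \cref{prop: derivatives-lmod-colim} was stated for $\sC, \sD \in \diff_\Sp$, i.e.\ with chosen equivalences $\Sp(\sC) \simeq \Sp \simeq \Sp(\sD)$, whereas here $\sC$ and $\sD$ are arbitrary objects of $\diff$. I would argue that its proof carries over verbatim to functor symmetric sequences in $\SSeqnu(\Sp(\sC), \Sp(\sD))$, in line with the paper's standing remark that all results of the previous section hold in this generality. The ingredients are the Koszul duality equivalence for left modules, the identification of the bottom row via cross-effects and multilinearization, and the fact that $\Sp(\sD)$ is stable, and none of these uses the identification with $\Sp$. Similarly, \cref{prop: materialization-exactness} was proved only between operads in spectra, but its proof explicitly notes that the general case is identical. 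If one prefers to stay in the setting actually used later, one can simply restrict to $\diff_\Sp$, where both propositions apply as stated.

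Second, colimits and finite limits in the target functor category must be computed pointwise, so that the meaning of ``preserve'' is the same in both propositions. In \cref{prop: materialization-exactness} they are computed pointwise in $\Fun(\Alg_\sO, \Alg_\sP)$. So I would note that reduced finitary functors are closed under pointwise colimits and finite limits in $\Fun(\sC, \sD)$: reducedness is clear, and finitarity uses that filtered colimits commute with colimits and, by differentiability of $\sD$, with finite limits. This makes the inclusion $\Fun^{\ast,\omega} \subseteq \Fun$ create these (co)limits, and the composition argument goes through.
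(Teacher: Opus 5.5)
Your proposal is correct and is exactly the paper's argument: the paper proves the corollary in one line by composing \cref{prop: derivatives-lmod-colim} with \cref{prop: materialization-exactness} along $\maAlg = \reAlg \circ \partial_*$ and $\maLMod = \reLMod \circ \partial_*$. Your two extra checks are sound bookkeeping that the paper leaves implicit: the first extends the results beyond $\diff_\Sp$, and the second confirms that colimits and finite limits in $\Fun^{\ast,\omega}$ are computed pointwise.
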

\begin{proof}
    This follows by combining the previous proposition with \cref{prop: derivatives-lmod-colim}.
\end{proof}

The next proposition shows that a bimodule can be recovered from its value under the functor $\reAlg \colon \MMor \to \diff$.

\begin{proposition} \label{prop: derivatives-bimodule-gives-bimodule}
    Let $\sC$ and $\sD$ be stable presentable categories and let 
    \[
    \sO \in \Alg(\SSeqnu(\sC, \sC)), \qquad \sP \in \Alg(\SSeqnu(\sD, \sD)).
    \]
    Suppose that $M \in \BMod_{(\sP, \sO)}(\SSeqnu(\sC, \sD))$.
    Then the derivatives of the functor
    \[
    M \circ_\sO - \colon \Alg_\sO(\sC) \to \Alg_\sP(\sD)
    \]
    are given by the $(\sP, \sO)$-bimodule $M$.
\end{proposition}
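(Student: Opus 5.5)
We first reduce to the case where $M$ is a free bimodule and then extend by exactness. Observe that the derivatives of $\id_{\Alg_\sO(\sC)}$ are $\sO$ and those of $\id_{\Alg_\sP(\sD)}$ are $\sP$; this is the generalization of \cref{ex: der-identity-O-algebras} to functor symmetric sequences, proved in the same way. So $\partial_*(M\circ_\sO -)$ is naturally a $(\sP,\sO)$-bimodule, and the task is to produce a natural equivalence of bimodules with $M$. The natural source of the comparison is the $(\infty,2)$-functoriality: $M \mapsto M\circ_\sO -$ is $\reAlg$ on mapping categories, and composing with $\partial_*$ gives an endofunctor
\[
\Phi\colon \BMod_{(\sP,\sO)}(\SSeqnu(\sC,\sD)) \to \BMod_{(\sP,\sO)}(\SSeqnu(\sC,\sD)).
\]
By \cref{prop: materialization-exactness} and \cref{prop: derivatives-lmod-colim}, $\Phi$ preserves colimits and finite limits. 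I would construct a natural transformation $\eta \colon \Phi \Rightarrow \id$. One option is to use that $\Phi$ is $\partial_*\circ\reAlg$ on mapping categories and compare with the unit/counit data of the identification of operads. More concretely, evaluate on free algebras: $M \circ_\sO \free_\sO(X) \simeq M\circ X$, so the composite $(M\circ_\sO-)\circ \free_\sO$ equals $\Sym_M$. Its derivatives are $M$ as a right-$\unit$ module by \cref{ex: der-retraction-sym}, which gives the comparison after applying the chain rule to $\free_\sO$.

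Step 1, the free case. Take $M = \sP \circ K \circ \sO$ for a symmetric sequence $K$. Then $M\circ_\sO A \simeq \free_\sP(K\circ \forget_\sO A)$, which factors as
\[
\Alg_\sO(\sC) \xrightarrow{\forget_\sO} \sC \xrightarrow{\Sym_K} \sD \xrightarrow{\free_\sP} \Alg_\sP(\sD).
\]
By the chain rule, $\partial_*$ of this composite is $\partial_*\free_\sP \circ \partial_*\Sym_K \circ \partial_*\forget_\sO$. By \cref{prop: derivative-left-adjoint} and \cref{rem: derivative-right-adjoint}, the outer factors are the free left $\sP$-module and free right $\sO$-module on $\partial_1$, and both $\partial_1$'s are $\unit$. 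The middle factor is $K$ by \cref{ex: der-retraction-sym}. Hence $\Phi(M)\simeq \sP\circ K\circ \sO = M$ as bimodules, and one checks this is $\eta$.

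Step 2, general $M$. Every bimodule is the geometric realization of its two-sided bar resolution $|\sP\circ\sP^{\circ k}\circ M\circ \sO^{\circ l}\circ\sO|$ by free bimodules. Since $\Phi$ preserves colimits and $\eta$ is an equivalence on free bimodules, $\eta_M$ is an equivalence for all $M$.

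The main obstacle is constructing $\eta$ as a genuine natural transformation of bimodules, compatible with the bimodule structures and not merely on underlying symmetric sequences. The free case gives an equivalence objectwise, but Step 2 needs naturality in maps of free bimodules, including the bar face maps, which are not induced by maps of $K$. I would first try to obtain $\eta$ from the strong $(\infty,2)$-functor $\partial_*$ applied to the corepresentation of $\reAlg$ by $\ast$. The idea is to compare the action map $\MMor(\ast,(\sC,\sO))\times\MMor((\sC,\sO),(\sD,\sP))\to \MMor(\ast,(\sD,\sP))$ with $\partial_*$ of the induced functor, which yields a bimodule map $M\to\Phi(M)$ natural in $M$. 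If that fails, I would restrict to the subcategory of free bimodules, where maps are determined by maps $K\to\sP\circ K'\circ\sO$, and verify compatibility via the chain rule there.
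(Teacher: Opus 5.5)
Your overall strategy matches the paper's in spirit: write things as a geometric realization, push $\partial_*$ through it with \cref{prop: derivatives-lmod-colim}, and compute the pieces with the chain rule, \cref{ex: der-retraction-sym} and the derivatives of adjoints. Your Step~1 computation is correct. However, the argument has a gap exactly where you suspect it. Step~2 needs a natural transformation $\eta\colon\Phi\Rightarrow\id$ of functors into $\BMod_{(\sP,\sO)}$, and none of your candidates supplies one.

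The $(\infty,2)$-functoriality route is not spelled out. It amounts to part of the comparison $\partial_*\circ\reAlg\simeq\id_{\MMor}$, which the paper explicitly leaves to future work (see the remark after the proposition).

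The free-algebra comparison also falls short, and it shows that Steps~1 and~2 target the wrong problem. Since $\partial_*\free_\sO\simeq\sO$, the chain rule gives
\[
\partial_*(M\circ_\sO-)\simeq\partial_*(M\circ_\sO-)\circ_\sO\partial_*\free_\sO\simeq\partial_*\bigl((M\circ_\sO-)\circ\free_\sO\bigr)\simeq\partial_*\Sym_M\simeq M
\]
naturally in $M$ as left $\sP$-modules. This is already an equivalence for \emph{every} $M$, free or not. What it lacks is compatibility with the right $\sO$-action, which in the $\infty$-categorical setting is structure rather than a property. A colimit argument over a free resolution can show that a \emph{given} bimodule map is an equivalence, but it cannot produce that structure.

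The paper avoids any global $\eta$ by applying $\partial_*$ to one explicit simplicial diagram of functors. It writes
\[
M\circ_\sO-\simeq\colim_{[n]\in\Delta^\op}\Sym_{M\circ\sO^{\circ n}}\circ\forget_\sO ,
\]
which is $\reAlg$ applied to the one-sided bar resolution $B_\bullet(M,\sO,\sO)$, whose terms are free only as right $\sO$-modules. It then:
\begin{enumerate}
\item applies $\partial_*$ levelwise using \cref{prop: derivatives-lmod-colim};
\item identifies level $n$ with the bimodule $M\circ\sO^{\circ n}\circ\sO$ via the chain rule, \cref{ex: der-retraction-sym} and \cref{rem: derivative-right-adjoint};
\item concludes by the extra degeneracy.
\end{enumerate}
Because the resolution is only right-free, every face map except the last is $\Sym$ of a map of left $\sP$-modules. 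Those are controlled by the strong monoidal equivalence $\partial_*\circ\Sym\simeq\id$. The last face map is the counit of $\free_\sO\dashv\forget_\sO$, which is controlled by the chain rule. No $\free_\sP$ appears, and no naturality over all of $\BMod_{(\sP,\sO)}$ is needed.

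Your two-sided free resolution could be treated the same way: apply $\partial_*$ directly to the simplicial diagram of functors obtained by applying $\reAlg$ to the resolution, and identify the result with the resolution itself. You would then also have to track the counit of $\free_\sP\dashv\forget_\sP$. With that organization, the detour through a transformation defined on all bimodules becomes unnecessary.
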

\begin{proof}
We will prove this in the case $\sC = \Sp = \sD$; the general proof again proceeds in exactly the same way.
Note that there are the following equivalences of functors from $\Alg_\sO(\Sp)$ to $\Alg_\sP(\Sp)$: 
\[
M \circ_\sO - \simeq \colim_{[n] \in \Delta^\op} M \circ \sO^{\circ n} \circ - \simeq \colim_{[n] \in \Delta^\op} \Sym_{M \circ \sO^{\circ n}} \circ \forget_{\sO}.
\]
Taking derivatives, we find that
\[
    \partial_*(M \circ_\sO -) \simeq \colim_{[n] \in \Delta^\op} \partial_*\Sym_{M \circ \sO^{\circ n}} \circ \partial_* \forget_\sO \simeq \colim_{[n] \in \Delta^{\op}} M \circ \sO^{\circ n} \circ \sO \simeq M
\]
The first equivalence follows from the derivatives functor preserving colimits when taking values in bimodules (\cref{prop: derivatives-lmod-colim}) and the stable chain rule (\cref{thm: lax-derivatives}).
The second equivalence follows from $\partial_*$ being a left inverse of $\Sym$ (\cref{ex: der-retraction-sym}) and our computation of the derivatives of right adjoints (\cref{rem: derivative-right-adjoint}).
The final equivalence follows since the simplicial diagram has an extra degeneracy.
\end{proof}

\begin{remark}
    This proposition suggests that there is an equivalence $\partial_* \circ \reAlg \simeq \id_{\MMor}$ of functors of $(\infty, 2)$-precategories.
    We indeed expect this to be true, and will return to this in future work.
\end{remark}

The following less obvious fact will be proved in \cite{BlansBlom2026}.
\begin{proposition} \label{prop: partial-mor-preserves-products}
    The functor $\partial_* \colon \diff \to \MMor$ preserves finite products.
\end{proposition}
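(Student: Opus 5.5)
The statement to prove is \cref{prop: partial-mor-preserves-products}: the derivatives functor $\partial_* \colon \diff \to \MMor$ preserves finite products. Since $\partial_*$ is a functor of $(\infty,2)$-precategories, this has two parts. It must send the terminal object to a terminal object, and the comparison morphism $\partial_*(\sC \times \sD) \to \partial_*\sC \times \partial_*\sD$ must be an equivalence.

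\emph{Terminal objects.} The terminal object of $\diff$ is the point $\ast$, with $\Sp(\ast) \simeq 0$. Its image $(0, 0)$ is the terminal object of $\MMor$, since bimodules in $\SSeqnu(\sC, 0)$ are all trivial.

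\emph{Products in $\MMor$.} Next I identify binary products in $\MMor$. Given $(\sC, \sO)$ and $(\sD, \sP)$, I expect the product to be $(\sC \times \sD, \sO \times \sP)$. Here $\sO \times \sP$ is the algebra in
\[
\SSeqnu(\sC \times \sD, \sC \times \sD) \simeq \prod_{i,j} \SSeqnu(\sC_i, \sC_j)
\]
that is ``diagonal'': it has $\sO$ and $\sP$ in the diagonal entries and $0$ off the diagonal. The justification is that a $(\sO \times \sP, \sQ)$-bimodule in $\SSeqnu(\sE, \sC \times \sD)$ splits as a pair of bimodules, one in $\SSeqnu(\sE, \sC)$ and one in $\SSeqnu(\sE, \sD)$. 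This splitting uses the explicit formula for the composition product of functor symmetric sequences with a product target. It should be checked directly from the definitions in \cite{blansblom2025chainrulegoodwilliecalculus}.

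\emph{The comparison map.} It remains to show that
\[
\partial_*\id_{\sC \times \sD} \to \partial_*\id_\sC \times \partial_*\id_\sD
\]
is an equivalence of operads, and that the induced maps on mapping categories are equivalences. First, $\Sp(\sC \times \sD) \simeq \Sp(\sC) \times \Sp(\sD)$. For the derivatives of the identity, I use that Goodwillie calculus of functors on a product is computed through multivariable cross-effects. Multilinearized cross-effects of $\id_{\sC \times \sD} = (\pi_\sC, \pi_\sD)$ involving inputs from both factors vanish. This is because $\pi_\sC$ only depends on the $\sC$-coordinate, so any cross-effect mixing a $\sD$-input is trivial. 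Hence the symmetric sequence $\partial_*\id_{\sC\times\sD}$ is diagonal, with entries $\partial_*\id_\sC$ and $\partial_*\id_\sD$. The operad structure comes from the lax monoidal structure of \cref{thm: lax-derivatives}, and the projections $\pi_\sC, \pi_\sD$ induce operad maps, so the operad structure matches as well.

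For mapping categories, take a functor $F = (F_1, F_2) \colon \sE \to \sC \times \sD$. Then $\partial_*F$ is determined by $\partial_*F_1$ and $\partial_*F_2$ via the same coordinatewise argument. Alternatively, apply the chain rule to $\pi_\sC \circ F$, using that $\partial_*\pi_\sC$ is the free bimodule computed via \cref{prop: derivative-left-adjoint} and \cref{rem: derivative-right-adjoint}, since $\pi_\sC$ is both a left and a right adjoint. Maps into $\sE$ from a product are handled dually, using the inclusions $\sC \to \sC \times \sD$.

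\emph{Main obstacle.} I expect the hardest step to be making this coherent at the $(\infty,2)$-level. One must show that the comparison functor on mapping categories, not just on objects, is an equivalence, and compatibly with composition. I would try to reduce to checking that the composite of $\partial_*$ with the two projections exhibits the product universal property. It suffices to check this on hom-categories, where it follows from the chain rule applied to $\pi_\sC \circ F$ and $\pi_\sD \circ F$ together with the splitting of bimodules described above.
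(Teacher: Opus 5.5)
The paper does not prove this proposition; it defers it to \cite{BlansBlom2026}. So your argument has to stand on its own, and its central step is not established. Your cross-effect computation identifies $\partial_*\id_{\sC\times\sD}$ with the ``diagonal'' object only as a functor symmetric sequence.

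The sentence ``the projections $\pi_\sC,\pi_\sD$ induce operad maps, so the operad structure matches'' has no basis as written. Since $\pi_\sC$ is not an endofunctor, \cref{thm: lax-derivatives} says nothing about it. What $\partial_*$ produces from $\pi_\sC$ is a $(\partial_*\id_\sC,\partial_*\id_{\sC\times\sD})$-bimodule, not a map of operads. An operad structure is not determined by its underlying symmetric sequence. Neither are the right module structures on $\partial_*\pi_\sC$ and $\partial_*\pi_\sD$, and these are exactly what decide whether $(\partial_*\pi_\sC,\partial_*\pi_\sD)$ is an equivalence in $\MMor$.

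Your last paragraph does not get around this, for two reasons. First, ``the splitting of bimodules described above'' is a statement about bimodules over the diagonal operad, so it presupposes the identification. Second, the chain rule for $\pi_\sC\circ F$ only computes the comparison functor on objects of the form $\partial_*F$. What you need is that $\MMor(X,\partial_*(\sC\times\sD))\to\MMor(X,\partial_*\sC)\times\MMor(X,\partial_*\sD)$ is an equivalence of $\infty$-categories for every $X$.

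Separately, $\SSeqnu(\sC\times\sD,\sC\times\sD)\simeq\prod_{i,j}\SSeqnu(\sC_i,\sC_j)$ is false. Only the target variable splits; out of a product there are mixed multidegrees, as in $\SSeqnu(\Sp\times\Sp,\Sp)\simeq\biSSeq(\Sp)$. ``Diagonal'' should mean concentrated in pure multidegrees. Your description of products in $\MMor$ survives this, since it only uses splitting in the target.

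The ingredients you cite are enough once used properly. Write $\sA=\partial_*\id_{\sC\times\sD}$ and $p=\partial_1\pi_\sC$.
\begin{itemize}
\item Since $\pi_\sC$ is a left adjoint, $\partial_*\pi_\sC\simeq\partial_*\id_\sC\circ p$ as left modules.
\item A compatible right $\sA$-action on this free left module is the same as an algebra map from $\sA$ to its endomorphism algebra. That algebra is $\partial_*\id_\sC$ placed in pure $\sC$-multidegrees in the $\sC$-component, and $0$ in the $\sD$-component.
\item Combining with the analogous map for $\pi_\sD$ gives an operad map $\sA\to\partial_*\id_\sC\times\partial_*\id_\sD$.
\item Because $\pi_\sC$ and $\pi_\sD$ are also right adjoints (\cref{rem: derivative-right-adjoint}), this map is an equivalence on underlying objects.
\item The comparison 1-morphism is then restriction along an operad equivalence, hence invertible.
\end{itemize}

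Alternatively, you can avoid operads altogether. Put $\iota_\sC(c)=(c,0)$. Then $\iota_\sC\dashv\pi_\sC\dashv\iota_\sC$, $\pi_\sC\iota_\sC\simeq\id$, $\pi_\sD\iota_\sC\simeq 0$, and $\id_{\sC\times\sD}\simeq\iota_\sC\pi_\sC\sqcup\iota_\sD\pi_\sD$. Apply $\partial_*$, using the chain rule and \cref{prop: derivatives-lmod-colim}. Two further facts are needed. Postcomposition with $\partial_*\pi_\sC$ is a left adjoint on hom-categories, because $\partial_*$ preserves the adjunction $\pi_\sC\dashv\iota_\sC$. Precomposition preserves colimits, because the relative composition product does so in its first variable. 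With these, $(N_1,N_2)\mapsto\partial_*\iota_\sC\circ N_1\sqcup\partial_*\iota_\sD\circ N_2$ is inverse to $N\mapsto(\partial_*\pi_\sC\circ N,\partial_*\pi_\sD\circ N)$ on $\MMor(X,-)$ for every $X$.

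Your treatment of the terminal object is fine.
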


\begin{corollary}
    The functors $\maAlg$ and $\maLMod \colon \diff \to \diff$ preserve finite products.
\end{corollary}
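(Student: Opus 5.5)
The plan is to deduce this formally from \cref{prop: partial-mor-preserves-products} together with the corresponding statement for $\reAlg$ and $\reLMod$. Since $\maAlg = \reAlg \circ \partial_*$ and $\maLMod = \reLMod \circ \partial_*$, and $\partial_*$ preserves finite products, it suffices to show that $\reAlg$ and $\reLMod \colon \MMor \to \diff$ preserve finite products. Both are corepresentable, by $\ast$ and by $(\Sp, \unit)$ respectively, and corepresentable functors of $(\infty,2)$-precategories preserve whatever limits exist in the source. Concretely, for a product $(\sC,\sO)\times(\sD,\sP)$ in $\MMor$ there are natural equivalences
\[
\MMor(\ast, (\sC,\sO)\times(\sD,\sP)) \simeq \MMor(\ast,(\sC,\sO)) \times \MMor(\ast,(\sD,\sP)),
\]
which is exactly $\Alg$ of the product operad being the product of the algebra categories. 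The same holds for $(\Sp,\unit)$ in place of $\ast$.

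Two points remain to be checked. The first is that the products in $\diff$ appearing on the target side are computed as products of underlying $\infty$-categories. This holds because a product of differentiable $\infty$-categories is again pointed and presentable, with filtered colimits and finite limits computed componentwise; moreover, mapping categories of reduced finitary functors into a product split as products. The second is that the empty product is preserved. The terminal object of $\diff$ is the zero category, and it is sent by $\partial_*$ to $(0, 0)$. Its image under $\reAlg$ is $\Alg$ in the zero category, which is again trivial, and similarly for $\reLMod$.

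The main obstacle is making the corepresentability argument rigorous at the level of $(\infty,2)$-precategories, since these are only Segal objects and "preserving products" must be interpreted as preserving products of objects whose mapping categories split. I would argue on mapping categories: a functor $\sX \to \diff$ corepresented by an object $c$ sends $x$ to $\MMor(c,x)$, and products in a Segal object are detected by equivalences of all mapping categories into them. This makes the product preservation formal, including the compatibility on $1$-morphisms, which follows because composition with a bimodule into a product splits componentwise. Combining the two steps gives the corollary.
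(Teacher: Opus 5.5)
Your proposal is correct and is the same argument as the paper's: write $\maAlg = \reAlg \circ \partial_*$ and $\maLMod = \reLMod \circ \partial_*$, use \cref{prop: partial-mor-preserves-products} for $\partial_*$, and use that the corepresentable functors $\reAlg$ and $\reLMod$ preserve products. Your extra checks are details the paper leaves implicit, and they are fine: finite products in $\diff$ are computed on underlying $\infty$-categories, so the product of mapping categories in $\Cat$ is also the product in $\diff$, and the empty product is handled.
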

\begin{proof}
    This holds since $\maAlg = \reAlg \circ \partial_*$ and $\maLMod = \reLMod \circ \partial_*$, and $\reAlg$ and $\reLMod$ are corepresentable functors.
\end{proof}

This result allows us to differentiate symmetric monoidal structures.

\begin{proposition} \label{prop: differentiating-sym-mon-cats}
    Let $\sC \in \diff_\Sp$ and suppose $\sC$ is equipped with a nonunital symmetric monoidal structure such that the tensor product $-\otimes_\sC-$ preserves filtered colimits and satisfies $\ast \otimes_\sC \ast \simeq \ast$, where $\ast$ denotes the zero object of $\sC$.
    Then $\Alg_{\partial_*{\id_\sC}}(\Sp)$ admits a nonunital symmetric monoidal structure with tensor product
    \[
    \partial_*(- \otimes_\sC -) \circ_{\partial_*{\id_\sC} \times \partial_*{\id_\sC}} - \colon \Alg_{\partial_*{\id_\sC}}(\Sp) \times \Alg_{\partial_*{\id_\sC}}(\Sp) \to \Alg_{\partial_*{\id_\sC}}(\Sp).
    \]
    If $\sD$ is another differentiable symmetric monoidal category satisfying these properties, and $F \colon \sC \to \sD$ is a (lax) symmetric monoidal functor, then the functor
    \[
    \partial_*F \circ_{\partial_*{\id_\sC}} - \colon \Alg_{\partial_*{\id_\sC}}(\Sp) \to \Alg_{\partial_*{\id_\sD}}(\Sp) 
    \]
    inherits a (lax) symmetric monoidal structure.
    The same result holds with $\Alg_{\partial_*{\id_\sC}}(\Sp)$ replaced by $\LMod_{\partial_*{\id_\sC}}(\SSeqnu(\Sp))$.
\end{proposition}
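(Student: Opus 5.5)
The plan is to treat a nonunital symmetric monoidal $\infty$-category as a commutative algebra object (nonunital, i.e.\ a functor out of the nonunital commutative operad or equivalently a commutative monoid for finite products without unit) in a suitable $(\infty,2)$-category. Then a finite-product-preserving functor of $(\infty,2)$-categories carries such algebras to algebras and (lax) morphisms to (lax) morphisms.

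Concretely, the hypotheses that $-\otimes_\sC-$ preserves filtered colimits and that $\ast\otimes_\sC\ast\simeq\ast$ say exactly that $\otimes_\sC \colon \sC\times\sC \to \sC$ is a reduced finitary functor. Note that reducedness in $\diff$ means preservation of the zero object of $\sC\times\sC$, which is $(\ast,\ast)$. Hence all structure maps of the nonunital commutative monoid $\sC$ (the iterated tensor products $\sC^{\times n}\to\sC$ for $n\ge1$ together with the coherences) live in $\diff$. Since $\diff$ has finite products computed as products of underlying $\infty$-categories, with $\Sp(\sC\times\sC)\simeq\Sp(\sC)\times\Sp(\sC)$, $\sC$ becomes a nonunital commutative monoid object in $\diff$. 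Here I would use the Lurie-style description as a functor $\Fin_*^{\mathrm{surj}}$-type diagram satisfying the Segal condition, or the product-preserving functor from the nonunital Lawvere theory.

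By the corollary following \cref{prop: partial-mor-preserves-products}, $\maAlg=\reAlg\circ\partial_*$ and $\maLMod$ preserve finite products. Since they are functors of $(\infty,2)$-(pre)categories, they send nonunital commutative monoid objects to nonunital commutative monoid objects. So $\maAlg(\sC)=\Alg_{\partial_*\id_\sC}(\Sp)$ acquires a nonunital symmetric monoidal structure whose binary product is the image of $\otimes_\sC$. To identify it, I would use $\maAlg(\sC\times\sC)\simeq\Alg_{\partial_*\id_\sC}(\Sp)^{\times2}$, which comes from the product of operads $\partial_*\id_\sC\times\partial_*\id_\sC$ in $\MMor$. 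The description of $\reAlg$ on bimodules then gives $\partial_*(-\otimes_\sC-)\circ_{\partial_*\id_\sC\times\partial_*\id_\sC}-$.

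For functors, a lax symmetric monoidal $F$ is a lax morphism of commutative monoids, given by 2-cells $F(-)\otimes F(-)\Rightarrow F(-\otimes-)$ with coherences. The strong functor $\maAlg$ maps these to the corresponding 2-cells, producing the lax structure on $\partial_*F\circ_{\partial_*\id_\sC}-$; strong structures map to strong ones since invertible 2-cells are preserved. One subtlety is that $F$ must itself lie in $\diff$, i.e.\ be reduced and finitary. I take this to be implicit in the statement; otherwise one first replaces $F$ by its finitary reduced part.

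The main obstacle is the formal bookkeeping. One has to make precise that commutative monoid objects and lax morphisms between them make sense in an $(\infty,2)$-precategory, and that a product-preserving strong functor transports them. The least painful route I would try first is to model nonunital symmetric monoidal categories as commutative monoids in the cartesian $(\infty,2)$-category. Lax morphisms are then handled via the lax Gray/oplax-transformation formalism, or by encoding a lax symmetric monoidal functor as a commutative monoid in the arrow-type $(\infty,2)$-category $\Fun^{\mathrm{lax}}([1],\diff)$ and checking that $\maAlg$ induces a product-preserving functor there. The $\LMod$ case is verbatim with $\reLMod$.
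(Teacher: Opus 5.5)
Your proposal is correct and follows the paper's proof essentially verbatim. Like you, the paper views $\sC$ as a $\Surj_*$-indexed Segal object in $\diff$, transports it using that $\maAlg$ and $\maLMod$ preserve finite products, and treats (lax) symmetric monoidal functors as (partially lax) natural transformations of such Segal objects, which any functor of $(\infty,2)$-categories preserves. The bookkeeping you identify as the main obstacle is handled in the paper by citing Theorem E of Haugseng--Hebestreit--Linskens--Nuiten for exactly this encoding of lax symmetric monoidal functors, rather than via Gray products or $\Fun^{\mathrm{lax}}([1],\diff)$.
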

\begin{proof}
Observe that a nonunital symmetric monoidal category $\sC$ as in the statement of the proposition is the same as a Segal object in the $(\infty, 2)$-category $\diff$ indexed by the category $\Surj_*$ of pointed finite sets and surjections.
Since $\maAlg$ and $\maLMod$ preserve finite products, they preserve such Segal objects.
As a (lax) symmetric monoidal functor between such nonunital symmetric monoidal categories can be expressed as a (partially lax) natural transformation between such Segal objects by for instance \cite[Theorem E]{haugseng-hebestreit-linskens-nuiten-lax-monoidal-adjunctions}, and since such natural transformations are preserved by all functors of $(\infty, 2)$-categories, it follows that $\maAlg$ and $\maLMod$ preserve (lax) symmetric monoidal functors as well.
\end{proof}

\begin{remark}
    One could try to obtain a version of the previous proposition for \emph{unital} symmetric monoidal differentiable categories $\sC$ by working with Segal objects in $\diff$ indexed by $\Fin_*$ instead of $\Surj_*$.
    But then the functor $\ast \to \sC$ picking out the unit object should be reduced, as it has to be a morphism in $\diff$.
    This forces the unit of $\sC$ to be the zero object.
    If this hypothesis is satisfied, it follows that we get an induced \emph{unital} symmetric monoidal structure on $\Alg_{\partial_*{\id_\sC}}(\Sp)$ with unit given by the zero object.
\end{remark}

\begin{example} \label{ex: derivative-cartesian-symmetric-monoidal-structure}
    Let $\sC \in \diff_\Sp$ and suppose we give $\sC$ the cartesian symmetric monoidal structure.
    Then the induced symmetric monoidal structures on both $\Alg_{\partial_*{\id_\sC}}(\Sp)$ and $\LMod_{\partial_*{\id_\sC}}(\SSeqnu(\Sp))$ are again cartesian, as follows easily from \cref{prop: materialization-exactness}.
    The same holds for cocartesian symmetric monoidal structures.
\end{example}

\begin{example} \label{ex: derivative-smash-product-sym-mon}
    Let $\sC \in \diff_\Sp$.
    The smash product of $X, Y \in \sC$ is defined to be the cofiber of the canonical map $X \sqcup Y \to X \times Y$. 
    Suppose that this extends to a nonunital symmetric monoidal structure on $\sC$; conditions under which this holds are given in \cref{prop: smash-monoidal-structure}.
    It then follows from the previous example combined with \cref{prop: materialization-exactness} that the induced nonunital symmetric monoidal structures on both $\Alg_{\partial_*{\id_\sC}}(\Sp)$ and $\LMod_{\partial_*{\id_\sC}}(\SSeqnu(\Sp))$ are again given by the smash product.
\end{example}

\begin{example} \label{ex: derivative-tensor-product-sym-mon}
    Consider $\Sp$ as a symmetric monoidal category under the tensor product.
    The functor $- \otimes - \colon \Sp \times \Sp \to \Sp$ is bilinear, so that the bisymmetric sequence $\partial_*(- \otimes -)$ is concentrated in bidegree $(1, 1)$, where it is given by the sphere spectrum.
    It follows that the induced nonunital symmetric monoidal structure on $\Alg_{\partial_*{\id_\Sp}}(\Sp) \simeq \Sp$ is again the tensor product.
    The induced nonunital symmetric monoidal structure on $\SSeqnu(\Sp)$ is Day convolution.
    Similarly, the augmented tensor product $\otimesaug$ on $\Sp$ gets sent to itself under $\maAlg$ and to the augmented Day convolution tensor product under $\maLMod$.
\end{example}

\subsection{Goodwillie calculus in two variables} \label{sec: multivariate-calculus}

We make some use of Goodwillie calculus in two variables throughout this article.
The goal of the present section is to explain what form the derivatives of two-variable functors take by giving an explicit description of the category of functor symmetric sequences $\SSeqnu(\Sp \times \Sp, \Sp)$. 

\begin{definition}
We define the category of \emph{nonunital bisymmetric sequences} $\biSSeq(\Sp)$ as the full subcategory of
\[
\Fun(\Fin^{\cong} \times \Fin^{\cong}, \Sp)
\]
spanned by those functors $A$ that send the pair $(\emptyset, \emptyset)$ to the zero object.
\end{definition}

In other words, to give a bisymmetric sequence $A$ is to give for each pair $(n, m)$ of integers for which $n + m \geq 1$ a spectrum $A_{n,m}$ with $\Sigma_n \times \Sigma_m$-action.

\begin{proposition}
    There is an equivalence of categories
    \[
    \SSeqnu(\Sp \times \Sp, \Sp) \simeq \biSSeq(\Sp).
    \]
\end{proposition}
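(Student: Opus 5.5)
The plan is to unwind the definition of $\SSeqnu(\sC, \sD)$ from \cite{blansblom2025chainrulegoodwilliecalculus} for $\sC = \Sp \times \Sp$ and $\sD = \Sp$. Recall that a nonunital functor symmetric sequence from $\sC$ to $\sD$ assigns to each $n \geq 1$ a functor $\sC^{\times n} \to \sD$ that preserves colimits in each variable separately. These functors come with compatible $\Sigma_n$-equivariance, so the data can be written as
\[
\prod_{n \geq 1} \FunL(\sC^{\otimes n}, \sD)^{h\Sigma_n},
\]
where $\sC^{\otimes n}$ is the Lurie tensor product in $\presl$. With $\sC = \Sp \times \Sp \simeq \Sp \oplus \Sp$, the argument proceeds by computing each factor in turn and then reassembling.

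First, $\Sp \times \Sp$ is the biproduct $\Sp \oplus \Sp$ in stable presentable categories, and the tensor product distributes over finite biproducts. This gives an equivalence
\[
(\Sp \oplus \Sp)^{\otimes n} \simeq \bigoplus_{f \colon \{1, \ldots, n\} \to \{1,2\}} \Sp,
\]
indexed by the $2^n$ ways of assigning each input to a variable. The $\Sigma_n$-action permutes the summands through its action on the set of such functions $f$.

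Second, colimit-preserving functors out of a finite coproduct of copies of $\Sp$ are tuples of spectra. So $\FunL(\sC^{\otimes n}, \Sp) \simeq \Fun(\{1,2\}^{\{1,\ldots,n\}}, \Sp)$, with $\{1,2\}^{\{1,\ldots,n\}}$ regarded as a discrete set carrying its $\Sigma_n$-action. Taking homotopy fixed points of a functor category on a $\Sigma_n$-set yields functors out of the action groupoid $(\{1,2\}^{n})_{h\Sigma_n}$. The orbits of this action are indexed by pairs $(p, q)$ with $p + q = n$, and each orbit has stabilizer $\Sigma_p \times \Sigma_q$. Hence
\[
(\{1,2\}^{n})_{h\Sigma_n} \simeq \coprod_{p+q=n} B(\Sigma_p \times \Sigma_q).
\]
Equivalently, the groupoid of finite sets of cardinality $n$ equipped with a map to $\{1,2\}$ is equivalent to the groupoid of pairs of finite sets $(I, J)$ with $|I| + |J| = n$. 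Taking the product over $n \geq 1$ therefore gives functors out of the full subgroupoid of $\Fin^{\cong} \times \Fin^{\cong}$ on pairs not equal to $(\emptyset, \emptyset)$. Extending by $0$ at $(\emptyset,\emptyset)$ identifies this with $\biSSeq(\Sp)$.

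The main obstacle is making the equivariance honest, that is, identifying $\FunL(\sC^{\otimes n}, \Sp)^{h\Sigma_n}$ with functors out of the action groupoid functorially rather than only objectwise. I would handle this by viewing the assignment $X \mapsto \Fun(X, \Sp)$ for $\Sigma_n$-spaces $X$ as sending colimits to limits, so that homotopy fixed points correspond to homotopy orbits of $X$. I would check compatibility with the distributivity equivalence by using naturality of $\Fun(-,\Sp) \simeq \FunL(\Fun(-,\Sp)^{\vee}, \Sp)$.

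As a sanity check, evaluating a bisymmetric sequence $A$ on $(X, Y)$ should recover the expected formula
\[
\bigoplus_{p,q} (A_{p,q} \otimes X^{\otimes p} \otimes Y^{\otimes q})_{h\Sigma_p \times \Sigma_q}.
\]
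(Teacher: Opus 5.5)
Your proposal is correct and follows essentially the paper's route. The paper also starts from the description $\FunL(\bigoplus_{n\geq 1}(\Sp\oplus\Sp)^{\otimes n}_{h\Sigma_n},\Sp)$ of \cite{blansblom2025chainrulegoodwilliecalculus} and splits each term according to how many inputs go to each variable, arriving at $\prod_{m+n\geq 1}\Fun(B\Sigma_m\times B\Sigma_n,\Sp)$. The only difference is bookkeeping: the paper does this split in $\presl$ as the binomial decomposition $(\Sp\oplus\Sp)^{\otimes n}_{h\Sigma_n}\simeq\bigoplus_{p+q=n}\Sp^{\otimes p}_{h\Sigma_p}\otimes\Sp^{\otimes q}_{h\Sigma_q}$, whereas you dualize first and read the split off from the $\Sigma_n$-orbits on $\{1,2\}^n$, which also makes explicit the equivariance point the paper leaves implicit.
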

\begin{proof}
By \cite[Section 3.1]{blansblom2025chainrulegoodwilliecalculus} the category $\SSeqnu(\Sp \times \Sp, \Sp)$ is equivalent to 
\[
    \Fun^{\mathrm{L}}(\bigoplus_{n \geq 1} (\Sp \oplus \Sp)^{\otimes n}_{h\Sigma_n}, \Sp),
\] 
where $\oplus$ and $\otimes$ denote the coproduct and tensor product in $\presl$ respectively, and $\FunL$ denotes the category of colimit preserving functors.
We then have the chain of equivalences
\begin{align*}
    \FunL(\bigoplus_{n \geq 1} (\Sp \oplus \Sp)^{\otimes n}_{h\Sigma_n}, \Sp) & \simeq \FunL(\bigoplus_{m + n \geq 1} \Sp^{\otimes m}_{h\Sigma_m} \otimes \Sp^{\otimes n}_{h\Sigma_n}, \Sp) \\
    &\simeq \prod_{m + n \geq 1} \FunL(\Sp_{h \Sigma_m \times \Sigma_n}, \Sp) \\
    &\simeq \prod_{m + n \geq 1} \Fun(B\Sigma_m \times B\Sigma_n, \Sp) \\
    &\simeq \biSSeq(\Sp). \qedhere
\end{align*}
\end{proof}

\begin{remark}
  It follows from this proposition that if $\sC$ and $\sD$ are differentiable categories with specified equivalences $\Sp(\sC) \simeq \Sp \simeq \Sp(\sD)$, and $F \colon \sC \times \sC \to \sD$ is a reduced finitary functor, then the derivatives $\partial_*F$ form a bisymmetric sequence in $\Sp$.
  The $(m, n)$-derivative $\partial_{m,n}F$ can be computed by applying the composite functor
    \begin{align*}
        \Fun^{\ast, \omega}(\sC \times \sC, \sD) & \xrightarrow{\phantom{lll}\sim\phantom{lll}} \Fun^{\ast, \omega}(\sC, \Fun^{\ast, \omega}(\sC, \sD)) \\
        & \xrightarrow{(\partial_n)_* \phantom{|}} \Fun^{\ast, \omega}(\sC, \Fun(B \Sigma_n, \Sp)) \\
        & \xrightarrow{\phantom{(}\partial_m\phantom{)_*}} \Fun(B\Sigma_m, \Fun(B \Sigma_n, \Sp)) \\
        & \xrightarrow{\phantom{lll}\sim\phantom{lll}} \Fun(B\Sigma_m \times B \Sigma_n, \Sp)
    \end{align*}
    to $F$.
    One obtains the same result by first taking the $m$-th and then the $n$-th derivative.
\end{remark}

\begin{remark}
To $A \in \biSSeq(\Sp)$ we can attach a functor $\Sym_A \colon \Sp \times \Sp \to \Sp$ given by the formula
\[
\Sym_A(X, Y) = \bigoplus_{n + m \geq 1} (A_{n, m} \otimes X^{\otimes n} \otimes Y^{\otimes m})_{h\Sigma_n \times \Sigma_m}.
\]
This defines a functor $\Sym \colon \biSSeq(\Sp) \to \Fun(\Sp \times \Sp, \Sp)$.
The evaluation functor
\[
 - \circ - \colon \biSSeq(\Sp) \times \Sp \times \Sp \to \Sp
\]
is given by $A \circ (X, Y) = \Sym_A(X, Y)$.
\end{remark}

\begin{remark}
    If $F \colon \Spc_* \times \Spc_* \to \Spc_*$ is a reduced finitary functor, then $\partial_*F \in \biSSeq(\Sp)$ has the structure of an $(\LL, \LL \times \LL)$-bimodule, where $\LL \times \LL$ denotes the cartesian product in the Morita category.
    We will denote the category of such bimodules by 
    \[
    \BMod_{\LL}(\biSSeq(\Sp)).
    \]
    These bimodules are defined using the composition product on the category of functor symmetric sequences $\SSeqnu(\Sp^{\times 2}, \Sp^{\times 2})$ as well as the left and right tensoring of $\biSSeq(\Sp)$ by $\SSeqnu(\Sp)$ and $\SSeqnu(\Sp^{\times 2}, \Sp^{\times 2})$ respectively.
    Explicit formulas for this composition product and these tensorings can be deduced from the general formula in \cite[Proposition 3.2.9]{blansblom2025chainrulegoodwilliecalculus},
    but we do not need them here: all our computations of bimodules of the form $\partial_*F$ will follow from the general exactness properties of $\partial_*$ proved in the previous sections.
\end{remark}

\begin{remark}
    Everything written in this section generalizes to functors in more than two variables.
    Concretely, the category of functor symmetric sequences $\mSSeqnu{n}$ for $n \geq 1$ can be identified with the category of $n$-fold symmetric sequences: to give an object $A$ in this category is to give for each $n$-tuple $(m_1, \ldots, m_n)$ with not all $m_i$ equal to zero a spectrum $A_{m_1, \ldots, m_n}$ with $\Sigma_{m_1} \times \cdots \times \Sigma_{m_n}$-action.
    Such an $n$-fold symmetric sequence gives rise to a functor $\Sym_A \colon \Sp^{\times n} \to \Sp$, given by the formula
    \[
    \Sym_A(X_1, \ldots, X_n) = \bigoplus_{m_1 + \cdots + m_n \geq 1} (A_{m_1, \ldots, m_n} \otimes X_1^{\otimes m_1} \otimes \cdots \otimes X_n^{\otimes m_n})_{h\Sigma_{m_1} \times \cdots \times \Sigma_{m_n}}.
    \]
\end{remark}

We will need the following simple computation of a composition product of multisymmetric sequences in \cref{sec: free-Lie-algebra}.

\begin{proposition} \label{prop: multi-composition-product}
    Suppose we are given integers $n, m_1, \ldots m_n \geq 1$ and symmetric sequences $A \in \mSSeqnu{n}$ and $B^i \in \mSSeqnu{m_i}$ for $1 \leq i \leq n$.
    Assume that both $A$ and the $B^i$ are concentrated in multidegree $(1, \ldots, 1)$, where for $A$ there are $n$ occurrences of $1$, and for $B^i$ there are $m_i$ occurrences of $1$.
    Then the composition product $A \circ (B^1, \ldots B^n)$ is concentrated in multidegree $(1, \ldots, 1)$, where there are $m_1 + \cdots + m_n$ occurrences of $1$.
\end{proposition}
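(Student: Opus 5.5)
The plan is to avoid any explicit formula for the composition product. Instead we use the description of $\SSeqnu(\sC,\sD)$ from \cite[Section 3.1]{blansblom2025chainrulegoodwilliecalculus} as a category of colimit-preserving functors out of a ``free symmetric'' presentable category, in the spirit of the proof identifying $\SSeqnu(\Sp\times\Sp,\Sp)$ with bisymmetric sequences. In that language, a multisymmetric sequence concentrated in multidegree $(1,\ldots,1)$ is the same as a multilinear functor, i.e.\ a functor $\Sp^{\times n}\to\Sp$ preserving colimits in each variable separately. Equivalently, it is a colimit-preserving functor out of the summand $\Sp^{\otimes n}$ of $\bigoplus_{k_1+\cdots+k_n\geq 1}\bigotimes_i \Sp^{\otimes k_i}_{h\Sigma_{k_i}}$, extended by zero on all other summands. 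Concretely, $A$ corresponds to $(X_1,\ldots,X_n)\mapsto A_{1,\ldots,1}\otimes X_1\otimes\cdots\otimes X_n$, with no orbits involved.

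The key step is that $\Sym$ is monoidal for the composition product on functor symmetric sequences, compatibly with the multivariable evaluation. This is the multivariable analogue of the monoidality of $\Sym\colon\SSeq(\Sp)\to\End(\Sp)$ stated in Section 2, and it is part of the setup of \cite{blansblom2025chainrulegoodwilliecalculus}. Granting it, we get
\[
\Sym_{A\circ(B^1,\ldots,B^n)}\simeq \Sym_A\circ(\Sym_{B^1}\times\cdots\times\Sym_{B^n}).
\]
The right-hand side sends $(X_{1,1},\ldots,X_{n,m_n})$ to
\[
A_{1,\ldots,1}\otimes\bigotimes_i\Bigl(B^i_{1,\ldots,1}\otimes X_{i,1}\otimes\cdots\otimes X_{i,m_i}\Bigr),
\]
which is multilinear in all $m_1+\cdots+m_n$ variables.

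It then remains to conclude that a multisymmetric sequence $C$ with $\Sym_C$ multilinear is concentrated in multidegree $(1,\ldots,1)$. For this we use the retraction $\partial_*\Sym\simeq\id$ from \cref{ex: der-retraction-sym} in its multivariable form. Alternatively, and more elementarily, we evaluate on $X_j=\Sph^{\oplus}$-sums and compare homogeneous pieces. The summand of $\Sym_C$ of multidegree $(k_1,\ldots,k_N)$ is $k_j$-homogeneous in $X_j$. Because $\Sym_C$ is linear in each variable, the homogeneous layers of degree $k_j\neq1$ vanish. By the classification of homogeneous functors, a homogeneous layer vanishes only if the corresponding coefficient $C_{k_1,\ldots,k_N}$ with its $\Sigma$-action vanishes, which gives the claim. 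As a by-product, the $(1,\ldots,1)$-term is $A_{1,\ldots,1}\otimes\bigotimes_i B^i_{1,\ldots,1}$.

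The main obstacle is justifying the multivariable monoidality of $\Sym$, or equivalently the retraction from derivatives, since neither is literally stated earlier for several variables. If this is problematic, the fallback is to read the composition product directly off the formula of \cite[Proposition 3.2.9]{blansblom2025chainrulegoodwilliecalculus}. That formula is a sum over partitions whose blocks index terms of $B^i$ in each variable. When all inputs are concentrated in degree $(1,\ldots,1)$, only partitions into blocks containing exactly one element from each of the $m_i$ variables of $B^i$, with each $B^i$ used exactly once, can contribute. Such partitions force every output variable to appear exactly once.
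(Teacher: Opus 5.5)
Your proposal is correct and follows the paper's proof. Both arguments use that $\Sym$ turns the composition product into functor composition, compute $\Sym_{A\circ(B^1,\ldots,B^n)}$ as the multilinear functor $A_{1,\ldots,1}\otimes\bigotimes_i B^i_{1,\ldots,1}\otimes Y_1\otimes\cdots\otimes Y_{m_1+\cdots+m_n}$, and read off concentration in multidegree $(1,\ldots,1)$. Your extra justification of that last deduction, via $\partial_*\Sym\simeq\id$ or via comparing homogeneous layers, only makes explicit what the paper leaves implicit; the layer comparison also rules out multidegrees with some $k_j=0$, because the functor is reduced in each variable.
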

\begin{proof}
    It is possible to deduce this from the general formula for the composition product given in \cite[Proposition 3.2.9]{blansblom2025chainrulegoodwilliecalculus}.
    This is however not the easiest way to proceed, since these formulas quickly become unwieldy.
    Instead, we make use of the fact that the assignment $X \mapsto \Sym_X$ sends the composition product to functor composition.
    By assumption, we have
    \begin{align*}
        \Sym_A(X_1, \ldots, X_n)  &= A_{1, \ldots, 1} \otimes X_1 \otimes \cdots \otimes X_n, \\
        \Sym_{B^i}(Y_1, \ldots, Y_{m_i}) &= B^i_{1, \ldots, 1} \otimes Y_1 \otimes \cdots \otimes Y_{m_i} \quad \text{for $1 \leq i \leq n$.}
    \end{align*}
    Composing and relabeling variables, we find that $\Sym_{A \circ (B^1, \ldots, B^n)}$ is given by the expression
    \[
    A_{(1, \ldots, 1)} \otimes B^1_{(1, \ldots, 1)} \otimes \cdots \otimes B^n_{(1, \ldots, 1)} \otimes Y_1 \otimes \cdots \otimes Y_{m_1 + \cdots + m_n}.
    \]
    It follows that $A \circ (B^1, \ldots, B^n)$ is concentrated in multidegree $(1, \ldots, 1)$, where it is given by the spectrum $A_{(1, \ldots, 1)} \otimes B^1_{(1, \ldots, 1)} \otimes \cdots \otimes B^n_{(1, \ldots, 1)}$.
\end{proof}

\section{The homotopy theory of Lie algebras} \label{sec: homotopy-theory-of-lie-algebras}
The goal of this section is to use Goodwillie calculus to transfer a number of classical results about pointed spaces to the homotopy theory of spectral Lie algebras.

\subsection{The commutator cofiber sequence}

We will now prove \cref{thm: mainA}, showing that there is a natural cofiber sequence
\[
    \Sigma (X \wedge Y) \to \Sigma X \vee \Sigma Y \to \Sigma X \times \Sigma Y,
\]
of spectral Lie algebras which desuspends the cofiber sequence defining the smash product of $\Sigma X$ and $\Sigma Y$.
We start by recalling how the maps in this sequence and a nullhomotopy of their composite are constructed.

Let $\sC$ be a pointed $\infty$-category with finite limits and colimits.
For $X, Y \in \sC$, we write $X \vee Y$ for their coproduct and $X \wedge Y$ for their smash product, i.e.\ the cofiber of the canonical map $X \vee Y \to X \times Y$.
For $X \in \sC$, the object $\Omega X$ is an $\E_1$-group.
The composite
\[
\Omega X \vee \Omega X \to \Omega X \times \Omega X \xrightarrow{((xy)x^{-1})y^{-1}} \Omega X
\]
is canonically nullhomotopic, and hence factors over the smash product to give the commutator map
\[
c \colon \Omega X \wedge \Omega X \to \Omega X.
\]

\begin{definition}[Whitehead bracket]\label{def: whitehead-bracket}
Let $\sC$ be a pointed $\infty$-category with finite limits and colimits. 
Suppose we are given maps $f \colon \Sigma A \to X$ and $g \colon \Sigma B \to X$ in $\sC$.
The \emph{Whitehead bracket} of $f$ and $g$ is the map
\[
[f, g] \colon \Sigma(A \wedge B) \to X
\] 
defined as the adjoint of the composite
\[
\begin{tikzcd}
    A \wedge B \ar[r, "\bar{f} \wedge \bar{g}"] & \Omega X \wedge \Omega X \ar[r, "c"] & \Omega X,
\end{tikzcd}
\]
where $\bar{f}$ and $\bar{g}$ denote the adjoints of $f$ and $g$ respectively.
\end{definition}

    \begin{construction}[Commutator sequence]
         Let $\sC$ be a pointed $\infty$-category with finite limits and colimits.
         For $X, Y \in \sC$, we obtain a sequence
        \[
		\begin{tikzcd}
			\Sigma(X \wedge Y) \ar[r, "{[\iota_X, \iota_Y]}"] & \Sigma X \vee \Sigma Y \ar[r] & \Sigma X \times \Sigma Y,
		\end{tikzcd}
	    \]
        where the first map is the Whitehead bracket of the inclusions of $\Sigma  X$ and $\Sigma Y$ in $\Sigma X \vee \Sigma Y$, and the second map is the canonical one from the coproduct to the product.
        The composite is canonically nullhomotopic, as can easily be seen by considering its adjoint.
        We call this null-sequence the \emph{commutator sequence} of $X$ and $Y$.
        It can be made natural in $X$ and $Y$ by considering the commutator sequence of the coordinate projections in $\Fun(\sC \times \sC, \sC)$.
    \end{construction}

    \begin{remark}
        The Whitehead bracket and the commutator sequence are preserved by any functor $F \colon \sC \to \sD$ that preserves finite limits and colimits.
    \end{remark}

    We can now state the main result of this section.

    \begin{theorem}[\cref{thm: mainA}] \label{thm: Lie-commutator-cofiber-seq}
    The commutator sequence of any pair of spectral Lie algebras $X$ and $Y$ is a cofiber sequence in $\Lie(\Sp)$.
    In case $X$ and $Y$ are free Lie algebras $\free_{\LL}(A)$ and $\free_{\LL}(B)$ respectively, this cofiber sequence is equivalent to
    \[
    \free_{\LL}(\Sigma^{-1} A \otimes B) \xrightarrow{[\cdot,\cdot]} \free_{\LL}(A \oplus B) \to \free_{\LL}(A) \times \free_{\LL}(B),
    \]
    where the map denoted $[\cdot, \cdot]$ is the map of spectral Lie algebras induced by
    \[
    [\iota_{X}, \iota_{Y}] \colon \Sigma^{-1} X \otimes Y \to \free_{\LL}(X \oplus Y),
    \]
    the Lie bracket of the inclusions of $X$ and $Y$ into $\free_{\LL}(X \oplus Y)$.    
    \end{theorem}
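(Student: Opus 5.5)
The plan is to deduce the statement from its classical analog in pointed spaces by applying the Goodwillie transform. Consider the three functors $\Spc_* \times \Spc_* \to \Spc_*$ given by $(X,Y) \mapsto \Sigma(X \wedge Y)$, $\Sigma X \vee \Sigma Y$ and $\Sigma X \times \Sigma Y$. They are reduced and finitary, and the commutator sequence relating them is natural; all of this lives in $\Fun^{\ast,\omega}(\Spc_*\times\Spc_*, \Spc_*)$. The classical fact I will use is the Ganea/Gray theorem: for pointed spaces, the fiber of $\Sigma X \vee \Sigma Y \to \Sigma X \times \Sigma Y$ is $\Sigma \Omega\Sigma X \wedge \Omega \Sigma Y$. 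This is not $\Sigma(X\wedge Y)$ itself, so the space-level sequence is not a cofiber sequence. Instead I work stably in the functor category. The cofiber of the inclusion $\Sigma X\vee\Sigma Y\to \Sigma X\times\Sigma Y$ is $\Sigma X\wedge\Sigma Y\simeq \Sigma^2(X\wedge Y)$, and the composite map of functors $\Sigma(X\wedge Y) \to \Sigma X \vee \Sigma Y$ is the Whitehead product. In the category of functors one checks directly that $\Sigma(X\wedge Y)\to \Sigma X\vee \Sigma Y\to \Sigma X\times\Sigma Y$ is a cofiber sequence \emph{after taking derivatives}. My approach is to compute the derivatives of all three terms as $(\LL,\LL\times\LL)$-bimodules and show that the induced sequence of bimodules is a cofiber sequence.

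The key steps are as follows. First, the functors $\Sigma X\vee\Sigma Y$ and $\Sigma(X\wedge Y)$ are built from colimit-preserving pieces. By \cref{rem: derivative-colim-pres-multivar-functor}, $\partial_*(\Sigma X\vee\Sigma Y)$ is the free left $\LL$-module on $\Sph$ in bidegrees $(1,0)$ and $(0,1)$. Likewise, $\partial_*(\Sigma(X\wedge Y))$ is free on its $(1,1)$-derivative, which is $\Sph$ with the suspension shift accounted for by the bimodule structure. Second, the cofiber sequence of functors
\[
\Sigma X\vee\Sigma Y\to\Sigma X\times\Sigma Y\to \Sigma X\wedge \Sigma Y
\]
is preserved by $\partial_*$ by \cref{prop: derivatives-lmod-colim}. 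So the cofiber of $\partial_*(\Sigma X\vee\Sigma Y)\to\partial_*(\Sigma X\times\Sigma Y)$ is $\partial_*(\Sigma^2(X\wedge Y))$. In the stable category of bimodules this rotates to a cofiber sequence
\[
\partial_*\Sigma^2(X\wedge Y)[-1]\to\partial_*(\Sigma X\vee\Sigma Y)\to\partial_*(\Sigma X\times \Sigma Y).
\]
The crux is to identify the first term with $\partial_*\Sigma(X\wedge Y)$, with the map given by the Whitehead bracket. Since both sides are free left $\LL$-modules on the same generator, it suffices to check that the connecting map on generators in bidegree $(1,1)$ agrees with the bracket. I would verify this by applying the functor to spheres, or by comparing the null-sequence's induced map to the cofiber. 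Third, I apply $\reAlg$, which preserves colimits on mapping categories (\cref{prop: materialization-exactness}). This gives a cofiber sequence of functors $\Lie(\Sp)^2\to\Lie(\Sp)$. By \cref{ex: derivative-smash-product-sym-mon} and \cref{ex: derivative-cartesian-symmetric-monoidal-structure}, these functors are $\Sigma(X\wedge Y)$, $\Sigma X\vee\Sigma Y$ and $\Sigma X\times \Sigma Y$ computed in $\Lie(\Sp)$. The maps are the commutator sequence there, because the Goodwillie transform preserves finite limits and colimits and hence the Whitehead bracket.

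For free algebras, I evaluate at $\free_\LL A,\free_\LL B$ and use the free-module descriptions from the first step. The bimodule $\partial_*\Sigma(X\wedge Y)\circ_{\LL\times\LL}(\free A,\free B)$ becomes $\free_\LL(\Sigma^{-1}A\otimes B)$, after tracking the shift coming from $\partial_1\Sigma$ on Lie algebras. Similarly, the middle term becomes $\free_\LL(A\oplus B)$, and the bracket is identified on generators. I expect the main obstacle to be the identification of the connecting map with the Whitehead bracket, including the correct suspension shift. My first attempt would be to reduce this to the $(1,1)$ cross-effect computation, where it becomes the classical statement that the Whitehead product of $\iota_1,\iota_2$ in $\pi_{p+q-1}(S^p\vee S^q)$ attaches the top cell of $S^p\times S^q$.
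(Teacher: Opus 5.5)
\textbf{There is a genuine gap: the bimodule-level cofiber sequence, which is the heart of the theorem, is never established.}

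\textbf{The failing step is the rotation.} You treat the category of $(\LL,\LL\times\LL)$-bimodules as stable, but it is not. Finite limits there are computed on underlying bisymmetric sequences, but coproducts are not.
\begin{itemize}
\item By exactness of $\partial_*$, the coproduct of $\partial_*(\Sigma X)$ and $\partial_*(\Sigma Y)$ is $\partial_*(\Sigma X\vee\Sigma Y)$. This is the free left $\LL$-module on generators $\Sigma\Sph$ in bidegrees $(1,0)$ and $(0,1)$, so it has nonzero mixed terms; for example $\LL(2)\otimes\Sigma\Sph\otimes\Sigma\Sph\simeq\Sigma\Sph$ in bidegree $(1,1)$.
\item The product $\partial_*(\Sigma X\times\Sigma Y)\simeq\partial_*(\Sigma X)\times\partial_*(\Sigma Y)$ has no mixed terms.
\end{itemize}
This non-stability mirrors that of $\Spc_*$ and $\Lie(\Sp)$. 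Consequently the cofiber sequence $\partial_*(\Sigma X\vee\Sigma Y)\to\partial_*(\Sigma X\times\Sigma Y)\to\partial_*(\Sigma^2(X\wedge Y))$ cannot be rotated backwards, and ``$\partial_*\Sigma^2(X\wedge Y)[-1]$'' has no meaning. The only desuspension-like objects available are:
\begin{itemize}
\item $\Omega\,\partial_*(\Sigma^2(X\wedge Y))\simeq\partial_*(\Omega\Sigma^2(X\wedge Y))$;
\item the fiber of the first map, which is $\partial_*$ of the Ganea fiber $\Sigma\Omega\Sigma X\wedge\Omega\Sigma Y$.
\end{itemize}
Neither is $\partial_*\Sigma(X\wedge Y)$. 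So your ``crux'' compares against an object that does not exist. (Also, $\partial_{(1,1)}\Sigma(X\wedge Y)\simeq\Sigma\Sph$, not $\Sph$; the bimodule structure does not shift generators.)

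\textbf{The detour rests on a false premise.} Ganea's theorem computes the \emph{fiber} of $\Sigma X\vee\Sigma Y\to\Sigma X\times\Sigma Y$, which says nothing about cofibers. The classical fact you need --- and which you quote at the end for spheres --- is that $\Sigma X\times\Sigma Y$ is the mapping cone of the Whitehead product $\Sigma(X\wedge Y)\to\Sigma X\vee\Sigma Y$, for all pointed $X,Y$. For general spaces, reduce to wedges of spheres, since all three functors commute with weakly contractible colimits in each variable. Colimits of functors are pointwise, so the commutator sequence is a cofiber sequence in $\Fun^{\ast,\omega}(\Spc_*\times\Spc_*,\Spc_*)$ (\cref{lem: spaces-comm-cofiber-seq}). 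The first claim then follows at once: $\maAlg$ preserves colimits and finite limits on mapping categories (\cref{cor:exactnessGoodwillietransform}), so it preserves both the commutator sequence and cofiber sequences. No computation of bimodules is needed.

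Your derivative-level idea could be repaired via Koszul duality. By \cref{prop: derivatives-stabilization-indec}, a null sequence of such bimodules is a cofiber sequence exactly when its image under $\Sigma^\infty$ is one in stable bisymmetric sequences. But verifying it there still requires knowing that the given nullhomotopy identifies the cofiber, which is essentially the same classical input.

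\textbf{The free case.} Your outline points the right way, but the paper makes it precise with two inputs:
\begin{itemize}
\item $\free_\LL(A)\wedge\free_\LL(B)\simeq\free_\LL(A\otimes B)$ (\cref{lem: derivatives-smash-product});
\item the Whitehead bracket agrees with the Lie bracket up to sign, because both exhibit $\Sigma A\otimes B$ as the $(1,1)$-derivative of $\free_\LL(\Sigma A\oplus\Sigma B)$. This uses Hilton--Milnor (\cref{lem: lie-bracket-is-whitehead}).
\end{itemize}
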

    
    We will need three lemmas to prove this theorem.
	We start with the following classical fact:
	\begin{lemma} \label{lem: spaces-comm-cofiber-seq}
		The commutator sequence in $\Fun^{\ast, \omega}(\Spc_* \times \Spc_*, \Spc_*)$ is a cofiber sequence.
	\end{lemma}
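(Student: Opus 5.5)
Since colimits in $\Fun^{\ast,\omega}(\Spc_*\times\Spc_*,\Spc_*)$ are computed pointwise, it suffices to show that for all pointed spaces $X, Y$ the commutator sequence
\[
\Sigma(X\wedge Y)\xrightarrow{[\iota_X,\iota_Y]}\Sigma X\vee\Sigma Y\to\Sigma X\times\Sigma Y
\]
is a cofiber sequence in $\Spc_*$. The nullhomotopy is natural in $(X,Y)$, so the comparison map from the cofiber of the Whitehead bracket to $\Sigma X\times\Sigma Y$ is natural as well. Pointwise equivalence then gives the statement in the functor category. Note that the finitary condition plays no role, since it is closed under colimits.

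The pointwise statement is the classical fact that $\Sigma X\times\Sigma Y$ is obtained from $\Sigma X\vee\Sigma Y$ by attaching the cone on $\Sigma(X\wedge Y)\simeq X* Y$ along the generalized Whitehead product. The plan is as follows.
\begin{enumerate}[(1)]
\item Write $\Sigma X\times\Sigma Y$ as the pushout of $C X\times\Sigma Y\leftarrow X\times \Sigma Y\to \ldots$. More concretely, use the standard decomposition $\Sigma X\times\Sigma Y=(CX\times \Sigma Y)\cup(\Sigma X\times CY)$ glued along $X\times Y$-type pieces. In other words, use the pushout square expressing $\Sigma X\times\Sigma Y$ as the join-type pushout
\[
\Sigma X\times\Sigma Y\simeq (\Sigma X\vee\Sigma Y)\cup_{X*Y} C(X*Y).
\]
This follows from the homotopy pushout $C X\times CY\supset (X\times CY)\cup(CX\times Y)$, whose boundary is the join $X*Y\simeq\Sigma(X\wedge Y)$ (pointed version).
\item Identify the attaching map $X*Y\to\Sigma X\vee\Sigma Y$ with the Whitehead bracket $[\iota_X,\iota_Y]$. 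This is the classical theorem of Arkowitz (generalized Whitehead product). To match it with the $\infty$-categorical definition via the commutator map on $\Omega$, take adjoints: the attaching map, adjointed to $X\wedge Y\to\Omega(\Sigma X\vee\Sigma Y)$, is the commutator of the two loops $x$ and $y$, as seen from the standard parametrization of the square's boundary.
\item Check that the induced nullhomotopy agrees with the canonical one. It suffices to note that both come from the cone $C(X*Y)=CX\times CY$ mapping into the product, adjoint to the canonical nullhomotopy of the commutator in $\Omega(\Sigma X\times\Sigma Y)$, which is commutative on the two factors.
\end{enumerate}

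The main obstacle is step (2)--(3): making the identification of the attaching map with the $\infty$-categorical commutator, together with its nullhomotopy, coherent and natural. I would try first to avoid point-set arguments by working universally. Both constructions are natural in $(X,Y)$ and compatible with colimits in each variable in the appropriate sense. One can then reduce to checking the cofiber statement directly: the natural comparison $\mathrm{cof}([\iota_X,\iota_Y])\to\Sigma X\times\Sigma Y$ is a map between simply connected spaces for connected $X,Y$. Its homology is computed by the Künneth formula, and the bracket induces zero on homology (it factors through a commutator), so the long exact sequence gives a homology isomorphism. The general case is handled via the decomposition above, or by observing that everything is pointwise and arguing with $\Sigma$-spaces directly.
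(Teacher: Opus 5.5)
\textbf{Gap in the homological shortcut.} Your pointwise reduction and steps (1)--(3) follow the classical argument that the paper also appeals to. The problem is the shortcut you propose for what you correctly call the main obstacle.

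Knowing that $[\iota_X,\iota_Y]$ vanishes on reduced homology gives only short exact sequences $0\to\tilde H_*(\Sigma X\vee\Sigma Y)\to\tilde H_*(\mathrm{cof}[\iota_X,\iota_Y])\to\tilde H_*(\Sigma^2(X\wedge Y))\to 0$. These show that the homology of the cofiber is abstractly isomorphic to that of $\Sigma X\times\Sigma Y$. They do not show that the comparison map $\phi$ is an isomorphism. Since $\phi$ is the identity on $\Sigma X\vee\Sigma Y$, it is a homology isomorphism exactly when the induced map on cofibers $\bar\phi\colon\Sigma^2(X\wedge Y)\to\Sigma X\wedge\Sigma Y$ is one. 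But $\bar\phi$ depends on the nullhomotopy, which is precisely the data of steps (2)--(3) you wanted to bypass.

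As written, your argument applies verbatim to the zero map $\Sigma(X\wedge Y)\to\Sigma X\vee\Sigma Y$ with any nullhomotopy. For $X=Y=S^1$ it would then ``prove'' $S^2\vee S^2\vee S^4\simeq S^2\times S^2$, which cup products rule out.

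There is a second problem. Whitehead's theorem only covers connected $X,Y$. Disconnected inputs such as $X=Y=S^0$ cannot be reached from connected ones by weakly contractible colimits, since connected pointed spaces are closed under them. Saying the general case is ``handled via the decomposition above'' just sends you back to (2)--(3). So on your route, step (3) is real content that still has to be proved, not a formality.

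\textbf{How the paper avoids this.} The paper reduces first. All of $\Sigma(X\wedge Y)$, $\Sigma X\vee\Sigma Y$, $\Sigma X\times\Sigma Y$ and the cofiber of the bracket preserve weakly contractible colimits in each variable; for the product this is universality of such colimits in $\Spc_*$. So it suffices to treat wedges of spheres, where the statement is the classical top-cell description of a product of spheres.

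You can push this further. Every pointed space is the geometric realization of a simplicial pointed set, whose terms are wedges of $S^0$. So you may assume $\Sigma X=\bigvee_I S^1$ and $\Sigma Y=\bigvee_J S^1$. By naturality, the bracket sends the $(i,j)$ circle to the commutator of $a_i$ and $b_j$; at $(S^0,S^0)$ this holds by definition of $c$. Hence the cofiber is exactly the product cell structure on $(\bigvee_I S^1)\times(\bigvee_J S^1)$. The map $\phi$ is the inclusion on $1$-skeleta, so it is a $\pi_1$-isomorphism between aspherical complexes, hence an equivalence. No analysis of the nullhomotopy is needed at all.
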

	\begin{proof}[Sketch of proof]
        First note that all functors in the commutator sequence indeed lie in the full subcategory of reduced finitary functors.
        Since colimits in a functor category are computed pointwise, it suffices to show that the commutator sequence of any pair $X, Y \in \Spc_*$ is a cofiber sequence.
		Here one can argue reducing to the case where $X$ and $Y$ are both wedges of spheres, since all functors in the sequence commute with weakly contractible colimits in both variables.
		In this case, it reduces to the fact that the attaching map of the top cell in a product of spheres $S^k \times S^\ell$ is given by the Whitehead bracket.
	\end{proof}

    We will use the following two lemmas to show the cofiber sequence is of the right form for free Lie algebras.
    First of all, the free Lie algebra functor takes tensor products to smash products.

\begin{lemma} \label{lem: derivatives-smash-product}
    Let $- \wedge - \colon \Spc_* \times \Spc_* \to \Spc_*$ be the smash product functor.
    Then we have an equivalence of spectral Lie algebras
    \[
    \free_{\LL}(X) \wedge \free_{\LL}(Y) \simeq \free_{\LL}(X \otimes Y),
    \]
    natural in $X, Y \in \Sp$.
\end{lemma}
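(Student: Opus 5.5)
We will obtain the equivalence by differentiating the smash product functor $-\wedge- \colon \Spc_* \times \Spc_* \to \Spc_*$ and feeding the result into the Goodwillie transform $\maAlg$. By \cref{ex: derivative-smash-product-sym-mon}, combined with \cref{prop: materialization-exactness} and \cref{ex: derivative-cartesian-symmetric-monoidal-structure}, the smash product on $\Lie(\Sp) \simeq \Alg_{\partial_*\id_{\Spc_*}}(\Sp)$ is given by
\[
\free_{\LL}(X) \wedge \free_{\LL}(Y) \simeq \partial_*(-\wedge-) \circ_{\LL \times \LL} (\free_{\LL}(X), \free_{\LL}(Y)).
\]
The point is that this holds without knowing in advance that the smash product is associative on $\Spc_*$ restricted to the relevant functors. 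Concretely, $\partial_*$ preserves colimits and finite limits valued in bimodules (\cref{prop: derivatives-lmod-colim}), and $\reAlg$ does too, so the cofiber of $X \vee Y \to X \times Y$ is sent to the corresponding cofiber in $\Lie(\Sp)$. The computation is thus reduced to identifying the $(\LL, \LL\times\LL)$-bimodule $\partial_*(-\wedge-)$.

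\textbf{Step 1: identify the bimodule.} The smash product of pointed spaces preserves colimits in each variable separately. By \cref{rem: derivative-colim-pres-multivar-functor}, $\partial_*(-\wedge-)$ is therefore the free left $\LL$-module on its bidegree $(1,1)$ derivative $\partial_{1,1}(-\wedge-)$. Computing via cross-effects, $\Sigma^\infty(X \wedge Y) \simeq \Sigma^\infty X \otimes \Sigma^\infty Y$ is bilinear with coefficient $\Sph$, so $\partial_{1,1} \simeq \Sph$. Writing $\mu$ for the bisymmetric sequence $\Sph$ concentrated in bidegree $(1,1)$, we get $\partial_*(-\wedge-) \simeq \LL \circ \mu$ as left modules. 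This is the analogue of \cref{ex: derivative-tensor-product-sym-mon}.

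\textbf{Step 2: evaluate the relative composition product.} Free algebras satisfy $\free_{\LL}(X) \simeq \LL \circ X$, and $\free_{\LL}(X)$ is the bar construction $(\LL \times \LL) \circ_{\LL\times\LL}(\ldots)$ with an extra degeneracy. Hence
\[
\partial_*(-\wedge-) \circ_{\LL\times\LL} (\LL \circ X, \LL \circ Y) \simeq \partial_*(-\wedge-) \circ (X, Y).
\]
Substituting Step 1, the right-hand side is $\LL \circ \mu \circ (X,Y) = \LL \circ (X \otimes Y) = \free_{\LL}(X \otimes Y)$. Here $\mu \circ (X,Y) = X\otimes Y$ by the formula for $\Sym_A$ of a bisymmetric sequence (the same computation as \cref{prop: multi-composition-product}). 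Naturality in $X, Y$ is automatic, since every identification is functorial.

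\textbf{Main obstacle.} The delicate point is Step 2's use of the right $\LL\times\LL$-module structure. We must know that $\partial_*(-\wedge-)$ is freely generated as a bimodule compatibly, i.e.\ that the right action is the one making $\partial_*(-\wedge-) \circ_{\LL\times\LL}(\LL\times\LL) \simeq \partial_*(-\wedge-)$. This holds formally, because the relative composition product with a free right module collapses by the extra-degeneracy argument used in \cref{prop: derivatives-bimodule-gives-bimodule}. I would make it precise as follows. Write $\free_{\LL}(X)\wedge\free_{\LL}(Y) = \maAlg(\wedge)\circ(\free_{\LL}\times \free_{\LL})$. Then use that $\free_{\LL}$ corresponds under $\reAlg$ to the bimodule $\LL$ viewed as $(\LL,\unit)$-bimodule, and compose in $\MMor$: $\partial_*(\wedge)\circ_{\LL\times\LL}(\LL\times\LL) \simeq \partial_*(\wedge)$. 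The result is then read off as $\LL\circ\mu$.
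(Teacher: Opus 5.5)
Your proposal is correct and is essentially the paper's proof. The paper likewise identifies $\partial_*(-\wedge-)\simeq\LL\circ\partial_*(-\otimes-)$ as left $\LL$-modules, arguing directly via the stable chain rule for $\Sigma^\infty\circ(-\wedge-)\simeq(-\otimes-)\circ(\Sigma^\infty\times\Sigma^\infty)$ plus Koszul duality, which is exactly the argument packaged in \cref{rem: derivative-colim-pres-multivar-functor} that you cite. It then evaluates on $(X,Y)$, using $\partial_*(-\wedge-)\circ(X,Y)\simeq\partial_*(-\wedge-)\circ_{\LL\times\LL}(\free_{\LL}(X),\free_{\LL}(Y))\simeq\free_{\LL}(X)\wedge\free_{\LL}(Y)$ via \cref{prop: materialization-exactness}. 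The ``main obstacle'' you flag is not a real issue, since the extra-degeneracy collapse of the relative composition product against $(\LL\circ X,\LL\circ Y)$ holds for any right $\LL\times\LL$-module structure, so only the left module structure from Step 1 matters.
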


\begin{proof}
    Since $\Sigma^\infty$ is symmetric monoidal from the smash product of pointed spaces to the tensor product of spectra, we have a natural isomorphism
    \[
    \Sigma^\infty \circ (- \wedge -) \simeq (- \otimes -) \circ (\Sigma^\infty \times \Sigma^\infty)
    \]
    of functors $\Spc_* \times \Spc_* \to \Sp$.
    The stable chain rule then implies that
    \[
    \partial_*(\Sigma^\infty \circ (- \wedge -)) \simeq \partial_*(- \otimes -) \circ (\partial_*\Sigma^\infty \times \partial_*\Sigma^\infty) \simeq \partial_*(- \otimes -).
    \]
    The bisymmetric sequence $\partial_*(- \otimes -)$ is concentrated in bidegree $(1,1)$, so that the left $\partial_*(\Sigma^\infty \Omega^\infty)$-comodule structure is trivial for degree reasons.
    Taking Koszul duals and using \cref{prop: derivatives-stabilization-indec},  we find that
    \[
    \partial_*(- \wedge -) \simeq \LL \circ \partial_*(- \otimes -)
    \]
    as left $\LL$-modules.
    We now let both sides of this equation act on a pair of spectra $(X, Y)$.
    The left hand side then becomes
    \begin{align*}
    \partial_*(- \wedge -) \circ (X, Y) 
    &\simeq \partial_*(- \wedge -) \circ_{\LL \times \LL} (\free_\LL(X), \free_\LL(Y)) \\
    &\simeq \free_{\LL}(X) \wedge \free_\LL(Y).
    \end{align*}
    Here the final equivalence follows from \cref{prop: materialization-exactness}.
    Since $\partial_*(- \otimes -) \circ (X, Y) = X \otimes Y$, the right hand side evaluates to $\free_{\LL}(X \otimes Y)$.
\end{proof}    

Finally, we need to relate the Whitehead bracket to the Lie bracket of spectral Lie algebras.
More precisely, we let
\[
B_W \colon \free_{\LL}(\Sigma A \otimes B) \simeq \Sigma \free_{\LL}(A) \wedge \free_{\LL}(B) \to  \free_\LL(\Sigma A \oplus \Sigma B)
\]
denote the composite of the equivalence from the previous lemma and the Whitehead bracket.
We write
\[
B_L \colon \free_{\LL}(\Sigma A \otimes B) \to \free_{\LL}(\Sigma A \oplus \Sigma B)
\]
for the Lie bracket, which is the map of Lie algebras determined by the composite
\[
\Sigma A \otimes B \simeq \Sigma^{-1} \Sigma A \otimes \Sigma B\to \Sigma^{-1}\free_{\LL}(\Sigma A \oplus \Sigma B)^{\otimes 2} \xrightarrow{\mu} \free_{\LL}(\Sigma A \oplus \Sigma B).
\]
Here, the first equivalence interchanges the suspension with $A$ and uses $\id \simeq \Sigma^{-1}\Sigma$. 
The second map is the desuspension of the composite of the inclusions of $\Sigma A$ and $\Sigma B$ in $\Sigma A \oplus \Sigma B$ and the unit $\Sigma A \oplus \Sigma B \to \free_{\LL}(\Sigma A \oplus \Sigma B)$.
The third map is the multiplication map coming from the Lie algebra structure on $\free_{\LL}(\Sigma A \oplus \Sigma B)$, where we have made the identification $\LL(2) \simeq \Sph^{-1}$.

\begin{lemma} \label{lem: lie-bracket-is-whitehead}
    The natural transformations $B_W$ and $B_L$ agree, possibly up to a sign.
\end{lemma}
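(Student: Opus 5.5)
Both $B_W$ and $B_L$ are natural transformations between functors $\Sp \times \Sp \to \Lie(\Sp)$ that send $(A,B)$ to $\free_{\LL}(\Sigma A \otimes B)$ and to $\free_{\LL}(\Sigma A \oplus \Sigma B)$ respectively. The source is a free Lie algebra, so a map out of it is determined by its restriction along the unit, which is a natural transformation of spectrum-valued functors
\[
\Sigma A \otimes B \to \forget_\LL \free_{\LL}(\Sigma A \oplus \Sigma B).
\]
The target splits as $\bigoplus_{n} (\LL_n \otimes (\Sigma A \oplus \Sigma B)^{\otimes n})_{h\Sigma_n}$. The plan is to use the fact that the source is bilinear, i.e.\ of bidegree $(1,1)$, to argue that any such natural transformation factors through the bihomogeneous summand of bidegree $(1,1)$ of the target. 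That summand is $\LL_2 \otimes \Sigma A \otimes \Sigma B \simeq \Sigma A \otimes B$.

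To make this precise, we pass to derivatives. A natural transformation between the two functors $\Sp\times\Sp\to\Sp$ of the form $\Sym_{C}$ corresponds, via \cref{ex: der-retraction-sym} (in its two-variable form), to a map of bisymmetric sequences. Such maps out of a sequence concentrated in bidegree $(1,1)$ are exactly maps of spectra $\Sph \to \LL_2 \otimes \Sigma\Sph^{\otimes 2}$ in the bidegree-$(1,1)$ piece. Alternatively, one can argue directly: maps of bilinear functors into $\Sym_C$ factor through the $(1,1)$ multilinear part, since the other homogeneous layers have no nonzero natural maps from a $(1,1)$-homogeneous functor. In either case the space of natural transformations $B$ is identified with the mapping space $\Map_{\Sp}(\Sph, \Sph)$, with no $\Sigma_2$-equivariance constraint because the two variables are distinct. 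In particular, $\pi_0$ of this space is $\mathbb{Z}$.

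It then remains to show that $B_W$ and $B_L$ both correspond to units $\pm 1 \in \mathbb{Z}$. For $B_L$ this holds by definition, since it is the bracket given by the identification $\LL(2)\simeq\Sph^{-1}$. For $B_W$, we identify the corresponding integer by computing on a test case, for example $A = B = \Sph$, where we want to show that the induced map $\Sph \to \Sph$ in the $(1,1)$ component is an equivalence. Here we use the Goodwillie transform. The Whitehead bracket in $\Spc_*$ is natural, and both the Whitehead bracket and the identification of \cref{lem: derivatives-smash-product} are obtained by differentiating the corresponding constructions on pointed spaces, via \cref{prop: materialization-exactness} and the remark that functors preserving finite limits and colimits preserve Whitehead brackets. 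So the $(1,1)$ component of $B_W$ equals the bidegree $(1,1)$ derivative of the Whitehead product natural transformation $\Sigma(X\wedge Y)\to \Sigma X\vee\Sigma Y$ of functors $\Spc_*\times\Spc_*\to\Spc_*$, that is, its cross-effect. By \cref{lem: spaces-comm-cofiber-seq}, the cofiber of this natural transformation is $\Sigma X\times \Sigma Y$. Taking $\partial_{1,1}$, which is exact, we obtain a cofiber sequence $\partial_{1,1}\Sigma(X\wedge Y)\to\partial_{1,1}(\Sigma X\vee\Sigma Y)\to\partial_{1,1}(\Sigma X\times\Sigma Y)=0$. The last term vanishes because the product has no cross-effect. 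Hence the map on $\partial_{1,1}$ is an equivalence, and the integer is a unit.

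The main obstacle is the rigidity step, namely showing that natural transformations from the bilinear functor into $\forget_\LL\free_\LL(\Sigma A\oplus\Sigma B)$ are detected on the $(1,1)$ derivative. The plan is to handle this with the retraction $\partial_*\circ\Sym\simeq\id$ of \cref{ex: der-retraction-sym}, extended to several variables, so that mapping spaces of such functors agree with mapping spaces of bisymmetric sequences. A secondary task is to check that the equivalence in \cref{lem: derivatives-smash-product} is compatible with these derivative identifications, so that the $(1,1)$ component of $B_W$ is indeed computed by $\partial_{1,1}$ of the spatial Whitehead bracket.
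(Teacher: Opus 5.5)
You have the same overall architecture as the paper. You reduce to the restriction along the unit, note that $B_L$ is the inclusion of the bidegree $(1,1)$ summand, and show $B_W$ is an equivalence on $(1,1)$-derivatives by differentiating the spatial Whitehead product, concluding up to $\pm 1$.

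The gap is in how you prove that $\Sigma(X\wedge Y)\to\Sigma X\vee\Sigma Y$ is an equivalence on $\partial_{1,1}$. You apply $\partial_{1,1}$ to the cofiber sequence of \cref{lem: spaces-comm-cofiber-seq} and assert that $\partial_{1,1}$ is exact. For functors valued in $\Spc_*$ this is false. Derivatives with values in (bi)symmetric sequences preserve finite limits but not cofiber sequences, as the paper warns just before \cref{prop: derivatives-lmod-colim}. For example, $X\vee Y\to\ast\to\Sigma X\vee\Sigma Y$ is a pointwise cofiber sequence. By \cref{prop: derivative-left-adjoint}, its $(1,1)$-derivatives are $\LL(2)\otimes\Sph\otimes\Sph\simeq\Sph^{-1}$, then $0$, then $\LL(2)\otimes\Sph^1\otimes\Sph^1\simeq\Sph^1$; a cofiber sequence would force $\Sph^1\simeq\Sph^0$. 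Colimits are only preserved as left $\LL$-modules. Passing from a left $\LL$-module to its bidegree $(1,1)$ part is not exact, because of the bracket term $\LL(2)\otimes M_{1,0}\otimes M_{0,1}$. The paper does not use the commutator sequence at this point. It obtains the $(1,1)$-statement from the Hilton--Milnor theorem via \cite[Lemma 2.4]{brantnerheuts}.

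Your route can be repaired, but it needs an argument you did not give.
\begin{itemize}
\item The chain rule $\partial_*(\Sigma^\infty K)\simeq\unit\circ_{\LL}\partial_*K$ gives a natural cofiber sequence $\LL(2)\otimes\partial_{1,0}K\otimes\partial_{0,1}K\to\partial_{1,1}K\to\partial_{1,1}(\Sigma^\infty K)$. Here $\partial_{1,0}$, $\partial_{0,1}$ and $\partial_{1,1}(\Sigma^\infty -)$ are exact in $K$.
\item Since $\partial_{1,0}$ and $\partial_{0,1}$ of $\Sigma(X\wedge Y)$ vanish, the first column of the resulting $3\times 3$ diagram is a cofiber sequence, and so is the last. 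Hence, in this particular case, $\partial_{1,1}$ does carry the commutator sequence to a cofiber sequence, and your conclusion follows.
\item Equivalently, you can compute the cofiber in left $\LL$-modules using \cref{prop: derivatives-lmod-colim}.
\end{itemize}

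Your proposed justification of the rigidity step also fails as stated. The retraction $\partial_*\circ\Sym\simeq\id$ only makes maps of bisymmetric sequences a retract of the space of natural transformations, not equivalent to it. Nor is it true in general that natural maps from a bilinear functor into higher homogeneous layers vanish. Tate-type transformations, such as a nonzero natural map $\Sigma^{-1}A\to(A^{\otimes 2})_{h\Sigma_2}$, exist. A safe approach is to compare $B_W$ and $B_L$ as maps of left $\LL$-modules out of the free module on $\Sph^1$ in bidegree $(1,1)$, using \cref{prop: derivatives-bimodule-gives-bimodule}. Such maps are determined by their $(1,1)$-component.
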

\begin{proof}
    It suffices to show that the composites of $B_W$ and $B_L$ with the inclusion $\Sigma A \otimes B \to \free_{\LL}(\Sigma A \otimes B)$ are equivalent natural transformations of functors $\Sp \times \Sp \to \Sp$.
    We write $B_W$ and $B_L$ for these composites as well, and claim that both of them exhibit $\Sigma A \otimes B$ as the $(1,1)$-derivative of $\free_{\LL}(\Sigma A \oplus \Sigma B)$.
    This is enough to complete the proof, since it implies that $B_W$ and $B_L$ agree up to a natural automorphism of the functor $(A, B) \mapsto \Sigma A \otimes B$, of which there are exactly two: the identity and multiplication by $-1$.
    
    For $B_L$, the claim follows by observing that it is given by the composite
    \[
        \Sigma A \otimes B \to \LL(2) \otimes_{h\Sigma_2} (\Sigma A \oplus \Sigma B)^{\otimes 2} \to \free_{\LL}(\Sigma A \oplus \Sigma B),
    \]
    where the first map uses the inclusions of $\Sigma A$ and $\Sigma B$ in $\Sigma A \oplus \Sigma B$ as well as the equivalence $\LL(2) \simeq \Sph^{-1}$ and the second map is the inclusion of the weight 2 term.
    For $B_W$, observe that since it is given by the composite
    \[
    \Sigma A \otimes B \to \free_{\LL}(\Sigma A \otimes B) \simeq \Sigma \free_{\LL}(A) \wedge \free_{\LL}(B) \to \free_{\LL}(\Sigma A \oplus \Sigma B),
    \]
    where the first map clearly exhibits the source as the $(1,1)$-derivative of $\free_{\LL}(\Sigma A \oplus \Sigma B)$, it suffices to show that the Whitehead bracket
    \[
        \Sigma \free_{\LL}(A) \wedge \free_{\LL}(B) \to \free_{\LL}(\Sigma A \oplus \Sigma B)
    \]
    induces an equivalence on $(1,1)$-derivatives.
    But this map is induced by taking derivatives of the Whitehead bracket
    \[
    \Sigma X \wedge Y \to \Sigma X \vee \Sigma Y
    \]
    in pointed spaces, so it suffices to show that this map induces an equivalence of $(1,1)$-derivatives by \cref{prop: derivatives-bimodule-gives-bimodule}.
    This is a consequence of the Hilton-Milnor theorem; see Lemma 2.4 of \cite{brantnerheuts}.
 \end{proof}

 \begin{remark}
     A version of this lemma also appears as \cite[Proposition 4.28]{heutsLie}. However, the proof given there is incomplete.
 \end{remark}

    We now come to the proof of the main theorem of this section.

\begin{proof}[Proof of \cref{thm: Lie-commutator-cofiber-seq}]
	Since the Goodwillie transform 
	\[
	\maAlg \colon \Fun^{\ast, \omega}(\Spc_* \times \Spc_*, \Spc_*) \to \Fun^{\ast, \omega}(\Lie(\Sp) \times \Lie(\Sp), \Lie(\Sp))
	\]
	preserves finite limits and all small colimits by \cref{prop: derivatives-lmod-colim}, it preserves the commutator sequence.
    This is a cofiber sequence in pointed spaces by \cref{lem: spaces-comm-cofiber-seq}, so we learn that it is also one in spectral Lie algebras.
    The fact that it is of the required form for free spectral Lie algebras follows at once by combining \cref{lem: derivatives-smash-product,lem: lie-bracket-is-whitehead}.
\end{proof}

\begin{corollary}[\cref{cor: intro-smash-associative}]
    The cartesian product 
    \[
    \times \colon \Lie(\Sp) \times \Lie(\Sp) \to \Lie(\Sp)
    \]
    commutes with weakly contractible colimits in both variables.
    Consequently, the smash product defines a nonunital symmetric monoidal structure on $\Lie(\Sp)$.
\end{corollary}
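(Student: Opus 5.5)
The plan is to deduce both claims from \cref{thm: Lie-commutator-cofiber-seq}, together with the fact that every spectral Lie algebra is a desuspension. The first step is to observe that $\Sigma\colon \Lie(\Sp) \to \Lie(\Sp)$ is an equivalence. Indeed, it suffices to check that $\Omega$ and $\Sigma$ are inverse. $\Omega$ is computed on underlying spectra, since the forgetful functor preserves limits. To see that $\Sigma$ is inverse to it, I would apply the Goodwillie transform to the functors $\Sigma, \Omega \colon \Spc_* \to \Spc_*$. Their derivatives are $\partial_*\Sigma \simeq \LL \circ \Sph^{1}$ (by \cref{prop: derivative-left-adjoint}) and $\partial_*\Omega \simeq \Sph^{-1}\circ\LL$ (by \cref{rem: derivative-right-adjoint}). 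The chain rule then gives $\partial_*(\Sigma\Omega)\simeq \LL\circ \Sph^1\circ\Sph^{-1}\circ \LL \simeq \LL \circ_\LL \LL$ via the unit and counit, and similarly for $\Omega\Sigma$. One has to be slightly careful that the counit and unit induce these identifications. Alternatively, and more cleanly, $\maAlg(\Sigma)$ is the suspension in $\Lie(\Sp)$, and $\partial_*\Sigma$ is an invertible bimodule in $\MMor_\Sp$ with inverse $\partial_*\Omega$, because the arity-one derivatives are inverse shifts. So $\Sigma$ is an equivalence of $\Lie(\Sp)$.

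Granting this, I write any $X$ as $\Sigma X'$ and $Y$ as $\Sigma Y'$. \Cref{thm: Lie-commutator-cofiber-seq} then gives a natural cofiber sequence $\Sigma(X'\wedge Y') \to X\vee Y \to X\times Y$, natural in $(X,Y)\in \Lie(\Sp)^2$. Fix $Y$ and consider the cofiber sequence of functors of $X$. The functor $X \mapsto X\vee Y$ preserves weakly contractible colimits, since coproduct with a fixed object preserves colimits indexed by weakly contractible diagrams. The functor $X\mapsto \Sigma(\Sigma^{-1}X\wedge \Sigma^{-1}Y)$ is a cofiber of functors, and I want it to preserve all colimits. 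Here $\wedge$ is the cofiber of $\vee\to\times$, which is circular, so instead I use the free description. Every Lie algebra is a geometric realization of free ones. On frees, the first term is $\free_\LL(\Sigma^{-1}A\otimes B)$ by \cref{lem: derivatives-smash-product}, which visibly preserves colimits in $A$. Better still, the first term is $\maAlg$ of the pointed-space functor $(X,Y)\mapsto\Sigma(\Omega X\wedge\Omega Y)$ transported. More simply, it is $\maAlg$ applied to the functor $(X,Y)\mapsto \Sigma X\wedge Y$ precomposed with the equivalence $\Sigma^{-1}$. By \cref{ex: derivative-smash-product-sym-mon} and \cref{rem: derivative-colim-pres-multivar-functor}, $\partial_*(-\wedge-)$ is the free left $\LL$-module on a $(1,1)$-concentrated bisymmetric sequence. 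The induced functor $\partial_*(\wedge)\circ_{\LL\times\LL}(-,-)$ therefore preserves colimits in each variable, since it is $\LL\circ(\Sph\otimes \cot(-)\otimes\cot(-))$ up to identification, and $\cot_\LL$ is a left adjoint.

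Hence $X\times Y$ is a cofiber of two functors preserving weakly contractible colimits in $X$. Cofibers commute with colimits, so $\times$ preserves weakly contractible colimits in each variable. For the monoidal structure, the smash product equals the cofiber of $\vee\to\times$, and by the above it equals $\Sigma(\Sigma^{-1}X\wedge\Sigma^{-1}Y)$. So it equals $\maAlg$ applied to the smash product of pointed spaces (twisted by $\Sigma$), which is symmetric monoidal on $\Spc_*$. I would then conclude via \cref{prop: differentiating-sym-mon-cats} and \cref{ex: derivative-smash-product-sym-mon}, whose hypothesis is that the smash product on $\Spc_*$ is nonunital symmetric monoidal; this holds classically. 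I expect the main obstacle to be verifying carefully that the induced structure from \cref{prop: differentiating-sym-mon-cats} has underlying tensor product the smash product in $\Lie(\Sp)$. This follows from \cref{prop: materialization-exactness}, since $\maAlg$ preserves cofibers, products and coproducts.
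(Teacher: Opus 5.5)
Your proof has a genuine gap: the first step is false, because $\Sigma\colon \Lie(\Sp)\to\Lie(\Sp)$ is not an equivalence. If it were, $\Omega$ would be its inverse, so $\Lie(\Sp)$ would coincide with the limit of its $\Omega$-tower and hence be stable. It is not stable: the canonical map $\free_\LL(A)\vee\free_\LL(B)\simeq\free_\LL(A\oplus B)\to\free_\LL(A)\times\free_\LL(B)$ is not an equivalence. On underlying spectra it kills the summand $\LL(2)\otimes A\otimes B\simeq\Sigma^{-1}A\otimes B$ of the source (the bracket $[\iota_A,\iota_B]$), which is nonzero for $A=B=\Sph$.

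Your derivative-level justification fails because agreement of $\partial_1$ says nothing about invertibility in the Morita category. We have $\partial_*\Sigma\circ_\LL\partial_*\Omega\simeq\partial_*(\Sigma\Omega)$ and $\partial_*\Omega\circ_\LL\partial_*\Sigma\simeq\partial_*(\Omega\Sigma)$. Neither the counit nor the unit induces an equivalence with $\partial_*\id_{\Spc_*}\simeq\LL$, since $\Sigma\Omega X\to X$ and $X\to\Omega\Sigma X$ fail to be equivalences even on highly connected spaces (Ganea, James). Your manipulation $\LL\circ\Sph^1\circ\Sph^{-1}\circ\LL\simeq\LL\circ_\LL\LL$ silently replaces a relative composition product by an absolute one. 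Consequently \cref{thm: Lie-commutator-cofiber-seq} only expresses $X\times Y$ as a cofiber when $X$ and $Y$ are suspensions, and your conclusion for arbitrary $X,Y$ is unsupported.

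You already have the ingredients for the correct argument, which is the paper's. Apply \cref{thm: Lie-commutator-cofiber-seq} only to free algebras, which \emph{are} suspensions because $\free_\LL$ preserves colimits: $\free_\LL(A)\simeq\Sigma\free_\LL(\Sigma^{-1}A)$. This gives $\free_\LL(A)\times\free_\LL(B)\simeq\mathrm{cofib}\bigl(\free_\LL(\Sigma^{-1}A\otimes B)\to\free_\LL(A\oplus B)\bigr)$, which preserves weakly contractible colimits in $A$ and in $B$ separately. You then pass to arbitrary $X,Y$ using two facts:
\begin{enumerate}
\item $\times$ preserves sifted colimits in each variable, since both are computed in $\Sp$.
\item Every Lie algebra is a geometric realization of free ones. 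Because the cofiber description is natural only in maps of spectra, resolve a whole pushout diagram levelwise by the monadic bar construction, whose levels are diagrams of maps of the form $\free_\LL(g)$.
\end{enumerate}
The second claim then follows from \cref{prop: smash-monoidal-structure} applied to the cartesian structure on $\Lie(\Sp)$.

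Your alternative for the second claim is actually valid on its own and independent of the first: transport $(\Spc_*,\wedge)$ along $\maAlg$ via \cref{prop: differentiating-sym-mon-cats} and \cref{ex: derivative-smash-product-sym-mon}. This costs a reliance on \cref{prop: partial-mor-preserves-products}. You should drop the intermediate identification $X\wedge Y\simeq\Sigma(\Sigma^{-1}X\wedge\Sigma^{-1}Y)$. It rests on the false equivalence, and it is also off by a suspension, since the cofiber of $X\vee Y\to X\times Y$ is the suspension of the first term of the commutator sequence.
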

\begin{proof}
    The commutator cofiber sequence for free spectral Lie algebras implies that the functor
    \[
    (A, B) \mapsto \free_\LL(A) \times \free_\LL(B)
    \]
    preserves weakly contractible colimits in both variables.
    Writing an arbitrary Lie algebra as a sifted colimit of free Lie algebras, we find that the cartesian product commutes with weakly contractible colimits in both variables.
    This immediately implies that the smash product defines a nonunital symmetric monoidal structure by \cref{prop: smash-monoidal-structure}.
\end{proof}

\begin{remark}
Since $\free_\LL$ takes the tensor product of spectra to the smash product of Lie algebras, the reader might be led to believe that $\free_\LL(\Sph)$ is a unit for the smash product. This turns out not to be the case: one can show that for $X \in \Alg_\LL(\Sp)$ there is an isomorphism
\[
X \wedge \free_\LL(\Sph) \simeq \free_\LL(\cot_\LL(X)),
\]
so that smashing with $\free_\LL(\Sph)$ only acts as a unit on free algebras. We will not need this fact here, so we do not pursue it further.
\end{remark}

\subsection{The James construction}

The aim of this section is to prove \Cref{introthm:James}. We will begin by showing that any monoid object in spectral Lie algebras is automatically group-complete:

\begin{proposition}
\label{prop:freeE1AlgO}
Let $\sC$ be a stable presentably symmetric monoidal $\infty$-category and $\sO$ a nonunital operad in $\sC$. Then the map of $\mathbb{E}_1$-monoids $\free_{\E_1}(X) \to \Omega\Sigma X$, induced by the unit $X \to \Omega\Sigma X$, is an equivalence. Moreover, the two functors
\[
\Alg_{\sO}(\sC) \xrightarrow{\Omega} \mathrm{Grp}(\Alg_{\sO}(\sC)) \xrightarrow{\mathrm{forget}} \mathrm{Mon}(\Alg_{\sO}(\sC))
\]
are equivalences of $\infty$-categories, where the last term denotes the $\infty$-category of monoid objects in $\Alg_{\sO}(\sC)$ and the middle term the analogous $\infty$-category of group objects. 
\end{proposition}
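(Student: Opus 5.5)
The strategy is to exploit the fact that $\Alg_\sO(\sC)$ admits a ``square-zero'' description in low filtration. The key input is that, for a nonunital operad $\sO$, every $\sO$-algebra carries a filtration whose associated graded is a trivial algebra. I will reduce the claims to the stable case, where every monoid is a group and $\Omega\Sigma \simeq \id$, via a convergence argument. A cleaner route, which I will follow, avoids filtrations: show directly that the shear map $M \times M \to M \times M$, $(x,y)\mapsto (x, xy)$, is an equivalence for every monoid $M$ in $\Alg_\sO(\sC)$. This gives grouplikeness, and then the equivalence $\mathrm{Grp} \simeq \mathrm{Mon}$ follows formally.

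\emph{Step 1: grouplike.} The forgetful functor $\forget_\sO \colon \Alg_\sO(\sC) \to \sC$ preserves limits, so it sends a monoid $M$ to a monoid $\forget_\sO M$ in $(\sC,\times)$. Since $\sC$ is stable, every monoid in $\sC$ is grouplike: in $\sC$ the product coincides with the direct sum, so the Eckmann--Hilton argument identifies the multiplication with the fold map $M\oplus M \to M$, and the shear map becomes the matrix $\begin{pmatrix}1&0\\1&1\end{pmatrix}$, which is invertible. Because $\forget_\sO$ is conservative and preserves products, the shear map of $M$ is an equivalence in $\Alg_\sO(\sC)$, so $M$ is grouplike. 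The subtle point here is that the multiplication on $\forget_\sO M$ need not be linear a priori. However, the Eckmann--Hilton argument in $\sC$ only uses that $\sC$ is additive: any unital binary operation $m$ on an object of an additive category satisfies $m = m\circ(\iota_1 + \iota_2) = m\iota_1 + m\iota_2 = \id + \id$ on $M\oplus M$. This works because composition is bilinear in an additive $\infty$-category. I expect this step to be the main point requiring care, in particular checking that the homotopy-coherent identification is not needed, since only invertibility of the shear map matters.

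\emph{Step 2: $\mathrm{Grp}\simeq\mathrm{Mon}$ and $\Omega$.} The functor $\forget\colon \mathrm{Grp}(\Alg_\sO(\sC)) \to \mathrm{Mon}(\Alg_\sO(\sC))$ is fully faithful by definition and, by Step 1, essentially surjective. The functor $\Omega \colon \Alg_\sO(\sC) \to \mathrm{Grp}(\Alg_\sO(\sC))$ is an equivalence by the standard $\infty$-categorical delooping result (Lurie's theorem that $\Omega$ is an equivalence onto group objects). This requires that $\Alg_\sO(\sC)$ be presentable and that groupoid objects be effective. I will instead argue by conservativity, using the bar construction $B$ as a left adjoint to $\Omega$. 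Geometric realizations, being sifted colimits, are preserved and detected by $\forget_\sO$, and finite limits are too. The unit $G \to \Omega BG$ and the counit $B\Omega X \to X$ can therefore be checked after forgetting to $\sC$, where they are equivalences because $\sC$ is stable.

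\emph{Step 3: free $\E_1$-monoid.} Both $\free_{\E_1}(X)$ and $\Omega\Sigma X$ corepresent functors on $\mathrm{Mon}(\Alg_\sO(\sC))$. Maps $\Omega\Sigma X \to M$ of groups correspond, by Step 2, to maps $\Sigma X \to BM$, and hence to maps $X \to \Omega BM \simeq M$. Since $\mathrm{Mon} = \mathrm{Grp}$, this is exactly the universal property of $\free_{\E_1}(X)$, and the induced comparison map is the one determined by the unit.
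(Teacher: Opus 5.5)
Your proposal is correct and is essentially the paper's proof. Both arguments take the bar construction as left adjoint to $\Omega$ and check the unit and counit after applying the conservative forgetful functor, which preserves finite products and geometric realizations, so that stability of $\sC$ finishes the job; both then identify $\free_{\E_1}$ with $\Omega\Sigma$ (the paper compares the two monads, which amounts to your corepresentability argument).

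The only difference is organizational. The paper runs the adjunction against $\mathrm{Mon}(\Alg_\sO(\sC))$ directly and reads off grouplikeness from the fact that $\Omega$ lands in group objects, so your separate shear-map argument in Step 1 is not needed. It is nonetheless valid, with one correction to the additive identity: it should read $m = m\circ(\iota_1\pi_1+\iota_2\pi_2) = \pi_1+\pi_2$, not $m\circ(\iota_1+\iota_2)$.
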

\begin{remark}
In particular, any monoid of $\Alg_{\sO}(\sC)$ is automatically grouplike, and any group object $X \in \mathrm{Grp}(\Alg_{\sO}(\sC))$ admits an essentially unique delooping. Observe that in the $\infty$-category of pointed spaces such statements need the hypothesis that $X$ is connected, but they hold unconditionally in the setting of $\sO$-algebras. 
\end{remark}
\begin{proof}[Proof of \Cref{prop:freeE1AlgO}]
Consider the commutative diagram of right adjoint functors
\[
\begin{tikzcd}
\Alg_{\sO}(\sC)\ar{r}{\Omega}\ar{d}{\mathrm{forget}} & \mathrm{Mon}(\Alg_{\sO}(\sC)) \ar{r}{\mathrm{forget}}\ar{d}{\mathrm{forget}} & \Alg_{\sO}(\sC) \ar{d}{\mathrm{forget}} \\
\sC \ar{r}{\Omega} & \sC \ar[equal]{r} & \sC.
\end{tikzcd}
\]
The left adjoint of the top-left functor $\Omega$ is the usual bar construction of a monoid:
\[
\mathrm{Bar}\colon \mathrm{Mon}(\Alg_{\sO}(\sC)) \to \Alg_{\sO}(\sC)\colon X \mapsto \mathrm{colim}_{\mathbf{\Delta}^{\mathrm{op}}} X^{\bullet}.
\]
Since the forgetful functor $\Alg_{\sO}(\sC) \to \sC$ preserves products and sifted colimits, we find $\mathrm{forget} \circ \mathrm{Bar} \simeq \Sigma \circ \mathrm{forget}$ (since $\sC$ is stable, so bar construction and suspension agree). Hence the unit and counit of the adjoint pair $(\mathrm{Bar},\Omega)$ become, after applying the forgetful functor, the unit and counit of the loops-suspension adjunction on $\sC$ and hence equivalences. It follows that the top-left functor $\Omega$ is indeed an equivalence of $\infty$-categories. Since $\Omega$ takes values in grouplike monoids, it follows that every monoid object is grouplike. This proves that the two functors in the statement of the proposition are indeed equivalences. Finally, the right adjoint functor 
\[
\mathrm{Mon}(\Alg_{\sO}(\sC)) \xrightarrow{\mathrm{forget}} \Alg_{\sO}(\sC)
\]
induces a monad on $\Alg_{\sO}(\sC)$, namely the free $\E_1$-monoid functor $\free_{\E_1}$. Similarly, the right adjoint functor 
\[
\mathrm{Mon}(\Alg_{\sO}(\sC)) \xrightarrow{\Omega} \Alg_{\sO}(\sC)
\]
induces the monad $\Omega\Sigma$. The diagram at the start of this proof and our earlier argument show that the comparison map $\free_{\E_1} \to \Omega\Sigma$ between the two is indeed an equivalence.
\end{proof}

It is proved in \cite{blanslinskens} that the James construction $\mathcal{J}(X)$ computes the free $\E_1$-monoid on $X$, provided that the cartesian product commutes with weakly contractible colimits in each variable. This is the case in $\Lie(\Sp)$ by \Cref{cor: intro-smash-associative}, so that the first part of  \Cref{introthm:James} follows immediately. However, let us also give a direct proof using the methods of this paper:

\begin{proof}[Proof of \Cref{introthm:James}]
First consider the usual natural transformation $\mathcal{J} \to \Omega\Sigma$ between functors $\Spc_* \to \Spc_*$. It is a classical fact that this map is an equivalence when evaluated on a connected space $X$, and that there is a natural James splitting 
$\Sigma\mathcal{J}(X) \simeq \bigvee_{n \geq 1} \Sigma X^{\wedge n}$
in that case. We now apply the Goodwillie transform $\maAlg$ to obtain corresponding natural transformations between functors $\Lie(\Sp) \to \Lie(\Sp)$. Since $\maAlg$ preserves colimits and finite limits on mapping categories (\Cref{cor:exactnessGoodwillietransform}), it preserves the James construction and sends $\Omega\Sigma$ on $\Spc_*$ to the corresponding construction on $\Lie(\Sp)$. Moreover, if $F \to G$ is a natural transformation of functors $\Spc_* \to \Spc_*$ that is an equivalence on connected spaces, then it induces an equivalence $\partial_* F \simeq \partial_* G$ on derivatives. Hence the same is true for $\maAlg(F) \to \maAlg(G)$. This completes the proof.
\end{proof}

\subsection{The EHP sequence}
\label{subsec:EHP}

Let $X$ be a connected pointed space and consider the usual EHP sequence, which consists of natural maps
\[
X \xrightarrow{E} \Omega\Sigma X \xrightarrow{H} \Omega\Sigma X^{\wedge 2}
\]
and a natural nullhomotopy of the composite. Here $E$ is the unit of the loops-suspension adjunction and the Hopf map $H$ is adjoint to the map
\[
\Sigma\Omega\Sigma X \simeq \Sigma \bigvee_n X^{\wedge n} \to \Sigma X^{\wedge 2}
\]
combining the James splitting and projection onto the second factor. We now apply the Goodwillie transform $\maAlg$ to obtain a corresponding null sequence of functors
\[
\mathrm{id}_{\Lie(\Sp)} \xrightarrow{E} \Omega\Sigma \xrightarrow{H} \Omega\Sigma(-)^{\wedge 2}
\]
on the $\infty$-category $\Lie(\Sp)$. Here we have again used that $\maAlg$ preserves colimits and finite limits of functors  (\Cref{cor:exactnessGoodwillietransform}) to conclude that the sequence takes the same shape on $\Lie(\Sp)$. In fact, one can argue that the maps $E$ and $H$ admit the same description as before, but now interpreted inside $\Lie(\Sp)$, relying on the James splitting for spectral Lie algebras that we produced above.

\begin{proof}[Proof of \Cref{introthm:EHP}]
It remains to show that the sequence is a 2-local fiber sequence when evaluated on a spectral Lie algebra $X = \free_{\LL}(\mathbb{S}^k)$. This can be checked on derivatives: we should argue that
\[
\partial_*(\mathrm{id}_{\Spc_*}) \circ \mathbb{S}^k \xrightarrow{E} \partial_*(\Omega\Sigma) \circ \mathbb{S}^k  \xrightarrow{H} \partial_*(\Omega\Sigma(-)^{\wedge 2}) \circ \mathbb{S}^k 
\]
is a 2-local fiber sequence. This has been shown by Behrens \cite[Lemma 2.1.2]{behrensEHP} and by Arone--Mahowald \cite[Propositions 4.6, 4.7]{aronemahowald}.
\end{proof}

\subsection{The Hilton--Milnor splitting}

For pointed connected spaces $X$ and $Y$, the classical Hilton--Milnor theorem provides a splitting of the loop space $\Omega\Sigma(X \vee Y)$ as an infinite product, indexed by a basis for the free Lie algebra on 2 generators, of factors of the form $\Omega\Sigma(X^{\wedge a} \wedge Y^{\wedge b})$. We will now derive the analogous splitting in the $\infty$-category of spectral Lie algebras. At the level of derivatives this was already observed by Arone--Kankaanrinta \cite{aronekankaanrinta} and a statement on Goodwillie towers was considered by Brantner--Heuts \cite{brantnerheuts}.

In more detail, write $L_2$ for an ordered set of Lie words in two letters $x, y$ forming a basis for the free classical Lie algebra (say over $\mathbb{Q}$) generated by $x$ and $y$. For $X, Y \in \Lie(\Sp)$ and a word $w \in L_2$, we define $w(X,Y)$ by letting the bracket act as the smash: e.g., if $w = {[x,[x,y]]}$ then $w(X,Y) = X^{\wedge 2} \wedge Y$. For each $w$ we define a map
\[
w(X,Y) \to \Omega\Sigma(X \vee Y)
\]
using the (iterated) Whitehead bracket defined by $w$. This extends to a map of group objects 
\[
\Omega\Sigma w(X,Y) \to \Omega\Sigma(X \vee Y).
\]
Now multiplying these maps together using the group structure of the codomain, we obtain a map
\[
\varphi\colon \sideset{}{'}\prod_{w \in L_2} \Omega\Sigma w(X,Y) \to \Omega\Sigma(X \vee Y)
\]
where $\sideset{}{'}\prod$ denotes the weak infinite product, meaning the filtered colimit of products indexed by finite subsets of $L_2$.

\begin{theorem}
\label{thm:HiltonMilnor}
The map $\varphi$ is an equivalence of spectral Lie algebras.
\end{theorem}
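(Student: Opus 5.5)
The plan is to follow the template used for \Cref{introthm:James} and \Cref{introthm:EHP}. First build the map $\varphi$ naturally as a map of functors of two variables. Then apply the Goodwillie transform $\maAlg$ to the classical Hilton--Milnor map of functors $\Spc_* \times \Spc_* \to \Spc_*$. Finally, deduce that it is an equivalence because the classical map is an equivalence on connected spaces and therefore induces an equivalence of derivatives.

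\textbf{Step 1: the space-level map.} In $\Fun^{\ast,\omega}(\Spc_*\times\Spc_*,\Spc_*)$ consider the classical map
\[
\varphi^{\Spc}\colon \sideset{}{'}\prod_{w\in L_2}\Omega\Sigma w(X,Y)\to \Omega\Sigma(X\vee Y).
\]
It is built only from finite products, finite colimits, loops and suspensions, the commutator map $c$ (Whitehead brackets), and the group multiplication, followed by a filtered colimit over finite subsets of $L_2$. All of these functors are reduced and finitary.

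\textbf{Step 2: transport to Lie algebras.} By \Cref{cor:exactnessGoodwillietransform}, and its two-variable analogue obtained exactly as in the proof of \cref{thm: Lie-commutator-cofiber-seq}, $\maAlg$ preserves colimits and finite limits on mapping categories. It is also compatible with products of categories by \cref{prop: partial-mor-preserves-products}. Hence $\maAlg$ sends each of the following to the corresponding Lie-algebra constructions:
\begin{itemize}
\item $\Omega\Sigma$ and the wedge;
\item the smash product, by \cref{ex: derivative-smash-product-sym-mon};
\item Whitehead brackets, which are built from finite limits and colimits;
\item the group structure on $\Omega\Sigma(X\vee Y)$;
\item weak infinite products, which are filtered colimits of finite products.
\end{itemize}
Therefore $\maAlg(\varphi^{\Spc})$ is identified with $\varphi$.

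\textbf{Step 3: the equivalence.} It suffices to show that $\partial_*\varphi^{\Spc}$ is an equivalence of $(\LL,\LL\times\LL)$-bimodules, since $\maAlg$ factors through $\partial_*$. Derivatives of a reduced finitary functor of two variables depend only on its values on pairs of connected spaces, or even on highly connected spaces, since Taylor towers converge there. The classical Hilton--Milnor theorem says $\varphi^{\Spc}$ is an equivalence whenever $X$ and $Y$ are connected. Therefore the bisymmetric sequences $\partial_*$ of source and target agree, so $\partial_*\varphi^{\Spc}$ is an equivalence. This matches the derivative-level statement of Arone--Kankaanrinta.

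\textbf{Main obstacle.} I expect Step 2 to be the hard part: checking that the Lie-algebra map obtained from $\maAlg$ really coincides with $\varphi$ as defined via Whitehead brackets and the group multiplication in $\Lie(\Sp)$. The identification of objects follows from exactness. The identification of maps needs the observation, already used for \cref{thm: Lie-commutator-cofiber-seq}, that Whitehead brackets and commutators are preserved by functors preserving finite limits and colimits, applied here functorially in the mapping $\infty$-categories. A secondary subtlety is handling the weak infinite product through the filtered colimit, which is also covered by \Cref{cor:exactnessGoodwillietransform}.
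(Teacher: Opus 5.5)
Your proposal is correct and is essentially the paper's proof: apply $\maAlg$ to the natural Hilton--Milnor map on $\Spc_*\times\Spc_*$, using its exactness on mapping categories to identify the result with $\varphi$, and conclude because a transformation that is an equivalence on connected spaces induces an equivalence on derivatives. One fix in Step 3: that last fact holds because $P_n$ (equivalently, the multilinearized cross-effects, built from $\colim_k \Omega^k(-)\Sigma^k$) only evaluates the functor on connected inputs, not because Taylor towers converge there, since convergence gives the opposite implication.
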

\begin{proof}
The Hilton--Milnor theorem states that the analogous construction in the $\infty$-category of pointed spaces yields an equivalence when evaluated on connected $X$ and $Y$. The conclusion follows immediately by applying the Goodwillie transform $\maAlg$, using that it preserves colimits and finite limits of functors and the fact that an equivalence on connected spaces yields an equivalence on derivatives, all as before (e.g. in the proof of \Cref{introthm:James}).
\end{proof}

\subsection{Universality of weakly contractible colimits} 
Another typical feature of the homotopy theory of unpointed spaces is the universality of certain kinds of colimits.

\begin{definition}
    Let $\mathcal{I}$ be a small category and let $\sC$ be a category that admits pullbacks and $\mathcal{I}$-indexed colimits.
    We say that $\mathcal{I}$-indexed colimits are universal in $\sC$ if for every morphism $f \colon X \to Y$ in $\sC$, the pullback functor
    \[
    f^* \colon \sC_{/Y} \to \sC_{/X}
    \]
    preserves $\mathcal{I}$-indexed colimits.
    In other words, for any diagram $g \colon \mathcal{I} \to \sC_{/Y}$, the comparison map
    \[
    \mathrm{colim}_{\mathcal{I}} (X \times_Y g(-)) \to X \times_Y \mathrm{colim}_{\mathcal{I}} g(-)
    \]
    is an isomorphism.
\end{definition}

\begin{example}
    In the category $\Spc$ of spaces, $\mathcal{I}$-indexed colimits are universal for any small indexing category $\mathcal{I}$.
    Since the forgetful functor $\Spc_* \to \Spc$ preserves and reflects all limits and weakly contractible colimits, $\mathcal{I}$-indexed colimits are universal in $\Spc_*$ whenever $\mathcal{I}$ is weakly contractible.
\end{example}

The main result of this section is that the category of spectral Lie algebras behaves the same as the category of pointed spaces when it comes to universality of colimits.

\begin{theorem}[\cref{introthm:Mather}]\label{thm: universality-of-colimits-Lie}
    Let $\mathcal{I}$ be a weakly contractible small category.
    Then $\mathcal{I}$-indexed colimits are universal in $\Lie(\Sp)$.
\end{theorem}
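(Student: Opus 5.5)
We prove this by applying the Goodwillie transform to the corresponding statement for pointed spaces, in the same spirit as the proofs of \cref{introthm:James} and \cref{thm:HiltonMilnor}.

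The first step is to phrase universality as a statement about functors. Let $\mathcal{I}^{\triangleleft}$ be $\mathcal{I}$ with a cone point adjoined. Consider the category $\mathcal{K}$ indexing diagrams consisting of an $\mathcal{I}$-diagram $g$ over an object $X$ together with a map $f\colon Y \to X$. The two sides of the comparison map
\[
\colim_{\mathcal{I}} (Y \times_X g(-)) \to Y \times_X \colim_{\mathcal{I}} g(-)
\]
are then finite-limit/colimit expressions in the entries of such a diagram. Concretely, for a $\mathcal{K}$-shaped diagram in a category $\sC$ we obtain two functors
\[
L, R\colon \Fun(\mathcal{K}, \sC) \to \sC
\]
built from one $\mathcal{I}$-indexed colimit and pullbacks, together with a natural transformation $L \to R$.

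The universal case for $\Lie(\Sp)$ has too many variables for the Goodwillie transform as set up, so I would reduce to free inputs. Every diagram in $\Lie(\Sp)$ of shape $\mathcal{K}$ is a sifted colimit of diagrams whose entries are free Lie algebras. Both $L$ and $R$ commute with sifted colimits: $L$ is a colimit of pullbacks, and pullbacks of Lie algebras commute with sifted colimits because the forgetful functor to $\Sp$ preserves and reflects both limits and sifted colimits, and $\Sp$ is stable. Hence it suffices to handle diagrams in the image of free functors.

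To make $\maAlg$ applicable, I would instead consider the functor category $\Fun^{*,\omega}(\Fun(\mathcal{K},\Spc_*),\Spc_*)$, or equivalently work with $\mathcal{K}$-indexed families of differentiable categories. Then $L$ and $R$ are themselves reduced finitary functors $\Fun(\mathcal{K},\Spc_*) \to \Spc_*$. The natural transformation $L \to R$ is an equivalence because weakly contractible colimits are universal in $\Spc_*$. The Goodwillie transform $\maAlg$ preserves colimits and finite limits on mapping categories (\cref{cor:exactnessGoodwillietransform}) and preserves finite products (\cref{prop: partial-mor-preserves-products}). Therefore it sends $L \to R$ to the analogous comparison on $\maAlg(\Fun(\mathcal{K},\Spc_*))$, which is an equivalence. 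One then has to identify $\maAlg(\Fun(\mathcal{K},\Spc_*))$ with $\Fun(\mathcal{K},\Lie(\Sp))$, or at least with enough of it to cover diagrams of free algebras.

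\textbf{Main obstacle.} This identification is the step I expect to be hardest. $\Fun(\mathcal{K},\Spc_*)$ is not stabilized by $\Sp$ itself, and $\mathcal{K}$ may be infinite, so \cref{prop: partial-mor-preserves-products} only handles finite discrete $\mathcal{K}$.

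\textbf{Fallback.} If this fails, I would argue directly using \cref{cor: intro-smash-associative}, which is the cleaner route.
\begin{itemize}
\item By that result, $\times$ on $\Lie(\Sp)$ preserves weakly contractible colimits in each variable.
\item A pullback $Y\times_X Z$ in $\Lie(\Sp)$ is the fiber of $Y\times Z \to X$ along the difference map. This uses that $X$ is a group object by \cref{prop:freeE1AlgO}, so that $Y \times_X Z \simeq \mathrm{fib}(Y \times Z \to X)$, with the map $(y,z)\mapsto f(y)g(z)^{-1}$.
\item Fibers in $\Lie(\Sp)$ commute with sifted colimits, by the underlying-spectra argument above.
\item Reduce a general weakly contractible colimit to sifted colimits and pushouts.
\item For pushouts, use the stable underlying category together with the commutator cofiber sequence of \cref{thm: Lie-commutator-cofiber-seq} to control how $\times$ interacts with them.
\end{itemize}
Combining these expresses $Y\times_X \colim g$ as $\colim (Y\times_X g)$.
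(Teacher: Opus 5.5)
\textbf{There is a genuine gap.} Your primary route follows the paper's strategy: differentiate universality in $\Spc_*$. But the step you flag as the main obstacle carries all the content, and the tools you cite do not supply it.

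\cref{cor:exactnessGoodwillietransform} concerns (co)limits \emph{of} functors in $\Fun^{\ast,\omega}(\sC,\sD)$. That is what handles the James construction, which is a colimit of the functors $X\mapsto X^{n+1}$. Here, however, the colimit and the pullbacks are taken \emph{in the input variable}. Your $L$ and $R$ are single morphisms out of $\Fun(\mathcal{K},\Spc_*)$. Nothing in \cref{cor:exactnessGoodwillietransform} or \cref{prop: partial-mor-preserves-products} identifies $\maAlg(\Fun(\mathcal{K},\Spc_*))$. Nor does it show that $\maAlg(L)$, $\maAlg(R)$ and $\maAlg(L\to R)$ are the corresponding constructions in $\Lie(\Sp)$. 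The reduction to free inputs does not touch this problem either.

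The paper fills the gap with an extra input, proved in forthcoming work: $\maAlg$ preserves cotensors by small $\infty$-categories (\cref{prop: malin-preserves-cotensors}). Hence $\maAlg$ sends restriction functors to restriction functors. Since functors of $(\infty,2)$-categories preserve adjunctions and Beck--Chevalley maps, it also sends $\colim_{\mathcal{I}}$, finite limit functors, and the limit--colimit interchange map to their Lie counterparts. The paper works with $\Fun(K\times\mathcal{I},-)$, where $K$ is the cospan. It cuts out the diagrams with constant base via a pullback of categories, which needs a second input, \cref{prop: malin-preserves-certain-pullbacks}.

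Your shape $\mathcal{K}$ builds the constant base into the indexing category, so once cotensor preservation is available it would let you skip that second input. Universality is the interchange map precomposed with restriction along the evident functor $K\times\mathcal{I}\to\mathcal{K}$, using weak contractibility of $\mathcal{I}$ for the constant parts, and that restriction is preserved. Without cotensor preservation or an equivalent statement, however, the argument does not close.

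\textbf{The fallback fails at its second step.} \cref{prop:freeE1AlgO} says that monoid objects in $\Lie(\Sp)$ are grouplike and that group objects are loop objects. It does not say that every Lie algebra is a group object, and in general none is. Take $G=\free_{\LL}(\Sph^k)$. A unital multiplication on $G$ would force the Whitehead bracket $[\id,\id]$ to be null. By \cref{lem: lie-bracket-is-whitehead}, $[\id,\id]$ is the self-bracket of the generator, which is nonzero in weight $2$. So $Y\times_X Z\simeq \mathrm{fib}(Y\times Z\to X)$ is not available for general $X$.

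Even over a group object $X$, the pushout step is the entire problem. Commuting a fiber (base change along $0\to X$) past a pushout is itself an instance of universality. Pushouts in $\Lie(\Sp)$ are not computed in $\Sp$, and the commutator cofiber sequence does not address them. \cref{cor: intro-smash-associative} gives universality only over the terminal object $X=0$.
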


\begin{corollary}[Mather's second cube lemma]
Suppose that we have a commutative cube of spectral Lie algebras
\[
    \begin{tikzcd}[row sep=scriptsize, column sep=scriptsize]
& A \arrow[dl] \arrow[rr] \arrow[dd] & & C \arrow[dl] \arrow[dd] \\
B \arrow[rr, crossing over] \arrow[dd] & & D \\
& W \arrow[dl] \arrow[rr] & & Y \arrow[dl] \\
X \arrow[rr] & & Z \arrow[from=uu, crossing over]\\
\end{tikzcd}
\]
where the bottom face is cocartesian and all the vertical faces are cartesian.
Then the top face is cocartesian as well.
\end{corollary}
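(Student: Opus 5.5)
The statement to prove is the Corollary (Mather's second cube lemma). I would deduce it directly from \cref{thm: universality-of-colimits-Lie}, applied with $\mathcal{I}$ the span category $\bullet \leftarrow \bullet \rightarrow \bullet$. This category is weakly contractible, since its nerve is contractible. Pushouts are therefore universal in $\Lie(\Sp)$, and the argument becomes the standard formal one, used for spaces, that universality of pushouts implies the second cube lemma.

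\textbf{Step 1: set up the base change.} Regard the bottom face as a diagram $g\colon \mathcal{I} \to \Lie(\Sp)_{/Z}$, namely $X \leftarrow W \rightarrow Y$ over $Z$. By hypothesis its colimit is $Z$ itself, with the identity structure map. Consider the vertical map $f\colon D \to Z$ and the pullback functor $f^*\colon \Lie(\Sp)_{/Z} \to \Lie(\Sp)_{/D}$.

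\textbf{Step 2: identify the pulled-back diagram with the top face.} Use that the vertical faces are cartesian:
\begin{itemize}
\item the face $B, D, X, Z$ being cartesian gives $B \simeq D \times_Z X$;
\item similarly, $C \simeq D \times_Z Y$;
\item for the back face, pasting the cartesian squares $A, B, W, X$ and $B, D, X, Z$ gives $A \simeq B \times_X W \simeq D \times_Z W$.
\end{itemize}
I would check this identification naturally, as an equivalence of diagrams $\mathcal{I} \to \Lie(\Sp)_{/D}$ between $f^* g$ and the top face $B \leftarrow A \rightarrow C$. This should follow from the functoriality of pullbacks in $\infty$-categories: the cube is a natural transformation of spans that is cartesian over each object, so it is the base change of the bottom span.

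\textbf{Step 3: conclude.} By \cref{thm: universality-of-colimits-Lie}, the comparison map
\[
\colim_{\mathcal{I}} (D \times_Z g(-)) \to D \times_Z \colim_{\mathcal{I}} g(-) \simeq D \times_Z Z \simeq D
\]
is an equivalence. Under the identification of Step 2, this comparison map is the canonical map $B \amalg_A C \to D$. Hence the top face is cocartesian.

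I expect the only mildly delicate point to be the naturality claim in Step 2, that is, making precise that a cube with cartesian vertical faces is the base change of its bottom face along $D \to Z$. This is a general $\infty$-categorical fact, for instance via the equivalence between cartesian transformations and pullback along the cone point. It uses nothing specific to Lie algebras, and all the real content lies in \cref{thm: universality-of-colimits-Lie}.
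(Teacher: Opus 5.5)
Your proposal is correct and is essentially the paper's own route: the paper gives no separate proof and presents the cube lemma as an immediate consequence of \cref{thm: universality-of-colimits-Lie}. Your argument supplies the standard formal deduction, taking $\mathcal{I}$ to be the span category (weakly contractible since it has an initial object) and identifying the top face with the base change of the bottom face along $D \to Z$.
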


Our proof of \cref{thm: universality-of-colimits-Lie} will of course amount to differentiating the universality of weakly contractible colimits in pointed spaces.
In order to carry out this proof, we need two additional properties of the functor $\maAlg \colon \diff \to \diff$, both of which will be proved in \cite{BlansBlom2026}.
The first one is:

\begin{proposition} \label{prop: malin-preserves-cotensors}
    The functor $\maAlg \colon \diff \to \diff$ preserves cotensors by small $\infty$-categories. 
\end{proposition}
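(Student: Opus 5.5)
We write $\sC^K \coloneqq \Fun(K, \sC)$ for the cotensor of $\sC \in \diff$ by a small $\infty$-category $K$. It is again differentiable, and $\Sp(\sC^K) \simeq \Fun(K, \Sp(\sC))$. The plan is to build a canonical comparison functor $\maAlg(\sC^K) \to \Fun(K, \maAlg(\sC))$ from the $(\infty,2)$-functoriality of $\maAlg$. We then show it is an equivalence by identifying both sides as algebras over the same monad on $\Fun(K, \Sp(\sC))$.

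\textbf{Step 1: the comparison map.} The evaluation functors $\ev_k \colon \sC^K \to \sC$ preserve all limits and colimits, so they are reduced finitary. Together they assemble into a functor $K \to \Fun^{\ast,\omega}(\sC^K, \sC)$. Applying $\maAlg$ on mapping categories gives a functor $K \to \Fun(\maAlg(\sC^K), \maAlg(\sC))$, and its adjoint is the comparison
\[
\Phi \colon \maAlg(\sC^K) \to \Fun(K, \maAlg(\sC)).
\]
Since $\maAlg$ is a functor of $(\infty,2)$-precategories, the same construction makes $\Phi$ natural with respect to reduced finitary functors $\sD \to \sC^K$. This naturality is what ``preserves cotensors'' means.

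\textbf{Step 2: the operad $\partial_*\id_{\sC^K}$.} I would compute this operad through its Koszul dual, using \cref{prop: Koszul-dual-derivative-id}. The stabilization adjunction of $\sC^K$ is $\Sigma^\infty_\sC$ and $\Omega^\infty_\sC$ applied pointwise. So the comonad $\Sigma^\infty\Omega^\infty$ on $\sC^K$ is the pointwise extension of that of $\sC$.

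For a pointwise functor $F_K$ the cross-effects are also pointwise: $\cross_n F_K(X_1,\dots,X_n)(k) = \cross_n F(X_1(k),\dots,X_n(k))$. Multilinearizing then shows that $\partial_* F_K$ is the image of $\partial_*F$ under a ``diagonal'' functor
\[
\delta_K \colon \SSeqnu(\Sp(\sC),\Sp(\sC)) \to \SSeqnu(\Sp(\sC)^K, \Sp(\sC)^K).
\]
This functor is induced by the diagonals $K \to K^{\times n}$, and it is strong monoidal for the composition product. Hence $\partial_*(\Sigma^\infty\Omega^\infty_{\sC^K}) \simeq \delta_K\,\partial_*(\Sigma^\infty\Omega^\infty_\sC)$ as cooperads. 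Since $\delta_K$ commutes with the bar and cobar constructions, it follows that $\partial_*\id_{\sC^K} \simeq \delta_K(\partial_*\id_\sC)$ as operads.

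\textbf{Step 3: comparing algebra categories.} The monad $\Sym_{\delta_K\sO}$ on $\Sp(\sC)^K$ is $\Sym_\sO$ applied pointwise. This gives $\Alg_{\delta_K\sO}(\Sp(\sC)^K) \simeq \Fun(K, \Alg_\sO(\Sp(\sC)))$, compatibly with the forgetful functors to $\Fun(K,\Sp(\sC))$.

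It remains to see that $\Phi$ is this equivalence. Both sides are monadic over $\Fun(K, \Sp(\sC))$, and monadicity means I only need to compare the free algebras. Each $\ev_k$ is a left adjoint, so by \cref{prop: derivative-left-adjoint} the bimodule $\partial_*\ev_k$ is free on $\partial_1 \ev_k$. Consequently $\maAlg(\ev_k)$ sends free algebras to free algebras and commutes with sifted colimits, and it is compatible with the forgetful functors to $\Sp(\sC)$ via $\ev_k$. Therefore $\Phi$ commutes with the forgetful functors and preserves free algebras. By the monadicity comparison (Barr--Beck--Lurie), it is then an equivalence.

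\textbf{Main obstacle.} I expect the hard part to be Step 2, made coherent. One needs the identification $\partial_*\id_{\sC^K} \simeq \delta_K(\partial_*\id_\sC)$ as operads, and in fact as objects of $\MMor$. One also needs the bimodules $\partial_*\ev_k$ to match the canonical pointwise bimodules, uniformly in $k \in K$. My first attempt would be to show that $\delta_K$ extends to a functor of Morita $(\infty,2)$-precategories realizing the cotensor there, so that it intertwines $\partial_*$ with pointwise extension. This should follow from the description of the chain rule through cross-effects and multilinearization, both of which commute with pointwise extension.
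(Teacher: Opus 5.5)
The paper does not prove this statement; it is deferred to \cite{BlansBlom2026}. So your argument has to stand on its own, and as written it has a gap exactly where you suspect one.

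In Step 3 you deduce from \cref{prop: derivative-left-adjoint} that $\maAlg(\ev_k)$ is compatible with the forgetful functors. But freeness of $\partial_*\ev_k$ as a \emph{left} $\partial_*\id_\sC$-module only describes $\maAlg(\ev_k)$ on free algebras. The underlying object of $\partial_*\ev_k\circ_{\partial_*\id_{\sC^K}}A$ is the bar construction $B(\partial_*\ev_k,\partial_*\id_{\sC^K},A)$, which is controlled by the \emph{right} module structure. Without it, $\Phi$ is not known to be a functor over $\Fun(K,\Sp(\sC))$, and Barr--Beck--Lurie cannot be invoked.

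Your intended source for this compatibility is Step 2 made coherent, together with an identification of the bimodules $\partial_*\ev_k$. You leave this open, and it is not routine. The cross-effect computation identifies $\partial_*(F_K)$ with $\delta_K(\partial_*F)$ as symmetric sequences, naturally in $F$. It says nothing about compatibility with the lax monoidal structure of \cref{thm: lax-derivatives}, and that is what an equivalence of operads requires. Also, $\delta_K$ tensors with $\Sigma^\infty_+\prod_i\Map_K(k_i,k)$, so its commuting with the cobar construction needs an argument, e.g.\ that the cobar construction of a reduced cooperad is arity-wise a finite limit.

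Step 2 can in fact be dropped entirely. Each $\ev_k\colon\sC^K\to\sC$ is also a \emph{right} adjoint, with left adjoint the left Kan extension along $\{k\}\hookrightarrow K$. By \cref{rem: derivative-right-adjoint}, the map of right $\partial_*\id_{\sC^K}$-modules $\partial_1\ev_k\circ\partial_*\id_{\sC^K}\to\partial_*\ev_k$ induced by the arity-one inclusion is an equivalence. This map is natural in $k$, because the arity-one inclusion $\partial_1F\to\partial_*F$ is natural in $F$. Moreover, $\partial_1\ev_k$ is just evaluation at $k$ on $\Sp(\sC)^K$. An extra degeneracy then gives $\forget\,\maAlg(\ev_k)(A)\simeq(\forget A)(k)$, naturally in $k$ and $A$.

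So $\Phi$ is a functor over $\Fun(K,\Sp(\sC))$ between monadic $\infty$-categories. Hence it is restriction along a map of monads from the pointwise extension of $\Sym_{\partial_*\id_\sC}$ to $\Sym_{\partial_*\id_{\sC^K}}$. Unwinding with the unit axioms, the component of this map at $X$ and $k$ is the left-module map $\partial_*\id_\sC\circ\partial_1\ev_k\to\partial_*\ev_k$ adjoint to the arity-one inclusion, applied to $X$. This is an equivalence by \cref{prop: derivative-left-adjoint}, since $\ev_k$ is also a left adjoint. Now \cite[Corollary 4.7.3.16]{HA} shows that $\Phi$ is an equivalence.

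The coherence ``uniformly in $k$'' that you worried about thus comes for free. The only input is that every $\ev_k$ is simultaneously a left and a right adjoint. This is more elementary than identifying $\partial_*\id_{\sC^K}$ as an operad or constructing cotensors in $\MMor$.
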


\begin{remark}
Observe that the $(\infty,2)$-category $\diff$ admits such cotensors: if $\sC$ is a differentiable $\infty$-category and $K$ is a small $\infty$-category, then $\Fun(K, \sC)$ is again differentiable and provides a cotensor of $\sC$ by $K$ in $\diff$. 
\end{remark}

The proposition implies that
\[
\maAlg(\Fun(K, \sC)) \simeq \Fun(K, \Alg_{\partial_*{\id_\sC}}),
\]
where we have abbreviated $\Alg_{\partial_*{\id_\sC}}(\Sp(\sC))$ to $\Alg_{\partial_*{\id_\sC}}$.
It also implies that if $p \colon I \to J$ is a functor of small $\infty$-categories, then the restriction functor $p^* \colon \Fun(J, \sC) \to \Fun(I, \sC)$ is preserved by $\maAlg$.
Since any functor of $(\infty,2)$-categories preserves adjunctions, this has the further consequence that $\maAlg$ sends the colimit functor $\Fun(K, \sC) \to \sC$ to
\[
\colim \colon \Fun(K, \Alg_{\partial_*{\id_\sC}}) \to \Alg_{\partial_*{\id_\sC}}.
\]
The same holds for the limit functor if $K$ is finite (if $K$ is not finite, the limit functor is not finitary).

The other property of $\maAlg$ we need is:

\begin{proposition} \label{prop: malin-preserves-certain-pullbacks}
    Let
    \[
    \begin{tikzcd}
        \sA \ar[r, "\bar{p}^*", hook] \ar[d, "\bar{q}^*"']  \ar[dr, phantom, "\lrcorner", very near start] & \sC \ar[d, "q^*"] \\
        \sB \ar[r, "p^*"', hook] & \sD
    \end{tikzcd}
    \]
    be a pullback square of $\infty$-categories where $\sB$, $\sC$, and $\sD$ are differentiable and $p^*$ and $q^*$ are reduced and finitary.
    Assume that
    \begin{enumerate}[\upshape{(}\arabic*\upshape{)}]
        \item $p^*$ and $q^*$ both admit a left adjoint, and
        \item $p^*$ is fully faithful.
    \end{enumerate}
    Then $\sA$ is differentiable, $\bar{p}^*$ and $\bar{q}^*$ are reduced and finitary, and $\maAlg \colon \diff \to \diff$ preserves the pullback square.
\end{proposition}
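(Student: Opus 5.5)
The plan is to first handle the formal statements about $\sA$, and then identify $\maAlg(\sA)$ as a full subcategory of $\maAlg(\sC)$ cut out by a condition that $\maAlg$ visibly preserves.

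\textbf{Step 1: properties of $\sA$.} Since $p^*$ is fully faithful, so is $\bar p^*$. Hence $\sA$ is the full subcategory of $\sC$ on those $c$ for which $q^*c$ lies in the essential image of $p^*$. Write $p_!\dashv p^*$ and $q_!\dashv q^*$. Then $q^*c$ lies in that image if and only if the unit
\[
\eta_{q^*c}\colon q^*c\to p^*p_!q^*c
\]
is an equivalence. This gives the following facts.
\begin{itemize}
\item The essential image of $p^*$ is closed under limits, because $p^*$ is a right adjoint.
\item It is closed under filtered colimits, because $p^*$ is finitary.
\item Since $q^*$ preserves limits and filtered colimits, $\sA\subseteq\sC$ is closed under limits and filtered colimits.
\item $\sA$ is presentable, being a pullback in $\presl^{\mathrm{R}}$ of accessible right adjoints.
\item $\sA$ is pointed.
\item Finite limits commute with filtered colimits in $\sA$, since both are computed in $\sC$.
\item $\bar p^*$ is reduced and finitary, and $\bar q^*$ is as well, because its composite with $p^*$ equals $q^*\bar p^*$ and $p^*$ is fully faithful and preserves filtered colimits.
\end{itemize}
Moreover $\bar p^*$ is a right adjoint by the adjoint functor theorem; call its left adjoint $L$. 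The counit $L\bar p^*\to\id_\sA$ is an equivalence.

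\textbf{Step 2: transport along $\maAlg$.} Since $\maAlg$ is a functor of $(\infty,2)$-categories, it preserves adjunctions and invertibility of $2$-cells. Therefore $\maAlg(\bar p^*)$ and $\maAlg(p^*)$ are again fully faithful right adjoints. Consequently the pullback $\sA'$ of the $\maAlg$-square is the full subcategory of $\maAlg(\sC)$ on those $x$ such that $\maAlg(\eta)$ is an equivalence at $\maAlg(q^*)x$. Here we use that $\maAlg(\eta)$ is the unit of $\maAlg(p_!)\dashv\maAlg(p^*)$.

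There is a canonical comparison functor $\maAlg(\sA)\to\sA'$. It is fully faithful, because its composite with the inclusion $\sA'\subseteq\maAlg(\sC)$ is $\maAlg(\bar p^*)$. It remains to show that the essential images agree.

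To do this I would encode the membership condition $2$-categorically, so that it becomes a pullback built from cotensors. Consider the functor $u\colon\sC\to\Fun(\Delta^1,\sD)$, $c\mapsto\eta_{q^*c}$. By the computation above,
\[
\sA\simeq \sC\times_{\Fun(\Delta^1,\sD)}\Fun(\Delta^1,\sD)^{\simeq},
\]
where $\Fun(\Delta^1,\sD)^{\simeq}\simeq\sD$ is the full subcategory of equivalences, i.e.\ the image of the degeneracy $s^*\colon\sD\to\Fun(\Delta^1,\sD)$. By \cref{prop: malin-preserves-cotensors}:
\begin{itemize}
\item $\maAlg$ sends $\Fun(\Delta^1,\sD)$ to $\Fun(\Delta^1,\maAlg(\sD))$ and $s^*$ to $s^*$;
\item it sends $u$ to the functor $x\mapsto\maAlg(\eta)_{\maAlg(q^*)x}$, since $u$ is the mate of a whiskered unit $2$-cell, and cotensors turn $2$-cells into maps into $\Fun(\Delta^1,-)$.
\end{itemize}
Thus $\sA'$ is precisely the corresponding pullback built from $\maAlg$ of the data. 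So the claim reduces to showing that $\maAlg$ preserves this particular pullback along the fully faithful right adjoint $s^*$.

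\textbf{Step 3: the main obstacle.} The hard step is showing that every object of $\sA'$ actually comes from $\maAlg(\sA)$. My first attempt is to use the idempotent monad $T=\bar p^*L$ on $\sC$. On objects of $\sA'$ the unit of $\maAlg(T)$ should be an equivalence; since $\maAlg(\bar p^*)\maAlg(L)\simeq\maAlg(T)$, this puts the object in the image of $\maAlg(\bar p^*)$.

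To verify this I would work on derivatives. An object of $\maAlg(\sC)$ is a $1$-morphism $\ast\to\partial_*\id_\sC$ in $\MMor$, and equivalences of $\partial_*\id$-algebras are detected on underlying spectra. I would then compute $\partial_*T$ using \cref{prop: derivative-left-adjoint} and \cref{rem: derivative-right-adjoint}: it is a free left module tensored with a free right module over $\partial_1$ of the adjoints. The goal is to show that the unit $\partial_*\id\to\partial_*T$ becomes an equivalence after applying $-\circ_{\partial_*\id_\sC}x$ whenever $q$-locality holds, possibly by a Beck--Chevalley identification $q^*T\simeq p^*p_!q^*$ on the level of $\partial_1$. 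If this direct route fails, I would instead hope to use the exactness of $\reAlg$ from \cref{prop: materialization-exactness} on a bimodule-level pullback.
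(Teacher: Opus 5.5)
The paper does not prove this proposition (it defers it to \cite{BlansBlom2026}), so I am judging your argument on its own merits. Step~1 is fine. So is the observation that the comparison $\maAlg(\sA)\to\sA'$ is fully faithful. But the entire content of the proposition is essential surjectivity of that comparison, and there your proposal has a genuine gap.

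\emph{Step~2 is circular.} Exhibiting $\sA$ as the pullback of $s^*$ along $u$ only says that $\sA$ is the inverter of the $2$-cell $q^*\eta$, which is the same question. Worse, $u$ involves $p_!$ and is not a right adjoint, so this square is not even of the type the proposition covers. None of the properties of $\maAlg$ you invoke directly gives preservation of inverters.

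\emph{Step~3 does not close the gap either.} The formula $\partial_*T\simeq\partial_1\bar p^*\circ\partial_*\id_\sA\circ\partial_1 L$ involves the unknown operad $\partial_*\id_\sA$, so it gives no leverage. The Beck--Chevalley identification $q^*T\simeq p^*p_!q^*$ is false in general, also on $\partial_1$. For example, take $\sB=\sC=\Spc_*$, $\sD=\Spc_*\times\Spc_*$, $p^*X=(X,\ast)$ and $q^*$ the diagonal. Then $\sA\simeq\ast$, so $q^*TX=(\ast,\ast)$, while $p^*p_!q^*X=(X,\ast)$. Even where the identification holds, as in the proof of \cref{thm: universality-of-colimits-Lie}, it would only show that $\maAlg(q^*)$ inverts the unit $x\to\maAlg(T)x$. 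That is not enough, because $q^*$ need not be conservative; in that application it forgets the value at $c$.

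\emph{The missing idea} is to write $T$ explicitly in terms of $p_!,p^*,q_!,q^*$ and their (co)units. You should use only composition, pushouts and a sequential colimit in $\Fun^{\ast,\omega}(\sC,\sC)$, all of which $\maAlg$ preserves by $2$-functoriality and \cref{cor:exactnessGoodwillietransform}.
\begin{enumerate}[(a)]
\item Let $T_1=\id_\sC\sqcup_{q_!q^*}q_!p^*p_!q^*$ be the pushout of the counit of $q_!\dashv q^*$ and $q_!\eta q^*$. Set $T_\omega=\colim_n T_1^{\circ n}$.
\item By the triangle identity and naturality of the unit of $q_!\dashv q^*$, $q^*$ of the map $T_1^{\circ n}c\to T_1^{\circ(n+1)}c$ factors through $\eta$ at $q^*T_1^{\circ n}c$. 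Hence $q^*T_\omega c\simeq\colim_n p^*p_!q^*T_1^{\circ n}c$ lies in the image of $p^*$. This is where both finitary hypotheses enter.
\item Each stage is a pushout of some $q_!(\eta_d)$. $L$ inverts these maps, since $q^*\bar p^*\simeq p^*\bar q^*$ lands in $p$-local objects.
\item Therefore $T_\omega\simeq\bar p^*L$ under $\id_\sC$.
\item Applying $\maAlg$ gives $\maAlg(\bar p^*)\maAlg(L)\simeq\colim_n\maAlg(T_1)^{\circ n}$. Here $\maAlg(T_1)$ is given by the same pushout formula for the adjunctions $\maAlg(p_!)\dashv\maAlg(p^*)$ and $\maAlg(q_!)\dashv\maAlg(q^*)$.
\item For $x\in\sA'$, the leg $\maAlg(q_!)\maAlg(\eta)_{\maAlg(q^*)x}$ is an equivalence. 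So $x\to\maAlg(T_1)x$ is an equivalence, and inductively $x\simeq\maAlg(\bar p^*)\maAlg(L)x$.
\end{enumerate}
This finishes your Step~3 without computing any derivatives.
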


With this in place, we come to the proof of universality of weakly contractible colimits in spectral Lie algebras.

\begin{proof}[Proof of \cref{thm: universality-of-colimits-Lie}]
    Let $\mathcal{I}$ be a weakly contractible small $\infty$-category as in the statement of the theorem.
    We start by rephrasing the universality of $\mathcal{I}$-indexed colimits in $\Spc_*$ in a way that is amenable to applying Goodwillie calculus.
    Let $K = (a \rightarrow b \leftarrow c)$ be the free cospan and consider the pullback of categories
    \begin{equation}\label{eq: universality-pullback-square}
    \begin{tikzcd}
        \sE \ar[d] \ar[r, hook] \ar[dr, phantom, "\lrcorner", very near start] & \Fun(K \times \mathcal{I}, \Spc_*) \ar[d, "q^*"] \\
        \Fun(\{a \to b\}, \Spc_*) \ar[r, "p^*"', hook] & \Fun(\{a \to b\} \times \mathcal{I}, \Spc_*),
    \end{tikzcd}
    \end{equation}
    where the right hand map is restriction along the inclusion $q \colon \{a \to b\} \times \mathcal{I} \to K \times \mathcal{I}$, and the bottom map is restriction along the projection $p \colon \{a \to b\}\times \mathcal{I} \to \{a \to b\}$.
    Note that $p^*$ is fully faithful since $\mathcal{I}$ is weakly contractible.
    The $\infty$-category $\sE$ consists of cospans of $\mathcal{I}$-indexed diagrams of the form
    \[
    p^*(A) \xrightarrow{p^*(f)} p^*(B) \xleftarrow{\phantom{p}g\phantom{p}} C,
    \]
    i.e.\ the left arrow is a constant natural transformation between constant $\mathcal{I}$-indexed diagrams.
    Now consider the canonical natural transformation
    \begin{equation} \label{eq: limit-colimit-interchange-square}
    \begin{tikzcd}
        \Fun(K \times \mathcal{I}, \Spc_*) \ar[r, "\lim_K"] \ar[d, "\colim_{\mathcal{I}}"'] & \Fun(\mathcal{I}, \Spc_*) \ar[d, "\colim_{\mathcal{I}}"] \ar[dl, "\alpha", Rightarrow, shorten=1em] \\
        \Fun(K, \Spc_*) \ar[r, "\lim_K"'] & \Spc_*
    \end{tikzcd}
    \end{equation}
    that interchanges the limit and the colimit.
    Observe that the universality of $\mathcal{I}$-indexed colimits in $\Spc_*$ is equivalent to $\alpha$ becoming an equivalence after restricting to the full subcategory $\sE \subseteq \Fun(K \times \mathcal{I}, \Spc_*)$.

    We now apply $\maAlg$ to the squares \cref{eq: universality-pullback-square} and \cref{eq: limit-colimit-interchange-square}.
    For $\cref{eq: universality-pullback-square}$, observe that all three $\infty$-categories in the cospan are differentiable, $p^*$ and $q^*$ are reduced finitary right adjoints, and $p^*$ is fully faithful.
    It follows from \cref{prop: malin-preserves-certain-pullbacks} that $\maAlg$ sends this square to a pullback square.
    By \cref{prop: malin-preserves-cotensors}, we can identify the resulting square with
    \[
    \begin{tikzcd}
    \Alg_{\partial_*{\id_\sE}} \ar[d] \ar[r, hook] \ar[dr, phantom, "\lrcorner", very near start] & \Fun(K \times \mathcal{I}, \Lie(\Sp)) \ar[d, "q^*"] \\
        \Fun(\{a \to b\}, \Lie(\Sp)) \ar[r, "p^*"', hook] & \Fun(\{a \to b\} \times \mathcal{I}, \Lie(\Sp)).
    \end{tikzcd}
    \]
    Another application of \cref{prop: malin-preserves-cotensors} shows that $\maAlg$ sends the square \cref{eq: limit-colimit-interchange-square} to the same square with $\Spc_*$ replaced by $\Lie(\Sp)$.
    The natural transformation $\maAlg(\alpha)$ is again the canonical map interchanging the colimit and the limit; this follows because functors of $(\infty,2)$-categories preserve Beck-Chevalley transformations.
    By functoriality of $\maAlg$, we find that this transformation becomes an equivalence upon restricting to the full subcategory
    \[
    \Alg_{\partial_*{\id_\sE}} \subseteq \Fun(K \times \mathcal{I}, \Lie(\Sp)).
    \]
    But this is equivalent to the universality of $\mathcal{I}$-indexed colimits in spectral Lie algebras.
\end{proof}

\section{The characterization}

In this section we will prove the characterization of the spectral Lie operad.
In \cref{sec: lax-sym-mon-comonads}, we give a short preliminary discussion of lax symmetric monoidal comonads.
We then give a characterization of the cocommutative cooperad in \cref{sec: characterization-cocom-cooperad}.
Using this, we show that the derivatives of the identity functor in pointed spaces are the spectral Lie operad in \cref{sec: derivatives-identity-spaces}.
We then prove in \cref{sec: free-Lie-algebra} that the free spectral Lie algebra functor is symmetric monoidal with respect to the tensor product of spectra and the smash product of Lie algebras.
Finally, we prove the characterization in \cref{sec: proof-characterization}.

\subsection{Lax symmetric monoidal comonads} \label{sec: lax-sym-mon-comonads}
Our proof of the characterization in the end relies on the fact that the cartesian product in the $\infty$-category $\coCAlg(\Sp)$ of cocommutative coalgebras is given by the tensor product of spectra.
It will therefore be useful to know when one can lift the tensor product to a symmetric monoidal structure on the $\infty$-category of $\sQ$-coalgebras, where $\sQ$ is a cooperad. We address this question now.

Let $\sC$ be a symmetric monoidal $\infty$-category and suppose that $T \colon \sC \to \sC$ is a comonad. 
If the functor $T$ has a lax symmetric monoidal structure, then we could try to define a $T$-coalgebra structure on the tensor product of two $T$-coalgebras $A$ and $B$ by taking the composite
\[
A \otimes B \to T(A) \otimes T(B) \to T(A \otimes B)
\]
as the coaction of $T$ on $A \otimes B$. 
Here, the first morphism is the tensor product of the coactions of $T$ on $A$ and $B$ respectively, and the second morphism uses that $T$ is a lax monoidal functor.
Of course, for this to work we need the lax monoidal structure to be compatible with the comonad structure.
The precise definition is as follows.

\begin{definition}
    Let $\sC$ be a symmetric monoidal $\infty$-category and write $\End^{\mathrm{lax}}(\sC)$ for the $\infty$-category of lax symmetric monoidal endofunctors on $\sC$.
    This is a monoidal $\infty$-category under composition.
    A \emph{lax symmetric monoidal comonad} is a coalgebra in $\End^{\mathrm{lax}}(\sC)$.
\end{definition}

This definition indeed accomplishes what it was designed for, as we have:

\begin{proposition}\label{prop: oplax-monad-tensor-algebras}
    Let $T$ be a lax symmetric monoidal comonad on $\sC$. 
    Then $\coAlg_{T}(\sC)$ admits a symmetric monoidal structure for which the forgetful functor $\coAlg_T(\sC) \to \sC$ is strong monoidal.
\end{proposition}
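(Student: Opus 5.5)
The plan is to realize $\coAlg_T(\sC)$ as the $\infty$-category of coalgebras for a comonad acting on $\sC$ viewed as an object of a suitable symmetric monoidal $(\infty,2)$-category, so that the symmetric monoidal structure is inherited formally. Concretely, consider the $(\infty,2)$-category (or just the $\infty$-category) $\mathrm{CMon}(\Cat_\infty)^{\mathrm{lax}}$ of symmetric monoidal $\infty$-categories and lax symmetric monoidal functors. Then $\End^{\mathrm{lax}}(\sC)$ is precisely the monoidal endomorphism category of $\sC$ in this setting, and a lax symmetric monoidal comonad $T$ is a comonad on the object $\sC$ there. The first step is to phrase the statement dually to the monad case: for monads, \cite[Section 4.7]{HA}-type arguments (or the formalism of algebras over an operad acting on an $\infty$-operad) identify algebras over a lax monoidal monad as forming a symmetric monoidal category. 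I would do this via $\infty$-operads: a lax symmetric monoidal functor $\sC \to \sC$ is a map of $\infty$-operads $\sC^\otimes \to \sC^\otimes$ over $\Fin_*$, and $T$ acting by such maps gives an action of the monoidal category $\End^{\mathrm{lax}}(\sC)$ on $\sC^\otimes$ fibrewise over $\Fin_*$.

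The second step is to form the coalgebra category fibrewise. Over $\langle n\rangle \in \Fin_*$, the fibre $\sC^\otimes_{\langle n\rangle} \simeq \sC^{\times n}$ carries the comonad $T^{\times n}$, and the lax structure maps $T(A)\otimes T(B)\to T(A\otimes B)$ are exactly what is needed to lift the cocartesian pushforward $\sC^{\times n}\to\sC^{\times m}$ along active maps to a functor on coalgebras: given coalgebras $A,B$, the composite $A\otimes B \to TA\otimes TB \to T(A\otimes B)$ is a coaction, with coassociativity and counitality coming from $T$ being a coalgebra in $\End^{\mathrm{lax}}(\sC)$ (i.e.\ comultiplication and counit are symmetric monoidal natural transformations). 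To make this homotopy-coherent I would define $\coAlg_T(\sC)^\otimes$ as the category of coalgebras for the induced comonad $T^\otimes$ on $\sC^\otimes$ in $\Cat_{/\Fin_*}$ (a comonad relative to $\Fin_*$, since $T$ lies over the identity of $\Fin_*$), equivalently via the dual of \cite[Corollary 7.3.2.7]{HA}-style arguments, and check that $\coAlg_{T^\otimes}(\sC^\otimes)\to\Fin_*$ is a cocartesian fibration whose fibres are $\coAlg_T(\sC)^{\times n}$.

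The main obstacle is verifying the cocartesian fibration property and the Segal condition for this coalgebra category: one must show that cocartesian lifts in $\sC^\otimes$ lift to coalgebras, which uses that pushforward along an active map $f$ together with the lax structure gives a map $f_!T^{\times n}\Rightarrow T^{\times m}f_!$ compatible with comultiplication, i.e.\ a morphism of comonads, and a morphism of comonads induces a functor on coalgebras commuting with forgetful functors. Finally, since the forgetful functor $\coAlg_{T^\otimes}(\sC^\otimes)\to\sC^\otimes$ preserves cocartesian edges by construction, it is a symmetric monoidal functor, i.e.\ the forgetful functor $\coAlg_T(\sC)\to\sC$ is strong monoidal, as claimed.
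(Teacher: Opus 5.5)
Your outline is sound but takes a genuinely different route from the paper. The paper gives no argument of its own: it simply cites \cite[Theorem 1.11]{heine2025monadicitytheoremhigheralgebraic}. The symmetric monoidal structure on $\coAlg_T(\sC)$ and the strong monoidality of the forgetful functor, with all their homotopy coherences, therefore come packaged from that general theorem.

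You instead build $\coAlg_T(\sC)^\otimes$ by hand as $\coAlg_{T^\otimes}(\sC^\otimes)\to\Fin_*$ and check the cocartesian and Segal conditions. This buys transparency in three places:
\begin{itemize}
\item It shows why \emph{lax}, rather than oplax, is the right hypothesis for a comonad: the lax structure is exactly what pushes a coaction forward, via $f_!x\to f_!Tx\to Tf_!x$.
\item The fibres come out right because $T^\otimes$ preserves inert edges and hence restricts to $T^{\times n}$ over $\langle n\rangle$. Since $\Fin_*$ is a $1$-category, coactions automatically lie over identities.
\item Strong monoidality of the forgetful functor is immediate once cocartesian edges are created in $\sC^\otimes$.
\end{itemize}

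The price is that the step you flag as the main obstacle carries all of the $\infty$-categorical content. Given a lift, cocartesianness is easy, e.g.\ from $\Map_{\coAlg}(x,y)\simeq\Tot\,\Map_{\sC^\otimes}(x,T^{\bullet}y)$: each $T^ky$ lies over the image of $y$, and totalization commutes with the relevant fibre products over $\Map_{\Fin_*}$.

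\emph{Existence} of the lift is the hard part, because it needs a fully coherent coalgebra structure on $f_!x$. Neither of the following supplies one:
\begin{itemize}
\item the first-order factorization of $x\to Tx\to Tf_!x$ through $x\to f_!x$;
\item a transformation $f_!T^{\times n}\Rightarrow T^{\times m}f_!$ compatible with comultiplication only up to homotopy.
\end{itemize}
You need an $\infty$-categorical input saying that a lax morphism of comonads induces a functor on coalgebras. Equivalently, coalgebras for a comonad defined over the base on a cocartesian fibration must again form a cocartesian fibration. That input is precisely what the paper outsources to Heine.
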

\begin{proof}
    This is proved in \cite[Theorem 1.11]{heine2025monadicitytheoremhigheralgebraic}.
\end{proof}

The main source of examples of lax symmetric monoidal comonads is the following:

\begin{proposition} \label{prop: monoidal-adjunction-comonad}
    Let
    \[
    \begin{tikzcd}
        F \colon \sC \ar[r, shift left] & \sD \ar[l, shift left] : G
    \end{tikzcd}
    \]
    be an adjunction between symmetric monoidal $\infty$-categories where $F$ is strong symmetric monoidal.
    Then the comonad $FG$ lifts to a lax symmetric monoidal comonad.
\end{proposition}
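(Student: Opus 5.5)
The plan is to use the doctrinal adjunction principle. Since $F$ is strong symmetric monoidal, its right adjoint $G$ inherits a canonical lax symmetric monoidal structure with adjunction unit and counit monoidal natural transformations (\cite[Corollary 7.3.2.7]{HA}). Concretely, I would work in the $(\infty,2)$-category of symmetric monoidal $\infty$-categories, lax symmetric monoidal functors and monoidal natural transformations. I would then show that $(F, G)$ is an adjunction internal to this $(\infty,2)$-category. In other words, $F \dashv G$ lifts to an adjunction $F \dashv G$ in $\Cat^{\otimes,\mathrm{lax}}$. This is exactly the content of the doctrinal adjunction results, for instance \cite[Theorem E]{haugseng-hebestreit-linskens-nuiten-lax-monoidal-adjunctions}, which identifies lax monoidal right adjoints of strong monoidal functors with relative adjunctions over $\Fin_*$.

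Granting this, the conclusion is formal. Any adjunction $L \dashv R$ between objects $x, y$ of an $(\infty,2)$-category $\mathbb{K}$ gives the endomorphism $LR$ of $y$ the structure of a coalgebra in the monoidal $\infty$-category $\End_{\mathbb{K}}(y)$. The monoidal structure here is composition, the comultiplication is $L\eta R$, and the counit is $\epsilon$. This is the universal property of the walking adjunction: the free adjunction $\mathrm{Adj}$ contains the endomorphism monoid of the second object as the walking comonad (the dual of \cite[Section 4.7]{HA}, or Riehl--Verity). Applying it with $\mathbb{K} = \Cat^{\otimes,\mathrm{lax}}$ and $y = \sD$, we get $\End_{\mathbb{K}}(\sD) = \End^{\mathrm{lax}}(\sD)$. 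So $FG$ becomes a coalgebra in $\End^{\mathrm{lax}}(\sD)$, that is, a lax symmetric monoidal comonad. Its image under the forgetful $2$-functor $\mathbb{K} \to \Cat$ is the usual comonad $FG$, so it lifts the given comonad.

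The main obstacle is the first step: producing the adjunction inside the $(\infty,2)$-category of lax functors coherently, rather than only the lax structure on $G$ and the monoidality of $\eta$ and $\epsilon$ separately. My first approach would be to model lax symmetric monoidal functors as maps of $\infty$-operads over $\Fin_*$. By \cite[Proposition 7.3.2.6]{HA}, $F^\otimes$ admits a right adjoint $G^\otimes$ relative to $\Fin_*$. I would then check that a relative adjunction over $\Fin_*$ between maps of $\infty$-operads is an adjunction in the $(\infty,2)$-category of $\infty$-operads. This holds because relative adjunctions are adjunctions in $\Cat_{/\Fin_*}$, and the (lax) morphisms of operads form a locally full sub-$2$-category that contains $F^\otimes$, $G^\otimes$, $\eta$ and $\epsilon$. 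Everything else reduces to the formal 2-categorical argument above.
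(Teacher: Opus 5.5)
Your proposal is correct. Its first step is the same as the paper's: both start from \cite[Corollary 7.3.2.7]{HA}, which makes $G$ lax symmetric monoidal with unit and counit monoidal natural transformations. Where you differ is how the comonad structure is then extracted.

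The paper stays inside the single hom-category $\Fun^{\mathrm{lax}}(\sC,\sD)$, left-tensored over $\End^{\mathrm{lax}}(\sD)$ by postcomposition. Copying the proof of \cite[Lemma 4.7.3.1]{HA}, it observes that the unit $F \to FGF$ exhibits $FG$ as the coendomorphism object of $F$. Then \cite[Corollary 4.7.1.41]{HA} equips any coendomorphism object with a coalgebra structure.

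You instead promote $F \dashv G$ to an adjunction internal to an $(\infty,2)$-category $\mathbb{K}$ of symmetric monoidal $\infty$-categories and lax functors. You then invoke the general fact that an adjunction $L \dashv R$ in any $(\infty,2)$-category makes $LR$ a coalgebra in $\End(y)$, via homotopy coherent adjunctions and the walking adjunction $\mathrm{Adj}$.

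The two routes are closer than they look. The coendomorphism property is exactly the adjunction $(-\circ G) \dashv (-\circ F)$ between hom-categories, which your internal adjunction supplies.

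The paper's route buys economy. It only needs the unit and counit to be monoidal and to satisfy the triangle identities up to homotopy in $\Fun^{\mathrm{lax}}$, together with results already in \cite{HA}. Yours needs more:
\begin{itemize}
\item an honest construction of $\mathbb{K}$, which your locally full sub-$2$-category of $\Cat_{/\Fin_*}$ argument does supply;
\item the identification of Lurie's relative adjunctions with adjunctions in $\Cat_{/\Fin_*}$;
\item the comparatively deep theorem that adjunctions in $(\infty,2)$-categories extend coherently over $\mathrm{Adj}$.
\end{itemize}
In return, your route gives a conceptual statement that is manifestly natural and visibly compatible with the forgetful $2$-functor to $\Cat$.

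A minor point: the paper cites Theorem E of \cite{haugseng-hebestreit-linskens-nuiten-lax-monoidal-adjunctions} for a different fact, namely lax functors as partially lax transformations. You may want to double-check that pointer, though your fallback via \cite[Proposition 7.3.2.6]{HA} makes it unnecessary.
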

\begin{proof}
    Since $F$ is strong symmetric monoidal, we obtain a lax symmetric monoidal structure on $G$ such that the unit and counit refine to lax symmetric monoidal natural transformations \cite[Corollary 7.3.2.7]{HA}.
    One then observes that the unit $F \to FGF$ exhibits $FG \in \End^{\mathrm{lax}}(\sD)$ as the coendomorphism object of $F \in \Fun^{\mathrm{lax}}(\sC, \sD)$; the proof is exactly the same as in \cite[Lemma 4.7.3.1]{HA}.
    Since a coendomorphism object acquires a canonical coalgebra structure by \cite[Corollary 4.7.1.41]{HA}, this completes the proof.
\end{proof}

When $\sC$ is a cartesian symmetric monoidal $\infty$-category and $\sD$ is $\Sp$ with the augmented tensor product, it turns out that the tensor product $\otimesaug$ on the $\infty$-category of $FG$-coalgebras is equivalent to the cartesian product.
More precisely, we have the following:

\begin{proposition} \label{prop: lax-sm-comonad-over-sp-aug}
    Let $\sD$ be a stable presentably symmetric monoidal $\infty$-category and let $T$ be a comonad on $\sD$. The following are equivalent:
    \begin{enumerate}[\upshape{(}\arabic*\upshape{)}]
        \item There is a pointed $\infty$-category $\sC$ with finite limits and colimits and an adjunction $F \colon \sC \rightleftharpoons \sD : G$, where $F$ is a nonunital symmetric monoidal functor with respect to the smash product on $\sC$ and the tensor product $\otimes$ on $\sD$, and there is an equivalence of comonads $T \simeq FG$.
        \item There is a pointed $\infty$-category $\sC$ with finite limits and colimits and an adjunction $F \colon \sC \rightleftharpoons \sD : G$, where $F$ is a symmetric monoidal functor with respect to the cartesian product on $\sC$ and the augmented tensor product $\otimesaug$ on $\sD$, and there is an equivalence of comonads $T \simeq FG$.
        \item The comonad $T$ lifts to a lax symmetric monoidal comonad for $\otimesaug$, and for every pair $X, Y \in \sD$, the composite
        \[
        T(X) \otimesaug T(Y) \to T(X \otimesaug Y) \to T(X \times Y)
        \]
        is an isomorphism, where the first arrow is the lax comparison map of $T$ and the second one is induced by the projection $X \otimesaug Y \to X \times Y$.
        \item The forgetful functor $\coAlg_T(\sD) \to \sD$ admits a symmetric monoidal structure from $\times$ to $\otimesaug$.
        \item The forgetful functor $\coAlg_T(\sD) \to \sD$ admits a nonunital symmetric monoidal structure from the smash product $\wedge$ to $\otimes$.
    \end{enumerate}
\end{proposition}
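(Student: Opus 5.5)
I would prove the equivalences by a cycle of implications, using two translations throughout. The first is the dictionary between the smash product and the cartesian product: a nonunital symmetric monoidal structure on a functor $F$ from $\wedge$ to $\otimes$ corresponds to a symmetric monoidal structure from $\times$ to $\otimesaug$. This should be formal. The pointed $\infty$-category $\sC$ with products is equivalent to its category of objects over and under $\ast$. For a left adjoint $F$ into a stable $\sD$, a symmetric monoidal structure for $\times$ and $\otimesaug$ gives $F(X\times Y)\simeq F(X)\oplus F(Y)\oplus F(X)\otimes F(Y)$. Taking cofibers of the map from $F(X\vee Y)\simeq F(X)\oplus F(Y)$ then identifies $F(X\wedge Y)$ with $F(X)\otimes F(Y)$. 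Conversely, given a nonunital structure for $\wedge$, one reassembles $F(X\times Y)$ using the split cofiber sequence $F(X)\oplus F(Y)\to F(X\times Y)\to F(X\wedge Y)$, which splits via the projections. This gives (1)$\Leftrightarrow$(2) and (4)$\Leftrightarrow$(5) at once; for (5) the smash product on $\coAlg_T(\sD)$ is only oplax in general, so it is handled as in the remark after \Cref{introthm: characterization}.

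The second translation is between coalgebras and the comonad itself. For (2)$\Rightarrow$(3), I apply \cref{prop: monoidal-adjunction-comonad} to $F\dashv G$ with $F$ strong monoidal from $(\sC,\times)$ to $(\sD,\otimesaug)$, which makes $T\simeq FG$ a lax symmetric monoidal comonad. The lax map $FG(X)\otimesaug FG(Y)\to FG(X\otimesaug Y)$ is the composite of $F(GX)\otimesaug F(GY)\simeq F(GX\times GY)$ with $F$ applied to the lax structure map $GX\times GY\to G(X\otimesaug Y)$ of $G$. After further composing with the projection to $X\times Y$, this becomes $F$ of the canonical equivalence $GX\times GY\simeq G(X\times Y)$, since $G$ is a right adjoint. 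The required composite is therefore an isomorphism; the one real check here is that the lax structure on $G$ is compatible with the projections. For (3)$\Rightarrow$(4), I take the symmetric monoidal structure on $\coAlg_T(\sD)$ from \cref{prop: oplax-monad-tensor-algebras} and show that it is cartesian. The projections $X\otimesaug Y\to X$ are maps of coalgebras, and to see that the induced map $\mathrm{Map}(Z,X\otimesaug Y)\to \mathrm{Map}(Z,X)\times\mathrm{Map}(Z,Y)$ is an equivalence, I check it on cofree coalgebras. By the hypothesis, $T(X)\otimesaug T(Y)\simeq T(X\times Y)$ is cofree, hence is the product of $T(X)$ and $T(Y)$ in $\coAlg_T(\sD)$. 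General coalgebras are handled by resolving them by cofree ones, that is, a totalization of a cosimplicial diagram, and using that $\otimesaug$ commutes with the relevant limits in each variable. This limit-commutation step is where I expect the main obstacle: $\otimesaug$ need not commute with totalizations. I would first try the fact that the resolution of a coalgebra is split after forgetting, so that it is preserved by any functor, and then Beck–Chevalley-type arguments on cobar resolutions.

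Finally, (4)$\Rightarrow$(2) takes $\sC=\coAlg_T(\sD)$, which is pointed with finite limits and colimits, together with the forgetful–cofree adjunction; the forgetful functor is a left adjoint and is symmetric monoidal by (4), and $FG\simeq T$ by construction. This closes the cycle (1)$\Leftrightarrow$(2)$\Rightarrow$(3)$\Rightarrow$(4)$\Rightarrow$(2), with (4)$\Leftrightarrow$(5) obtained from the first translation.
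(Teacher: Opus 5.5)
Your treatment of (2)$\Rightarrow$(3) and of the step taking $\sC=\coAlg_T(\sD)$ agrees with the paper. The idea you list as a first try in (3)$\Rightarrow$(4) is exactly the paper's argument. The underlying cosimplicial object of $C^\bullet(T,T,M)\otimesaug N$ is split, so its totalization in $\coAlg_T(\sD)$ is $M\otimesaug N$; this uses that the forgetful functor creates limits of cosimplicial objects that split after forgetting. On the other side, $\Tot(C^\bullet(T,T,M)\times N)\simeq M\times N$ because limits commute with limits. This reduces to $M$ cofree, then symmetrically to $N$ cofree, where the hypothesis applies.

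The genuine gap is the direction from $\wedge$ to $\times$ in your first translation, i.e.\ (1)$\Rightarrow$(2). Since (5)$\Rightarrow$(1) is trivial by taking $\sC=\coAlg_T(\sD)$, this is the only place you need it. Splitting $F(X)\oplus F(Y)\to F(X\times Y)\to F(X\wedge Y)$ by the projections gives, pair by pair, an equivalence $F(X\times Y)\simeq F(X)\otimesaug F(Y)$. But a symmetric monoidal structure is a coherent family of such maps for all $n$-fold products, compatible with associativity and the symmetric group actions. Already for three factors you must compare the two ways of producing $F(X\times Y\times Z)\to F(X)\otimes F(Y)\otimes F(Z)$, which involves the associators of $\wedge$ relative to $\times$. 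The splitting gives no control over this. The remark that $\sC\simeq\sC_{\ast//\ast}$ does not help either: unlike in the stable $\sD$, there is no decomposition of $X\times Y$ as $X\vee Y\vee(X\wedge Y)$ in $\sC$ to transport.

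What is missing is a way to first produce a canonical \emph{oplax} structure on $F$ from $\times$ to $\otimesaug$; strongness is then exactly your objectwise check. The paper does this by lifting $F$ to $\coCAlg^{\mathrm{nu}}(\sD,\otimes)$. Every object of $\sC$ is a nonunital cocommutative coalgebra for $\times$, hence for $\wedge$ via the oplax structure on the identity from $\times$ to $\wedge$. The monoidal functor $F$ then carries it to a coalgebra in $(\sD,\otimes)$. The resulting functor $\sC\to\coCAlg^{\mathrm{nu}}(\sD)$ is canonically oplax for the cartesian structures, as is any functor between $\infty$-categories with finite products. Composing with the forgetful functor, which is strong monoidal from $\times$ to $\otimesaug$ by \cref{prop: product-nu-cocalg}, gives the oplax structure. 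Its comparison map $F(M\times N)\to F(M)\otimesaug F(N)$ sits between the cofiber sequences $F(M\sqcup N)\to F(M\times N)\to F(M\wedge N)$ and $F(M)\oplus F(N)\to F(M)\otimesaug F(N)\to F(M)\otimes F(N)$. The five lemma then shows it is an equivalence.

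The opposite direction, (2)$\Rightarrow$(1) or (4)$\Rightarrow$(5), is fine as you describe it, provided the coherences come from the functoriality of the smash product construction (\cref{rem: right-exact-strong-monoidal-smash}). Your cofiber computation is then precisely the check that the induced lax structure is strong.
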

\begin{proof}
    $(1) \Rightarrow (2)$: The composite of functors
    \begin{align*}
    \sC & \xrightarrow{\phantom{uFu}} \coCAlg^{\mathrm{nu}}(\sC, \times) \\
    & \xrightarrow{\phantom{uFu}} \coCAlg^{\mathrm{nu}}(\sC, \wedge) \\
    &\xrightarrow{\phantom{u}F\phantom{u}} \coCAlg^{\mathrm{nu}}(\sD, \otimes).
    \end{align*}
    provides a lift of $F \colon \sC \to \sD$ through the category $\coCAlg^{\mathrm{nu}}(\sD)$.
    Here, the first arrow uses that every object in a category with products is canonically a nonunital commutative coalgebra for the cartesian product \cite[Corollary 2.4.3.10]{HA}, the second arrow is given by the oplax monoidal structure from $\times$ to $\wedge$ on the identity functor of $\sC$, and the third arrow uses the assumption that $F$ is symmetric monoidal from $\wedge$ to $\otimes$.

    The resulting functor $\sC \to \coCAlg^{\mathrm{nu}}(\sD)$ is canonically oplax monoidal for the cartesian symmetric monoidal structure on source and target, since this holds for any functor between $\infty$-categories with finite products \cite[Proposition 2.4.3.16]{HA}.
    By composing with the forgetful functor $\coCAlg^{\mathrm{nu}}(\sD) \to \sD$, which is strong monoidal from $\times$ to $\otimesaug$, we obtain an oplax monoidal structure on the functor $F \colon \sC \to \sD$ with respect to $\times$ and $\otimesaug$.
    For every pair of objects $M, N \in \sC$, the oplax comparison map by construction fits in a commutative diagram
    \[
    \begin{tikzcd}[sep = small]
        F(M \sqcup N) \ar[r] \ar[d, "\wr"] & F(M \times N) \ar[r] \ar[d] & F(M \wedge N) \ar[d, "\wr"] \\[1em]
        F(M) \oplus F(N) \ar[r] & F(M) \otimesaug F(N) \ar[r] & F(M) \otimes F(N).
    \end{tikzcd}
    \]
    The left vertical arrow is an isomorphism since $F$ preserves colimits and the right vertical arrow is an isomorphism by assumption, so that the middle vertical arrow is an isomorphism by the $5$-lemma.
    This shows that $F$ admits a symmetric monoidal structure from $\times$ to $\otimesaug$.
    
    $(2) \Rightarrow (3)$: It follows from \cref{prop: monoidal-adjunction-comonad} that $T$ acquires the structure of a lax symmetric monoidal comonad for $\otimesaug$.
    The symmetric monoidal structure on $F$ induces a lax symmetric monoidal structure on $G$ with structure map given by the composite
    \begin{align*}
    G(X) \times G(Y) &\xrightarrow{\phantom{u}\eta\phantom{u}} GF(G(X) \times G(Y)) \\
    & \xrightarrow{\phantom{r}\sim \phantom{r}} G(FG(X) \otimesaug FG(Y)) \\
    & \xrightarrow{\phantom{ }\epsilon \otimes \epsilon\phantom{ }} G(X \otimesaug Y),
    \end{align*}
    where $\eta$ and $\epsilon$ denote respectively the unit and counit of the adjunction $(F, G)$.
    It is an immediate consequence of the triangle identities and the fact that $G$ preserves products that the composite of this map with the projection $G(X \otimesaug Y) \to G(X \times Y)$ is an isomorphism.
    Applying $F$ to this total composite and using that $FG \simeq T$, we find that the composite
    \[
        T(X) \otimesaug T(Y) \to T(X \otimesaug Y) \to T(X \times Y),
    \]
    is an isomorphism.

    $(3) \Rightarrow (4)$: Since $T$ is a lax symmetric monoidal comonad, the augmented tensor product lifts to a symmetric monoidal structure on $\coAlg_T(\sD)$. 
    For any pair $M, N \in \coAlg_{T}(\sD)$ there is a natural comparison morphism $M \otimesaug N \to M \times N$ with components 
    \[
    M \simeq M \otimesaug 0 \leftarrow M \otimesaug N \rightarrow 0 \otimesaug N  \simeq N.
    \]
    Since $\forget \colon \coAlg_T(\sD) \to \sD$ is symmetric monoidal for $\otimesaug$, it suffices to show that this comparison map is an isomorphism.
    Using the standard cobar resolution, we write $M \simeq \Tot C^{\bullet}(T, T, M)$ as a totalization of cofree $T$-coalgebras.
    Consider the following commutative diagram of coalgebras:
    \[
    \begin{tikzcd}
    	\Tot (C^{\bullet}(T, T, M) \otimesaug N) \ar[r] \ar[d] & M \otimesaug N \ar[d] \\
    	\Tot(C^{\bullet}(T, T, M) \times N) \ar[r] & M \times N.
    \end{tikzcd}
    \]
    The top horizontal arrow is an isomorphism since the underlying cosimplicial object of $C^{\bullet}(T, T, M) \otimesaug N$ is split.
    The bottom horizontal arrow is an isomorphism since totalizations commute with products.
    It therefore suffices to prove the map is an isomorphism if $M$ is a cofree $T$-coalgebra.
    By the same argument, we may also assume $N$ is cofree. 
    But in this case, the map is equivalent to the composite
    \[
    \cofree_{T}(X) \otimesaug \cofree_{T}(Y) \to \cofree_{T}(X \otimesaug Y) \to \cofree_{T}(X \times Y),
    \]
    which we assumed to be an isomorphism.

    $(4) \Rightarrow (5)$: This follows from \cref{rem: right-exact-strong-monoidal-smash}, since the forgetful functor preserves colimits.
    
    $(5) \Rightarrow (1)$: The cofree functor is right adjoint to the forgetful functor.
\end{proof}

\begin{corollary}\label{cor: exponential-property}
    Let $\sD$ be a stable presentably symmetric monoidal category and let $T$ be a comonad on $\sD$ satisfying the equivalent conditions from the previous proposition.
    Then $T$ admits a symmetric monoidal structure from $\oplus$ to $\otimesaug$.
\end{corollary}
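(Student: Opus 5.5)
We need a symmetric monoidal structure on $T$ from $(\sD,\oplus)$ to $(\sD,\otimesaug)$. The plan is to use condition (3) of \cref{prop: lax-sm-comonad-over-sp-aug}: $T$ is lax symmetric monoidal for $\otimesaug$, and for all $X, Y$ the composite
\[
T(X) \otimesaug T(Y) \to T(X \otimesaug Y) \to T(X \times Y)
\]
is an isomorphism. Since $\sD$ is stable, $X \times Y \simeq X \oplus Y$, so this composite is a natural map $\mu_{X,Y} \colon T(X) \otimesaug T(Y) \to T(X \oplus Y)$. The aim is to show that these maps assemble into a lax symmetric monoidal structure on $T$ from $\oplus$ to $\otimesaug$. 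The unit map $0 \to T(0)$ is automatically an isomorphism, because $T \simeq FG$ with $G$ a right adjoint and $F$ pointed, hence $T(0) \simeq 0$. A lax structure whose structure maps are all isomorphisms is strong, and that gives the claim.

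To obtain the lax structure coherently, rather than only the binary maps, I would compose lax symmetric monoidal functors. The first is the identity of $\sD$, regarded as a lax symmetric monoidal functor $(\sD, \oplus) \to (\sD, \otimesaug)$. It exists because $\oplus$ is the cartesian product on $\sD$, and any symmetric monoidal structure on a category with finite products receives a canonical lax symmetric monoidal functor from the cartesian structure. Equivalently, by \cite[Proposition 2.4.3.16]{HA}, the identity $(\sD,\otimesaug) \to (\sD,\times)$ is oplax, and since $0$ is the unit of $\otimesaug$ the projections $X \otimesaug Y \to X$ exist. Concretely, this map is the projection $X \otimesaug Y = X \oplus Y \oplus X\otimes Y \to X \oplus Y$.

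The cleaner route is to lift first to $\coAlg_T(\sD)$. By condition (4), the forgetful functor $\forget \colon (\coAlg_T(\sD), \times) \to (\sD, \otimesaug)$ is symmetric monoidal. The right adjoint $\cofree_T$ preserves products, so it is strong symmetric monoidal $(\sD, \times) \to (\coAlg_T(\sD), \times)$, since any product-preserving functor is symmetric monoidal for the cartesian structures. Then $T = \forget \circ \cofree_T$ is a composite of symmetric monoidal functors
\[
(\sD, \oplus) = (\sD, \times) \xrightarrow{\cofree_T} (\coAlg_T(\sD), \times) \xrightarrow{\forget} (\sD, \otimesaug),
\]
and so is symmetric monoidal. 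This avoids checking by hand that the maps $\mu_{X,Y}$ are coherent.

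The main point to verify is that the symmetric monoidal structure on the underlying functor $T$ obtained this way agrees with the intended one. That is, the binary structure map should be the composite displayed in (3). This follows by unwinding the proof of $(3)\Rightarrow(4)$, but the corollary only asks for existence, so the composite argument suffices once the equivalence $T \simeq \forget \circ \cofree_T$ is recorded as an equivalence of underlying functors.
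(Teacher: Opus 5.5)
Your main argument is correct and is the intended deduction. The paper states the corollary without a separate proof and applies it through condition (4) in \cref{ex: sym-cocom-lax-sm-comonad}: $\forget\colon(\coAlg_T(\sD),\times)\to(\sD,\otimesaug)$ is symmetric monoidal, the right adjoint $\cofree_T$ preserves products and hence is symmetric monoidal for the cartesian structures, and $T\simeq\forget\circ\cofree_T$ with $\times=\oplus$ because $\sD$ is stable.

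One slip in your preliminary first route: the identity is not in general a lax functor from the cartesian structure to an arbitrary symmetric monoidal structure, since this needs the unit to be terminal, as it is for $\otimesaug$. With that correction, composing this lax identity with $T$ also gives a valid direct proof from condition (3), because the resulting structure maps are exactly the composites assumed there to be isomorphisms.
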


\begin{example} \label{ex: sym-cocom-lax-sm-comonad}
     The forgetful functor $\coCAlg(\SSeqnu(\Sp)) \to \SSeqnu(\Sp)$ admits a symmetric monoidal structure from $\times$ to $\otimesaug$ by \cref{prop: product-nu-cocalg}.
    In this case, the previous corollary recovers the well-known fact that the functor $\Sym_{\coCom}$ is symmetric monoidal from $\oplus$ to $\otimes^{\mathrm{aug}}$.
\end{example}

\subsection{A characterization of the cocommutative cooperad} \label{sec: characterization-cocom-cooperad}

The cocommutative cooperad admits the following characterization.

\begin{proposition} \label{prop: characterization-cocom}
	Let $\sQ$ be a reduced cooperad in spectra.
	Then the following are equivalent:
	\begin{enumerate}[\upshape{(}\arabic*\upshape{)}]
    \item The cooperad $\sQ$ is equivalent to the cocommutative cooperad $\mathbf{Com}^{\vee}$.
    \item The functor
    \[
    \forget_{\sQ} \colon \LcoMod_{\sQ} \to \SSeqnu(\Sp)
    \]
    admits a nonunital symmetric monoidal structure with respect to the smash product on $\LcoMod_{\sQ}$ and the Day convolution tensor product on $\SSeqnu(\Sp)$.
\end{enumerate}
\end{proposition}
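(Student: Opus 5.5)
The plan is to apply \cref{prop: lax-sm-comonad-over-sp-aug} with $\sD = \SSeqnu(\Sp)$ under Day convolution and $T = \Sym_\sQ = \sQ \circ -$. Since $\LcoMod_\sQ \simeq \coAlg_{T}(\SSeqnu(\Sp))$, condition (2) of the statement is exactly condition (5) of \cref{prop: lax-sm-comonad-over-sp-aug}. So (2) is equivalent to condition (4) there: the forgetful functor is symmetric monoidal from $\times$ to $\otimesaug$.

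\textbf{Step 1: (1) $\Rightarrow$ (2).} By \cref{prop: construction-cocom-cooperad} we have $\LcoMod_{\coCom} \simeq \coCAlg^{\mathrm{nu}}(\SSeqnu(\Sp))$, compatibly with the forgetful functors. By \cref{prop: product-nu-cocalg} the forgetful functor is strong monoidal from $\times$ to $\otimesaug$, which is (4) of \cref{prop: lax-sm-comonad-over-sp-aug}. The implication (4) $\Rightarrow$ (5) of that proposition then gives (2).

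\textbf{Step 2: (2) $\Rightarrow$ (1).} Assume (2), so the forgetful functor $U\colon \LcoMod_\sQ \to \SSeqnu(\Sp)$ is symmetric monoidal from $\times$ to $\otimesaug$. Every object of a category with finite products is canonically a nonunital cocommutative coalgebra for $\times$ \cite[Corollary 2.4.3.10]{HA}. Applying the symmetric monoidal functor $U$, every underlying sequence $U(M)$ acquires a nonunital cocommutative coalgebra structure for $\otimesaug$, hence for $\otimes$. This gives a lift
\[
\Phi\colon \LcoMod_\sQ \to \coCAlg^{\mathrm{nu}}(\SSeqnu(\Sp)) \simeq \LcoMod_{\coCom}
\]
over $\SSeqnu(\Sp)$. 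Since $U$ preserves colimits, so does $\Phi$, because colimits in both comodule categories are created by the forgetful functors. By the adjoint functor theorem, a colimit-preserving functor commuting with the forgetful (left adjoint) functors corresponds to a map of comonads $\sQ \circ - \to \coCom \circ -$.

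I then need this comonad map to be induced by a map of cooperads $\sQ \to \coCom$. For that, the lift should be compatible with the right $\SSeqnu(\Sp)$-action by composition, since these actions distribute over Day convolution (\cref{rem: day-convolution-and-composition}). I expect this compatibility to hold because the right action is strong monoidal for Day convolution. Plausibly one can instead evaluate the comonad map on a single generator concentrated in arity one, using that both comonads are of the form $\Sym$ (\cref{ex: der-retraction-sym}).

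\textbf{Step 3: the map $\sQ \to \coCom$ is an equivalence.} This is the hardest step, and the plan is to argue by the exponential property. By \cref{cor: exponential-property}, $T = \sQ\circ-$ is symmetric monoidal from $\oplus$ to $\otimesaug$, and the map $\sQ\circ- \to \coCom\circ-$ is compatible with these structures. Evaluate at $X = \unit$, the symmetric sequence $\Sph$ in arity one. Then $T(\unit^{\oplus n}) \simeq T(\unit)^{\otimesaug n}$, and the multilinear (arity $(1,\dots,1)$) part of $T(\unit^{\oplus n})$ is $\sQ_n$. The multilinear part of $T(\unit)^{\otimesaug n}$ is $\sQ_1^{\otimes n} \simeq \Sph$, because $\sQ$ is reduced. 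So $\sQ_n \simeq \Sph$ for all $n \ge 1$, compatibly with $\coCom_n \simeq \Sph$. Hence the comparison map is an equivalence in each arity.

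To avoid the subtlety in Step 2, I would first try a simpler route: show that the lifted functor $\Phi$ itself is an equivalence. Both categories are comonadic over $\SSeqnu(\Sp)$, and the comonad map is an equivalence by the arity computation, so $\Phi$ is an equivalence. The uniqueness clause of \cref{prop: construction-cocom-cooperad} then identifies $\sQ$ with $\coCom$ directly.
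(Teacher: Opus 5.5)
Your strategy matches the paper's. You rewrite (2) as the $\times\to\otimesaug$ condition via \cref{prop: lax-sm-comonad-over-sp-aug}. You lift $\forget_\sQ$ through $\coCAlg^{\mathrm{nu}}(\SSeqnu(\Sp))$ using the cartesian coalgebra structure, which gives a comonad map $\eta\colon \sQ\circ- \to \coCom\circ-$. You then compare multilinear parts using the exponential isomorphisms of \cref{cor: exponential-property}. Your (1) $\Rightarrow$ (2) is also the paper's argument.

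There are two variations. First, you use the full $n$-variable exponential isomorphism and read off the multidegree $(1,\ldots,1)$ part, getting $\sQ_n\otimes X_1\otimes\cdots\otimes X_n \simeq \sQ_1^{\otimes n}\otimes X_1\otimes\cdots\otimes X_n$ in one step. The paper instead applies $\partial_{(n-1,1)}$ to the two-variable square and inducts on $n$; your version is slightly cleaner. Second, you close by showing $\Phi$ is an equivalence and invoking the uniqueness clause of \cref{prop: construction-cocom-cooperad}. The paper instead turns $\eta$ into an honest map of cooperads $\sQ\to\coCom$ by taking Goodwillie derivatives (\cref{ex: der-retraction-sym}). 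There the multilinear computation is literally the computation of $\partial_n\eta$, and equivalences of cooperads are detected arity-wise, so nothing further is needed.

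Your closing move skips a step. The multilinear computation shows only that $\eta$ induces equivalences on all multilinearized cross-effects, i.e.\ on every homogeneous layer $D_n\eta$. It should be phrased for the natural transformation of $n$-variable functors $(X_1,\ldots,X_n)\mapsto \sQ\circ(X_1\oplus\cdots\oplus X_n)$; the single value at $\unit^{\oplus n}$ does not see multidegrees. Since $\eta$ is only a natural transformation, not a priori $\Sym_f$ for a map of symmetric sequences $f$, your sentence ``hence the comparison map is an equivalence in each arity'' has no content yet. You can fill this with a convergence argument. In each output arity $m$, the functors $X\mapsto(\sQ\circ X)_m$ and $X\mapsto(\coCom\circ X)_m$ are finite sums of homogeneous functors of degree $\leq m$, because $X$ has nothing in arity $0$. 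They are therefore $m$-excisive with finite Goodwillie towers, so $\eta$ is an equivalence once every $D_n\eta$ is.

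Step 3 also needs $\eta$ to commute with the exponential isomorphisms, which you assert without comment. This holds because the factorization $\forget_\sQ\simeq\forget_{\coCom}\circ\Phi$ is one of symmetric monoidal functors for $\times\to\otimesaug$. That makes $\eta$ a map of lax symmetric monoidal comonads, as the paper notes.

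Finally, citing the uniqueness clause of \cref{prop: construction-cocom-cooperad} is legitimate as stated. However, its proof identifies the cooperad through the right $\SSeqnu(\Sp)$-linear structure on the comonad. That is exactly the compatibility you left as ``I expect''. The derivative route is what lets the paper avoid checking it.
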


\begin{proof}
	The implication (1) $\Rightarrow$ (2) follows immediately from \cref{prop: product-nu-cocalg,rem: right-exact-strong-monoidal-smash}.

    We now prove the other implication.
    First of all, since the forgetful functor $\LcoMod_{\sQ} \to \SSeqnu(\Sp)$ admits a nonunital symmetric monoidal structure from $\wedge$ to $\otimes$, it also admits a symmetric monoidal structure from the cartesian product to $\otimesaug$ by \cref{prop: lax-sm-comonad-over-sp-aug}.
    
    Since every object in $\LcoMod_{\sQ}$ is canonically a commutative coalgebra for the cartesian product and the forgetful functor $\LcoMod_{\sQ} \to \SSeqnu(\Sp)$ is symmetric monoidal from $\times$ to $\otimesaug$, we obtain a factorization of the forgetful functor through $\coCAlg^{\mathrm{nu}}(\SSeqnu(\Sp))$, so that we get a commutative diagram
    \[
    \begin{tikzcd}
        \LcoMod_{\sQ} \ar[rr, "H"] \ar[dr, "\forget_{\sQ}"'] & & \coCAlg^{\mathrm{nu}}(\SSeqnu(\Sp)) \ar[dl, "\forget_{\coCom}"] \\
        & \SSeqnu(\Sp). &
    \end{tikzcd}
    \]
    Observe that all functors in this diagram are symmetric monoidal for $\otimesaug$, and that the diagram commutes as a diagram of symmetric monoidal functors.
    The diagram gives rise to a map
    \[
    \Sym_{\sQ} \to \Sym_{\coCom}
    \]
    of lax symmetric monoidal comonads for $\otimesaug$.
    Indeed, since the (lax) symmetric monoidal functor $\forget_{\coCom}$ has the structure of a left comodule over the lax symmetric monoidal comonad $\Sym_{\coCom}$, the (lax) symmetric monoidal functor $\forget_\sQ \simeq \forget_\Com \circ H$ also acquires such a left module structure, and this is classified by a map $\Sym_{\sQ} \to \Sym_{\coCom}$ of lax symmetric monoidal comonads, since $\Sym_{\sQ}$ is the coendomorphism object of $\forget_{\sQ} \in \Fun^{\mathrm{lax}}(\LcoMod_{\sQ}, \SSeqnu(\Sp))$.
    Moreover, by \cref{cor: exponential-property} and \cref{ex: sym-cocom-lax-sm-comonad}, both $\Sym_{\coCom}$ and $\Sym_{\sQ}$ acquire a symmetric monoidal structure from $\oplus$ to $\otimesaug$ and the map $\Sym_{\sQ} \to \Sym_{\coCom}$ therefore refines to a natural transformation of symmetric monoidal functors $(\SSeqnu(\Sp), \oplus) \to (\SSeqnu(\Sp), \otimesaug)$.
    In particular, for every pair of symmetric sequences $X, Y \in \SSeqnu(\Sp)$, we get a commutative square
    \[
    \begin{tikzcd}
         \Sym_{\sQ}(X \oplus Y) \ar[r] \ar[d, "\wr"] & \Sym_{\coCom}(X \oplus Y) \ar[d, "\wr"] \\
        \Sym_{\sQ}(X) \otimesaug \Sym_{\sQ}(Y)  \ar[r] & \Sym_{\coCom}(X) \otimesaug \Sym_{\coCom}(Y).
    \end{tikzcd}
    \]
    
    The map of comonads $\Sym_{\sQ} \to \Sym_{\coCom}$ yields a map of cooperads $B\sO \to \coCom$ by passage to Goodwillie derivatives by \cref{ex: der-retraction-sym}.
    We will now show by induction that $\sQ(n) \to \coCom(n)$ is an isomorphism for all $n \geq 1$.
    For $n=1$, there is nothing to show since the cooperads in question are reduced.
    If $A$ is a symmetric sequence and $X, Y \in \SSeqnu(\Sp)$, we have natural decompositions
    \begin{align*}
    \Sym_A(X \oplus Y) &\simeq \bigoplus_{k, \ell \geq 0} \mathrm{Res}^{\Sigma_{k+\ell}}_{\Sigma_k \times \Sigma_\ell} A(k + \ell) \otimes_{h\Sigma_k \times \Sigma_\ell} X^{\otimes k} \otimes Y^{\otimes \ell}, \\
    \Sym_A(X) \otimesaug \Sym_A(Y) &\simeq \bigoplus_{k, \ell \geq 0} A(k) \otimes A(\ell) \otimes_{h \Sigma_k \times \Sigma_\ell} X^{\otimes k} \otimes Y^{\otimes \ell}.
    \end{align*}
    In the final expression, one should set $A(0) = 0$ if $k = \ell = 0$ and $A(0) = \Sph$ otherwise.
    Note that in both sums, the $(k,\ell)$-component is a $(k,\ell)$-homogeneous functor.
    Applying the multivariable Goodwillie derivative functor $\partial_{(n-1, 1)}$ to the commutative square above, we therefore obtain a commutative diagram of spectra with $\Sigma_{n-1}$-action
    \[
    \begin{tikzcd}
    \mathrm{Res}^{\Sigma_n}_{\Sigma_{n-1}} \sQ(n)  \ar[r] \ar[d, "\wr"] &  \mathrm{Res}^{\Sigma_n}_{\Sigma_{n-1}} \sQ(n) \ar[d, "\wr"] \\
         \sQ(n-1) \otimes \sQ(1) \ar[r] &  \coCom(n-1) \otimes \coCom(1).
    \end{tikzcd}
    \]
    We may take as our inductive hypothesis that the bottom horizontal map is an isomorphism, so that the top horizontal map is an isomorphism as well.
    Since $\mathrm{Res}^{\Sigma_n}_{\Sigma_{n-1}}$ is a conservative functor, this completes the proof.
\end{proof}

\subsection{Derivatives of the identity functor in pointed spaces}\label{sec: derivatives-identity-spaces}

Before giving the proof of \cref{introthm: characterization}, we will use the proposition from the previous section to prove that $\partial_*{\id_{\Spc_*}}$ is the spectral Lie operad; see \cref{ex: der-identity-pointed-spaces} above for how this proposition relates to other versions of this result in the literature.

\begin{proposition} \label{prop: derivatives-id-Lie}
    There is an equivalence $\partial_*{\id_{\Spc_*}} \simeq \LL$ of operads.
\end{proposition}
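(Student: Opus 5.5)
By \cref{prop: Koszul-dual-derivative-id} we have $\partial_*{\id_{\Spc_*}} \simeq \Cobar\, \partial_*(\Sigma^\infty\Omega^\infty)$ as operads. Since $\LL$ is by definition the Koszul dual of $\coCom$, and $B$ and $C$ are inverse equivalences by \cref{prop:KDoperads}, it suffices to show that the cooperad $\sQ \coloneqq \partial_*(\Sigma^\infty\Omega^\infty)$ is equivalent to $\coCom$. We do this with criterion (2) of \cref{prop: characterization-cocom}: we show that $\forget_{\sQ} \colon \LcoMod_{\sQ} \to \SSeqnu(\Sp)$ admits a nonunital symmetric monoidal structure from the smash product to Day convolution.

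To produce this structure, differentiate a symmetric monoidal situation in pointed spaces. The smash product on $\Spc_*$ is a nonunital symmetric monoidal structure that preserves filtered colimits and satisfies $\ast\wedge\ast\simeq\ast$. The functor $\Sigma^\infty \colon \Spc_* \to \Sp$ is strong symmetric monoidal from $\wedge$ to $\otimes$. Apply \cref{prop: differentiating-sym-mon-cats} in its $\maLMod$ version.
\begin{itemize}
\item By \cref{ex: derivative-smash-product-sym-mon}, the induced structure on $\LMod_{\partial_*{\id_{\Spc_*}}}(\SSeqnu(\Sp))$ is the smash product.
\item By \cref{ex: derivative-tensor-product-sym-mon}, the induced structure on $\LMod_{\unit}(\SSeqnu(\Sp)) \simeq \SSeqnu(\Sp)$ is Day convolution.
\item The functor $\partial_*\Sigma^\infty \circ_{\partial_*\id} -$ inherits a symmetric monoidal structure between them.
\end{itemize}
By \cref{prop: derivative-left-adjoint}, $\partial_*\Sigma^\infty$ is $\Sph$ concentrated in arity $1$, viewed as a $(\unit, \partial_*\id)$-bimodule. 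So this functor is $\unit\circ_{\partial_*\id} - = \cot_{\partial_*\id}$, which under Koszul duality is $\indec$ followed by the forgetful functor to $\SSeqnu(\Sp)$.

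Now transport along the Koszul duality equivalence of \cref{prop: koszul-duality-left-modules}:
\[
\indec \colon \LMod_{\partial_*\id}(\SSeqnu(\Sp)) \xrightarrow{\sim} \LcoMod_{B\partial_*\id}(\SSeqnu(\Sp)) \simeq \LcoMod_{\sQ}.
\]
Any equivalence preserves finite products and finite coproducts, hence cofibers of $X\sqcup Y\to X\times Y$, so it carries the smash product to the smash product. The commutative triangle $\forget_{B\sO}\circ\indec_\sO \simeq \cot_\sO$ then identifies $\forget_{\sQ}$ with $\cot_{\partial_*\id}\circ\indec^{-1}$. This gives $\forget_\sQ$ a nonunital symmetric monoidal structure $(\wedge)\to(\otimes)$. \Cref{prop: characterization-cocom} then yields $\sQ\simeq\coCom$, and applying $C$ gives $\partial_*\id_{\Spc_*}\simeq C\coCom=\LL$ as operads.

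The main obstacle is the hypothesis hidden in \cref{ex: derivative-smash-product-sym-mon}: that the smash product on $\Spc_*$ really is a nonunital symmetric monoidal structure to which \cref{prop: differentiating-sym-mon-cats} applies. This is presumably where \cref{prop: smash-monoidal-structure} enters, using that products in $\Spc_*$ commute with weakly contractible colimits in each variable (cf.\ the universality example). A second point to check is that the identification of the induced structure on left modules as the smash product uses the exactness of \cref{prop: materialization-exactness} together with \cref{prop: derivatives-lmod-colim}. In particular, $\partial_*(X\sqcup Y\to X\times Y)$ must give the corresponding map of left modules. This follows from \cref{ex: derivative-cartesian-symmetric-monoidal-structure} and preservation of cofibers.
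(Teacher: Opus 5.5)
Your proposal is correct and matches the paper's proof essentially step for step. You differentiate the symmetric monoidal functor $\Sigma^\infty$ via $\maLMod$, identify the result with $\cot_{\partial_*\id_{\Spc_*}}$ using that $\partial_*\Sigma^\infty\simeq\unit$ carries the trivial right module structure, transport along the Koszul duality equivalence to make $\forget_{B\partial_*\id_{\Spc_*}}$ nonunital symmetric monoidal from $\wedge$ to $\otimes$, and conclude with \cref{prop: characterization-cocom}. The only difference is your preliminary detour through $\partial_*(\Sigma^\infty\Omega^\infty)$ via \cref{prop: Koszul-dual-derivative-id}, which is unnecessary: the paper applies \cref{prop: characterization-cocom} directly to $B\partial_*\id_{\Spc_*}$, uses that $B$ and $C$ are inverse equivalences, and only afterwards deduces $\partial_*(\Sigma^\infty\Omega^\infty)\simeq\coCom$ as \cref{cor: derivatives-identity-pointed-spaces}.
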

\begin{proof}
    The functor $\Sigma^{\infty} \colon \Spc_* \to \Sp$ is strong symmetric monoidal from $\wedge$ to $\otimes$.
    Recall from \cref{prop: differentiating-sym-mon-cats} that the Goodwillie transform
    \[
    \maLMod \colon \diff \to \diff\colon \quad \sC \mapsto \LMod_{\partial_*{\id_\sC}}(\SSeqnu(\Sp(\sC)))
    \]
    preserves symmetric monoidal $\infty$-categories and symmetric monoidal functors.
    By \cref{ex: derivative-tensor-product-sym-mon,ex: derivative-smash-product-sym-mon}, $\maLMod$ sends the tensor product on $\Sp$ to the Day convolution tensor product on $\SSeqnu(\Sp)$ and the smash product on $\Spc_*$ to the smash product on $\LMod_{\partial_*{\id_{\Spc_*}}}(\SSeqnu(\Sp))$.
    It follows that the functor $\maLMod(\Sigma^\infty)$, which is given by the bar construction
    \[
    B(\partial_*{\Sigma^\infty}, \partial_*{\id_{\Spc_*}}, -) \colon \LMod_{\partial_*{\id_{\Spc_*}}} \to\SSeqnu(\Sp),
    \]
    acquires a symmetric monoidal structure from the smash product to the Day convolution tensor product.
    Since $\partial_*{\Sigma^\infty} \simeq \unit$, it has to be a trivial right $\partial_*{\id_{\Spc_*}}$-module for degree reasons, so that we have an equivalence of functors
    \[
    B(\partial_*{\Sigma^\infty}, \partial_*{\id_{\Spc_*}}, -) \simeq \cot_{\partial_*{\id_{\Spc_*}}}.
    \]
    By precomposing this functor with the Koszul duality equivalence
    \[
    \prim_{B \partial_*{\id_{\Spc_*}}} \colon \LcoMod_{B \partial_*{\id_{\Spc_*}}} \xrightarrow{\sim} \LMod_{\partial_*\id_{\Spc_*}}
    \]
    of \cref{prop: koszul-duality-left-modules} and using that $\cot_{\partial_*{\id_{\Spc_*}}} \circ \prim_{B \partial_*{\id_{\Spc_*}}} \simeq \forget_{B \partial_*{\id_{\Spc_*}}}$,
    we find that this forgetful functor acquires a nonunital symmetric monoidal structure from $\wedge$ to $\otimes$.
    By \cref{prop: characterization-cocom}, we then have that 
    $B \partial_*{\id_{\Spc_*}} \simeq \coCom$, so that $\partial_*{\id_{\Spc_*}} \simeq \LL$.
\end{proof}

\begin{corollary} \label{cor: derivatives-identity-pointed-spaces}
    There is an equivalence $\partial_*(\Sigma^\infty \Omega^\infty) \simeq \coCom$ of cooperads.
\end{corollary}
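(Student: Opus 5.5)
This follows by combining \cref{prop: derivatives-id-Lie} with the Koszul duality relation of \cref{prop: Koszul-dual-derivative-id}. For $\sC = \Spc_*$ that proposition gives an equivalence of operads
\[
\partial_*{\id_{\Spc_*}} \simeq \Cobar \partial_*(\Sigma^\infty \Omega^\infty),
\]
where $\partial_*(\Sigma^\infty\Omega^\infty)$ carries the cooperad structure coming from the stable chain rule (\cref{thm: lax-derivatives}) applied to the comonad $\Sigma^\infty\Omega^\infty$.

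The argument has two steps. First, apply the bar construction $B$ to both sides and use that $(B, C)$ is an adjoint equivalence (\cref{prop:KDoperads}). This yields an equivalence of cooperads $B\partial_*{\id_{\Spc_*}} \simeq \partial_*(\Sigma^\infty\Omega^\infty)$. Second, the proof of \cref{prop: derivatives-id-Lie} identifies $B\partial_*{\id_{\Spc_*}} \simeq \coCom$ directly, via \cref{prop: characterization-cocom}. Alternatively, one can start from $\partial_*{\id_{\Spc_*}} \simeq \LL = C\coCom$ (\cref{def: spectral-Lie-operad}) and apply $B$ again. Composing the two equivalences gives $\partial_*(\Sigma^\infty\Omega^\infty) \simeq \coCom$ as cooperads.

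The only point needing care is that the equivalence in \cref{prop: Koszul-dual-derivative-id} really is one of operads, with the cooperad structure on $\partial_*(\Sigma^\infty\Omega^\infty)$ being the one induced by the lax monoidal functor $\partial_*$. The statement of that proposition provides exactly this, so no further work is needed; everything else is formal manipulation of the equivalence $B \dashv C$.
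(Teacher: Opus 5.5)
Your proposal is correct and matches the paper's proof, which simply combines \cref{prop: derivatives-id-Lie} with \cref{prop: Koszul-dual-derivative-id}. Your extra step of applying $B$ and using that $B \dashv C$ is an equivalence (\cref{prop:KDoperads}) just makes explicit what the paper leaves implicit.
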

\begin{proof}
    This follows by combining the previous proposition with \cref{prop: Koszul-dual-derivative-id}.
\end{proof}

\subsection{The free Lie algebra functor} \label{sec: free-Lie-algebra}

We will show in this section that the free spectral Lie algebra functor is symmetric monoidal with respect to the tensor product of spectra and the smash product of Lie algebras.
We will do so by first proving an analogous statement in the Morita category.

\begin{definition}
    Let $\sO$ and $\sP$ be operads in spectra and let $n \geq 1$. 
    We write $\MMor^{\mathrm{L}}((\Sp, \sO)^{\times n}, (\Sp, \sP))$ for the full subcategory of
    \[
    \MMor((\Sp, \sO)^{\times n}, (\Sp, \sP)) \simeq \BMod_{(\sP, \sO)}(\mSSeqnu{n})
    \]
    spanned by the $(\sP, \sO)$-bimodules for which the underlying left $\sP$-module is free on an $n$-fold symmetric sequence concentrated in multidegree $(1, \ldots, 1)$.
\end{definition}

\begin{remark}
Note that if $M$ is a $(\sP, \sO)$-bimodule in $\mSSeqnu{n}$, then it is a condition and not extra structure for the underlying left $\sP$-module to be free on an $n$-fold symmetric sequence concentrated in multidegree $(1, \ldots, 1)$.
Indeed, the map of $n$-fold symmetric sequences $M_{(1, \ldots, 1)} \to M$ given by the inclusion of the arity $(1, \ldots, 1)$ term induces a map of left $\sP$-modules
\[
\sP \circ M_{(1, \ldots, 1)} \to M,
\]
and this map is an equivalence if and only if $M$ is free on an $n$-fold symmetric sequence of multidegree $(1, \ldots, 1)$.
\end{remark}

\begin{example}
    Recall we have an equivalence of $\infty$-categories 
    \[
    \MMor((\Sp, \unit)^{\times n}, (\Sp, \unit)) \simeq \mSSeqnu{n}.
    \]
    An $n$-fold symmetric sequence lies in the full subcategory $\MMor^{\mathrm{L}}((\Sp, \unit)^{\times n}, (\Sp, \unit))$
    if and only if it is concentrated in multidegree $(1, \ldots, 1)$.
\end{example}

\begin{lemma}\label{lem: postcomposition-cot-on-Mor}
    Let $\sO$ be an operad in spectra and let $n \geq 1$.
    Then postcomposition with the trivial right $\sO$-module $\unit$ induces an equivalence
    \[
    \begin{tikzcd}[sep = large]
    \MMor^{\mathrm{L}}((\Sp, \unit)^{\times n}, (\Sp, \sO)) \ar[r, "\unit \circ_\sO -", "\sim"'] & \MMor^{\mathrm{L}}((\Sp, \unit)^{\times n}, (\Sp, \unit)).
    \end{tikzcd}
    \]
\end{lemma}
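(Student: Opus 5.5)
We first pin down the objects on each side. Since the source is $(\Sp, \unit)^{\times n}$, a morphism $(\Sp,\unit)^{\times n} \to (\Sp,\sO)$ in $\MMor$ is a $(\sO,\unit^{\times n})$-bimodule in $\mSSeqnu{n}$, and a right module over the trivial operad $\unit^{\times n}$ carries no extra data. So $\MMor((\Sp,\unit)^{\times n},(\Sp,\sO)) \simeq \LMod_\sO(\mSSeqnu{n})$. The subcategory $\MMor^{\mathrm{L}}$ is then the full subcategory of left $\sO$-modules of the form $\sO \circ A$, with $A$ concentrated in multidegree $(1,\ldots,1)$. Likewise the target is the full subcategory of $\mSSeqnu{n}$ on sequences concentrated in multidegree $(1,\ldots,1)$, which is equivalent to $\Fun(B(\Sigma_1)^{n},\Sp) \simeq \Sp$. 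The functor $\unit \circ_\sO -$ is $\cot_\sO$, computed in the $n$-fold setting.

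The plan is to show that $\cot_\sO$ and $\free_\sO$ restrict to inverse equivalences between these subcategories.

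\textbf{Step 1: essential surjectivity.} We have $\cot_\sO(\sO \circ A) \simeq \unit \circ_\sO \sO \circ A \simeq A$, because the bar construction $B(\unit,\sO,\sO)$ has an extra degeneracy. So $\cot_\sO$ lands in the correct subcategory, and $A \mapsto \sO \circ A$ gives a section up to equivalence. The target condition is also preserved by $\free_\sO$: by \cref{prop: multi-composition-product}, or directly, $\sO \circ A$ is free on a multidegree $(1,\ldots,1)$ sequence.

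\textbf{Step 2: full faithfulness.} For $A,A'$ concentrated in multidegree $(1,\ldots,1)$ we compute
\[
\Map_{\LMod_\sO}(\sO\circ A, \sO\circ A') \simeq \Map_{\mSSeqnu{n}}(A, \sO \circ A').
\]
Since $A$ is concentrated in multidegree $(1,\ldots,1)$, this reduces to $\Map(A_{(1,\ldots,1)}, (\sO\circ A')_{(1,\ldots,1)})$. The component of $\sO\circ A'$ in multidegree $(1,\ldots,1)$ is $\sO(1)\otimes A'_{(1,\ldots,1)} \simeq A'_{(1,\ldots,1)}$. This holds because $\sO$ is reduced and $A'$ vanishes in total degree $<n$ and away from $(1,\ldots,1)$: any term $\sO(k)\otimes A'^{\otimes k}$ with $k\ge 2$ has multidegree with some entry $\ge 2$, or total degree $kn>n$. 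Hence the mapping space is $\Map(A,A')$.

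It then remains to check that this identification is the map induced by $\cot_\sO$. Concretely, the composite
\[
\Map_{\mSSeqnu{n}}(A,A') \to \Map(\sO\circ A,\sO\circ A') \to \Map(A,A')
\]
(free, then $\cot_\sO$) is equivalent to the identity, since $\cot_\sO\circ\free_\sO\simeq\id$ as functors. The first map is an equivalence by the computation above, so $\cot_\sO$ is fully faithful on $\MMor^{\mathrm{L}}$.

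The main obstacle is the multidegree bookkeeping in Step 2, namely verifying that the $(1,\ldots,1)$ component of $\sO\circ A'$ is exactly $A'$. We handle this with the $\Sym$ description as in the proof of \cref{prop: multi-composition-product}. The multilinear part of $\Sym_{\sO}(A'_{(1,\ldots,1)}\otimes X_1\otimes\cdots\otimes X_n)$ comes only from the weight-one term, since higher weights contribute polynomial degree $\ge 2$ in each variable.
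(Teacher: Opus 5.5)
Your proof is correct and follows essentially the same route as the paper: essential surjectivity comes from $\unit\circ_\sO(\sO\circ A)\simeq A$, and full faithfulness comes from the free--forgetful adjunction together with the observation that the multidegree $(1,\ldots,1)$ part of $\sO\circ A'$ is just $A'$, since $\sO$ is reduced. Your extra step, using $\cot_\sO\circ\free_\sO\simeq\id$ and two-out-of-three to show the mapping-space equivalence is actually induced by $\unit\circ_\sO -$, makes explicit what the paper only calls clear.
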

\begin{proof}
    The functor $\unit \circ_\sO -$ is clearly essentially surjective.
    To see that it is fully faithful, suppose that we are given
    \[
    M, N \in \MMor^{\mathrm{L}}((\Sp, \unit)^{\times n}, (\Sp, \sO)).
    \]
    Then $M \simeq \sO \circ M_{(1, \ldots, 1)}$ and $N \simeq \sO \circ N_{(1, \ldots, 1)}$ by assumption.
    We have
    \[
    \Map_{\LMod_\sO}(M, N) \simeq \Map_{\SSeq}(M_{(1, \ldots, 1)}, \sO \circ N_{(1, \ldots 1)}) \simeq \Map_\Sp(M_{(1, \ldots, 1)}, N_{(1, \ldots, 1)}).
    \]
    Here, we wrote $\Map_{\LMod_\sO}$ for the mapping space in $\MMor^{\mathrm{L}}((\Sp, \unit)^{\times n}, (\Sp, \sO))$ and $\Map_{\SSeq}$ for the mapping space in $\SSeqnu(\Sp^{\times n}, \Sp)$.
    The first equivalence follows by adjunction, and the second equivalence follows since a morphism of symmetric sequences from $M_{(1, \ldots, 1)}$ to $\sO \circ N_{(1, \ldots, 1)}$ has to factor through the arity $(1, \ldots, 1)$-term of the target, which is given by $N_{(1, \ldots, 1)}$.
    So we find that the space of maps from $M$ to $N$ is equivalent to the space of maps from $\unit \circ_\sO M$ to $\unit \circ_\sO N$, and this equivalence is clearly given by the functor $\unit \circ_\sO -$.
\end{proof}

\begin{definition}
We write $\CMonnu(\MMor)$ for the $(\infty, 2)$-precategory of nonunital commutative monoids in $\MMor$.
In other words, this is the full subcategory of $\Fun(\Surj_*, \MMor)$ spanned by the functors that satisfy the Segal condition.
\end{definition}

\begin{remark}
Every $M \in \CMonnu(\MMor)$ has an underlying object $(\sC, \sO)$ given by evaluating the corresponding functor $M \colon \Surj_* \to \MMor$ at the pointed set $\langle 1 \rangle$ with one non-basepoint element.
The unique surjection $\langle n \rangle \to \langle 1\rangle$ for which the preimage of the basepoint is the basepoint induces a multiplication map
\[
\otimes_\sO^n \colon (\sC, \sO)^{\times n} \to (\sC, \sO)
\]
in $\MMor$.
These multiplication maps are suitably associative and commutative.
We will often write $(\sC, \sO, \otimes_\sO)$ instead of $M$, or just $(\sO, \otimes_\sO)$ in case $\sC = \Sp$.
\end{remark}

Given  $(\sO, \otimes_\sO), (\sP, \otimes_\sP) \in \CMonnu(\MMor)$ we write
\[
\CMonnu(\MMor)^{\mathrm{L}}((\sO, \otimes_\sO), (\sP, \otimes_\sP))
\]
for the category of maps of nonunital commutative monoids such that the underlying morphism lies in $\MMor^{\mathrm{L}}((\Sp, \sO), (\Sp, \sP))$.

\begin{lemma}\label{lem: postcomposition-cot-on-Mor-monoidal}
    Suppose that we are given $(\sO, \otimes_\sO), (\unit, \otimes_\unit) \in \CMonnu(\MMor)$ such that the following conditions are satisfied:
    \begin{itemize}
        \item[\upshape{(}1\upshape{)}] For each $n \geq 1$, the multiplication maps $\otimes_\sO^n$ and $\otimes_\unit^n$ lie in the categories
        \[
        \MMor^{\mathrm{L}}((\Sp, \sO)^{\times n}, (\Sp, \sO)) \qquad \text{and} \qquad \MMor^{\mathrm{L}}((\Sp, \unit)^{\times n}, (\Sp, \unit)),
        \]
        respectively.
        \item[\upshape{(2)}] The trivial right $\sO$-module $\unit$, considered as a morphism 
        \[
        \unit \colon (\Sp, \sO) \to (\Sp, \unit)
        \] 
        in $\MMor$, admits the structure of a map of nonunital commutative monoids $(\sO, \otimes_\sO) \to (\unit, \otimes_\unit)$.
    \end{itemize}
    Then postcomposition with the morphism from \upshape{(2)} induces an equivalence
    \[
    \begin{tikzcd}[sep = large]
        \CMonnu(\MMor)^{\mathrm{L}}((\unit, \otimes_\unit), (\sO, \otimes_\sO)) \ar[r, "\unit \circ_\sO -", "\sim"'] & \CMonnu(\MMor)^{\mathrm{L}}((\unit, \otimes_\unit), (\unit, \otimes_\unit)).
    \end{tikzcd}
    \]
\end{lemma}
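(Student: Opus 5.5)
The strategy is to upgrade \cref{lem: postcomposition-cot-on-Mor} from single morphisms to morphisms of nonunital commutative monoids. A map of nonunital commutative monoids $(\unit, \otimes_\unit) \to (\sO, \otimes_\sO)$ is a (strong) natural transformation between the functors $\Surj_* \to \MMor$. Equivalently, it is a compatible system of 1-morphisms $M_{\langle n\rangle} \colon (\Sp,\unit)^{\times n} \to (\Sp, \sO)^{\times n}$ together with invertible 2-cells filling the squares for surjections $\langle n\rangle \to \langle m\rangle$. By the Segal condition, everything is determined by the underlying morphism $M$ and the invertible 2-cells
\[
\otimes^n_\sO \circ_{\sO^{\times n}} (M, \ldots, M) \simeq M \circ \otimes^n_\unit,
\]
together with all the higher coherences. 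So I will express both sides of the claimed equivalence as limits (ends) of diagrams of mapping categories in $\MMor$. Concretely, the category of such transformations is a limit over the twisted arrow category of $\Surj_*$ of categories of the form $\MMor((\Sp,\unit)^{\times n}, (\Sp, \sO)^{\times m})$, with restriction to the \emph{L}-subcategories. Postcomposition with $\unit$, which is a monoid map by hypothesis (2), induces a map between these limit diagrams. It then suffices to show that this map is a levelwise equivalence on the relevant full subcategories.

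The first step is to check that all terms appearing in the diagram lie in the \emph{L}-subcategories. I need $M$ to be free on multidegree $(1)$ and the multiplications to be free on $(1,\ldots,1)$, which hypothesis (1) provides. The composites $\otimes^n_\sO \circ_{\sO^{\times n}}(M,\ldots,M)$ and $M\circ \otimes^n_\unit$ are then again free on an $n$-fold sequence concentrated in multidegree $(1,\ldots,1)$. This follows from \cref{prop: multi-composition-product}: composing free left modules $\sO\circ A$ with $\sO\circ B$ over $\sO$ gives $\sO \circ (A\circ B)$.

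The second step is to show that on each mapping category $\MMor^{\mathrm L}((\Sp,\unit)^{\times n}, (\Sp,\sO)^{\times m})$, postcomposition with $\unit^{\times m}$ is an equivalence. Since maps into a product in $\MMor$ split as products of mapping categories, this reduces to \cref{lem: postcomposition-cot-on-Mor} for each factor. Moreover, the transition functors in the diagram (pre- and postcomposition with the multiplication maps) commute with $\unit\circ_\sO -$ up to the coherent equivalences supplied by hypothesis (2). Taking limits then yields the equivalence.

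I expect the main obstacle to be the bookkeeping in setting up the comparison as a natural map of diagrams of $\infty$-categories. Functors in $\CMonnu(\MMor)$ live in an $(\infty,2)$-precategory, so I must make sure the mapping category of strong transformations is genuinely computed by such an end, with invertible 2-cells imposed as a full-subcategory condition. I also must check that postcomposition by a monoid map acts compatibly on it. If this is too unwieldy, my fallback is an inductive Segal argument. I would show that both fully faithfulness and essential surjectivity can be checked on the underlying 1-morphism plus the structure 2-cells, using that every 2-cell space between \emph{L}-objects maps equivalently under $\unit\circ_\sO -$ by \cref{lem: postcomposition-cot-on-Mor}.
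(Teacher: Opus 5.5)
Your proposal follows essentially the same route as the paper. The paper expresses both sides as limits over $\Tw(\Surj_*)$ of mapping categories in $\MMor$ via the end formula of Gepner--Haugseng--Nikolaus, and uses \cref{prop: multi-composition-product} to check that composing with the multiplication maps preserves the $\mathrm{L}$-condition, so that the $\mathrm{L}$-part is the limit of a subfunctor. It then concludes from the levelwise equivalence given by \cref{lem: postcomposition-cot-on-Mor}.

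One bookkeeping point needs care in your first step, and the paper's write-up is loose here too. At an object $\gamma\colon\langle k\rangle\to\langle n\rangle$ of $\Tw(\Surj_*)$, the $j$-th factor of the component of a monoid map is free on a sequence concentrated in the multidegree that is $1$ on the variables in $\gamma^{-1}(j)$ and $0$ elsewhere, not $(1,\ldots,1)$. So the $\mathrm{L}$-subfunctor has to depend on $\gamma$ in this way, after which the argument of \cref{lem: postcomposition-cot-on-Mor} applies verbatim factorwise.
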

\begin{proof}
    By \cite[Lemma 6.4]{GepnerHaugsengNikolaus}, the $\infty$-category of morphisms $\CMonnu(\MMor)((\unit, \otimes_\unit), (\sO, \otimes_\sO))$ is equivalent to the limit of the functor
    \[
    \begin{tikzcd}[sep = 3em]
       K \colon \Tw(\Surj_*) \ar[r] & \Surj_*^{\op} \times \Surj_* \ar[rr, "{(\unit, \otimes_{\unit}) \times (\sO, \otimes_{\sO})}"] & & (\MMor)^{\op} \times \MMor \ar[r, "\MMor{(-, -)}"] & \Cat_\infty,
    \end{tikzcd}
    \]
    where $\Tw(\Surj_*)$ denotes the twisted arrow category of $\Surj_*$, and the first map is the projection onto source and target.
    In other words, this category of morphisms can be written as a limit over $\infty$-categories of the form
    \begin{equation} \label{eq: morita-end}
    \MMor((\Sp, \unit)^{\times m}, (\Sp, \sO)^{\times n})
    \end{equation}
    for $m, n \geq 1$.
    We claim that the full subcategory $\CMonnu(\MMor)^{\mathrm{L}}((\unit, \otimes_\unit), (\sO, \otimes_\sO))$ can be represented as the limit of a subfunctor of $K$, which is pointwise given by the full subcategory 
    \[
    \MMor^{\mathrm{L}}((\Sp, \unit)^{\times m}, (\Sp, \sO))^{\times n}
    \]
    of \cref{eq: morita-end}.
    We first verify that this defines a subfunctor.
    To do so, it suffices to show that for any surjection $\alpha \colon \langle n \rangle \to \langle \ell \rangle$, the induced functor
    \[
    \alpha_* \colon \MMor((\Sp, \unit)^{\times m}, (\Sp, \sO)^{\times n}) \to \MMor((\Sp, \unit)^{\times m}, (\Sp, \sO)^{\times \ell})
    \]
    maps the full subcategory $\MMor^{\mathrm{L}}((\Sp, \unit)^{\times m}, (\Sp, \sO))^{\times n}$ into $\MMor^{\mathrm{L}}((\Sp, \unit)^{\times m}, (\Sp, \sO))^{\times \ell}$,
    and also that the analogous statement holds for precomposition by any surjection $\beta \colon \langle k \rangle \to \langle m \rangle$.
    We only prove the first of these statements, since the second one follows by a similar argument.
    It is moreover enough to verify the first statement for $\ell = 1$. 
    We will now proceed to do so.
    
    Suppose that $M \in \MMor^{\mathrm{L}}((\Sp, \unit)^{\times m}, (\Sp, \sO))^{\times n}$.
    We have $M = (M_1, \ldots, M_n)$, and $M_i \simeq \sO \circ X_i$ as left $\sO$-modules with $X_i$ concentrated in multidegree $(1, \ldots, 1)$, where $1$ occurs $m$ times.
    By assumption, we also have $\otimes_\sO^{\times n} = \sO \circ Y$ as left $\sO$-modules with $Y$ concentrated in multidegree $(1, \ldots, 1)$, where $1$ occurs $n$ times.
    We need to show that the underlying left $\sO$-module of $\otimes_\sO^n \circ_{\sO^{\times n}} M$ is free on a symmetric sequence concentrated in multidegree $(1, \ldots, 1)$, where $1$ occurs $mn$ times.
    But as left $\sO$-modules, we have equivalences
    \[
    \otimes_\sO^n \circ_{\sO^{\times n}} M \simeq \otimes_\sO^n \circ_{\sO^{\times n}} \sO^{\times n} \circ (X_1, \ldots, X_n) \simeq \sO \circ Y \circ (X_1, \ldots, X_n),
    \]
    and $Y \circ (X_1, \ldots, X_n)$ is concentrated in multidegree $(1, \ldots, 1)$ by \cref{prop: multi-composition-product}.
    This shows that we indeed get a subfunctor.

    It now follows easily that $\CMonnu(\MMor)^{\mathrm{L}}((\unit, \otimes_\unit), (\sO, \otimes_\sO))$ is the limit of this subfunctor:
    if $M$ is an element of this limit, then $M$ in particular is an element of $\lim K$, and hence defines a map of nonunital commutative monoids.
    Moreover, its underlying morphism lies in $\MMor^{\mathrm{L}}((\Sp, \unit), (\Sp, \sO))$ by definition of the subfunctor.
    On the other hand, if $M \in \CMonnu(\MMor)^{\mathrm{L}}((\unit, \otimes_\unit), (\sO, \otimes_\sO))$, then we can represent $M$ as an element of $\lim K$ such that the component in $K(\id_{\langle 1 \rangle})$, which corresponds to the underlying morphism, lies in $\MMor^{\mathrm{L}}((\Sp, \unit), (\Sp, \sO))$, i.e. in the subfunctor.
    But then it follows that all components of $M$ in $\lim K$ take values in the subfunctor, since these components are obtained by composing the underlying morphism of $M$ with $\otimes_\sO^m$ and $\otimes_\unit^n$ for $m, n \geq 1$.
    Hence $M$ lies in the limit of the subfunctor.

    The category $\CMonnu(\MMor)^{\mathrm{L}}((\unit, \otimes_\unit), (\unit, \otimes_\unit))$ can be written as a limit over $\Tw(\Surj_*)$ in the same way.
    To complete the proof, it therefore suffices to show that postcomposition with the trivial right $\sO$-module $\unit$ induces an equivalence
    \[
    \begin{tikzcd}[sep=large]
    \MMor^{\mathrm{L}}((\Sp, \unit)^{\times m}, (\Sp, \sO))^{\times n} \ar[r, "(\unit \circ_\sO -)^{\times n}"] & 
    \MMor^{\mathrm{L}}((\Sp, \unit)^{\times m}, (\Sp, \unit))^{\times n}.
    \end{tikzcd}
    \]
    This follows from \cref{lem: postcomposition-cot-on-Mor}.
\end{proof}

We will now use this result to prove that the free Lie algebra functor is symmetric monoidal.

\begin{proposition} \label{prop: free-lie-sym-mon}
    The functor $\free_\LL \colon \Sp \to \Lie(\Sp)$ admits a nonunital symmetric monoidal structure with respect to the tensor product on $\Sp$ and the smash product on $\Lie(\Sp)$.
\end{proposition}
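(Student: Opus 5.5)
The plan is to realize $\free_\LL$ as the image under $\reAlg$ of a morphism of nonunital commutative monoids in $\MMor$, and to obtain that morphism from \cref{lem: postcomposition-cot-on-Mor-monoidal}. First, the smash product on $\Spc_*$ is a nonunital symmetric monoidal structure preserving filtered colimits with $\ast\wedge\ast\simeq\ast$. Applying the strong functor $\partial_*\colon\diff\to\MMor$, which preserves finite products by \cref{prop: partial-mor-preserves-products}, turns it into a nonunital commutative monoid $(\LL,\otimes_\LL)$ in $\MMor$, using \cref{prop: derivatives-id-Lie}. By \cref{ex: derivative-smash-product-sym-mon} together with \cref{cor: intro-smash-associative}, applying $\reAlg$ to this monoid gives the smash product on $\Lie(\Sp)$. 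Similarly, the tensor product on $\Sp$ yields $(\unit,\otimes_\unit)$, whose $n$-fold multiplication is the multisymmetric sequence concentrated in multidegree $(1,\ldots,1)$ with value $\Sph$ (\cref{ex: derivative-tensor-product-sym-mon}).

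Next I verify the hypotheses of \cref{lem: postcomposition-cot-on-Mor-monoidal}. For condition (1), the iterated smash product $\wedge^n\colon\Spc_*^{\times n}\to\Spc_*$ preserves colimits in each variable separately. By \cref{rem: derivative-colim-pres-multivar-functor}, $\partial_*(\wedge^n)$ is therefore the free left $\LL$-module on $\partial_{(1,\ldots,1)}(\wedge^n)$, which is the content of the computation in \cref{lem: derivatives-smash-product}. For $\otimes_\unit^n$ the condition is immediate. For condition (2), $\Sigma^\infty\colon\Spc_*\to\Sp$ is strong symmetric monoidal from $\wedge$ to $\otimes$, so $\partial_*\Sigma^\infty$ becomes a morphism of commutative monoids in $\MMor$. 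Its underlying bimodule is $\partial_*\Sigma^\infty\simeq\unit$, which is the trivial right $\LL$-module for degree reasons, exactly as in the proof of \cref{prop: derivatives-id-Lie}.

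The lemma then gives an equivalence under which the identity morphism $(\unit,\otimes_\unit)\to(\unit,\otimes_\unit)$ has a preimage. This preimage is a morphism of nonunital commutative monoids $(\unit,\otimes_\unit)\to(\LL,\otimes_\LL)$ whose underlying bimodule $M$ is a free left $\LL$-module on a sequence concentrated in arity $1$ and satisfies $\unit\circ_\LL M\simeq\unit$. By \cref{lem: postcomposition-cot-on-Mor} this forces $M\simeq\LL$ as an $(\LL,\unit)$-bimodule. Applying $\reAlg$, which is a functor of $(\infty,2)$-precategories preserving finite products and hence Segal objects over $\Surj_*$ and maps between them (as in \cref{prop: differentiating-sym-mon-cats}), produces a nonunital symmetric monoidal functor $(\Sp,\otimes)\to(\Lie(\Sp),\wedge)$. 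Its underlying functor is $\LL\circ_\unit-=\free_\LL$.

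The main obstacle I expect is bookkeeping: checking that $\reAlg$ carries $(\unit,\otimes_\unit)$ to $(\Sp,\otimes)$ and $(\LL,\otimes_\LL)$ to the smash product. In particular I need to know that a \emph{strong} map of monoids in $\MMor$ goes to a strong symmetric monoidal functor. This should follow because the Segal-condition maps are equivalences and $\reAlg$ preserves products, but I would verify that the relevant comparison 2-cells are invertible rather than merely lax.
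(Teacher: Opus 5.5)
Your proposal is correct and follows the paper's proof essentially step for step. You differentiate $\Sigma^\infty$ to get a map of nonunital commutative monoids $(\LL,\partial_*(-\wedge-))\to(\unit,\partial_*(-\otimes-))$, verify the hypotheses of \cref{lem: postcomposition-cot-on-Mor-monoidal}, lift the identity, identify the underlying bimodule as $\LL$ via \cref{lem: postcomposition-cot-on-Mor}, and apply $\reAlg$. Your strong-versus-lax concern is treated the same way in the paper: morphisms in $\CMonnu(\MMor)$ are genuine natural transformations of Segal objects, and $\reAlg$ sends them to strong nonunital symmetric monoidal functors.
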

\begin{proof}
    The functor $\Sigma^\infty \colon \Spc_* \to \Sp$ is nonunital symmetric monoidal with respect to the smash product on $\Spc_*$ and the tensor product on $\Sp$.
    Taking derivatives of this functor yields a morphism
    \[
    \partial_*(\Sigma^\infty) \colon (\LL, \partial_*(-\wedge-)) \to (\unit, \partial_*(- \otimes -))
    \]
    of nonunital commutative monoids in $\MMor$ by \cref{prop: partial-mor-preserves-products}.
    We claim that this morphism satisfies the conditions of \cref{lem: postcomposition-cot-on-Mor-monoidal}.
    Indeed, by \cref{rem: derivative-colim-pres-multivar-functor} it follows from the fact that the smash product commutes with colimits in each variable separately that $\partial_*(\wedge^n)$ lies in $\MMor^{\mathrm{L}}((\Sp, \LL)^{\times n}, (\Sp, \LL))$.
    Here $\wedge^n \colon \Spc_*^{\times n} \to \Spc_*$ denotes the $n$-fold smash product functor.
    It is also clear that $\partial_*(\otimes^n)$ lies in $\MMor^{\mathrm{L}}((\Sp, \unit)^{\times n}, (\Sp, \unit))$,
    as $\otimes^n \colon \Sp^{\times n} \to \Sp$ is multilinear.
    Since $\partial_*(\Sigma^\infty) \simeq \unit$, we see that all conditions are met.

    As a consequence of this lemma, there is a unique morphism
    \[
    F \in \CMonnu(\MMor)^{\mathrm{L}}((\unit, \partial_*(- \otimes -)), (\LL, \partial_*(- \wedge -)))
    \]
    such that $\unit \circ_\LL F \simeq \unit$, the identity of $(\unit, \partial_*(- \otimes -))$.
    It follows from \cref{lem: postcomposition-cot-on-Mor} that the underlying morphism of $F$ must be the free left $\LL$-module $\LL$.
    We now apply $\reAlg$ to $F$.
    Since the underlying module of $F$ is $\LL$, the result is a nonunital symmetric monoidal structure on the functor
    \[
    \free_\LL \colon \Sp \to \Lie(\Sp).
    \]
    It follows from \cref{ex: derivative-tensor-product-sym-mon,ex: derivative-smash-product-sym-mon} that the induced nonunital symmetric monoidal structure on $\Sp$ is the tensor product and on $\Lie(\Sp)$ is the smash product.
    This completes the proof.
\end{proof}

We will also need the following variation of \cref{lem: postcomposition-cot-on-Mor-monoidal} in the next section.
It is proved in exactly the same way.

\begin{lemma} \label{lem: precomposition-free-on-Mor-mon}
    Suppose that we are given $(\sO, \otimes_\sO), (\unit, \otimes_\unit) \in \CMonnu(\MMor)$ such that the following conditions are satisfied:
    \begin{itemize}
        \item[\upshape{(}1\upshape{)}] For each $n \geq 1$, the multiplication maps $\otimes_\sO^n$ and $\otimes_\unit^n$ lie in the categories
        \[
        \MMor^{\mathrm{L}}((\Sp, \sO)^{\times n}, (\Sp, \sO)) \qquad \text{and} \qquad \MMor^{\mathrm{L}}((\Sp, \unit)^{\times n}, (\Sp, \unit)),
        \]
        respectively.
        \item[\upshape{(2)}] The free left $\sO$-module $\sO$, considered as a morphism 
        \[
        \sO \colon (\Sp, \unit) \to (\Sp, \sO)
        \] 
        in $\MMor$, admits the structure of a map of nonunital commutative monoids $(\unit, \otimes_\unit) \to (\sO, \otimes_\sO)$.
    \end{itemize}
    Then precomposition with the morphism from \upshape{(2)} induces an equivalence
    \[
    \begin{tikzcd}[sep = large]
        \CMonnu(\MMor)^{\mathrm{L}}((\sO, \otimes_\sO), (\unit, \otimes_\unit)) \ar[r, "- \circ_\sO \sO", "\sim"'] & \CMonnu(\MMor)^{\mathrm{L}}((\unit, \otimes_\unit), (\unit, \otimes_\unit)).
    \end{tikzcd}
    \]
\end{lemma}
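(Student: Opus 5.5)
We mirror the proof of \cref{lem: postcomposition-cot-on-Mor-monoidal}, with the roles of source and target interchanged. By \cite[Lemma 6.4]{GepnerHaugsengNikolaus}, the mapping category $\CMonnu(\MMor)((\sO, \otimes_\sO), (\unit, \otimes_\unit))$ is a limit over $\Tw(\Surj_*)$ of categories of the form
\[
\MMor((\Sp, \sO)^{\times m}, (\Sp, \unit)^{\times n}) \simeq \MMor((\Sp, \sO)^{\times m}, (\Sp, \unit))^{\times n}.
\]
Precomposition with the commutative monoid map $\sO \colon (\unit, \otimes_\unit) \to (\sO, \otimes_\sO)$ from (2) induces a natural transformation to the analogous diagram computing $\CMonnu(\MMor)((\unit, \otimes_\unit), (\unit, \otimes_\unit))$. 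Pointwise, this transformation is given by $(- \circ_{\sO^{\times m}} \sO^{\times m})^{\times n}$.

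First I would define the subfunctor given pointwise by $\MMor^{\mathrm{L}}((\Sp, \sO)^{\times m}, (\Sp, \unit))^{\times n}$. Here I interpret $\MMor^{\mathrm{L}}$ for left $\unit$-modules as bimodules whose underlying symmetric sequence is concentrated in multidegree $(1,\ldots,1)$.

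Next I would check that this really is a subfunctor, that is, that it is stable under composition with the multiplication maps $\otimes_\unit^\ell$ and $\otimes_\sO^k$. For postcomposition with $\otimes_\unit^\ell$, I would use \cref{prop: multi-composition-product} directly. For precomposition with $\otimes_\sO^k \simeq \sO \circ Y$, I would write
\[
M \circ_{\sO^{\times m}} (\sO \circ Y_1, \ldots) \simeq (M \circ_{\sO^{\times m}} \sO^{\times m}) \circ (Y_1, \ldots) \simeq M \circ (Y_1,\ldots),
\]
which is again concentrated in multidegree $(1, \ldots, 1)$ by \cref{prop: multi-composition-product}. As in the previous lemma, one then checks that the limit of this subfunctor is exactly $\CMonnu(\MMor)^{\mathrm{L}}$: every component is obtained from the underlying morphism by composing with multiplication maps.

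It then remains to prove the pointwise statement, the analogue of \cref{lem: postcomposition-cot-on-Mor}. The claim is that
\[
- \circ_{\sO^{\times m}} \sO^{\times m} \colon \MMor^{\mathrm{L}}((\Sp, \sO)^{\times m}, (\Sp, \unit)) \to \MMor^{\mathrm{L}}((\Sp, \unit)^{\times m}, (\Sp, \unit))
\]
is an equivalence. On underlying objects this functor simply forgets the right $\sO^{\times m}$-module structure. So the key point is that a symmetric sequence concentrated in multidegree $(1,\ldots,1)$ admits an essentially unique right $\sO^{\times m}$-module structure, and that all maps between such sequences are automatically module maps.

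I expect this to be the main obstacle. My first approach would be a degree argument: the action map $M \circ \sO^{\times m} \to M$ can only be nonzero on the component where all $\sO$-factors have arity one. Since $\sO$ is reduced, this component is $M$ itself. Consequently the module structure is forced to be the trivial one, restricted along the augmentation $\sO \to \unit$. A more homotopy-coherent way to say this is that restriction along the augmentation gives an inverse, because $M \simeq M \circ_{\unit} \unit$ and $\unit \circ_{\sO} \sO \simeq \unit$. Making this coherent is the step I would verify most carefully.
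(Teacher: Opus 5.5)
Your overall route is the paper's. The paper proves this lemma ``in exactly the same way'' as \cref{lem: postcomposition-cot-on-Mor-monoidal}: the end over $\Tw(\Surj_*)$, the subfunctor given pointwise by $\MMor^{\mathrm{L}}((\Sp, \sO)^{\times m}, (\Sp, \unit))^{\times n}$, and a pointwise equivalence. Your two stability checks are correct, namely via \cref{prop: multi-composition-product} and via $M \circ_{\sO^{\times m}} \sO^{\times m} \circ (Y_1, \ldots, Y_m) \simeq M \circ (Y_1, \ldots, Y_m)$.

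What is not yet proved is the pointwise equivalence, and neither of your two arguments closes it. The degree argument only looks at the action map $M \circ \sO^{\times m} \to M$, which says nothing about the higher coherences of a right module structure. The observation $M \circ_\unit \unit \circ_\sO \sO \simeq M$ only shows $\forget \circ \triv \simeq \id$, i.e.\ essential surjectivity. It does not show that $\forget$ is fully faithful on right $\sO^{\times m}$-modules concentrated in multidegree $(1, \ldots, 1)$, nor that every such module is trivial.

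The clean fix is the exact dual of the free-module adjunction used in \cref{lem: postcomposition-cot-on-Mor}. Since $- \circ \sO^{\times m}$ preserves colimits, the forgetful functor $U \colon \RMod_{\sO^{\times m}}(\mSSeqnu{m}) \to \mSSeqnu{m}$ has a right adjoint $R$ (cofree right modules), and $UR$ is right adjoint to $- \circ \sO^{\times m}$.

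Your degree computation, made precise, says that the unit $A \to A \circ \sO^{\times m}$ is an equivalence in multidegree $(1, \ldots, 1)$, because $\sO_1 = \Sph$. Under the adjunction, postcomposition with the counit $URN \to N$ corresponds to restriction along this map. Maps into an $N$ concentrated in that multidegree only see that multidegree, so the counit is an equivalence for such $N$ by Yoneda.

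The triangle identity $UN \to URUN \to UN$ and conservativity of $U$ then give $N \simeq RUN$ for every right module $N$ with $UN$ concentrated in multidegree $(1, \ldots, 1)$. Hence $\Map_{\RMod_{\sO^{\times m}}}(M, N) \simeq \Map(UM, UN)$, and $R$ provides the inverse equivalence. On these objects $R$ agrees with your trivial-module functor.
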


\subsection{Proof of the characterization} \label{sec: proof-characterization}

We now prove our characterization of the spectral Lie operad:

\begin{theorem}[\cref{introthm: characterization}] \label{thm: characterization}
Let $\sO$ be a nonunital and reduced operad in spectra. Then the following are equivalent:
\begin{enumerate}[\upshape{(}\arabic*\upshape{)}]
    \item The operad $\sO$ is equivalent to the spectral Lie operad $\LL$.
    \item The cooperad $B\sO$ is equivalent to the nonunital cocommutative cooperad $\mathbf{Com}^{\vee}$.
    \item
    The free $\sO$-algebra functor 
    \[
    \free_{\sO} \colon \Sp \to \Alg_{\sO}(\Sp)
    \] 
    admits a nonunital symmetric monoidal structure with respect to the tensor product on $\Sp$ and the smash product on $\Alg_{\sO}(\Sp)$.
    \item The functor
    \[
        \cot_{\sO} \colon \Alg_{\sO}(\Sp) \to \Sp
    \]
    admits a nonunital symmetric monoidal structure with respect to the smash product on $\Alg_{\sO}(\Sp)$ and the tensor product on $\Sp$.
    \item The functor
    \[
    \forget \colon \coAlg^{\mathrm{dp, nil}}_{B\sO}(\Sp) \to \Sp
    \]
    admits a nonunital symmetric monoidal structure with respect to the smash product on $\coAlg^{\mathrm{dp, nil}}_{B\sO}(\Sp)$ and the tensor product on $\Sp$.
\end{enumerate}
\end{theorem}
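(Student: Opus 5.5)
We prove the cycle of implications (1)$\Leftrightarrow$(2), (1)$\Rightarrow$(3)$\Rightarrow$(4)$\Rightarrow$(5)$\Rightarrow$(2).

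\emph{(1)$\Leftrightarrow$(2).} This is immediate from \cref{def: spectral-Lie-operad} together with the Koszul duality equivalence $B \dashv C$ of \cref{prop:KDoperads}.

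\emph{(1)$\Rightarrow$(3).} This is \cref{prop: free-lie-sym-mon}.

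\emph{(3)$\Rightarrow$(4).} We pass to the Morita category. By \cref{prop: derivatives-bimodule-gives-bimodule}, taking derivatives of the smash product on $\Alg_\sO(\Sp)$ recovers a nonunital commutative monoid $(\sO, \otimes_\sO)$ in $\MMor$. Its multiplication $\otimes_\sO^n$ is the bimodule $\partial_*(\wedge^n)$. Since the smash product on $\Alg_\sO(\Sp)$ preserves colimits in each variable (once it is genuinely monoidal), \cref{rem: derivative-colim-pres-multivar-functor} shows that this bimodule is free on multidegree $(1,\ldots,1)$. Differentiating the monoidal structure on $\free_\sO$ gives $\sO\colon (\unit,\otimes)\to(\sO,\otimes_\sO)$ as a map of commutative monoids, so hypothesis (2) of \cref{lem: precomposition-free-on-Mor-mon} holds. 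That lemma then produces a unique monoidal morphism $(\sO,\otimes_\sO)\to(\unit,\otimes)$ in $\CMonnu(\MMor)^{\mathrm L}$ whose precomposition with $\sO$ is the identity. Its underlying right $\sO$-module $N$ satisfies $N\circ_\sO \sO\simeq \unit$, so $N\simeq\unit$ is the trivial module. Applying $\reAlg$ gives a nonunital symmetric monoidal structure on $\unit\circ_\sO - = \cot_\sO$.

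The delicate point in this step is that (3) is phrased for the smash product, which a priori is only oplax monoidal. We must first deduce that it is genuinely associative and colimit-preserving in each variable, as the remark after the introductory statement promises. To do this, I would write every algebra as a sifted colimit of free algebras. The structure on $\free_\sO$ identifies $\free_\sO(A)\wedge\free_\sO(B)\simeq\free_\sO(A\otimes B)$, which already commutes with sifted colimits in each variable, and this gives the required monoidality. We also need that the derivatives of the smash product form a commutative monoid in $\MMor$ rather than merely an oplax one, which uses \cref{prop: partial-mor-preserves-products} applied to $\Alg_\sO(\Sp)$ via \cref{ex: der-identity-O-algebras}. I expect this to be the main obstacle.

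\emph{(4)$\Rightarrow$(5).} The functor $\cot_\sO$ is a left adjoint and is nonunital symmetric monoidal from $\wedge$ to $\otimes$, with comonad $\cot_\sO\triv_\sO\simeq \Sym_{B\sO}$. Condition (1) of \cref{prop: lax-sm-comonad-over-sp-aug} therefore holds with $T=\Sym_{B\sO}$, $\sC=\Alg_\sO(\Sp)$ and $\sD=\Sp$. The equivalence (1)$\Leftrightarrow$(5) there gives (5) of the present theorem.

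\emph{(5)$\Rightarrow$(2).} We need to lift the statement from spectra to symmetric sequences and then invoke \cref{prop: characterization-cocom}. By \cref{prop: lax-sm-comonad-over-sp-aug}, condition (5) makes $\Sym_{B\sO}$ a lax symmetric monoidal comonad on $(\Sp,\otimesaug)$ satisfying the exponential condition of item (3) there, namely $T(X)\otimesaug T(Y)\simeq T(X\times Y)$. Applying the Goodwillie transform $\maLMod$ as in \cref{prop: differentiating-sym-mon-cats}, together with \cref{ex: derivative-tensor-product-sym-mon} and \cref{ex: der-retraction-sym}, turns $\Sym_{B\sO}$ on $\Sp$ into the comonad $B\sO\circ-$ on $\SSeqnu(\Sp)$. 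In doing so it carries $\otimesaug$ to augmented Day convolution and preserves the exponential condition, because $\partial_*$ preserves finite limits and products (\cref{prop: derivatives-lmod-colim}, \cref{prop: partial-mor-preserves-products}). By \cref{prop: lax-sm-comonad-over-sp-aug}, item (3)$\Rightarrow$(5) now applied with $\sD=\SSeqnu(\Sp)$, the forgetful functor $\LcoMod_{B\sO}\to\SSeqnu(\Sp)$ is nonunital symmetric monoidal from $\wedge$ to Day convolution. \cref{prop: characterization-cocom} then gives $B\sO\simeq\coCom$, which is (2).

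Alternatively, for this last step one could run the inductive argument of \cref{prop: characterization-cocom} directly on $\Sp$-valued comonads using multivariable derivatives $\partial_{(n-1,1)}$ of $\Sym_{B\sO}(X\oplus Y)\simeq \Sym_{B\sO}(X)\otimesaug\Sym_{B\sO}(Y)$. This route avoids the transfer to symmetric sequences, which I view as a fallback if the $\maLMod$ transfer proves technically awkward.
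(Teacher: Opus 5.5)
Your proposal is correct and follows the paper's proof essentially step for step. It uses the same cycle of implications, the same sifted-colimit argument for associativity together with \cref{lem: precomposition-free-on-Mor-mon} for (3)$\Rightarrow$(4), \cref{prop: lax-sm-comonad-over-sp-aug} for (4)$\Rightarrow$(5), and the transform $\maLMod$ followed by \cref{prop: characterization-cocom} for (5)$\Rightarrow$(2).

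The only caveat concerns your suggested fallback for the last step, which would not work as sketched. Running the induction directly over $\Sp$ only identifies the underlying spectra $B\sO(n)\simeq\Sph$, not the cooperad structure. \cref{prop: characterization-cocom} also needs a comparison map of comonads to $\Sym_{\coCom}$, and that map exists only because the cofree nonunital cocommutative coalgebra comonad on $\SSeqnu(\Sp)$ is $\Sym$ (\cref{prop: cofree-is-sym}). This identification is special to symmetric sequences, which is precisely why the paper transfers the problem there.
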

\begin{proof}
	$(1) \Leftrightarrow (2):$ This holds by definition.
	
	$(1) \Rightarrow (3):$ This is \cref{prop: free-lie-sym-mon}.
	
	$(3) \Rightarrow (4):$ First of all, it follows from \cref{prop: smash-product-T-algebras-associative} that the smash product defines an honest nonunital symmetric monoidal structure on $\Alg_{\sO}(\Sp)$.
    We also find that the smash product on $\Alg_{\sO}(\Sp)$ preserves colimits in each variable, since this holds for the tensor product of spectra, $\free_\sO$ is symmetric monoidal, and every $\sO$-algebra can be written as a geometric realization of free $\sO$-algebras.
    
    Taking derivatives, we obtain a morphism
    \[
    \partial_*(\free_\sO) \colon (\unit, \partial_*(- \otimes -)) \to (\sO, \partial_*(- \wedge -))
    \]
    of nonunital commutative monoids in the Morita category.
    Here, we have used the equivalence $\partial_*{\id_{\Alg_\sO}} \simeq \sO$ from \cref{ex: der-identity-O-algebras} to identify the right hand side.
    We claim this morphism satisfies the conditions of \cref{lem: precomposition-free-on-Mor-mon}.
    Since the smash product on $\Alg_\sO(\Sp)$ commutes with colimits in each variable, it follows from \cref{rem: derivative-colim-pres-multivar-functor} that for each $n \geq 1$, $\partial_*(\wedge^n)$ is free as a left $\sO$-module on a symmetric sequence concentrated in multidegree $(1, \ldots, 1)$.
    It is also clear that $\partial_*(\otimes^n)$ is concentrated in this multidegree.
    Finally, it follows from \cref{prop: derivative-left-adjoint} that $\partial_*(\free_\sO)$ is the free left $\sO$-module $\sO$.

    As a consequence of \cref{lem: precomposition-free-on-Mor-mon}, there is a unique morphism
    \[
    K \in \CMonnu(\MMor^{\mathrm{L}})((\sO, \partial_*(- \wedge -)), (\unit, \partial_*(- \otimes -)))
    \]
    such that $K \circ_\sO \sO \simeq \unit$, the identity of $(\unit, \partial_*(- \otimes -))$.
    The underlying morphism of $K$ must be the trivial right $\sO$-module $\unit$.
    We now apply $\reAlg$ to $K$.
    Since the underlying module of $K$ is $\unit$, the result is a nonunital symmetric monoidal structure on the functor
    \[
    \cot_\sO \colon \Alg_\sO(\Sp) \to \Sp.
    \]
    It follows from \cref{ex: derivative-tensor-product-sym-mon,ex: derivative-smash-product-sym-mon} that the induced nonunital symmetric monoidal structure on $\Sp$ is the tensor product and on $\Alg_\sO(\Sp)$ is the smash product.

	$(4) \Rightarrow (5):$ Since we have an equivalence of comonads $\cot_\sO \circ \triv_\sO \simeq \Sym_{B\sO}$, this follows from the implication $(1) \Rightarrow (5)$ of \cref{prop: lax-sm-comonad-over-sp-aug}.

    $(5) \Rightarrow (2):$ By \cref{prop: lax-sm-comonad-over-sp-aug}, the assumption is equivalent to the statement that $\Sym_{B\sO} \colon \Sp \to \Sp$ admits the structure of a lax symmetric monoidal comonad for $\otimesaug$ such that the composite
    \[
    \Sym_{B\sO}(X) \otimesaug \Sym_{B\sO}(Y) \to \Sym_{B\sO}(X \otimesaug Y) \to \Sym_{B\sO}(X \times Y)
    \]
    is an equivalence.
    Applying the Goodwillie transform $\maLMod$, we find that the same holds for the comonad $\Sym_{B\sO} \colon \SSeqnu(\Sp) \to \SSeqnu(\Sp)$.
    By another appeal to \cref{prop: lax-sm-comonad-over-sp-aug}, this implies that the forgetful functor
    \[
    \forget_{B\sO} \colon \LcoMod_{B\sO} \to \SSeqnu(\Sp)
    \]
    is nonunital symmetric monoidal with respect to the smash product and the tensor product by \cref{prop: lax-sm-comonad-over-sp-aug}.
    It follows that $B\sO \simeq \coCom$ by \cref{prop: characterization-cocom}.
\end{proof}

\begin{remark}
    It follows from \cref{prop: lax-sm-comonad-over-sp-aug} that points (4) and (5) from \cref{thm: characterization} can be replaced by
    \begin{itemize}
        \item[(4*)]
        The functor
        \[
            (\mathrm{cot}_{\sO})_+\colon \Alg_{\sO}(\Sp) \to \Sp_{\Sph//\Sph} \colon X \mapsto \Sph \oplus \mathrm{cot}_{\sO}(X)
        \]
        admits a symmetric monoidal structure with respect to the cartesian product on $\Alg_{\sO}(\Sp)$ and the tensor product on $\Sp$.
        \item[(5*)]
         The functor
         \[
         \forget \colon \coAlg^{\mathrm{dp, nil}}_{B\sO}(\Sp) \to \Sp
        \]
        admits a  symmetric monoidal structure with respect to the cartesian product on $\coAlg^{\mathrm{dp, nil}}_{B\sO}(\Sp)$ and the augmented tensor product on $\Sp$.
    \end{itemize}
    Moreover, (4*) is equivalent to the functor
    \[
    \indec_\sO \colon \Alg_\sO(\Sp) \to \coAlg^{\mathrm{dp, nil}}_{B\sO}(\Sp)
    \]
    preserving cartesian products.
    Also note that (5*) implies by \cref{cor: exponential-property} that $\Sym_{B\sO}$ is symmetric monoidal from $\oplus$ to $\otimesaug$.
\end{remark}

\appendix

\section{Smash products} \label{sec: smash}

Let $(\sC, \otimes)$ be a symmetric monoidal $\infty$-category which is pointed and admits finite colimits, and suppose that the zero object, which we denote by $\ast$, is the unit of the symmetric monoidal structure.
For $X, Y \in \sC$, we define their \emph{smash product} $X \wedge Y$ via the cofiber sequence
\[
X \sqcup Y \to X \otimes Y \to X \wedge Y,
\]
where the first arrow denotes the map from the coproduct to the monoidal product with components
\[
X \simeq X \otimes \ast \rightarrow X \otimes Y \leftarrow \ast \otimes Y \simeq Y.
\]

In general, the smash product does not extend to a symmetric monoidal structure: if the monoidal product of $\sC$ doesn't commute with enough colimits, the smash product can fail to be associative.
The relevant type of colimits are as follows.

\begin{definition}
	Let $I$ be a small $\infty$-category.
	We say $I$ is \emph{weakly contractible} if the space $\lvert I \rvert \in \Spc$ is contractible, where $\lvert{}-\rvert \colon \Cat \to \Spc$ denotes the left adjoint to the inclusion $\Spc \hookrightarrow \Cat$. A \emph{weakly contractible colimit} is a colimit diagram that is indexed by a small weakly contractible $\infty$-category.
\end{definition}

The main result of this appendix is that if the tensor product commutes with weakly contractible colimits in both variables, the smash product extends to an essentially unique nonunital symmetric monoidal structure on $\sC$.
In proving this, it is convenient to weaken the assumption that the zero object $\ast$ is the unit for $\otimes$: we will only require that $\ast \otimes \ast = \ast$. 
The smash product $X \wedge Y$ is then defined as the cofiber of the map
\[
X \otimes \ast \sqcup \ast \otimes Y \to X \otimes Y.
\]
The precise statement we will prove is as follows:

\begin{proposition}\label{prop: smash-monoidal-structure}
	Let $(\sC, \otimes)$ be a nonunital symmetric monoidal $\infty$-category which admits finite colimits, is pointed, and satisfies $\ast \otimes \ast = \ast$, where $\ast$ is the zero object of $\sC$.
		Suppose that 
	\[
	- \otimes - \colon \sC \times \sC \to \sC
	\]
	preserves finite weakly contractible colimits in both variables separately.
	Then there exists an essentially unique nonunital symmetric monoidal structure on $\sC$ for which 
	\begin{enumerate}[\upshape{(}\arabic*\upshape{)}]
		\item the monoidal product is given by the smash product, and
		\item the canonical morphism $X \otimes Y \to X \wedge Y$ extends to a lax symmetric monoidal functor $\rho \colon (\sC, \wedge) \to (\sC, \otimes)$ which lifts the identity functor of $\sC$.
	\end{enumerate}
\end{proposition}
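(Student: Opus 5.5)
We construct the smash product monoidal structure as a \emph{localization} of $(\sC,\otimes)$, in the spirit of \cite[Proposition 2.2.1.9]{HA}, applied not to $\sC$ but to an auxiliary category. Let $\sC^{\to}$ be the arrow category, and consider the full subcategory $\sC^{\mathrm{cof}} \subseteq \Fun(\Delta^1,\sC)$ on cofiber-type arrows, or more simply the category of pairs $(X, X_0 \to X)$ encoding ``$X$ relative to a basepoint part''. Concretely, we use $\Fun(\Delta^1,\sC)$ with the pointwise-in-a-twisted-sense \emph{pushout-product} tensor product
\[
(A\to X)\,\square\,(B\to Y) = \bigl(A\otimes Y \sqcup_{A\otimes B} X\otimes B \to X\otimes Y\bigr).
\]
This is a nonunital symmetric monoidal structure (Day convolution along $\max$ on $\Delta^1$, restricted appropriately), and it exists because $\otimes$ preserves the pushouts involved; the pushouts here are finite and weakly contractible. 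The cofiber functor $\cof\colon \Fun(\Delta^1,\sC)\to\sC$ is left adjoint to $X\mapsto(\ast\to X)$, exhibiting $\sC$ as a reflective localization. We then check that $\cof$ is compatible with $\square$: if $\cof(f)\simeq \cof(f')$ via a $\cof$-equivalence, the same holds after $\square$ with any $g$. Granting this, $\sC$ inherits a nonunital symmetric monoidal structure with $\cof$ strong monoidal.

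On objects the induced product is $\cof\bigl((\ast\to X)\square(\ast\to Y)\bigr)=\cof(X\otimes\ast\sqcup_{\ast\otimes\ast}\ast\otimes Y\to X\otimes Y)$. Since $\ast\otimes\ast=\ast$, this is exactly $X\wedge Y$, which proves (1). For (2), the functor $\sC\to\Fun(\Delta^1,\sC)$, $X\mapsto(\ast\to X)$, is lax symmetric monoidal as the right adjoint of a strong monoidal localization. Composing with evaluation at the target, $\ev_1\colon(\Fun(\Delta^1,\sC),\square)\to(\sC,\otimes)$, which is strong monoidal, we obtain a lax monoidal $\rho$ lifting the identity, with structure map $X\otimes Y\to X\wedge Y$ as required. (The directions need care: the map actually goes $X\otimes Y\to X\wedge Y$, so $\rho$ should be read as the functor on the $\otimes$ side. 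Equivalently, $\cof\circ(\text{target inclusion } X\mapsto(\ast\to X))$ yields the identity as a lax monoidal functor $(\sC,\otimes)\to(\sC,\wedge)$, and this is the version I would build.) For essential uniqueness, the space of such structures is identified with the space of monoidal localizations of $(\sC,\otimes)$ receiving a lax map of this form. Along the lines of \cite[Proposition 2.2.1.9]{HA}, a localization with prescribed local objects and prescribed localization map has a contractible space of compatible monoidal structures; the datum in (2) pins down the local equivalences to be exactly the maps $X\otimes Y\to X\wedge Y$ and their consequences.

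The main obstacle is the compatibility check: that $\cof$-equivalences are stable under $\square$. This is precisely where associativity can fail in general. Take a $\cof$-equivalence $f\to f'$ and an arrow $g=(B\to Y)$. We must show that $\cof(f\square g)\simeq\cof(f)\otimes Y\,\big/\,\cof(f)\otimes B$, computed as an iterated cofiber. Writing $\cof(A\to X)$ as the pushout $X\sqcup_A\ast$, commuting $-\otimes Y$ and $-\otimes B$ past this pushout requires $\otimes$ to preserve pushouts, which are finite weakly contractible colimits, separately in each variable. We also need that tensoring with $\ast$ gives $\ast\otimes Y$ rather than $\ast$, and these terms cancel in the pushout-product. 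Carrying out this cube-of-pushouts computation, so that it depends on $f$ only through $\cof(f)$ naturally, is the step I expect to require the most care.
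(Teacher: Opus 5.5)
\textbf{The gap is in the uniqueness half.}

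The uniqueness in \cite[Proposition 2.2.1.9]{HA} only says that $\sC$ carries an essentially unique structure making the localization $\cof\colon(\Ar(\sC),\square)\to\sC$ strong monoidal. It says nothing about a competing structure $\wedge'$ satisfying (1) and (2). Such a $\wedge'$ comes only with a lax functor $\rho'\colon(\sC,\wedge')\to(\sC,\otimes)$ over the identity. Nothing you wrote shows that $\cof$ is monoidal into $(\sC,\wedge')$, or produces any functor comparing $(\sC,\wedge)$ with $(\sC,\wedge')$.

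Your reformulation via ``monoidal localizations of $(\sC,\otimes)$'' does not repair this. The functor underlying $\rho$ is the identity, so the maps $X\otimes Y\to X\wedge Y$ are lax structure maps, not units of a localization of $\sC$. There is no localization of $(\sC,\otimes)$ to which 2.2.1.9 could apply.

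The paper closes this with a universal property (\cref{prop: smash-product-universal-property}). For every pointed nonunital $(\sD,\otimes_\sD)$ whose tensor product preserves finite colimits in each variable, postcomposition with $\rho$ gives an equivalence $\Fun^{\mathrm{lax}}_*((\sD,\otimes_\sD),(\sC,\wedge))\simeq\Fun^{\mathrm{lax}}_*((\sD,\otimes_\sD),(\sC,\otimes))$. To prove it, the paper makes $(\sC,\otimes)\mapsto(\sC,\wedge)$ into an endofunctor $S$ of $\mathrm{SymMon}^{\mathrm{lax,w.c.}}_*$ with a transformation $\alpha\colon S\to\id$. This uses naturality of Day convolution and the fact that fully faithful maps of $\infty$-operads are monomorphisms. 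It then checks via \cite[Proposition 5.2.7.4]{HTT} that $\alpha_{S(\sC)}$ and $S(\alpha_{\sC})$ are equivalences, so $\alpha$ is a colocalization.

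Uniqueness then needs an observation missing from your sketch. By (1), the functor underlying $\wedge'$ is the smash product, which preserves finite colimits in each variable (\cref{prop: smash-preserves-colimits}). So the universal property applies to $(\sD,\otimes_\sD)=(\sC,\wedge')$. It lifts $\rho'$ to a lax functor $(\sC,\wedge')\to(\sC,\wedge)$ over the identity whose comparison maps $X\wedge Y\to X\wedge' Y$ are equivalences, hence an equivalence of nonunital symmetric monoidal $\infty$-categories.

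\textbf{Existence.} Your existence argument is the paper's: pushout-product Day convolution on $\Ar(\sC)$, compatibility of the reflective localization $\cof$, \cite[Proposition 2.2.1.9]{HA}, and $\rho=\ev_1\circ(X\mapsto(\ast\to X))$. There are three slips.
\begin{enumerate}
\item The pushout-product is Day convolution for $\min$ on $[1]$. For $\max$ one gets the pointwise product $(A\otimes B\to X\otimes Y)$.
\item Your parenthetical on directions should be dropped. The construction just before it already gives the required lax $\rho\colon(\sC,\wedge)\to(\sC,\otimes)$ with maps $X\otimes Y\to X\wedge Y$. An identity functor $(\sC,\otimes)\to(\sC,\wedge)$ with these maps would be oplax, not lax.
\item The target $\cof(f\square g)\simeq\cof(\cof(f)\otimes B\to\cof(f)\otimes Y)$ is false when $\ast\otimes Y\not\simeq\ast$. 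For the cartesian product on $\Spc_*$ with $f=(\ast\to X)$ and $g=(\ast\to Y)$, it yields $X_+\wedge Y$ instead of $X\wedge Y$. The correct statement is $\cof(f\square g)\simeq\cof((\ast\to\cof(f))\square g)$. It follows by computing the total cofiber of the square with corners $A\otimes B$, $X\otimes B$, $A\otimes Y$, $X\otimes Y$ in two ways, which is how the paper argues.
\end{enumerate}
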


The uniqueness of the smash product symmetric monoidal structure will follow from the fact that it satisfies a universal property.
Before stating this universal property, we first make the following observation.

\begin{proposition} \label{prop: smash-preserves-colimits}
    Let $(\sC, \otimes)$ be a nonunital symmetric monoidal $\infty$-category as in the previous proposition.
    Then the smash product $\wedge \colon \sC \times \sC \to \sC$ preserves finite colimits in both variables separately.
\end{proposition}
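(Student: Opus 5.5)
We have to show that for fixed $Y \in \sC$ the functor $- \wedge Y \colon \sC \to \sC$ preserves finite colimits; by symmetry this also covers the other variable. The functor is reduced: $\ast \wedge Y$ is the cofiber of $\ast \otimes \ast \sqcup \ast \otimes Y \to \ast \otimes Y$. Since $\ast \otimes \ast \simeq \ast$, this map is the identity of $\ast \otimes Y$, so the cofiber is $\ast$. A reduced functor between pointed categories with finite colimits preserves all finite colimits once it preserves pushouts. Pushouts are weakly contractible colimits, so it suffices to show that $- \wedge Y$ preserves pushouts, or more generally finite weakly contractible colimits.

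\textbf{Step 1.} Write $X \wedge Y$ as a finite colimit of functors of $X$. It is the colimit of the diagram
\[
X \otimes \ast \;\leftarrow\; X\otimes \ast \sqcup \ast\otimes Y \;\to\; X \otimes Y
\]
arranged as a cofiber. Equivalently, it is the iterated pushout
\[
X \otimes Y \sqcup_{X \otimes \ast} \ast \sqcup_{\ast \otimes Y} \ast.
\]
Here we use that the cofiber of a map out of a coproduct is obtained by collapsing each summand in turn. So $X \wedge Y$ is the colimit of a diagram $D(X)$ whose entries are the functors $X \mapsto X\otimes Y$, $X \mapsto X \otimes \ast$, the constant functor $\ast \otimes Y$, and the constant functor $\ast$.

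\textbf{Step 2.} Let $p \colon I^{\triangleright} \to \sC$ be a finite weakly contractible colimit diagram with $X = \colim p$. Colimits commute with colimits, so $\colim_I (p(i) \wedge Y)$ is the colimit over $i$ and over the shape of $D$ of the entries of $D(p(i))$. This can be rearranged as the colimit over the shape of $D$ of $\colim_I$ of each entry. Each entry behaves correctly:
\begin{itemize}
\item $X \mapsto X \otimes Y$ and $X \mapsto X \otimes \ast$ preserve finite weakly contractible colimits by hypothesis.
\item Constant functors preserve colimits indexed by weakly contractible $I$, since the colimit of a constant diagram over $I$ is the tensor with $|I| \simeq \ast$.
\end{itemize}
Hence $\colim_I (p(i)\wedge Y) \simeq \colim D(\colim_I p) \simeq X \wedge Y$. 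One must check that this comparison is the canonical map, which follows from the naturality of the construction in Step 1.

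\textbf{Main obstacle.} The delicate point is the constant entries. This is exactly where weak contractibility is used, and it explains why the hypothesis only asks $\otimes$ to preserve weakly contractible colimits. Without it, coproducts would not be preserved: $(X_1 \sqcup X_2)\otimes Y$ need not be $X_1\otimes Y \sqcup X_2 \otimes Y$. The smash product still preserves coproducts, but only because reducedness together with pushouts gives all finite colimits. That formal reduction, that a reduced functor preserving pushouts preserves finite coproducts because $A \sqcup B = A \sqcup_\ast B$, has to be stated carefully. It is the step I would write out most explicitly.
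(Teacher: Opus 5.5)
Your proof is correct and is essentially the paper's argument: you express $-\wedge Y$ as a cofiber built from functors that preserve finite weakly contractible colimits, check $\ast\wedge Y\simeq\ast$ using $\ast\otimes\ast\simeq\ast$, and conclude because preserving finite colimits is the same as preserving the zero object and finite weakly contractible colimits. The only difference is packaging: the paper uses that $\sqcup$ and $\otimes$ preserve weakly contractible colimits, whereas you unfold the coproduct into an iterated pushout with constant entries. There is also a minor slip in your first display: its left entry should be $\ast$, not $X\otimes\ast$, though the iterated pushout you write next is correct.
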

\begin{proof}
    Since we can write
    \[
    X \wedge Y \simeq \mathrm{cofib}(X \otimes \ast \sqcup \ast \otimes Y \to X \otimes Y),
    \]
    and since both $\sqcup$ and $\otimes$ preserve weakly contractible colimits in both variables, it follows that $\wedge$ also preserves such colimits in both variables.
    As $\ast \otimes \ast \simeq \ast$ by assumption we also find that $\ast \wedge Y \simeq \ast \simeq X \wedge \ast$.
    The statement now follows, since a functor between finite cocomplete pointed categories preserves finite colimits if and only if it preserves finite weakly contractible colimits and the zero object.
\end{proof}

The universal property of the smash product is as follows.

\begin{proposition}\label{prop: smash-product-universal-property}
    Let $(\sC, \otimes_\sC)$ be a nonunital symmetric monoidal $\infty$-category as in \cref{prop: smash-monoidal-structure}.
    For every pointed nonunital symmetric monoidal $\infty$-category $(\sD, \otimes_\sD)$ for which the tensor product commutes with all finite colimits in both variables separately, postcomposition with $\rho \colon (\sC, \wedge) \to (\sC, \otimes_\sC)$ induces an equivalence
    \[
    \Fun_*^{\mathrm{lax}}((\sD, \otimes_\sD), (\sC, \wedge)) \xrightarrow{\sim} \Fun_*^{\mathrm{lax}}((\sD, \otimes_\sD), (\sC, \otimes_\sC)),
    \]
    where $\Fun_*^{\mathrm{lax}}$ denotes the $\infty$-category of lax symmetric monoidal functors whose underlying functor preserves the zero object.
\end{proposition}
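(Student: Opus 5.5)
The plan is to reformulate lax symmetric monoidal functors into $(\sC,\otimes)$ and into $(\sC,\wedge)$ as sections of the corresponding cocartesian fibrations over $\Surj_*$. I will then compare them arity by arity, using that the smash product is a cofiber. For a lax functor $F \colon (\sD,\otimes_\sD) \to (\sC,\otimes_\sC)$ with $F(\ast)\simeq\ast$, the structure maps $F(D_1)\otimes\cdots\otimes F(D_n) \to F(D_1\otimes\cdots\otimes D_n)$ are the essential data. The claim is that each such map factors essentially uniquely through the $n$-fold smash product $F(D_1)\wedge\cdots\wedge F(D_n)$.

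For $n=2$, the $n$-fold smash product is the cofiber of $F(D_1)\otimes\ast \sqcup \ast\otimes F(D_2) \to F(D_1)\otimes F(D_2)$. Composing the structure map with the inclusion of $F(D_1)\otimes \ast$ gives, by naturality of the lax structure along $\ast \to D_2$ and back, the composite
\[
F(D_1)\otimes\ast \simeq F(D_1)\otimes F(\ast) \to F(D_1\otimes_\sD \ast)\simeq F(\ast)\simeq \ast.
\]
Here $D\otimes_\sD\ast\simeq\ast$ because $\otimes_\sD$ preserves finite colimits, in particular the initial object, in each variable. So the restriction to the coproduct is \emph{canonically} null. For $n$-fold products, the analogous statement uses the iterated cofiber (total cofiber of the cube indexed by subsets of $\{1,\dots,n\}$, with $\ast$ substituted in the complementary slots). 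Its identification with the $n$-fold smash uses \cref{prop: smash-preserves-colimits}. The space of extensions is then the fiber of a restriction map between mapping spaces out of a colimit, so the factorization is unique.

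To make this coherent rather than objectwise, I would work at the level of $\infty$-operads. The map $\rho$ exhibits $(\sC,\wedge)^\otimes \to \Surj_*$ as obtained from $(\sC,\otimes)^\otimes$ by localizing each multimorphism space. Concretely, there is a map of $\infty$-operads $\rho^\otimes \colon (\sC,\otimes)^\otimes \to (\sC,\wedge)^\otimes$ in the operadic direction dual to the lax functor $\rho$ (hmm — rather, $\rho$ is lax, so I use the description via the Day-convolution-style presentation of \cref{prop: smash-monoidal-structure}). I would describe multimorphism spaces in $(\sC,\wedge)$ out of $(X_1,\dots,X_n)$ into $Y$ as the full subspace of those in $(\sC,\otimes)$ whose restriction to every face with some $X_i$ replaced by $\ast$ is null, compatibly. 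This is the same as maps out of the total cofiber. A lax functor from $\sD$ into $(\sC,\wedge)$ is then a lax functor into $(\sC,\otimes)$ that satisfies this vanishing condition on every multimorphism. By the previous paragraph, every reduced lax functor from a $\sD$ whose tensor product preserves finite colimits satisfies it automatically. The condition is a property, because the relevant restriction maps have contractible fibers by the universal property of cofibers.

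The main obstacle is this identification of $(\sC,\wedge)^\otimes$ as a subobject of $(\sC,\otimes)^\otimes$ defined by a vanishing condition. The issue is that the presentation of the smash structure in \cref{prop: smash-monoidal-structure} may not be literally of this form. I would try to prove directly that postcomposition with $\rho$ is fully faithful on sections over $\Surj_*$, using the fiberwise mapping-space formula for lax functors (as an end over $\Tw(\Surj_*)$, as in the proof of \cref{lem: postcomposition-cot-on-Mor-monoidal}) and checking fully faithfulness termwise. Essential surjectivity then reduces to the canonical nullhomotopies constructed above.
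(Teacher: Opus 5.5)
Your proposal has a genuine gap at the step it relies on most: lifting along $\rho$ is \emph{not} a property. On multimorphism spaces, $\rho$ is precomposition with the quotient, $\Map(X_1\wedge X_2,Y)\to\Map(X_1\otimes X_2,Y)$. Put $A=X_1\otimes\ast\sqcup\ast\otimes X_2$. The source is the fiber of restriction to $A$, so the fiber of this map over $f$ is the space of nullhomotopies of $f|_A$. When nonempty, that space is a torsor for $\Omega\Map(A,Y)$, not a point. Already in $(\Spc_*,\times)$, the map $\Map_*(X\wedge Y,Z)\to\Map_*(X\times Y,Z)$ has fibers $\Omega\Map_*(X,Z)\times\Omega\Map_*(Y,Z)$ over its image. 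Consequences:
\begin{itemize}
\item $(\sC,\wedge)^\otimes$ is not the suboperad of $(\sC,\otimes)^\otimes$ cut out by a vanishing condition.
\item The objectwise claim in your first paragraph, that the factorization through $F(D_1)\wedge F(D_2)$ is unique, is false. Your canonical nullhomotopy gives one extension among many.
\item Your fallback of checking full faithfulness termwise also fails, since the termwise comparison maps are not equivalences.
\end{itemize}
Moreover, the $\Tw(\Surj_*)$ end used in \cref{lem: postcomposition-cot-on-Mor-monoidal} computes maps of commutative monoids, i.e.\ strong transformations, not lax monoidal functors.

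What actually makes the statement true is naturality in the $D_i$ together with the zero object of $\sD$. For instance, consider natural transformations from $(D_1,D_2)\mapsto F(D_1)\otimes\ast$ to $(D_1,D_2)\mapsto F(D_1\otimes D_2)$. The source is pulled back along the first projection. The right Kan extension of the target along that projection is $\lim_{D_2}F(D_1\otimes D_2)\simeq F(D_1\otimes\ast)\simeq\ast$, the limit being evaluation at the initial object. So this space of natural transformations is contractible. Turning this into an equivalence of $\infty$-categories of lax functors requires organizing such arguments coherently over all arities and compositions, and your sketch does not supply that.

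The paper avoids doing this by hand. It makes $(\sC,\otimes)\mapsto(\sC,\wedge)$ into an endofunctor $S$ of $\mathrm{SymMon}^{\mathrm{lax,w.c.}}_*$ with a transformation $\alpha\colon S\to\id$. This uses functoriality of Day convolution on $\Ar(-)$ and the fact that $\sC^\wedge\hookrightarrow\Ar(\sC)^{\otimesday}$ is a monomorphism of $\infty$-operads. It then deduces the universal property from \cite[Proposition 5.2.7.4]{HTT} by checking that $\alpha(S(\sC))$ and $S(\alpha(\sC))$ are equivalences. Your canonical nullhomotopy is the same input as the fact that $\alpha(\sD)$ is an equivalence when $\otimes_\sD$ preserves finite colimits. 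A coherent version of your essential surjectivity argument is $F\mapsto S(F)\circ\alpha(\sD)^{-1}$: the lax structure of $F_*\colon\Ar(\sD)^{\otimesday}\to\Ar(\sC)^{\otimesday}$ on the arrows $\ast\to D_i$ packages your nullhomotopies coherently.
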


The construction of the smash product symmetric monoidal structure we will give here is essentially the one contained in the unpublished note \cite{raksit2018smash}.
Since our assumptions are slightly different, we have decided to reproduce it here.
In what follows, we let $(\sC, \otimes)$ be a nonunital symmetric monoidal $\infty$-category satisfying the assumptions from the proposition.
It will become clear in the course of the proof why we only get a nonunital symmetric monoidal structure, even if $\sC$ admits a unit; see \cref{rem: non-unital-smash} below.

We start with a number of preparatory lemmas.
Let $[1] = (0 \to 1)$ denote the free morphism.
We endow it with the minimum symmetric monoidal structure.

\begin{lemma}
	The arrow category $\Ar(\sC) = \Fun([1], \sC)$ becomes a nonunital symmetric monoidal $\infty$-category under Day convolution.
\end{lemma}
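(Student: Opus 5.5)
The plan is to use Day convolution along the symmetric monoidal structure on $[1]$ given by minimum. In the $\infty$-categorical setting this structure is available whenever the source is small and symmetric monoidal and the target is symmetric monoidal with the relevant colimits, and when the tensor product preserves those colimits separately in each variable (\cite[Section 2.2.6]{HA}, or Glasman's treatment of Day convolution). The nonunital version runs the same way: take $[1]$ with $\min$ as a nonunital symmetric monoidal category, so that it is a Segal object over $\Surj_*$, and apply the nonunital Day convolution construction, again using left Kan extension along the tensor functor of $[1]$.

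The first step is to compute the Day convolution explicitly and check that the left Kan extensions it requires exist in $\sC$. For arrows $f\colon X_0\to X_1$ and $g\colon Y_0\to Y_1$, the product $f\otimes g$ is the left Kan extension of $(i,j)\mapsto X_i\otimes Y_j$ along $\min\colon [1]\times[1]\to[1]$.
\begin{itemize}
\item At $1$ the fiber category is just $\{(1,1)\}$, so the value is $X_1\otimes Y_1$.
\item At $0$ the slice $\min^{-1}([1]_{/0})$ is the full subcategory on $(0,0),(0,1),(1,0)$, so the value is the pushout $X_0\otimes Y_1\sqcup_{X_0\otimes Y_0} X_1\otimes Y_0$.
\end{itemize}
These are finite colimits, so they exist. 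The $n$-fold products likewise only need finite colimits over the punctured cubes $\{0,1\}^n\setminus\{(1,\dots,1)\}$, and these exist as well.

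The second step is associativity: Day convolution is associative exactly when the $n$-fold Kan extension agrees with the iterated one. By the standard criterion in the cited references, this reduces to $\otimes$ preserving the relevant colimits in each variable. The relevant colimits are the pushout diagrams above and, more generally, punctured cubes. These diagrams are finite and weakly contractible: a pushout shape has an initial object, and a punctured cube $\{0,1\}^n\setminus\{\text{top}\}$ has the initial object $(0,\dots,0)$. By hypothesis $\otimes$ preserves finite weakly contractible colimits separately in each variable, so the criterion applies.

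Unitality plays no role here, since we only want a nonunital structure. I expect this step to be the main obstacle. The difficulty is in citing a version of Day convolution that only assumes the target has, and $\otimes$ preserves, this restricted class of colimits, rather than a presentably symmetric monoidal target. If no such version is available, the fallback is to embed $\sC$ fully faithfully into a presentable symmetric monoidal category via $\mathrm{Ind}$ or presheaves with the Day structure. One then shows that the full subcategory of arrows in $\sC$ is closed under Day convolution there, using that the Yoneda embedding preserves the finite weakly contractible colimits that $\otimes$ preserves.
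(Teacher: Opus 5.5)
Your proposal is correct and is essentially the paper's proof: Day convolution for the $\min$ structure on $[1]$ via \cite[Proposition 2.2.6.16]{HA}. The relevant indexing categories $[1]^{\times n}\times_{[1]}[1]_{/x}$ are punctured cubes for $x=0$, which are finite and weakly contractible, and have a final object for $x=1$. The paper handles the obstacle you anticipated by observing that the proof of that proposition only needs $\otimes$ to preserve colimits of these shapes, so no fallback embedding is required (and yours would need more care than indicated, since the Yoneda embedding into presheaves does not preserve pushouts).
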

\begin{proof}
	By \cite[Proposition 2.2.6.16]{HA}, it suffices to show that for $x \in \{0, 1\}$, for every $n \geq 1$, and for every sequence of arrows $(f_i \colon [1] \to \sC)_{i = 1}^n$, the composite
	\[
	\begin{tikzcd}
	{[1]}^{\times n} \times_{[1]} {[1]}_{/x} \ar[r] & {[1]}^{\times n} \ar[r, "(f_i)"] & \sC^{\times n} \ar[r, "\otimes"] & \sC
	\end{tikzcd}
	\]
	admits a colimit, and that the tensor product $\otimes$ on $\sC$ commutes with colimits indexed by ${[1]}^{\times n} \times_{[1]} {[1]}_{/x}$ in both variables separately.
	Here the functor $[1]^{\times n} \to [1]$ used to define the pullback is given by taking the minimum.
    (It is assumed in the cited proposition that the tensor product commutes with all finite colimits; it is however clear from the proof that it only has to commute with colimits of these particular shapes.)
	
	For $x=0$, the category ${[1]}^{\times n} \times_{[1]} {[1]}_{/0}$ is the full subcategory of $[1]^{\times n}$ consisting of those $n$-tuples $(y_j)$ for which at least one of the $y_j$ is equal to $0$.
	In other words, it is a punctured cube. 
	As this is a finite weakly contractible category, it follows from our assumptions that $\sC$ admits and $\otimes$ preserves colimits indexed by it.
	For $x=1$, the indexing category admits a final object so that both conditions are trivially satisfied.
\end{proof}
We write $\otimesday$ for the nonunital Day convolution symmetric monoidal structure on $\Ar(\sC)$.

\begin{remark} \label{rem: non-unital-smash}
	Even if we assume $(\sC, \otimes)$ admits a unit, the Day convolution symmetric monoidal structure defined in the previous lemma in general does not have one.
	Indeed, a unit would be guaranteed to exist in this case if $\sC$ admits and $\otimes$ commutes with colimits indexed by $[1]^{\times 0} \times_{[1]} [1]_{/x}$ for $x = 0, 1$.
	The map $[1]^{\times 0} \to [1]$ used to define the pullback picks out the unit of the minimum symmetric monoidal structure, which is $1$.
	Therefore, this pullback is the empty category for $x=0$, which is in particular not weakly contractible.
\end{remark}

\begin{example} \label{ex: formula-day-convolution-arrow}
	Suppose that $f \colon A \to B$ and $g \colon X \to Y$ are arrows in $\sC$.
	Then their Day convolution is given by the canonical morphism
	\[
	f \otimesday g \colon A \otimes Y \sqcup_{A \otimes X} B \otimes X \to B \otimes Y.
	\]
	In particular, $(\ast \to X) \otimesday (\ast \to Y)$ is the morphism $X \otimes \ast \sqcup \ast \otimes Y \to X \otimes Y$ whose cofiber is $X \wedge Y$.
\end{example}

Observe that the functor $\mathrm{cofib} \colon \Ar(\sC) \to \sC$ exhibits $\sC$ as a reflective localization of $\Ar(\sC)$.
Its fully faithful right adjoint is given by the functor that sends $X \in \sC$ to the arrow $\ast \to X$.

\begin{lemma}
	The localization $\mathrm{cofib} \colon \Ar(\sC) \to \sC$ is compatible with the Day convolution symmetric monoidal structure.
\end{lemma}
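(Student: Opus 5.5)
To show the reflective localization $\mathrm{cofib} \colon \Ar(\sC) \to \sC$ is compatible with $\otimesday$ in the sense of \cite[Definition 2.2.1.6]{HA}, I will verify the following. If $\phi \colon f \to f'$ is a morphism in $\Ar(\sC)$ inverted by $\mathrm{cofib}$, then for every arrow $g$ the morphism $\phi \otimesday g$ is again inverted by $\mathrm{cofib}$. By symmetry of $\otimesday$ it suffices to treat the first variable.

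First I identify the class of local equivalences. Write $f = (A \to B)$. There is a natural map from the arrow $\ast \to B/A$ into $f$? It is cleaner to use the unit map $f \to (\ast \to \mathrm{cofib}(f))$, given by the square with $A \to \ast$ and $B \to \mathrm{cofib}(f)$. A map $\phi$ is inverted by $\mathrm{cofib}$ if and only if its image under the unit is an equivalence. By two-out-of-three it therefore suffices to show that for every $f$ and $g$, applying $\mathrm{cofib}$ to
\[
f \otimesday g \to (\ast \to \mathrm{cofib}(f)) \otimesday g
\]
gives an equivalence.

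Next I compute both sides using \cref{ex: formula-day-convolution-arrow}. Take $g = (X \to Y)$. The cofiber of $f \otimesday g$ is the total cofiber of the square
\[
\begin{tikzcd}
A \otimes X \ar[r] \ar[d] & A \otimes Y \ar[d] \\
B \otimes X \ar[r] & B \otimes Y.
\end{tikzcd}
\]
I compute this iterated cofiber by first taking cofibers in the $A \to B$ direction. The point is that for fixed $Z$, the cofiber of $A \otimes Z \to B \otimes Z$ should be computed from $\mathrm{cofib}(f) \otimes Z$. The catch is that cofibers are not weakly contractible colimits, since they involve the zero object.

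This is the main obstacle. I would handle it by writing $\mathrm{cofib}(f)$ as the weakly contractible pushout $B \sqcup_A \ast$. Since $\otimes$ preserves finite weakly contractible colimits in each variable, this gives
\[
\mathrm{cofib}(f) \otimes Z \simeq B \otimes Z \sqcup_{A \otimes Z} \ast \otimes Z.
\]
Hence $\mathrm{cofib}(A \otimes Z \to B \otimes Z)$ and $\mathrm{cofib}(\ast \otimes Z \to \mathrm{cofib}(f) \otimes Z)$ agree naturally in $Z$, because cofibers of parallel arrows in a pushout square coincide. Applying this with $Z = X$ and $Z = Y$ and taking the cofiber of the resulting map, the total cofiber of the square for $f$ agrees with the total cofiber of the square for $(\ast \to \mathrm{cofib}(f))$. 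This is exactly $\mathrm{cofib}$ applied to $(\ast \to \mathrm{cofib}(f)) \otimesday g$.

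Finally, I check that this identification is the one induced by the unit map. This holds by naturality, since all the equivalences used come from the comparison map of the pushout square. The argument uses only finite weakly contractible colimits, the pointedness of $\sC$, and the formula for $\otimesday$. The hypothesis $\ast \otimes \ast \simeq \ast$ is not needed at this stage; it becomes relevant later when showing the unit object behaves correctly.
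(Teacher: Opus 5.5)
Your argument is correct and is essentially the paper's proof. You reduce, via the unit map $f \to (\ast \to \mathrm{cofib}(f))$ and two-out-of-three, to showing that $\mathrm{cofib}$ inverts $f \otimesday g \to (\ast \to \mathrm{cofib}(f)) \otimesday g$, and then you compute both sides as total cofibers of the square with corners $A \otimes X$, $A \otimes Y$, $B \otimes X$, $B \otimes Y$, taking cofibers first in the $A \to B$ direction and using that $\otimes$ preserves the weakly contractible pushout $B \sqcup_A \ast$. (Your closing aside is slightly off, since the structure is nonunital and $\ast \otimes \ast \simeq \ast$ is used later to get $\ast \wedge Y \simeq \ast$ rather than for a unit, but this does not affect the proof.)
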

\begin{proof}
	It suffices to show that for every pair of morphisms $f \colon A \to B$ and $g \colon X \to Y$, the morphism $f \otimesday g \to \mathrm{cofib}(f) \otimesday g$ is sent to an isomorphism by $\mathrm{cofib}$.
	This is equivalent to showing that the commutative square
	\[
	\begin{tikzcd}
		A \otimes Y \sqcup_{A \otimes X} B \otimes X \ar[rr, "f \otimesday g"] \ar[d] & & B \otimes Y \ar[d] \\
		\ast \otimes Y \sqcup_{\ast \otimes X} \cof(f) \otimes X \ar[rr, "\mathrm{cofib}(f) \otimesday g"] & & \cof(f) \otimes Y 
	\end{tikzcd}
	\]
	induces an isomorphism on horizontal cofibers.
	A simple diagram chase using that $\otimes$ commutes with weakly contractible colimits in both variables shows that this is a consequence of the well-known fact that the total cofiber of the square
	\[
	\begin{tikzcd}
		A \otimes X \ar[r] \ar[d] & B \otimes X \ar[d] \\
		A \otimes Y \ar[r] & B \otimes Y
	\end{tikzcd}
	\]
	can be computed in the following two ways:
	either as the cofiber of the canonical map from the pushout to the bottom right corner, which corresponds to the cofiber of the top horizontal morphism in the previous square,
	or as cofiber of the induced map on horizontal cofibers, which corresponds to the cofiber of the bottom horizontal morphism in the previous square.
\end{proof}

We now prove that the smash product symmetric monoidal structure exists.

\begin{proof}[Proof of \cref{prop: smash-monoidal-structure}, existence]
	By \cite[Proposition 2.2.1.9]{HA} and the previous lemma, $\sC$ inherits a nonunital symmetric monoidal structure from $\Ar(\sC)$ for which the monoidal product of $X, Y \in \sC$ is given by
	\[
	\mathrm{cofib}((\ast \to X) \otimesday (\ast \to Y)),
	\]
	which is $X \wedge Y$ by \cref{ex: formula-day-convolution-arrow}.
	Since restriction along a strong monoidal functor gives a lax monoidal functor with respect to Day convolution, the functor $\ev_1 \colon \Ar(\sC) \to \sC$ is lax monoidal from $\otimesday$ to $\otimes$.
	Composing this with the inclusion $\sC \hookrightarrow \Ar(\sC)$, which is lax monoidal from $\wedge$ to $\otimesday$ by construction of $\wedge$, we obtain the required lax monoidal functor $(\sC, \wedge) \to (\sC, \otimes)$.
\end{proof}

\begin{remark}
    If the tensor product on $\sC$ preserves all small weakly contractible colimits in both variables instead of just the finite ones, then the smash product preserves all small colimits in both variables.
\end{remark}

In order to prove the universal property of the smash product, we first show the construction is suitably natural.
We can always arrange for all symmetric monoidal $\infty$-categories we deal with to be small by enlarging the universe.
Let $\mathrm{SymMon}^{\mathrm{lax, w.c.}}_*$ denote the category with objects small nonunital symmetric monoidal $\infty$-categories $(\sC, \otimes)$ such that $\sC$ is pointed, admits all finite colimits, and the tensor product commutes with weakly contractible colimits in both variables and satisfies $\ast \otimes \ast \simeq \ast$ and with morphisms lax monoidal functors such that the underlying functor preserves the zero object.

\begin{lemma}
    The construction $(\sC, \otimes) \mapsto (\sC, \wedge)$ extends to an endofunctor of the $\infty$-category $\mathrm{SymMon}^{\mathrm{lax, w.c.}}_*$, such that the lax monoidal functors $(\sC, \wedge) \to (\sC, \otimes)$ become the components of a natural transformation from this functor to the identity functor.
\end{lemma}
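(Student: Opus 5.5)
The plan is to make each step of the existence proof functorial in $(\sC, \otimes)$, working with $\infty$-operads so that lax monoidal functors are ordinary maps of operads. The first step is to show that the arrow-category construction
\[
(\sC, \otimes) \mapsto (\Ar(\sC), \otimesday)
\]
is an endofunctor of $\mathrm{SymMon}^{\mathrm{lax, w.c.}}_*$. Day convolution with $([1], \min)$ is functorial for lax monoidal functors. Concretely, $\Ar(\sC)^{\otimes}$ is characterized as the operad corepresenting $\Fun([1], -)$ with the minimum tensor product, i.e.\ through the exponential $\Fun([1]^{\otimes}, \sC^{\otimes})$ in $\infty$-operads (as in \cite[Section 2.2.6]{HA}). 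Postcomposition with a lax monoidal functor $F\colon \sC \to \sD$ is then a map of $\infty$-operads
\[
\Ar(\sC)^{\otimesday} \to \Ar(\sD)^{\otimesday},
\]
since no colimits need to be preserved at the operadic level. Under this description, $\ev_1\colon \Ar(\sC) \to \sC$ (restriction along the strong monoidal inclusion $\{1\} \to [1]$) is natural in $\sC$.

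The second step is to pass to the localization. The monoidal structure $\wedge$ on $\sC$ is the localization of $\otimesday$ along $\mathrm{cofib}$, so $(\sC, \wedge)^{\otimes}$ is the full suboperad of $\Ar(\sC)^{\otimesday}$ on arrows of the form $\ast \to X$ (\cite[Proposition 2.2.1.9]{HA}). Thus $(\sC, \wedge)$ is identified with a full suboperad of $\Ar(\sC)^{\otimesday}$ whose underlying category is the image of the right adjoint $X \mapsto (\ast \to X)$. A lax monoidal $F$ preserving the zero object sends $\ast \to X$ to $F(\ast) = \ast \to F(X)$, so the induced map of operads restricts to these suboperads. Restricting a natural family of operad maps to full suboperads is functorial, because full suboperads are determined by a property of objects. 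This gives the endofunctor $(\sC, \otimes) \mapsto (\sC, \wedge)$, with $F \mapsto F$ on underlying functors.

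The third step is the natural transformation. The map $\rho$ was defined as the composite of the inclusion $(\sC, \wedge)^{\otimes} \subseteq \Ar(\sC)^{\otimesday}$ with $\ev_1$. The inclusion is natural by construction, since it is the inclusion of the full suboperads in the second step. The map $\ev_1$ is natural by the first step. Their composite therefore assembles into a natural transformation to the identity.

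I expect the main obstacle to be making the first step honestly functorial at the $\infty$-categorical level: Day convolution is usually constructed object by object. To handle this, I would use the universal property of $\Fun([1]^{\otimes}, -)^{\otimes}$ as an internal hom in $\infty$-operads. This gives a functor
\[
\Op_\infty \to \Op_\infty
\]
together with the natural evaluation map to $\sC^{\otimes}$ at $1$, and then I would restrict it to the subcategory $\mathrm{SymMon}^{\mathrm{lax, w.c.}}_*$. The only remaining check is that the resulting operad $\Ar(\sC)^{\otimesday}$ is a genuine nonunital symmetric monoidal structure whose restriction to the full suboperad on arrows $\ast \to X$ is the localized monoidal structure. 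Both facts were established in the lemmas preceding the existence proof.
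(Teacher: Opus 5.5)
Your proposal is correct and follows essentially the same route as the paper. You make $(\sC,\otimes)\mapsto(\Ar(\sC),\otimesday)$ functorial with $\ev_1$ natural, observe that $f_*$ carries the full suboperad $\sC^{\wedge}\subseteq\Ar(\sC)^{\otimesday}$ on arrows $\ast\to X$ into $\sD^{\wedge}$ because $f$ preserves the zero object, and then compose with $\ev_1$. Your remark that restricting to full suboperads is functorial ``because it is a property of objects'' is exactly what the paper makes precise: it notes that fully faithful maps of $\infty$-operads are monomorphisms in $\mathrm{Op}_\infty$, and invokes \cite[Proposition A.1]{RamziYoneda} to assemble the restricted maps into a subfunctor.
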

\begin{proof}
    Since Day convolution is functorial, the assignment $(\sC, \otimes) \mapsto (\Ar(\sC), \otimesday)$ extends to a functor $\Ar \colon \mathrm{SymMon}^{\mathrm{lax, w.c.}}_* \to \mathrm{SymMon}^{\mathrm{lax, w.c.}}_*$ and the symmetric monoidal functor $\ev_1 \colon (\Ar(\sC), \otimesday) \to (\sC, \otimes)$ extends to a natural transformation from $\Ar$ to the identity functor.
    We write $\sC^{\wedge}$ for the $\infty$-operad corresponding to the nonunital symmetric monoidal category $(\sC, \wedge)$.
    In the same way we write $\Ar(\sC)^{\otimesday}$.
    By construction, $\sC^{\wedge}$ comes equipped with a fully faithful map $\sC^{\wedge} \hookrightarrow \Ar(\sC)^{\otimesday}$ of $\infty$-operads.
    Now if $\sD \in \mathrm{SymMon}^{\mathrm{lax, w.c.}}_*$ and $f \colon \sC \to \sD$ is a lax symmetric monoidal functor that preserves the zero object, then the composite
    \[
    \sC^{\wedge} \hookrightarrow \Ar(\sC)^{\otimesday} \xrightarrow{f_*} \Ar(\sD)^{\otimesday}
    \]
    factors through $\sD^\wedge \hookrightarrow \Ar(\sD)^{\otimesday}$.
    It therefore follows from \cite[Proposition A.1]{RamziYoneda} that the lax symmetric monoidal functors $(\sC, \wedge) \to (\Ar(\sC), \otimesday)$ assemble into a natural transformation; to apply the proposition from loc.\ cit.\ we use that fully faithful maps of $\infty$-operads are monomorphisms in $\mathrm{Op}_{\infty}$.
    We obtain the desired natural transformation by composing with $\ev_1 \colon \Ar \to \id$.
\end{proof}

\begin{remark} \label{rem: right-exact-strong-monoidal-smash}
    It follows from this construction that if $F \colon \sC \to \sD$ is a strong nonunital symmetric monoidal functor that preserves finite colimits, then the induced lax nonunital symmetric monoidal functor on smash products is strong as well.
\end{remark}

    We write $S \colon \mathrm{SymMon}^{\mathrm{lax, w.c.}}_* \to \mathrm{SymMon}^{\mathrm{lax, w.c.}}_*$ and $\alpha \colon S \to \id$ for the functor and natural transformation constructed in this lemma.
    We now prove the universal property of the smash product.

\begin{proof}[Proof of \cref{prop: smash-product-universal-property}]
    Suppose that $(\sC, \otimes)$ is a pointed nonunital symmetric monoidal $\infty$-category for which the tensor product commutes with all finite colimits in both variables separately.
    For every $X \in \sC$ we then have $X \otimes \ast \simeq \ast \simeq \ast \otimes X$, so that $X \wedge Y \simeq X \otimes Y$ and the lax symmetric monoidal functor $\alpha(\sC) \colon (\sC, \wedge) \to (\sC, \otimes)$ is an equivalence.
    Conversely, for every $(\sC, \otimes) \in \mathrm{SymMon}^{\mathrm{lax, w.c.}}_*$, the pointed nonunital symmetric monoidal category $(\sC, \wedge)$ has the property that its tensor product commutes with all finite colimits.
    The essential image of $S$ therefore consists precisely of those pointed nonunital symmetric monoidal $\infty$-categories satisfying this property.
    This means that to prove the universal property of the smash product, it will suffice to show that $\alpha$ exhibits the essential image of $S$ as a colocalization of $\mathrm{SymMon}^{\mathrm{lax, w.c.}}_*$.
    
    By \cite[Proposition 5.2.7.4]{HTT}, this follows if for every $(\sC, \otimes) \in \mathrm{SymMon}^{\mathrm{lax, w.c.}}_*$ the maps
    \[
    \alpha(S(\sC)), S(\alpha(\sC)) \colon SS(\sC, \otimes) \to S(\sC, \otimes)
    \]
    are both equivalences.
    The first of these we just checked.
    For the second, observe that we have a commutative diagram of lax symmetric monoidal functors
    \[
    \begin{tikzcd}
        \Ar(S(\sC, \otimes)) \ar[r, "\alpha(\sC)_*"] & \Ar(\sC, \otimes) \ar[d, "\mathrm{cofib}"] \\
        SS(\sC, \otimes) \ar[r, "S(\alpha(\sC))"] \ar[u] & S(\sC, \otimes).
    \end{tikzcd}
    \]
    It follows that for $X, Y \in \sC$, the lax comparison map of $S(\alpha(\sC))$ is given by the map induced on cofibers in the square
    \[
    \begin{tikzcd}
        X \otimes \ast \sqcup \ast \otimes Y \ar[r] \ar[d] & X \otimes Y \ar[d] \\
        \ast \ar[r] & X \wedge Y,
    \end{tikzcd}
    \]
    which is an isomorphism.
    Therefore, $S(\alpha(\sC))$ is a strong monoidal functor whose underlying functor is the identity, showing that it is an equivalence.
\end{proof}

The uniqueness of the smash product symmetric monoidal structure follows easily from this universal property:

\begin{proof}[Proof of \cref{prop: smash-monoidal-structure}, uniqueness]
    Suppose that $(\sC, \wedge')$ is a symmetric monoidal structure on $\sC$ satisfying (1) and (2) from \cref{prop: smash-monoidal-structure}.
    Then the tensor product $- \wedge' - \colon \sC \times \sC \to \sC$ is the smash product by (1), so that it preserves finite colimits in both variables by \cref{prop: smash-preserves-colimits}.
    It follows from (2) that the canonical map $X \otimes Y \to X \wedge' Y$ extends to a lax monoidal functor $(\sC, \wedge') \to (\sC, \otimes)$.
    The universal property of the smash product now yields a unique lax symmetric monoidal functor $(\sC, \wedge') \to (\sC, \wedge)$ such that the diagram
    \[
    \begin{tikzcd}[sep = small]
        & X \otimes Y \ar[dr] \ar[dl] & \\
        X \wedge Y \ar[rr] & & X \wedge' Y
    \end{tikzcd}
    \]
    commutes.
    But then the lax comparison map $X \wedge Y \to X \wedge' Y$ is an equivalence, so that $(\sC, \wedge') \to (\sC, \wedge)$ is a strong monoidal functor with underlying functor the identity, which implies that it is an equivalence of symmetric monoidal $\infty$-categories.
\end{proof}

It sometimes is useful to consider the smash product even in situations where the tensor product does not commute with weakly contractible colimits.
In this generality, the smash product defines what is known as a nonunital oplax symmetric monoidal $\infty$-category.
Recall that this is a nonunital $\infty$-operad $\sD^\otimes$, such that the structure morphism
\[
\sD^\otimes \to \Surj_*
\]
is a locally cocartesian fibration; here $\Surj_*$ is the category of finite pointed sets and surjections.
We will usually abuse notation by calling $\sD$ itself a nonunital oplax symmetric monoidal $\infty$-category.
For any non-empty finite set $I$ and $I$-indexed set of objects $\{X_i\}_{i \in I}$ in $\sD$, we obtain an $I$-fold tensor product
\[
\otimes^I\{X_i\}_{i \in I} \in \sD
\]
by choosing a locally cartesian lift of the active morphism $I_+ \to \langle 1 \rangle$.
These $I$-fold tensor products are related by associator morphisms: for every partition $E$ of $I$,
we obtain a natural map
\[
\wedge^I\{X_i\}_{i \in I} \to \wedge^E\{\wedge^J\{X_j\}_{j \in J}\}_{J \in E}.
\]
The nonunital oplax symmetric monoidal $\infty$-category $\sD$ is an honest nonunital symmetric monoidal $\infty$-category if and only if all these maps are equivalences.

If $\sC$ and $\sD$ are nonunital oplax symmetric monoidal $\infty$-categories, we define a lax symmetric monoidal functor $\sC \to \sD$ to be a map of $\infty$-operads $\sC^\otimes \to \sD^\otimes$.
We define a strong monoidal functor to be a map of $\infty$-operads $\sC^\otimes \to \sD^\otimes$ that preserves locally cocartesian morphisms.

To show the smash product defines a nonunital oplax symmetric monoidal $\infty$-category, we use the following proposition.

\begin{proposition} \label{prop: localization-oplax-symmetric-monoidal-category}
    Let $\sC$ be a nonunital oplax symmetric monoidal $\infty$-category and let
    \[
    \begin{tikzcd}
    L \colon \sC \ar[r, shift left] & \sD \ar[l, shift left, hook'] \colon \iota
    \end{tikzcd}
    \]
    be a reflective localization of $\sC$.
    Then $\sD$ can be made into a nonunital oplax symmetric monoidal $\infty$-category such that for any non-empty set $I$ and $I$-indexed collection of objects $\{X_i\}_{i \in I}$ of $\sD$, we have
    \[
    \otimes^I_{\sD}\{X_i\}_{i \in I} \simeq L(\otimes^I_{\sC}\{X_i\}_{i \in I}).
    \]
    Moreover, $\iota$ can be made into a lax symmetric monoidal functor.
\end{proposition}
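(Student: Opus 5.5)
The most direct route is to build the $\infty$-operad for $\sD$ as a full suboperad of $\sC^\otimes$ and then check the locally cocartesian condition. Write $p\colon \sC^\otimes \to \Surj_*$ for the structure map, and let $\sD^\otimes \subseteq \sC^\otimes$ be the full subcategory spanned by those objects $(X_1,\ldots,X_n)$ over $\langle n\rangle$ with every $X_i$ in the essential image of $\iota$. Because the inert maps in $\sC^\otimes$ simply project onto components, $\sD^\otimes \to \Surj_*$ is again a nonunital $\infty$-operad. The inclusion $\sD^\otimes \hookrightarrow \sC^\otimes$ is a map of $\infty$-operads, and this will give the lax symmetric monoidal structure on $\iota$.

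The main step is to show that $\sD^\otimes \to \Surj_*$ is a locally cocartesian fibration. Fix a morphism $\alpha\colon\langle n\rangle\to\langle m\rangle$ in $\Surj_*$ and an object $\vec X$ of $\sD^\otimes$ over $\langle n\rangle$.
\begin{enumerate}
\item Take a locally $p$-cocartesian lift $\vec X \to \alpha_!\vec X$ in $\sC^\otimes$.
\item Compose it with the unit map $\alpha_!\vec X \to \iota L(\alpha_!\vec X)$, where $L$ is applied componentwise. This is a morphism in the fiber $\sC^{\times m}$ over $\langle m\rangle$.
\item Check that the composite is locally cocartesian in $\sD^\otimes$. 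For $\vec Y$ in $\sD^{\times m}$, the space of maps in $\sD^\otimes$ lying over $\alpha$ from $\vec X$ to $\vec Y$ is
\[
\Map_{\sC^{\times m}}(\alpha_!\vec X,\vec Y) \simeq \Map_{\sD^{\times m}}(L\alpha_!\vec X,\vec Y).
\]
The first equivalence is local cocartesianness in $\sC^\otimes$; the second is the adjunction $L \dashv \iota$ applied componentwise.
\end{enumerate}
The point I expect to need the most care is that local cocartesianness only asks for this universal property over the single edge $\alpha$. The condition therefore reduces to the fiberwise statement that $L$ is left adjoint to the inclusion of $\sD^{\times m}$ into $\sC^{\times m}$. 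I would make this precise using the criterion that a morphism is locally $p$-cocartesian if and only if it is $p$-cocartesian after pulling back along $\Delta^1 \to \Surj_*$ (see \cite{HTT}). This turns the check into an argument about a cocartesian fibration over $\Delta^1$, where cocartesian edges compose with fiberwise reflections in the standard way.

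With the fibration established, the formula in the statement is immediate. The $I$-fold tensor product in $\sD$ is computed by the locally cocartesian lift over the active map $I_+\to\langle 1\rangle$, which by step 2 is $L(\otimes^I_\sC\{X_i\})$. No associativity has to be checked, because only the oplax structure is asserted. The associator maps exist automatically and come from factoring the active map through the partition $E$, exactly as in the definition given above.
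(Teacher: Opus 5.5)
Your proposal is correct. You take the full suboperad $\sD^\otimes \subseteq \sC^\otimes$ on tuples with components in $\sD$, build its locally cocartesian lift over $\alpha$ by composing the lift in $\sC^\otimes$ with the unit of the fiberwise localization $L^{\times m} \dashv \iota^{\times m}$, and check cocartesianness against the single edge $\alpha$ via mapping spaces. That is all that is needed, and it also gives the formula for the $I$-fold tensor products and the lax structure on $\iota$.

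The paper gives no independent argument here; it only defers to the proof of \cite[Proposition A.3.25]{blansblom2025chainrulegoodwilliecalculus}. Your direct verification is essentially that kind of argument, carried out over $\Surj_*$.
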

\begin{proof}
    This follows from the proof of \cite[Proposition A.3.25]{blansblom2025chainrulegoodwilliecalculus}.
\end{proof}

Now suppose that $\sC$ is a nonunital oplax symmetric monoidal $\infty$-category which admits finite colimits, is pointed, and satisfies $\otimes^I \{*\}_{i \in I} \simeq \ast$ for any non-empty finite set $I$, i.e.\  $I$-fold tensor products of the zero object are the zero object.
For $I$ a non-empty finite set and $\{X_i\}_{i \in I}$ a collection of objects in $\sC$, we define the $I$-fold smash product $\wedge^I \{X_i \}_{i \in I}$ to be the total cofiber of the $I$-cube in $\sC$ defined by
\[
S \mapsto \otimes^I\{Y_i\}_{i \in I} \qquad Y_i = 
\begin{cases}
    X_i & \text{if $i \in S$} \\
    * & \text{else}
\end{cases}
\]
for $S$ a subset of $I$.

By repeating our earlier constructions, the $I$-fold smash products assemble to give a nonunital oplax symmetric monoidal structure on $\sC$.
To be precise, we first observe that Day convolution gives $\Ar(\sC)$ the structure of a nonunital oplax symmetric monoidal $\infty$-category, and then apply \cref{prop: localization-oplax-symmetric-monoidal-category} to the localization $\mathrm{cofib} \colon \Ar(\sC) \to \sC$.
Slight modifications of the arguments in this appendix then give the following generalizations of \cref{prop: smash-monoidal-structure} and \cref{prop: smash-product-universal-property}.

\begin{proposition} \label{prop: smash-oplax-monoidal-structure}
    Suppose that $(\sC, \otimes^I)$ is a nonunital oplax symmetric monoidal $\infty$-category which admits finite colimits, is pointed, and satisfies $\otimes^I \{*\}_{i \in I} \simeq \ast$ for any non-empty finite set $I$.
    Then there exists an essentially unique nonunital oplax symmetric monoidal structure on $\sC$ for which
    \begin{enumerate}[\upshape{(}\arabic*\upshape{)}]
        \item for any non-empty finite set $I$, the $I$-fold monoidal product is given by $\wedge^I$, and
        \item the canonical morphisms $\otimes^I\{X_i\}_{i \in I} \to \wedge^I \{X_i\}_{i \in I}$ extend to a lax symmetric monoidal functor $(\sC, \wedge^I) \to (\sC, \otimes^I)$ which lifts the identity functor of $\sC$.
    \end{enumerate}
\end{proposition}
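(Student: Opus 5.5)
Existence follows the unital case, now in the oplax setting. First, equip $\Ar(\sC)$ with nonunital oplax Day convolution. Concretely, take the locally cocartesian fibration $\sC^\otimes \to \Surj_*$ and form the oplax Day convolution operad $\Ar(\sC)^{\otimesday}$. Its $I$-fold product on arrows $(f_i\colon A_i \to B_i)_{i\in I}$ is the map to $\otimes^I\{B_i\}$ from the colimit over the punctured $I$-cube of the mixed tensor products $\otimes^I\{A_i \text{ or } B_i\}$. Only the existence of these finite punctured-cube colimits in $\sC$ is needed, and $\sC$ has them because it admits finite colimits. No exactness of $\otimes^I$ is required, precisely because we only ask for an oplax structure: the associators are the canonical comparison maps of colimits and need not be equivalences.

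Next, apply \cref{prop: localization-oplax-symmetric-monoidal-category} to the reflective localization $\mathrm{cofib}\colon \Ar(\sC) \to \sC$, with right adjoint $X \mapsto (\ast \to X)$. This gives an oplax structure on $\sC$ whose $I$-fold product is $\mathrm{cofib}\bigl(\otimesday^I\{\ast \to X_i\}\bigr)$. Using $\otimes^I$ of any family containing $\ast$ in the relevant slots, together with the hypothesis on $\otimes^I\{\ast\}$, identify this cofiber with the total cofiber of the $I$-cube $S \mapsto \otimes^I\{Y_i\}$. This is exactly $\wedge^I$, which verifies (1). For (2), compose the lax inclusion $\iota\colon (\sC,\wedge) \to \Ar(\sC)$ provided by that proposition with $\ev_1\colon \Ar(\sC) \to \sC$. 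The functor $\ev_1$ is lax monoidal because restricting Day convolution along the strong monoidal inclusion $\{1\} \to [1]$ gives a lax map, and this argument works equally for locally cocartesian fibrations over $\Surj_*$. The composite is a lax functor lifting the identity whose structure maps are the canonical $\otimes^I \to \wedge^I$.

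Uniqueness is the main obstacle, since the earlier proof ran through \cref{prop: smash-product-universal-property}. I would first prove the oplax analogue of that universal property. Let $S$ be the smash construction on the category of pointed nonunital oplax symmetric monoidal $\infty$-categories with finite colimits satisfying $\otimes^I\{\ast\} \simeq \ast$, with morphisms the zero-preserving lax functors. The claim is that $\alpha\colon S \to \id$ exhibits the image of $S$ as a colocalization. That image consists of the objects whose $I$-fold products preserve finite colimits in each variable separately, and $\wedge^I$ does so because total cofibers of cubes commute with finite colimits in each coordinate. As in the earlier proof, functoriality in lax functors comes from \cite[Proposition A.1]{RamziYoneda}, using that $\sC^\wedge \hookrightarrow \Ar(\sC)^{\otimesday}$ is fully faithful. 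One then checks via \cite[Proposition 5.2.7.4]{HTT} that $\alpha S$ and $S\alpha$ are equivalences by computing lax comparison maps on $I$-fold products. Given this, suppose $\wedge'$ is another oplax structure satisfying (1) and (2). By (1) its products preserve finite colimits in each variable, so the universal property gives a lax functor $(\sC,\wedge') \to (\sC,\wedge)$ compatible with the maps from $\otimes$. Its comparison maps are therefore equivalences on each $I$-fold product, so it preserves locally cocartesian edges and is an equivalence.
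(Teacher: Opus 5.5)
Your existence argument matches the paper's: oplax Day convolution on $\Ar(\sC)$, then \cref{prop: localization-oplax-symmetric-monoidal-category} applied to $\mathrm{cofib}$, then $\ev_1\circ\iota$ for (2). Your plan to get uniqueness from an oplax version of the universal property is also what the paper does. (A minor point: $\mathrm{cofib}(\otimesday^I\{\ast\to X_i\})$ is the total cofiber of the cube directly from the Day convolution formula, and the hypothesis $\otimes^I\{\ast\}\simeq\ast$ plays no role there.)

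The problem is in your uniqueness step. You assert that $\wedge^I$ preserves finite colimits in each variable ``because total cofibers of cubes commute with finite colimits''. You then use this to characterize the image of $S$ and to place $(\sC,\wedge')$ in it. This does not hold in the present generality. A colimit in the variable $X_j$ yields a colimit of $I$-cubes only if $\otimes^I$ itself preserves that colimit in the $j$-th slot. In the oplax setting no exactness of $\otimes^I$ is assumed, as you stress yourself in the existence part. In the honest case this property came from the hypothesis that $\otimes$ preserves weakly contractible colimits (\cref{prop: smash-preserves-colimits}). Without that hypothesis it is not formal: even for the cartesian product on $\Lie(\Sp)$, preservation of weakly contractible colimits needs the commutator cofiber sequence, and for $\Alg_\sO(\Sp)$ the paper deduces colimit preservation of $\wedge$ separately in the step $(3)\Rightarrow(4)$ of \cref{thm: characterization}. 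So your description of the image of $S$ is wrong, and the sentence ``By (1) its products preserve finite colimits in each variable'' is unjustified.

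The repair is to characterize the image of $S$ by reducedness in each variable instead.

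\begin{enumerate}
\item $\wedge^I$ is always reduced in each variable. If $X_j\simeq\ast$, the $I$-cube is an equivalence in the $j$-direction, so its total cofiber $\wedge^I\{X_i\}$ vanishes.
\item Conversely, suppose $\otimes_\sD^I\{X_i\}\simeq\ast$ whenever some $X_i\simeq\ast$. Then the cube vanishes away from its terminal vertex, so $\otimes^I_\sD\to\wedge^I_\sD$ is an equivalence and $\alpha(\sD)$ is an equivalence.
\end{enumerate}

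This is exactly the hypothesis of \cref{prop: oplax-smash-product-universal-property}. With it, your check that $\alpha S$ and $S\alpha$ are equivalences goes through. Moreover, $(\sC,\wedge')$ lies in the image because of (1), and the rest of your uniqueness argument then works as written.
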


\begin{proposition} \label{prop: oplax-smash-product-universal-property}
    Let $(\sC, \otimes^I_\sC)$ be a nonunital oplax symmetric monoidal $\infty$-category as in the previous proposition.
    Suppose $(\sD, \otimes^I_\sD)$ is a nonunital oplax symmetric monoidal $\infty$-category such that for every non-empty finite set $I$ and every collection of objects $\{X_i\}_{i \in I}$ in $\sD$, we have $\otimes^I_\sD\{X_i\}_{i \in I} = \ast$ whenever $X_i \simeq \ast$ for some $i \in I$.
    Then postcomposition with the lax symmetric monoidal functor $(\sC, \wedge^I) \to (\sC, \otimes_\sC^I)$ induces an equivalence
    \[
    \Fun_*^{\mathrm{lax}}((\sD, \otimes_\sD^I), (\sC, \wedge^I)) \xrightarrow{\sim} \Fun_*^{\mathrm{lax}}((\sD, \otimes_\sD^I), (\sC, \otimes_\sC^I)),
    \]
    where $\Fun_*^{\mathrm{lax}}$ denotes the $\infty$-category of lax symmetric monoidal functors whose underlying functor preserves the zero object.
\end{proposition}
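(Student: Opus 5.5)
The strategy is to reproduce the colocalization argument used for \cref{prop: smash-product-universal-property}, now in the category of nonunital oplax symmetric monoidal $\infty$-categories. Let $\mathrm{OplaxSymMon}_*$ be the $\infty$-category whose objects are pointed nonunital oplax symmetric monoidal $\infty$-categories with finite colimits and $\otimes^I\{\ast\}\simeq\ast$. Its morphisms are maps of $\infty$-operads over $\Surj_*$ whose underlying functor preserves the zero object.

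\textbf{Step 1: functoriality of the smash construction.} First I would make $(\sC,\otimes^I)\mapsto(\sC,\wedge^I)$ into an endofunctor $S$, together with a natural transformation $\alpha\colon S\to\id$. Day convolution on $\Ar(\sC)$ is functorial in lax functors, and $\ev_1$ gives a natural transformation $\Ar\to\id$. The $\infty$-operad $\sC^{\wedge}$ embeds fully faithfully into $\Ar(\sC)^{\otimesday}$ via \cref{prop: localization-oplax-symmetric-monoidal-category}. Given a lax functor $f$ preserving $\ast$, the composite $\sC^\wedge\to\Ar(\sC)^{\otimesday}\to\Ar(\sD)^{\otimesday}$ lands in $\sD^{\wedge}$, because arrows of the form $\ast\to X$ are sent to arrows of the form $\ast\to f(X)$. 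Then \cite[Proposition A.1]{RamziYoneda} assembles these into a functor, exactly as in the earlier lemma.

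\textbf{Step 2: identifying the objects fixed by $S$.} Let $\sD$ be such that $\otimes^I_\sD\{X_i\}=\ast$ whenever some $X_i\simeq\ast$. The $I$-cube defining $\wedge^I$ then has every vertex except the terminal one equal to $\ast$, so its total cofiber is $\otimes^I_\sD\{X_i\}$. Consequently $\alpha(\sD)$ is a lax functor whose comparison maps are equivalences. Such a functor preserves locally cocartesian edges, hence is strong, and since it lies over the identity it is an equivalence. Conversely, $\wedge^I$ always vanishes when one input is $\ast$: in that case the cube is degenerate along the $i$-th direction, because both faces in that direction equal the same objects. So the essential image of $S$ is exactly the class of $\sD$ in the hypothesis.

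\textbf{Step 3: colocalization.} By \cite[Proposition 5.2.7.4]{HTT} it suffices to show that $\alpha(S\sC)$ and $S(\alpha(\sC))$ are equivalences. The first follows from Step 2. For the second, I would compare with $\Ar$ as before. The lax comparison map of $S(\alpha(\sC))$ on $\{X_i\}$ is the map from the total cofiber of the cube of $\wedge^{I}$-values, whose non-terminal vertices vanish, to $\wedge^I\{X_i\}$, induced by the cofiber map. This is an equivalence. Hence $S(\alpha(\sC))$ is strong, lies over the identity, and is therefore an equivalence. Finally, the mapping spaces from such a $\sD$ identify with the functor categories via the $(\infty,1)$-colocalization, applied to $\Fun^{\mathrm{lax}}_*$ by the usual cotensoring with $\Delta^n$, or equivalently by checking this on mapping spaces.

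I expect Step 1 to be the main obstacle. The fully faithful inclusion $\sC^\wedge\subseteq\Ar(\sC)^{\otimesday}$ in the oplax setting must be a map of $\infty$-operads that is a monomorphism, and this has to be extracted from the proof of \cref{prop: localization-oplax-symmetric-monoidal-category}. I would verify that the localization there produces a full suboperad.
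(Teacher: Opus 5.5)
Your proposal is correct and follows the paper's route: the paper proves this statement by rerunning, in the oplax setting, the colocalization argument for \cref{prop: smash-product-universal-property}. That argument builds the endofunctor $S$ with $\alpha\colon S\to\id$ from Day convolution on $\Ar(\sC)$ and \cite[Proposition A.1]{RamziYoneda}, then applies \cite[Proposition 5.2.7.4]{HTT}, which is exactly your Steps 1--3. Two small remarks: the lax comparison map of $S(\alpha(\sC))$ actually runs from $\wedge^I\{X_i\}$ to the total cofiber of the $\wedge^I$-cube, the reverse of what you wrote, which is harmless since you only need it to be an equivalence; and the colocalization only reaches those $\sD$ that admit finite colimits, and so lie in your $\mathrm{OplaxSymMon}_*$, a hypothesis the statement leaves implicit.
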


We end with the following criterion for showing that the smash product defined with respect to the cartesian product defines an honest nonunital symmetric monoidal category.

\begin{proposition} \label{prop: smash-product-T-algebras-associative}
    Let $(\sC, \otimes)$ be a stable presentably symmetric monoidal $\infty$-category and let $T$ be a sifted colimit preserving monad on $\sC$.
    Suppose that the free $T$-algebra functor
    \[
    \free_T \colon \sC \to \Alg_T(\sC)
    \]
    admits a nonunital symmetric monoidal structure from $\otimes$ to $\wedge$, where the smash product is taken with respect to the cartesian product on $\Alg_T(\sC)$.
    Then the smash product defines an honest nonunital symmetric monoidal structure on $\Alg_T(\sC)$.
\end{proposition}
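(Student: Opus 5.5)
We need to show that the associator maps
\[
\wedge^I\{X_i\}_{i \in I} \to \wedge^E\{\wedge^J\{X_j\}_{j \in J}\}_{J \in E}
\]
of the oplax smash product on $\Alg_T(\sC)$ are equivalences. We are given that $\free_T$ is a strong nonunital symmetric monoidal functor $(\sC,\otimes)\to(\Alg_T(\sC),\wedge^I)$. In the oplax setting, strong means it preserves locally cocartesian morphisms. Hence for objects $A_i\in\sC$ we have natural equivalences $\wedge^I\{\free_T A_i\}\simeq \free_T(\otimes^I A_i)$ compatible with the associators. On the $\sC$ side the associators are equivalences, since $\otimes$ is an honest symmetric monoidal structure. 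So the associator maps of $\wedge$ are equivalences whenever every $X_i$ is free.

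The plan is to extend from free algebras to all algebras by resolution. Every $T$-algebra is the geometric realization of its bar resolution $\free_T T^{\bullet}X$. It therefore suffices to show that the functors
\[
(X_i)\mapsto \wedge^I\{X_i\}, \qquad (X_i)\mapsto \wedge^E\{\wedge^J\{X_j\}\}
\]
preserve geometric realizations in each variable separately. Geometric realizations of diagrams in a product can be computed diagonally, so it is enough to treat each variable separately.

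The key step is this colimit preservation. The $I$-fold smash product is a total cofiber of the cube $S\mapsto \prod_{i\in S}X_i$. Total cofibers commute with colimits, so it suffices that finite products in $\Alg_T(\sC)$ commute with geometric realizations in each variable.

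For this I would use the hypothesis that $T$ preserves sifted colimits. Then the forgetful functor $\Alg_T(\sC)\to\sC$ preserves and reflects sifted colimits, and it also preserves products. In $\sC$, which is stable, finite products are direct sums and commute with all colimits, in particular geometric realizations. Hence $\prod_{i}X_i$ commutes with geometric realizations (indeed with all sifted colimits) in each variable in $\Alg_T(\sC)$. So each $\wedge^I$, and therefore each composite $\wedge^E\{\wedge^J\}$, preserves geometric realizations in each variable.

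There is one subtlety, which I expect to be the main obstacle: the comparison map must be natural. The associators are natural transformations between functors $\Alg_T(\sC)^{I}\to\Alg_T(\sC)$. Both sides preserve geometric realizations in each variable, and they agree, via a natural equivalence, on tuples of free algebras, i.e. after restricting along $\free_T^{\times I}$. I need to check that the equivalence on free objects obtained from the strong monoidal structure of $\free_T$ is the restriction of the associator itself, and not merely some abstract equivalence. This follows because a strong monoidal functor commutes with the associator maps up to the structure equivalences. Granting this, resolving each $X_i$ by its bar construction (via iterated geometric realization, one variable at a time) shows the associator on $(X_i)$ is a colimit of equivalences, hence an equivalence. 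Therefore $(\Alg_T(\sC),\wedge)$ is an honest nonunital symmetric monoidal $\infty$-category.
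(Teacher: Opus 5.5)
Your proposal is correct and follows essentially the same route as the paper. The smash products preserve sifted colimits (in particular geometric realizations) in each variable, because products and sifted colimits in $\Alg_T(\sC)$ are computed in the stable $\sC$; this reduces the associator to tuples of free algebras, where strong monoidality of $\free_T$ and associativity of $\otimes$ finish the argument. Your explicit check that the associator on free objects is identified with $\free_T$ of the associator in $\sC$ is a point the paper leaves implicit.
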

\begin{proof}
    For each non-empty finite set $I$, the functor
    \[
    (X_i)_{i \in I} \mapsto \wedge^I\{X_i\}_{i \in I}
    \]
    on $\Alg_T(\sC)$ preserves sifted colimits; this holds since sifted colimits and products are both calculated in the stable category $\sC$, where they commute.
    To prove that the smash product is associative, it suffices to show for every non-empty finite set $I$ and every partition $E$ of $I$ the natural transformation
    \[
    \wedge^I\{X_i\}_{i \in I} \to \wedge^E\{\wedge^J\{X_j\}_{j \in J}\}_{J \in E}
    \]
    is an equivalence.
    Using that every $T$-algebra is a sifted colimit of free $T$-algebras, we can assume that each $X_i$ is a free $T$-algebra.
    But in this case the map is an equivalence since $\free_T$ is symmetric monoidal and the tensor product on $\sC$ is associative.
\end{proof}

\phantomsection
\printbibliography 

\end{document}